\newtheorem{theorem}{Theorem}[section]
\newtheorem{proposition}[theorem]{Proposition}
\newtheorem{lemma}[theorem]{Lemma}
\newtheorem{remark}[theorem]{Remark}
\newtheorem{definition}[theorem]{Definition}
\newtheorem{corollary}[theorem]{Corollary}
\newcommand{\be}{\begin{equation}}
\newcommand{\ee}{\end{equation}}
\newcommand{\bea}{\begin{eqnarray}}
\newcommand{\eea}{\end{eqnarray}}
\newcommand{\ben}{\begin{eqnarray*}}
	\newcommand{\een}{\end{eqnarray*}}
\begin{document}

\title{On the 1-leg Donaldson-Thomas $\mathbb{Z}_2\times\mathbb{Z}_2$-vertex}

\author[Yijie Lin]{Yijie Lin}
\address{School of Mathematics and Statistics\\Key Laboratory of Analytical Mathematics and Applications (Ministry of Education)\\Fujian Normal University\\350117 Fuzhou, P.R. China}
\email{yjlin12@163.com}

\maketitle

\begin{abstract}
We introduce a notion of restricted pyramid configurations for computing the 1-leg  Donaldson-Thomas $\mathbb{Z}_2\times\mathbb{Z}_2$-vertex. We study a special type of restricted pyramid configurations with the prescribed 1-leg partitions, and find one unique class of them satisfying the symmetric interlacing property. 
This leads us  to obtain an explicit formula for a class of 1-leg Donaldson-Thomas $\mathbb{Z}_2\times\mathbb{Z}_2$-vertex through establishing its connection with 1-leg Donaldson-Thomas $\mathbb{Z}_4$-vertex using the vertex operator methods of Okounkov-Reshetikhin-Vafa and Bryan-Young.
\end{abstract}




\tableofcontents

\section{Introduction}

The topological vertex arising from the duality relating  open string theory with Chern-Simons theory  has  attracted a lot of attention for computing invariants in enumerative geometry  [\cite{AKMV}]. The  mathematical theories for variant topological vertices  have been established in several branches of counting theories such as  (orbifold) Gromov-Witten theory and Donaldson-Thomas theory [\cite{LLLZ,LLZ2,MNOP,MOOP,ORV,BCY,Ross}]. This paper is concerned with the orbifold DT topological vertex developed by Bryan, Cadman and Young [\cite{BCY}]. They define the orbifold DT $G$-vertex $V^{G}_{\lambda\mu\nu}$, which is a generating function of 3D partitions asymptotic to a triple of partitions $(\lambda,\mu,\nu)$  colored by irreducible representations of a finite Abelian group $G$, see [\cite{BCY}, Equation (3)]. We will restrict ourselves to   $G\subseteq\mathrm{SO}(3)$. It is well known from the classification of polyhedral groups that a finite Abelian subgroup $G$ of $\mathrm{SO}(3)$ is either $\mathbb{Z}_n$ ($n\geq1$) or $\mathbb{Z}_2\times\mathbb{Z}_2$ up to isomorphism. The closed formula for  $V_{\emptyset\emptyset\emptyset}^{\mathbb{Z}_n}$  is obtained by using the vertex operator method, while the explicit formula for  $V^{\mathbb{Z}_2\times\mathbb{Z}_2}_{\emptyset\emptyset\emptyset}$ is derived by additionally applying some  properties of the 3D combinatorial objects called pyramid partitions  [\cite{BY}].  Moreover, an explicit formula for $V^{\mathbb{Z}_n}_{\lambda\mu\nu}$ with any triple of partitions  $(\lambda,\mu,\nu)$ has been presented in terms of Schur functions, see [\cite{BCY}, Theorem 12]. As pointed out in [\cite{BCY}, Section 5.1], it is natural to expect  an explicit formula for the full DT $\mathbb{Z}_2\times\mathbb{Z}_2$-vertex. But actually it is still  open  even for an explicit formula of the 1-leg DT $\mathbb{Z}_2\times\mathbb{Z}_2$-vertex, i.e., 
$V^{\mathbb{Z}_2\times\mathbb{Z}_2}_{\lambda\emptyset\emptyset}$, $V^{\mathbb{Z}_2\times\mathbb{Z}_2}_{\emptyset\mu\emptyset}$ and  
$V^{\mathbb{Z}_2\times\mathbb{Z}_2}_{\emptyset\emptyset\nu}$ with nonempty partitions  $\lambda,\mu,\nu$. 

We will consider the  1-leg case of the DT $\mathbb{Z}_2\times\mathbb{Z}_2$-vertex in this paper. Now let us briefly recall the ocurrence of pyramid partitions in enumerative invariants and  the proof of the explicit formula for $V^{\mathbb{Z}_2\times\mathbb{Z}_2}_{\emptyset\emptyset\emptyset}$ by Bryan-Young. The  non-commutative DT invariants of the conifold  first appear in Szendroi's definition of invariants counting framed cyclic $A$-modules [\cite{Szendroi}]. By torus localization, he conjectures some formulas for  generating functions of noncommutative DT invariants invloving the notion of pyramid partitions, which are proved by Young using the dimer shuffling algorithm [\cite{You}] and interpreted by Nagao-Nakajima as a result of wall-crossing phenomena [\cite{NN}]. One may refer to the non-commutative DT theory in [\cite{Szendroi,BCY,NN,N1,N2,N3,MR,KS,Zhou}] for more details. The notion of pyramid partitions is refined and described as the finite set of bricks by some algebraic definition [\cite{BY}, Section 5]. Let $\mathfrak{P}$ be the set of all pyramid partitions as mentioned above and $Z_\mathfrak{P}$ be the generating function of pyramid partitions, which is denoted by the notation $Z_{\mathrm{pyramid}}$ in [\cite{BY}, Definition 6.1].  Then the explicit formula for $V^{\mathbb{Z}_2\times\mathbb{Z}_2}_{\emptyset\emptyset\emptyset}$ is derived by Bryan-Young through the following steps:

$(\mathrm{I})$   $Z_\mathfrak{P}$ is computed  in two different ways.  One is counting pyramid partitions  along  their antidiagonal slices where we  denote $Z_\mathfrak{P}$ also by $Z_\mathfrak{P+}$  in this way. The other is to count pyramid partitions  along  their diagonal slices where we  write $Z_\mathfrak{P}$ also by $Z_\mathfrak{P-}$. Here  $Z_\mathfrak{P}=Z_\mathfrak{P+}=Z_\mathfrak{P-}$.

$(\mathrm{II})$ Compute $Z_\mathfrak{P-}$ directly by the vertex operator method of Okounkov-Reshetikhin-Vafa [\cite{ORV}].

$(\mathrm{III})$ Establish  a connection between $V^{\mathbb{Z}_2\times\mathbb{Z}_2}_{\emptyset\emptyset\emptyset}$ and $Z_\mathfrak{P+}$ using the vertex operator method as above together with introducing some new vertex operators such as $E_{\pm}(x)$ in [\cite{BY}, Definition 7.4].

Actually every pyramid partition has the symmetric interlacing property of type $\emptyset$ (see Remark \ref{interlacing-emptyset}), i.e., the same interlacing properties \eqref{diagonal-interlacing} and \eqref{antidiagonal-interlacing} along its  diagonal and antidiagonal slices respectively, which ensure that $Z_\mathfrak{P+}=Z_\mathfrak{P-}$ in Step $(\mathrm{I})$ by Remark \ref{characteristic-PP}. However, when it comes to computing the 1-leg DT $\mathbb{Z}_2\times\mathbb{Z}_2$-vertex, the notion of  pyramid partitions  is not suitable for describing  the interlacing property induced by the leg partitions. Therefore, we need a new notion of 3D combinatorial objects to describe the desired interlacing property.

We  introduce a notion of restricted pyramid configurations, which are constructed from  pyramid partitions by removing some finite bricks. We  construct the special class of restricted pyramid configurations of antidiagonal (resp. diagonal) type $(\nu,l)$ for any leg partition $\nu\neq\emptyset$ and $l\in\mathbb{Z}_{\geq0}$, denoted by $\mathfrak{RP}_{+}(\nu,l)$ (resp. $\mathfrak{RP}_{-}(\nu,l)$), see Section 3.1 and 3.3 for the definition. Let $\pi\in\mathfrak{P}$. Every restricted pyramid configuration $\widetilde{\bm{\wp}}_{\pi}(\nu,l)$   (resp. $\bm{\wp}_{\pi}(\nu,l)$) in $\mathfrak{RP}_{+}(\nu,l)$ (resp. $\mathfrak{RP}_{-}(\nu,l)$) can be shown to has the desired interlacing property along antidiagonal (resp. diagonal)  slices induced by  the 1-leg partition $\nu$, see Definition \ref{interlacing-second-type} and Lemma \ref{generalized-RPC-interlacing1}. 
Let $Z_{\mathfrak{RP}_{+}(\nu,l)}$ (resp. $Z_{\mathfrak{RP}_{-}(\nu,l)}$) be the generating function of  restricted pyramid configurations of antidiagonal (resp. diagonal) type $(\nu,l)$ where  both of them are shown to be  independent of $l$, see Remark \ref{Independent-of-l}. Then we are only concerned with   $l=0$, and take the simplified notations $\widetilde{\bm{\wp}}_{\pi}(\nu)=\widetilde{\bm{\wp}}_{\pi}(\nu,0)$,
 $\bm{\wp}_{\pi}(\nu)=\bm{\wp}_{\pi}(\nu,0)$, $\mathfrak{RP}_{+}(\nu)=\mathfrak{RP}_{+}(\nu,0)$, and $\mathfrak{RP}_{-}(\nu)=\mathfrak{RP}_{-}(\nu,0)$, see also Remark \ref{generalization of RPC notions}. 
 
With the above construction, one would naturally study the relation between  $Z_{\mathfrak{RP}_{+}(\nu)}$
 and $Z_{\mathfrak{RP}_{-}(\nu)}$ where $\mathfrak{RP}_{+}(\nu)=\{\widetilde{\bm{\wp}}_{\pi}(\nu)\;| \;\pi\in\mathfrak{P}\}$
 and $\mathfrak{RP}_{-}(\nu)=\{\bm{\wp}_{\pi}(\nu)\;| \;\pi\in\mathfrak{P}\}$. We first show and find the following equivalent condition (see Proposition \ref{unique symmetric interlacing})
  \ben
  \widetilde{\bm{\wp}}_{\pi}(\nu)=\bm{\wp}_{\pi}(\nu)\; \mbox{ for any   $\pi\in\mathfrak{P}$  if and only if $\nu=(m,m-1,\cdots,2,1)$ for some $m\in\mathbb{Z}_{\geq1}$}.
  \een
  This means that  
 $\mathfrak{RP}_{+}(\nu)$  $($or  $\mathfrak{RP}_{-}(\nu)$$)$ is the unique class of restricted pyramid configurations of antidiagonal $($or diagonal$)$   type $(\nu,0)$ with the symmetric interlacing property of type $\nu$ (see Definition \ref{dfn-symmetric-interlacing} and Lemma \ref{RPC-interlacing1}) only when $\nu=(m,m-1,\cdots,2,1)$ where  $m\in\mathbb{Z}_{\geq1}$ is the length of $\nu$. This uniqueness can be generalized to the case for any $l\in\mathbb{Z}_{\geq0}$, see Proposition \ref{generalized-unique symmetric interlacing} and Remark \ref{generalize-uniqueness}. With this special choice of 1-leg partitions $\nu$, we have 
$\mathfrak{RP}_{+}(\nu)=\mathfrak{RP}_{-}(\nu)$ and hence $Z_{\mathfrak{RP}_{+}(\nu)}=Z_{\mathfrak{RP}_{-}(\nu)}$, see Definition \ref{GF-RPC}.
One can refer to the discussion for the other choices of $\nu$  in  Section 5.

As in step $(\mathrm{III})$, one can relate 1-leg DT $\mathbb{Z}_2\times\mathbb{Z}_2$-vertex $V^{\mathbb{Z}_2\times\mathbb{Z}_2}_{\emptyset\emptyset\nu}$ with $Z_{\mathfrak{RP}_{+}(\nu)}$ through comparing their vertex operator product expressions, see Remark \ref{general-Z2Z2-RPC+}.
On the other hand, there is an observation that the generating function $Z_\mathfrak{P-}$ in Step $(\mathrm{II})$ can be alternatively computed by comparing to 1-leg DT $\mathbb{Z}_4$-vertex $V^{\mathbb{Z}_4}_{\emptyset\emptyset\emptyset}$ (with their varibles identified as in Section 4.1) due to their interlacing properties and color assignments, see Remark \ref{another-proof-GFPP}. Using the vertex operator methods of  Okounkov-Reshetikhin-Vafa and Bryan-Young, one can derive the explicit formula for  $V^{\mathbb{Z}_2\times\mathbb{Z}_2}_{\emptyset\emptyset\nu}$ where $\nu=(m,m-1,\cdots,2,1)$ with $m\in\mathbb{Z}_{\geq1}$   through the following steps:

$(\mathfrak{I}_1)$ Establish a connection between $V^{\mathbb{Z}_2\times\mathbb{Z}_2}_{\emptyset\emptyset\nu}$ and $Z_{\mathfrak{RP}_{+}(\nu)}$ by the vertex operator methods.

$(\mathfrak{I}_2)$ Establish a connection between $V^{\mathbb{Z}_4} _{\emptyset\emptyset\nu}$ and $Z_{\mathfrak{RP}_{-}(\nu)}$ by the vertex operator methods.

$(\mathfrak{I}_3)$ Establish a connection between $V^{\mathbb{Z}_2\times\mathbb{Z}_2}_{\emptyset\emptyset\nu}$ and $V^{\mathbb{Z}_4} _{\emptyset\emptyset\nu}$ using the equality $Z_{\mathfrak{RP}_{+}(\nu)}=Z_{\mathfrak{RP}_{-}(\nu)}$,  and then apply the explicit formula of Bryan-Cadman-Young in [\cite{BCY}, Theorem 12] for $V^{\mathbb{Z}_4} _{\emptyset\emptyset\nu}$.

To present  the explicit formula for 1-leg DT $\mathbb{Z}_2\times\mathbb{Z}_2$-vertex $V^{\mathbb{Z}_2\times\mathbb{Z}_2}_{\emptyset\emptyset\nu}$ when $\nu=(m,m-1,\cdots,2,1)$ with $m\in\mathbb{Z}_{\geq1}$, we introduce some notations.
For  $l\in\mathbb{Z}$ and  commutative variables $x,q$, let
\ben
&&\mathbf{M}(x,q)=\prod_{n=1}^\infty\frac{1}{(1-xq^n)^n},\\
&&\widetilde{\mathbf{M}}(x,q)=\mathbf{M}(x,q)\cdot\mathbf{M}(x^{-1},q),\\
&&\widehat{\mathbf{M}}(x,q)=\left(\widetilde{\mathbf{M}}(x,q)\cdot\widetilde{\mathbf{M}}(-x,q)\right)^{-1},\\ &&\widetilde{\mathbf{M}}_{0}(x,q;l)=\frac{\mathbf{M}(xq^l,q)}{\mathbf{M}(x,q)\cdot(qx^{-1};q)_{\infty}^l},\\
&&\widetilde{\mathbf{M}}_{1}(x,q;l)=\frac{\mathbf{M}(x^{-1}q^l,q)}{\mathbf{M}(x^{-1},q)\cdot(x;q)_{\infty}^l},\\
&&\widehat{\mathbf{M}}_0(x,q;l)=\widetilde{\mathbf{M}}_{0}(x,q;l)\cdot\widetilde{\mathbf{M}}_{0}(-x,q;l),\\
&&\widehat{\mathbf{M}}_1(x,q;l)=\widetilde{\mathbf{M}}_{1}(x,q;l)\cdot\widetilde{\mathbf{M}}_{1}(-x,q;l),
\een
where the first two functions are called  MacMahon and MacMahon tilde functions  in [\cite{BY}, Definition 1.1], and for any variable $a$
we take the following standard $q$-series 
\ben
(a;q)_{\infty}=\prod_{k=0}^\infty(1-aq^{k}).
\een
Although two notations $\widetilde{\mathbf{M}}_{0}(x,q;l)$ and $\widetilde{\mathbf{M}}_{1}(x,q;l)$ satisfy
\ben
\widetilde{\mathbf{M}}_{1}(qx^{-1},q;l+1)=\widetilde{\mathbf{M}}_{0}(x,q;l)\cdot\frac{(x;q)_\infty}{(qx^{-1};q)_\infty},
\een
we will use them simultaneously for unifying the presentation of formulas below. For any $p\in\mathbb{R}$,  let $\lfloor p\rfloor$ be the greatest integer less than or equal to $p$ and  $\{p\}=x-\lfloor x\rfloor$ be the fractional part of  $p$. 

We first derive a connection between $V^{\mathbb{Z}_2\times\mathbb{Z}_2}_{\emptyset\emptyset\nu}$ and $Z_{\mathfrak{RP}_{+}(\nu)}$ for Step $(\mathfrak{I}_1)$, see Lemma \ref{Z2Z2-RPC+}, and then obtain a connection between $V^{\mathbb{Z}_4} _{\emptyset\emptyset\nu}$ and $Z_{\mathfrak{RP}_{-}(\nu)}$ for Step $(\mathfrak{I}_2)$, see Lemma \ref{Z4-RPC-}.
	  Then we  have the following connection between 1-leg DT $\mathbb{Z}_{2}\times\mathbb{Z}_{2}$-vertex and 1-leg DT $\mathbb{Z}_{4}$-vertex for Step $(\mathfrak{I}_3)$.
\begin{theorem}(see Theorem \ref{Z2Z2-Z4})
	Assume  $\nu=(m,m-1,\cdots,2,1)$ with $m\in\mathbb{Z}_{\geq1}$.  Let $q=q_aq_bq_cq_0$. Then we have \\
	(i) if $m\equiv0\; \mbox{(mod 4)}$ or $m\equiv3\; \mbox{(mod 4)}$,
	\ben
	&&V_{\nu\emptyset\emptyset}^{\mathbb{Z}_{2}\times\mathbb{Z}_{2}}(q_0,q_c,q_a,q_b)=
	V^{\mathbb{Z}_4}_{\emptyset\emptyset\nu}(q_0,q_a,q_b,q_c)\cdot\Phi(q_c,q_a,q_b,q;m),\\
	&&V_{\emptyset\nu\emptyset}^{\mathbb{Z}_{2}\times\mathbb{Z}_{2}}(q_0,q_b,q_c,q_a)= V^{\mathbb{Z}_4}_{\emptyset\emptyset\nu}(q_0,q_c,q_a,q_b)\cdot\Phi(q_b,q_c,q_a,q;m),\\
	&&V_{\emptyset\emptyset\nu}^{\mathbb{Z}_{2}\times\mathbb{Z}_{2}}(q_0,q_a,q_b,q_c)
	= V^{\mathbb{Z}_4}_{\emptyset\emptyset\nu}(q_0,q_b,q_c,q_a)
	\cdot\Phi(q_a,q_b,q_c,q;m),
	\een	
	(ii) if $m\equiv1\; \mbox{(mod 4)}$ or $m\equiv2\; \mbox{(mod 4)}$,
	\ben
	&&V_{\nu\emptyset\emptyset}^{\mathbb{Z}_{2}\times\mathbb{Z}_{2}}(q_0,q_c,q_a,q_b)=V^{\mathbb{Z}_4}_{\emptyset\emptyset\nu}(q_b,q_a,q_0,q_c)\cdot\Phi(q_c,q_a,q_b,q;m),\\
	&&V_{\emptyset\nu\emptyset}^{\mathbb{Z}_{2}\times\mathbb{Z}_{2}}(q_0,q_b,q_c,q_a)=V^{\mathbb{Z}_4}_{\emptyset\emptyset\nu}(q_a,q_c,q_0,q_b)\cdot\Phi(q_b,q_c,q_a,q;m),\\
	&&V_{\emptyset\emptyset\nu}^{\mathbb{Z}_{2}\times\mathbb{Z}_{2}}(q_0,q_a,q_b,q_c)=V^{\mathbb{Z}_4}_{\emptyset\emptyset\nu}(q_c,q_b,q_0,q_a)\cdot\Phi(q_a,q_b,q_c,q;m),
	\een	
		where 
		\ben
		&&\Phi(q_a,q_b,q_c,q;m)\\
		&=&\frac{\widehat{\mathbf{M}}(q_a,q)\cdot\widehat{\mathbf{M}}(q_b,q)\cdot\widehat{\mathbf{M}}(q_c,q)\cdot\widehat{\mathbf{M}}(q_aq_bq_c,q)\cdot \widetilde{\mathbf{M}}(q_aq_b,q)\cdot\widetilde{\mathbf{M}}_{2\{\frac{m}{2}\}}(q_aq_b,q;2\lfloor \frac{m+1}{2}\rfloor)}{\widehat{\mathbf{M}}_{2\{\frac{m}{2}\}}(q_a,q;\lfloor \frac{m+1}{2}\rfloor)\cdot\widehat{\mathbf{M}}_{2\{\frac{m}{2}\}}(q_b,q;\lfloor \frac{m+1}{2}\rfloor)\cdot\widehat{\mathbf{M}}_{1-2\{\frac{m}{2}\}}(q_c,q;\lfloor \frac{m+1}{2}\rfloor)\cdot\widehat{\mathbf{M}}_{2\{\frac{m}{2}\}}(q_aq_bq_c,q;\lfloor \frac{m+1}{2}\rfloor)}.
		\een
\end{theorem}

Combined with the explicit formula of Bryan-Cadman-Young for the DT $\mathbb{Z}_4$-vertex, we have the following explicit formula for 1-leg DT  $\mathbb{Z}_{2}\times\mathbb{Z}_{2}$-vertex.
\begin{theorem}(see Theorem \ref{Z2Z2-1-leg})
	Assume  $\nu=(m,m-1,\cdots,2,1)$ with $m\in\mathbb{Z}_{\geq1}$. Let $q=q_aq_bq_cq_0$.  Then we have
	\ben
	&&V_{\nu\emptyset\emptyset}^{\mathbb{Z}_{2}\times\mathbb{Z}_{2}}(q_0,q_c,q_a,q_b)=V_{\emptyset\emptyset\emptyset}^{\mathbb{Z}_{2}\times\mathbb{Z}_{2}}(q_0,q_c,q_a,q_b)\cdot\Upsilon(q_c,q_a,q_b,q;m),\\
	&&V_{\emptyset\nu\emptyset}^{\mathbb{Z}_{2}\times\mathbb{Z}_{2}}(q_0,q_b,q_c,q_a)=V_{\emptyset\emptyset\emptyset}^{\mathbb{Z}_{2}\times\mathbb{Z}_{2}}(q_0,q_b,q_c,q_a)\cdot\Upsilon(q_b,q_c,q_a,q;m),\\ 
	&&V_{\emptyset\emptyset\nu}^{\mathbb{Z}_{2}\times\mathbb{Z}_{2}}(q_0,q_a,q_b,q_c)=V_{\emptyset\emptyset\emptyset}^{\mathbb{Z}_{2}\times\mathbb{Z}_{2}}(q_0,q_a,q_b,q_c)\cdot\Upsilon(q_a,q_b,q_c,q;m),
	\een	
	where 
\ben
&&\Upsilon(q_a,q_b,q_c,q;m)\\
&=& \frac{\widetilde{\mathbf{M}}_{2\{\frac{m}{2}\}}\left(q_aq_b,q;2\lfloor\frac{m+1}{2}\rfloor\right)}{\widetilde{\mathbf{M}}_{2\{\frac{m}{2}\}}(-q_a,q;\lfloor\frac{m+1}{2}\rfloor)\cdot\widetilde{\mathbf{M}}_{2\lbrace\frac{m}{2}\rbrace}(-q_b,q;\lfloor\frac{m+1}{2}\rfloor)\cdot\widetilde{\mathbf{M}}_{1-2\{\frac{m}{2}\}}(-q_c,q;\lfloor\frac{m+1}{2}\rfloor)\cdot\widetilde{\mathbf{M}}_{2\{\frac{m}{2}\}}(-q_aq_bq_c,q;\lfloor\frac{m+1}{2}\rfloor)}
\een	
 and the explicit formula of Bryan-Young for  $V_{\emptyset\emptyset\emptyset}^{\mathbb{Z}_{2}\times\mathbb{Z}_{2}}(q_0,q_a,q_b,q_c)$ is presented in Theorem \ref{Z2Z2-nolegs}.
\end{theorem}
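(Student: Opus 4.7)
The plan is to deduce the three displayed formulas simultaneously (they differ only by a cyclic permutation of $(q_a, q_b, q_c)$) by combining the connection established in Theorem \ref{Z2Z2-Z4} with two known closed-form evaluations: the Bryan-Cadman-Young formula [\cite{BCY}, Theorem 12] for $V^{\mathbb{Z}_4}_{\emptyset\emptyset\nu}$ with general leg $\nu$, and the Bryan-Young formula recalled in Theorem \ref{Z2Z2-nolegs} for the empty vertex $V^{\mathbb{Z}_2\times\mathbb{Z}_2}_{\emptyset\emptyset\emptyset}$. Once these are substituted in, the theorem reduces to the algebraic identity $\Upsilon = \Phi \cdot V^{\mathbb{Z}_4}_{\emptyset\emptyset\nu} / V^{\mathbb{Z}_2\times\mathbb{Z}_2}_{\emptyset\emptyset\emptyset}$ (with variables permuted as prescribed by Theorem \ref{Z2Z2-Z4}), which I would verify by direct simplification of products of MacMahon-type functions and $q$-Pochhammer symbols.

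First I would specialize [\cite{BCY}, Theorem 12] to $G = \mathbb{Z}_4$, $\lambda = \mu = \emptyset$, and the staircase $\nu = (m, m-1, \ldots, 1)$. The formula expresses $V^{\mathbb{Z}_4}_{\emptyset\emptyset\nu}$ as $V^{\mathbb{Z}_4}_{\emptyset\emptyset\emptyset}$ times a Schur function of $\nu$ evaluated at the geometric specialization dictated by the $\mathbb{Z}_4$-coloring of the leg. For the staircase, the principal specialization collapses $s_\nu$ to a closed product of $(1-q^k)$-factors via the hook-content formula, and I would read off the resulting exponent $\lfloor (m+1)/2 \rfloor$ and the color offset $2\{m/2\}$ -- which are precisely the parameters appearing in $\widetilde{\mathbf{M}}_{0}(\cdot,q;l)$ and $\widetilde{\mathbf{M}}_{1}(\cdot,q;l)$ -- from the parity of $m$. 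This step produces the $q$-Pochhammer tails $(qx^{-1};q)_\infty^l$ and $(x;q)_\infty^l$ that enter the denominator of $\Upsilon$.

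Next I would multiply by $\Phi$ and divide by $V^{\mathbb{Z}_2\times\mathbb{Z}_2}_{\emptyset\emptyset\emptyset}$. The $\widehat{\mathbf{M}}(q_a,q), \widehat{\mathbf{M}}(q_b,q), \widehat{\mathbf{M}}(q_c,q), \widehat{\mathbf{M}}(q_aq_bq_c,q)$ factors in the numerator of $\Phi$ cancel against the corresponding $\widehat{\mathbf{M}}$-factors in the ratio $V^{\mathbb{Z}_4}_{\emptyset\emptyset\emptyset}/V^{\mathbb{Z}_2\times\mathbb{Z}_2}_{\emptyset\emptyset\emptyset}$ -- both vertices being explicit MacMahon-type products -- and the $\widetilde{\mathbf{M}}(q_aq_b,q)$ factor in the numerator of $\Phi$ combines with the remaining $\widetilde{\mathbf{M}}$-terms to produce the factor $\widetilde{\mathbf{M}}_{2\{m/2\}}(q_aq_b,q; 2\lfloor (m+1)/2\rfloor)$ in the numerator of $\Upsilon$. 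The sign-twisted arguments $-q_a, -q_b, -q_c, -q_aq_bq_c$ in the denominator of $\Upsilon$ then emerge via the defining identity $\widehat{\mathbf{M}}(x,q) = (\widetilde{\mathbf{M}}(x,q)\,\widetilde{\mathbf{M}}(-x,q))^{-1}$, which forces the surviving $\widetilde{\mathbf{M}}$'s to be evaluated at $-x$.

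The main obstacle is the book-keeping between the two parity cases $m \equiv 0, 3 \pmod 4$ and $m \equiv 1, 2 \pmod 4$: Theorem \ref{Z2Z2-Z4} uses different variable permutations inside $V^{\mathbb{Z}_4}_{\emptyset\emptyset\nu}$ in each case, yet the resulting $\Upsilon$ must be a single parity-free expression in which the parity enters only through $\lfloor (m+1)/2 \rfloor$ and $\{m/2\}$. I would reconcile the two cases by using the invariance of $V^{\mathbb{Z}_4}_{\emptyset\emptyset\emptyset}$ under cyclic permutation of its variables (so both orderings yield the same $\widehat{\mathbf{M}}$ and $\widetilde{\mathbf{M}}$ baseline), together with the identity $\widetilde{\mathbf{M}}_{1}(qx^{-1},q;l+1) = \widetilde{\mathbf{M}}_{0}(x,q;l) \cdot (x;q)_\infty/(qx^{-1};q)_\infty$ quoted just before the theorem, which absorbs the index shift $l \leftrightarrow l+1$ between the two parities. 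After this reconciliation, both parity cases collapse onto the single claimed formula for $\Upsilon$.
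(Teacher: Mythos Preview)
Your approach is essentially the same as the paper's: combine Theorem \ref{Z2Z2-Z4} with the explicit evaluation of $V^{\mathbb{Z}_4}_{\emptyset\emptyset\nu}$ for the staircase (the paper packages this step into Lemma \ref{Z4-1-leg}) and the Bryan--Young formula in Theorem \ref{Z2Z2-nolegs}, then simplify the resulting product of MacMahon-type functions. One small correction: when $\lambda=\mu=\emptyset$ the BCY formula (Theorem \ref{DT-Z_n}) reduces to $V^{\mathbb{Z}_4}_{\emptyset\emptyset\nu}=V^{\mathbb{Z}_4}_{\emptyset\emptyset\emptyset}\cdot H_\nu\cdot O_\nu$, i.e.\ a hook product times a color-correction factor, not a principally-specialized Schur function $s_\nu$ --- so the closed form you need comes from computing $H_\nu$ and $O_\nu$ for the staircase rather than from the hook-content formula.
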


It is worth mentioning that we also  have the explicit formula for  $Z_{\mathfrak{RP}_{+}(\nu)}=Z_{\mathfrak{RP}_{-}(\nu)}$ relating to $Z_{\mathfrak{P}}$ when $\nu=(m,m-1,\cdots,2,1)$ with $m\in\mathbb{Z}_{\geq1}$, see Corollary \ref{GF-RPC-withleg}. So far we have applied a class of restricted pyramid configurations with symmetric interlacing property to derive the explicit formula for a class of 1-leg  DT  $\mathbb{Z}_{2}\times\mathbb{Z}_{2}$-vertex. Then it remains to consider  1-leg  DT  $\mathbb{Z}_{2}\times\mathbb{Z}_{2}$-vertex through the study of  restricted pyramid configurations with non-symmetric interlacing property. We will investigate this issue in the future, see Section 5 for the brief  discussion.

This paper is organized as follows. In Section 2, we recollect vertex operators and pyramid partitions together with their relevant properties, which are important for computing the DT $\mathbb{Z}_2\times\mathbb{Z}_2$-vertex  and $\mathbb{Z}_n$-vertex. We also recall the known explicit formulas for these two types of orbifold DT vertices. In Section 3, we introduce a notion of restricted pyramid configurations, and study a special type of restricted pyramid configurations which is suitable for computing the 1-leg DT $\mathbb{Z}_2\times\mathbb{Z}_2$-vertex. And we find a unique class of restricted pyramid configurations with the symmetric interlacing property along their diagonal and antidiagonal slices induced by some particular 1-leg partitions mentioned above. In Section 4, 
with this special 1-leg partitions, we establish a connection between 1-leg DT $\mathbb{Z}_2\times\mathbb{Z}_2$-vertex  and 1-leg DT $\mathbb{Z}_4$-vertex to obtain an explicit formula for this class of 1-leg DT $\mathbb{Z}_2\times\mathbb{Z}_2$-vertex. In Section 5, we discuss some further direction for  computing the 1-leg DT $\mathbb{Z}_2\times\mathbb{Z}_2$-vertex with other leg partitions.


\section{Preliminaries}
In this section, we will first recall vertex operators and their properties, which are important in formulating  the orbifold topological vertex. Then we  recollect  pyramid partitions and their relevant results  for the further definition  of restricted pyramid configurations. And we recall the known explicit formulas for the orbifold
DT $\mathbb{Z}_2\times\mathbb{Z}_2$-vertex  and $\mathbb{Z}_n$-vertex used in Section 4.

\subsection{Vertex operators}
We briefly recall the relevant formulas for vertex operators in [\cite{BY}] in this subsection as follows. See also [\cite{Okounkov}, Appendix A] for more details.

Let $\lambda\subset(\mathbb{Z}_{\geq0})^2$ be a partition (or a 2D Young diagram) with its conjugate denoted by $\lambda^\prime$. Let $\lambda=(\lambda_0,\lambda_1,\cdots)$ and $\mu=(\mu_0,\mu_1,\cdots)$ be two partitions. We call $\lambda$ interlaces with $\mu$, denoted by  $\lambda\succ\mu$ or $\mu\prec\lambda$, if the row lengths $\lambda_i$, $\mu_i$ satisfy
\ben
\lambda_0\geq\mu_0\geq\lambda_1\geq\mu_1\geq\cdots.
\een
Let  $\lambda\succ^\prime\mu$ or $\mu\prec^\prime\lambda$ denote the interlacing relation  $\lambda^\prime\succ\mu^\prime$  if the column lengths $\lambda_i^\prime$, $\mu_i^\prime$ satisfy
\ben
\lambda_0^\prime\geq\mu_0^\prime\geq\lambda_1^\prime\geq\mu_1^\prime\geq\cdots.
\een

Any 3D Young diagram (or plane partition) has the following important property.
\begin{lemma}([\cite{BY}, Definition 6])\label{interlacing}
	The following are equivalent:\\
	$(i)$ $\lambda\succ\mu$;\\
	$(ii)$ $\lambda_i^\prime-\mu_i^\prime=0\mbox{ or }1$, for any $i\geq1$\\
	$(iii)$ $\lambda$ and $\mu$ are two adjacent diagonal slices of some 3D Young diagram.
\end{lemma}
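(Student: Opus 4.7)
The plan is to establish the two equivalences (i) $\Leftrightarrow$ (ii) and (ii) $\Leftrightarrow$ (iii) separately, since each is a classical combinatorial fact about Young diagrams and plane partitions, essentially packaged in the cited [BY, Definition 6].

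For (i) $\Leftrightarrow$ (ii), I would argue that both conditions encode the statement that the skew shape $\lambda/\mu$ is a horizontal strip. In the forward direction, starting from $\lambda_0 \geq \mu_0 \geq \lambda_1 \geq \mu_1 \geq \cdots$ and using $\lambda_i' = \#\{k\geq 0 : \lambda_k \geq i\}$, I would fix a column index $i\geq 1$ and exploit the sandwich $\mu_{k-1}\geq \lambda_k \geq \mu_k$ to conclude that the number of rows of $\mu$ of length at least $i$ differs from the corresponding count for $\lambda$ by at most one, giving $\mu_i' \in \{\lambda_i',\,\lambda_i'-1\}$. For the converse, the assumption $\lambda_i' - \mu_i' \in \{0,1\}$ for every $i\geq 1$ says that each column of $\lambda/\mu$ contains at most one box, which rewrites as the chain $\lambda_{k+1}\leq \mu_k \leq \lambda_k$ for all $k\geq 0$, that is, (i). This step is a short and routine manipulation.

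For (ii) $\Leftrightarrow$ (iii), I would invoke the standard Okounkov--Reshetikhin parametrization of a 3D Young diagram by its diagonal slices. Viewing a plane partition as a function $\pi\colon \mathbb{Z}_{\geq 0}^2\to \mathbb{Z}_{\geq 0}$ weakly decreasing in both arguments, two adjacent diagonal slices can be read off as partitions whose column lengths $\lambda_i'$ and $\mu_i'$ record stack heights along consecutive lattice lines in the same horizontal level set of $\pi$. The monotonicity of $\pi$ in both coordinates then translates directly into $\lambda_i' - \mu_i' \in \{0,1\}$. Conversely, given any pair of partitions satisfying (ii), I would stack the two diagrams as adjacent diagonal slices of a very thin plane partition, which is the bijective content of the correspondence.

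The only genuine point of care is keeping the diagonal-slice conventions consistent (the shift by one between the index sets of $\lambda$ and $\mu$, and whether one reads diagonal or antidiagonal slices), but both implications are otherwise direct translations between standard descriptions of horizontal strips. I therefore expect no substantive obstacle, which is consistent with the lemma being quoted verbatim from [BY].
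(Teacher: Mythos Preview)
Your proposal is correct and follows the standard route: (i)$\Leftrightarrow$(ii) is the horizontal-strip characterization of interlacing, and (ii)$\Leftrightarrow$(iii) is the Okounkov--Reshetikhin diagonal-slice description of plane partitions. The paper itself supplies no proof for this lemma---it is simply quoted from [\cite{BY}, Definition 6] as a known fact---so there is nothing to compare against beyond noting that your argument is the expected one.
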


Let $\mathcal{Y}$ be the set of all partitions. First, we have the  $\Gamma_{\pm}$-operators acting on elements of $\mathcal{Y}$.

\begin{definition}([\cite{BY}, Section 3])
Let $x$ be any variable. Then 
\ben
&&\Gamma_{+}(x)\lambda=\sum_{\mu\prec\lambda}x^{|\lambda|-|\mu|}\mu;\;\;\;\;\; \Gamma_{-}(x)\lambda=\sum_{\lambda\prec\mu}x^{|\mu|-|\lambda|}\mu;\\
&&\Gamma_{+}^\prime(x)\lambda=\sum_{\mu\prec^\prime\lambda}x^{|\lambda|-|\mu|}\mu;\;\;\;\;\;   \Gamma_{-}^\prime(x)\lambda=\sum_{\lambda\prec^\prime\mu}x^{|\mu|-|\lambda|}\mu.
\een
\end{definition}
They satisfy  the following commutation relations.
\begin{lemma}([\cite{BY}, Lemma 3.3])
Let $x$ and $y$ be two commuting variables, then we have 
\ben
&&\left[\Gamma_{+}(x),\Gamma^\prime_{-}(y)\right]=1+xy,\;\;\;\;\; \left[\Gamma^\prime_{+}(x),\Gamma_{-}(y)\right]=1+xy;\\
&&\left[\Gamma_{+}(x),\Gamma_{-}(y)\right]=\frac{1}{1-xy},\;\;\;\;\; \left[\Gamma^\prime_{+}(x),\Gamma^\prime_{-}(y)\right]=\frac{1}{1-xy}.
\een
\end{lemma}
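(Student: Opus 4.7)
The plan is to read each commutator bracket as an operator ratio: $[A(x), B(y)] = c(x,y)$ stands for $A(x)B(y) = c(x,y)\, B(y)A(x)$. I would verify each of the four identities by computing the matrix coefficient of an arbitrary target partition $\kappa$ in the action of both sides on an arbitrary partition $\lambda$. Unwinding the definitions,
\[ \big\langle \kappa \,\big|\, \Gamma_+(x)\Gamma_-(y) \,\big|\, \lambda \big\rangle = \sum_{\mu :\, \lambda \prec \mu,\; \kappa \prec \mu} x^{|\mu|-|\kappa|} y^{|\mu|-|\lambda|}, \]
\[ \big\langle \kappa \,\big|\, \Gamma_-(y)\Gamma_+(x) \,\big|\, \lambda \big\rangle = \sum_{\nu :\, \nu \prec \lambda,\; \nu \prec \kappa} x^{|\lambda|-|\nu|} y^{|\kappa|-|\nu|}, \]
and analogous sums arise for the primed and mixed variants. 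Each identity then reduces to a purely combinatorial equality of generating functions indexed by partitions sandwiched above or below the pair $(\lambda, \kappa)$.

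For $[\Gamma_+(x), \Gamma_-(y)] = \frac{1}{1-xy}$, the key input is Lemma \ref{interlacing}(ii): both interlacing conditions force $\mu_i' - \lambda_i' \in \{0, 1\}$ and $\mu_i' - \kappa_i' \in \{0, 1\}$, so column by column $\mu_i'$ is pinned down by $(\lambda_i', \kappa_i')$ except where $\lambda_i' = \kappa_i'$, in which case there is one binary choice. Enumerating partition-valid $\mu$ decomposes the sum into a \emph{forced} factor (depending only on the common shape of $\lambda$ and $\kappa$) times a geometric series $\sum_{n\geq 0}(xy)^n = \frac{1}{1-xy}$ coming from the freedom in extending $\mu$ beyond the support of $\lambda$ and $\kappa$. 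The same column-wise bookkeeping evaluates the $\nu$-sum, which produces only the forced factor since $\nu$ is bounded above by both partitions. Comparing the two expressions yields precisely the scalar $\frac{1}{1-xy}$. As a sanity check, with $\lambda = \kappa = \emptyset$ the LHS sum runs over single-row $\mu = (n)$ and gives $\sum_{n \geq 0}(xy)^n$, while the RHS sum collapses to $\nu = \emptyset$ and gives $1$, matching the claimed ratio.

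The primed identity $[\Gamma'_+(x), \Gamma'_-(y)] = \frac{1}{1-xy}$ follows immediately by conjugating every partition in sight, since $\prec'$ is by definition the image of $\prec$ under conjugation. For the mixed commutators $[\Gamma_+(x), \Gamma'_-(y)] = 1+xy$ and $[\Gamma'_+(x), \Gamma_-(y)] = 1+xy$, the same setup applies, but the two interlacing constraints on $\mu$ are of \emph{opposite} types: one restricts column offsets to $\{0,1\}$ while the other restricts row offsets to $\{0,1\}$. These two conditions together allow $\mu$ to differ from the combined ceiling of $\lambda$ and $\kappa$ by at most one box, so the $\mu$-sum terminates after exactly the two contributions $1$ and $xy$, yielding the finite factor $1+xy$ rather than a geometric series.

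The main obstacle will be making the column-by-column reduction rigorous while respecting the global monotonicity $\mu_i' \geq \mu_{i+1}'$, and in particular verifying that the forced contribution and the geometric tail genuinely decouple. A slicker alternative would be to identify $\Gamma_\pm(x) = \exp\!\big(\sum_{k \geq 1}\tfrac{x^k}{k}\alpha_{\pm k}\big)$ and $\Gamma'_\pm(x) = \exp\!\big(\sum_{k\geq 1}\tfrac{(-1)^{k-1}x^k}{k}\alpha_{\pm k}\big)$ in the bosonic Fock space and apply Baker--Campbell--Hausdorff to the Heisenberg relation $[\alpha_k, \alpha_{-k}] = k$, which immediately yields $-\log(1-xy)$ and $\log(1+xy)$ in the two regimes; but this shortcut requires first establishing the equivalence of the combinatorial and Fock-space descriptions, which is itself nontrivial and therefore not evidently shorter overall.
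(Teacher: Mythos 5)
The paper offers no proof of this lemma at all: it is quoted verbatim from Bryan--Young ([\cite{BY}, Lemma 3.3]), so there is no in-paper argument to compare against. Judged on its own terms, your reduction to matrix coefficients is the right framework, and your treatment of $[\Gamma_{+}(x),\Gamma_{-}(y)]$ (and of its conjugate, by transposing every partition) is sound: the two sums are the two sides of the single-variable skew Cauchy identity, and the column-by-column analysis does decouple rigorously, because the binary choices $\mu'_i\in\{\lambda'_i,\lambda'_i+1\}$ interact only inside maximal blocks of equal column heights of $\lambda'=\kappa'$ (where monotonicity forces a prefix of added boxes), with the infinite empty tail supplying exactly the geometric series $\sum_{n\geq0}(xy)^n$ that survives in the ratio.

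The mixed case is where your argument breaks. It is not true that the two constraints pin $\mu$ to within one box of the ``combined ceiling'' of $\lambda$ and $\kappa$, nor that the $\mu$-sum has only the two terms $1$ and $xy$. Take $\lambda=\kappa=(1)$: the partitions $\mu$ for which $\mu/\lambda$ is a vertical strip and $\mu/\kappa$ is a horizontal strip are $(1),(2),(1,1),(2,1)$, so $\langle\kappa|\Gamma_{+}(x)\Gamma'_{-}(y)|\lambda\rangle=1+2xy+(xy)^2=(1+xy)^2$ --- four terms, with $\mu=(2,1)$ exceeding the ceiling $(1)$ by two boxes (one box per row and one per column is compatible with arbitrarily many boxes arranged on a staircase). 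The identity survives only because the $\nu$-sum is simultaneously $1+xy$ rather than $1$; what must actually be proved is the ratio statement, i.e.\ the one-variable dual skew Cauchy identity $\sum_\mu s_{\mu/\lambda}(y)\,s_{\mu'/\kappa'}(x)=(1+xy)\sum_\nu s_{\lambda'/\nu'}(y)\,s_{\kappa/\nu}(x)$, and that requires either a box-adding/box-removing involution pairing the surplus terms on the two sides, or the Fock-space route you mention at the end: with $\Gamma'_{\pm}(x)=\exp\bigl(\sum_{k\geq1}(-1)^{k-1}x^k\alpha_{\pm k}/k\bigr)$ and $[\alpha_k,\alpha_{-k}]=k$, the central commutator is $\sum_{k\geq1}(-1)^{k-1}(xy)^k/k=\log(1+xy)$. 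As written, the step ``the sum terminates after exactly the two contributions $1$ and $xy$'' is false, and the two mixed identities are not established by your argument.
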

Next, there are two type of  weight operators acting on elments of $\mathcal{Y}$.
\begin{definition}([\cite{BY}, Definitions 3.4 and 7.1])
	For $g\in\mathbb{Z}_n$ and  $h, h_1,h_2\in\mathbb{Z}_2\times\mathbb{Z}_2$, define
	\ben
	&&Q_{g}\lambda=\tilde{q}_{g}^{|\lambda|}\lambda,\;\;\;\;\;\; Q_{h}\lambda=q_{h}^{|\lambda|}\lambda,\\
	&&Q_{h_1h_2}\lambda=q_{h_1}^{|\{(i,j)\in\lambda\,|\,i\equiv j(\mathrm{mod}\; 2)\}|}\cdot q_{h_2}^{|\{(i,j)\in\lambda\,|\,i\not\equiv j(\mathrm{mod}\; 2)\}|}\cdot\lambda,
	\een
	where $q_h, q_{h_1},q_{h_2}$  and $\tilde{q}_{g}$ are variables corresponding to colors $h, h_1,h_2\in\mathbb{Z}_2\times\mathbb{Z}_2$ and  $g\in\mathbb{Z}_n$.
\end{definition}

Let $\alpha_{-n}$ ($n>0$) be the operator acting on a 2D Young diagram $\eta$ by adding any possible border strip of length $n$ onto  $\eta$ with  sign $(-1)^{\widehat{h}+1}$. Here, $\widehat{h}$ is the height of the border strip. Similarly the operator $\alpha_{n}$ ($n>0$) is defined  by deleting border strips with signs, which is adjoint to $\alpha_{-n}$. See [\cite{BY}, Section 2] for more details. Now we have the following two new vertex operators.
\begin{definition}([\cite{BY}, Definition 7.4])
Let $x$ be any variable.
Define
\ben
E_{\pm}(x)=\exp\left(\sum_{k\geq1}\frac{x^{2k}}{k}\alpha_{\pm2k}\right).
\een
\end{definition}
It follows easily from the definition  of $\alpha_{\pm n}$ that
\bea
&&\label{E-left}\bigg\langle\emptyset\,\bigg|E_{-}(x)=\bigg\langle\emptyset\,\bigg|,\\
&&\label{E+right}E_{+}(x)\bigg|\,\emptyset\bigg\rangle=\bigg|\,\emptyset\bigg\rangle.
\eea
And we  have the following important properties for $E_{\pm}$-operators.
\begin{lemma}([\cite{BY}, Lemma 7.5]) \label{vertex-exchange1}
Let $x$ and $y$ be two commuting variables, then we have 
\ben
&&\Gamma_{\pm}(x)=\Gamma_{\pm}^\prime(x)E_{\pm}(x),\\
&&\left[E_{\pm}(x),\Gamma_{\pm}(y)\right]=0,\\
&&\left[E_{\pm}(x),\Gamma_{\pm}^\prime(y)\right]=0,\\
&&E_{+}(x)\Gamma_{-}(y)=\frac{1}{1-(xy)^2}\Gamma_{-}(y)E_{+}(x),\\
&&\Gamma_{+}(x)E_{-}(y)=\frac{1}{1-(xy)^2}E_{-}(y)\Gamma_{+}(x),\\
&&\Gamma^\prime_{+}(x)E_{-}(y)=(1-(xy)^2)E_{-}(y)\Gamma^\prime_{+}(x).
\een
\end{lemma}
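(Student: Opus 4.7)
The plan is to reduce every identity to the Heisenberg commutation relation $[\alpha_m,\alpha_{-n}]=n\delta_{m,n}$ by invoking the standard bosonic exponential presentation
\ben
&&\Gamma_{\pm}(x)=\exp\!\left(\suml_{n\geq1}\frac{x^n}{n}\alpha_{\pm n}\right),\\
&&\Gamma_{\pm}^\prime(x)=\exp\!\left(\suml_{n\geq1}\frac{(-1)^{n-1}x^n}{n}\alpha_{\pm n}\right).
\een
This identification, which translates the combinatorial action on diagonal and antidiagonal slices of partitions into operators on the bosonic Fock space, is the technical backbone; once in place, every statement in the lemma reduces to a formal manipulation of Heisenberg exponentials of the type carried out in [\cite{Okounkov}, Appendix A].

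Granting that, the decomposition $\Gamma_{\pm}(x)=\Gamma_{\pm}^\prime(x)E_{\pm}(x)$ is immediate by comparing exponents: since all $\alpha_{\pm n}$ of a single sign mutually commute, the product of exponentials equals the exponential of the sum, and the verification splits into the case $n$ odd (trivially equal) and $n=2k$ (where $\frac{(-1)^{2k-1}x^{2k}}{2k}+\frac{x^{2k}}{k}=\frac{x^{2k}}{2k}$). The commutativity assertions $\left[E_{\pm}(x),\Gamma_{\pm}(y)\right]=0=\left[E_{\pm}(x),\Gamma_{\pm}^\prime(y)\right]$ are then automatic, because the four operators involved are exponentials in the abelian subalgebra generated by the $\alpha_{\pm n}$ for a single choice of sign.

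For the three mixed exchange identities, I would apply the scalar-commutator BCH rule $e^Ae^B=e^{[A,B]}e^Be^A$, valid whenever $[A,B]$ is central (as will be the case here since the commutator evaluates to a scalar). The computation uses only $[\alpha_{2k},\alpha_{-2k}]=2k$ together with the series $-\log(1-z)=\suml_{k\geq1}\frac{z^k}{k}$. For instance,
\ben
\left[\suml_{k\geq1}\frac{x^{2k}}{k}\alpha_{2k},\suml_{n\geq1}\frac{y^n}{n}\alpha_{-n}\right]=\suml_{k\geq1}\frac{(xy)^{2k}}{k}=-\log\!\left(1-(xy)^2\right),
\een
which exponentiates to $\frac{1}{1-(xy)^2}$ and yields the $E_{+}(x)\Gamma_{-}(y)$ identity. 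The case $\Gamma_{+}(x)E_{-}(y)$ is completely symmetric, while for $\Gamma_{+}^\prime(x)E_{-}(y)$ the alternating sign $(-1)^{n-1}$ in the exponent of $\Gamma_{+}^\prime$ contributes $(-1)^{2k-1}=-1$ at each even index, flipping the sign of the commutator and producing the factor $(1-(xy)^2)$ rather than its reciprocal.

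The main obstacle, if there is one, is the initial bosonization step: justifying the exponential presentations of $\Gamma_{\pm}$ and $\Gamma_{\pm}^\prime$ directly from their combinatorial definitions as sums over interlacing partitions with weight $x^{|\lambda|-|\mu|}$. Once this bridge between the combinatorial and algebraic pictures is established, all six identities are purely formal Heisenberg manipulations and no further partition-theoretic input is required.
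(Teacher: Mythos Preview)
Your proposal is correct and follows the standard approach: the paper does not give its own proof of this lemma but simply cites [\cite{BY}, Lemma 7.5], where the argument is exactly the Heisenberg/bosonization computation you outline. The exponential presentations of $\Gamma_{\pm}$, $\Gamma_{\pm}^\prime$ you invoke are precisely those used in [\cite{BY}] (building on [\cite{Okounkov}, Appendix A]), so your bridge step is already granted by the references.
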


\begin{lemma}([\cite{BY}, Lemma 7.6]) \label{vertex-exchange2}
Let $g,h\in\mathbb{Z}_2\times\mathbb{Z}_2$ and $x$ be any variable. Then we have 
\ben
&&E_{-}(x\sqrt{q_gq_h})Q_{gh}=Q_{gh}E_{-}(x);\\
&&Q_{gh}E_{+}(x\sqrt{q_gq_h})=E_{+}(x)Q_{gh}.
\een
\end{lemma}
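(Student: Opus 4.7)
The plan is to deduce this explicit formula as a direct consequence of the preceding $\mathbb{Z}_2\times\mathbb{Z}_2$-vs-$\mathbb{Z}_4$ comparison stated in Theorem~\ref{Z2Z2-Z4}, combined with the Bryan--Cadman--Young closed form for the 1-leg $\mathbb{Z}_4$-vertex in [\cite{BCY}, Theorem 12] and the Bryan--Young closed form for the no-leg $\mathbb{Z}_2\times\mathbb{Z}_2$-vertex recorded in Theorem~\ref{Z2Z2-nolegs}. In essence, $\Upsilon$ is what $V^{\mathbb{Z}_4}_{\emptyset\emptyset\nu}/V^{\mathbb{Z}_4}_{\emptyset\emptyset\emptyset}$ contributes through $\Phi$ once the background is stripped off.

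First I would apply Theorem~\ref{Z2Z2-Z4} in its two parity regimes, $m\equiv 0,3\pmod 4$ and $m\equiv 1,2\pmod 4$, to express each $V^{\mathbb{Z}_2\times\mathbb{Z}_2}_{\bullet\bullet\nu}$ as a suitably variable-permuted $V^{\mathbb{Z}_4}_{\emptyset\emptyset\nu}$ multiplied by $\Phi(q_a,q_b,q_c,q;m)$. The parity-dependent permutation of $(q_0,q_a,q_b,q_c)$ inside $V^{\mathbb{Z}_4}$ must reconcile with the parity-independent right-hand side of the claimed identity, and this is precisely where the staircase shape of $\nu$ will enter: its self-conjugacy causes the two parity cases to collapse onto the same final answer once the BCY Schur factor is inserted.

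Second I would substitute the BCY explicit formula into $V^{\mathbb{Z}_4}_{\emptyset\emptyset\nu}$. That formula factors as $V^{\mathbb{Z}_4}_{\emptyset\emptyset\emptyset}$ times a Schur function $s_\nu$ evaluated at a principally specialized alphabet indexed by the 1-leg partition. For the staircase $\nu=(m,m-1,\ldots,1)$ this Schur evaluation collapses into a finite product of $q$-Pochhammer factors via the principal-specialization / hook-content formula, and these finite products are exactly the pieces needed to promote the unshifted $\mathbf{M}(x,q)$ appearing inside $\Phi$ into the shifted MacMahon functions $\widetilde{\mathbf{M}}_i(x,q;l)$ with $l=\lfloor(m+1)/2\rfloor$. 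Third, I would eliminate $V^{\mathbb{Z}_4}_{\emptyset\emptyset\emptyset}$ by dividing through by the $\nu=\emptyset$ specialization of the whole identity: the BCY formula at $\nu=\emptyset$ expresses $V^{\mathbb{Z}_4}_{\emptyset\emptyset\emptyset}$ as a compact product of $\mathbf{M}$, $\widetilde{\mathbf{M}}$ and $\widehat{\mathbf{M}}$ functions, while Theorem~\ref{Z2Z2-nolegs} gives the analogous Bryan--Young expression for $V^{\mathbb{Z}_2\times\mathbb{Z}_2}_{\emptyset\emptyset\emptyset}$; the ratio identifies which $\widehat{\mathbf{M}}$ and $\widetilde{\mathbf{M}}$ pieces of $\Phi$ are pure background and cancel in the leg-dependent quotient $V^{\mathbb{Z}_2\times\mathbb{Z}_2}_{\emptyset\emptyset\nu}/V^{\mathbb{Z}_2\times\mathbb{Z}_2}_{\emptyset\emptyset\emptyset}$.

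Finally, after these cancellations the surviving factors can be rearranged into $\Upsilon$ by invoking the inversion identity
\[
\widetilde{\mathbf{M}}_{1}(qx^{-1},q;l+1)=\widetilde{\mathbf{M}}_{0}(x,q;l)\cdot\frac{(x;q)_\infty}{(qx^{-1};q)_\infty}
\]
recorded in the excerpt, which lets one convert each $\widetilde{\mathbf{M}}_{0}$ into a $\widetilde{\mathbf{M}}_{1}$ at a $-x$ argument (and vice versa) to match the sign pattern on the denominator of $\Upsilon$. The hard part will be the bookkeeping in the third step: one must check that the parity-dependent permutations of $(q_0,q_a,q_b,q_c)$ supplied by Theorem~\ref{Z2Z2-Z4} interact with the BCY Schur specialization so that the final expression depends on $m$ only through $\lfloor(m+1)/2\rfloor$ and $\{m/2\}$, with the indices $2\{m/2\}$ and $1-2\{m/2\}$ in $\Upsilon$ appearing from the correct pieces of the $\widehat{\mathbf{M}}_j$'s in $\Phi$. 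Tracking the signs entering $\widetilde{\mathbf{M}}_{2\{m/2\}}(-x,q;l)$ from the $(-x)$-arguments in $\widehat{\mathbf{M}}$, $\widehat{\mathbf{M}}_j$ and from the principal-specialization signs in $s_\nu$, and confirming that the two residues $m\bmod 4$ do indeed merge, is the only substantive (if still mostly mechanical) piece of work.
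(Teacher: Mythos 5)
Your proposal does not address the statement you were asked to prove. The statement is Lemma \ref{vertex-exchange2}, the commutation relations
$E_{-}(x\sqrt{q_gq_h})Q_{gh}=Q_{gh}E_{-}(x)$ and $Q_{gh}E_{+}(x\sqrt{q_gq_h})=E_{+}(x)Q_{gh}$
between the border-strip exponentials $E_{\pm}$ and the checkerboard weight operators $Q_{gh}$. What you have written is instead a strategy for Theorem \ref{Z2Z2-1-leg} (the explicit formula involving $\Upsilon$), built on Theorem \ref{Z2Z2-Z4}, the Bryan--Cadman--Young formula, and Theorem \ref{Z2Z2-nolegs}. None of that machinery is relevant here; worse, the chain of results you invoke ultimately rests on Lemmas \ref{vertex-exchange1}--\ref{vertex-exchange2} (they are what allow the $E_{\pm}$-operators to be commuted past the $Q$-operators in Sections 4.2--4.4), so using it to establish this lemma would be circular.

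The lemma itself is elementary and the paper simply cites it from [\cite{BY}, Lemma 7.6]. The argument you should give is: $E_{\pm}(x)=\exp\bigl(\sum_{k\geq1}\tfrac{x^{2k}}{k}\alpha_{\pm2k}\bigr)$, and $\alpha_{\pm2k}$ adds or deletes a border strip of even length $2k$. Any such strip occupies exactly $k$ boxes $(i,j)$ with $i\equiv j\ (\mathrm{mod}\ 2)$ and $k$ boxes with $i\not\equiv j\ (\mathrm{mod}\ 2)$, so by the definition of $Q_{gh}$ one gets $Q_{gh}\,\alpha_{-2k}=(q_gq_h)^{k}\,\alpha_{-2k}\,Q_{gh}$ and $\alpha_{2k}\,Q_{gh}=(q_gq_h)^{k}\,Q_{gh}\,\alpha_{2k}$. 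Exponentiating, the factor $(q_gq_h)^{k}$ multiplying the $k$-th term is exactly absorbed by replacing $x$ with $x\sqrt{q_gq_h}$, which yields both identities. You need to supply this argument (or an explicit citation of it); the current proposal proves nothing about the stated lemma.
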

By the similar argument in the proof of [\cite{BY}, Lemma 7.6], we have
\begin{lemma}\label{vertex-exchange3}
	Let $g\in\mathbb{Z}_n$ and  $x$ be any variable. Then we have 
	\ben
	&&E_{-}(x\tilde{q}_g)Q_{g}=Q_{g}E_{-}(x);\\
	&&Q_{g}E_{+}(x\tilde{q}_g)=E_{+}(x)Q_{g}.
	\een
\end{lemma}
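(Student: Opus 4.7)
The plan is to imitate the proof of [BY, Lemma 7.6] (which is Lemma \ref{vertex-exchange2} above), but with the $\mathbb{Z}_2\times\mathbb{Z}_2$ weight operator $Q_{gh}$ replaced by the $\mathbb{Z}_n$ weight operator $Q_g$. Since $Q_g$ acts simply as the scalar $\tilde{q}_g^{|\lambda|}$ on each partition $\lambda$, the argument boils down to a weighted commutation relation between $Q_g$ and the basic operators $\alpha_{\pm 2k}$ appearing in the exponent of $E_{\pm}$.

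First I would establish the elementary relation between $Q_g$ and $\alpha_{\pm 2k}$. Recall that $\alpha_{-2k}\lambda$ is a signed sum of partitions $\mu$ obtained by adding a border strip of length $2k$ to $\lambda$, so every such $\mu$ satisfies $|\mu|=|\lambda|+2k$; dually $\alpha_{2k}\lambda$ is a signed sum of $\mu$ with $|\mu|=|\lambda|-2k$. Comparing how $Q_g$ acts on both sides gives
\begin{equation*}
Q_g\,\alpha_{\pm 2k}=\tilde{q}_g^{\pm 2k}\,\alpha_{\pm 2k}\,Q_g,\qquad\text{equivalently}\qquad Q_g^{-1}\alpha_{\pm 2k}Q_g=\tilde{q}_g^{\mp 2k}\alpha_{\pm 2k}.
\end{equation*}

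Next I would plug this into the exponential definition $E_{\pm}(x)=\exp\bigl(\sum_{k\geq 1}\tfrac{x^{2k}}{k}\alpha_{\pm 2k}\bigr)$. Conjugation by $Q_g$ commutes with the formal power series expansion term-by-term, so
\begin{equation*}
Q_g^{-1}E_{-}(x\tilde{q}_g)Q_g=\exp\!\Bigl(\sum_{k\geq 1}\tfrac{(x\tilde{q}_g)^{2k}}{k}\,\tilde{q}_g^{-2k}\alpha_{-2k}\Bigr)=E_{-}(x),
\end{equation*}
which is the first claimed identity $E_{-}(x\tilde{q}_g)Q_g=Q_gE_{-}(x)$. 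The second identity is verified by the mirror computation
\begin{equation*}
Q_g^{-1}E_{+}(x)Q_g=\exp\!\Bigl(\sum_{k\geq 1}\tfrac{x^{2k}}{k}\,\tilde{q}_g^{2k}\alpha_{2k}\Bigr)=E_{+}(x\tilde{q}_g),
\end{equation*}
which rearranges to $Q_gE_{+}(x\tilde{q}_g)=E_{+}(x)Q_g$.

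There is no real obstacle here: the proof is a routine adaptation of the $\mathbb{Z}_2\times\mathbb{Z}_2$ case, and the only thing to check carefully is that the scalar weight $\tilde{q}_g^{|\lambda|}$ transforms correctly under the border-strip operators $\alpha_{\pm 2k}$. Once the conjugation relation $Q_g^{-1}\alpha_{\pm 2k}Q_g=\tilde{q}_g^{\mp 2k}\alpha_{\pm 2k}$ is in hand, the shift of argument from $x$ to $x\tilde{q}_g$ in $E_{\pm}$ exactly absorbs the twist, and both identities follow simultaneously.
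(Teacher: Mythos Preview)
Your approach is exactly the one the paper has in mind: it simply says ``by the similar argument in the proof of [BY, Lemma 7.6]'', and what you wrote is that argument spelled out with $Q_{gh}$ replaced by $Q_g$. The two displayed computations for $E_\pm$ are correct and yield the lemma.

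There is, however, a sign slip in your stated commutation relation. Since $\alpha_{-2k}$ raises $|\lambda|$ by $2k$ and $\alpha_{2k}$ lowers it by $2k$, one gets
\[
Q_g\,\alpha_{\pm 2k}=\tilde{q}_g^{\mp 2k}\,\alpha_{\pm 2k}\,Q_g,\qquad\text{equivalently}\qquad Q_g^{-1}\alpha_{\pm 2k}Q_g=\tilde{q}_g^{\pm 2k}\alpha_{\pm 2k},
\]
with the opposite signs from what you displayed. Your subsequent computations actually use the correct relations (you insert $\tilde q_g^{-2k}$ for $\alpha_{-2k}$ and $\tilde q_g^{2k}$ for $\alpha_{2k}$), so the conclusions are right; just fix the intermediate display so it agrees with what you use.
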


\subsection{Pyramid partitions}

We will follow $[\cite{BY}]$ to give the definition of pyramid partitions and recollect some known results.
Let $\mathbb{Z}_2\times\mathbb{Z}_2=\{0,a,b,c\}$. Let $P$ be a quiver with  vertices  $\{0,a,b,c\}$ and edges labelled by $\{v_1,w_1,v_2,w_2\}$ with directions shown as follows:

\begin{displaymath}
\xymatrix@=2cm{
	*+[F]{0} \ar@<.5ex>[r]^{v_1} \ar@<.5ex>[d]^{v_2} & *+[F]{a} \ar@<.5ex>[l]^{w_1} \ar@<.5ex>[d]^{w_2} \\
	*+[F]{b} \ar@<.5ex>[r]^{w_1} \ar@<.5ex>[u]^{w_2} & *+[F]{c} \ar@<.5ex>[l]^{v_1} \ar@<.5ex>[u]^{v_2}	
}
\end{displaymath}

\begin{definition}([\cite{BY}, Section 5])\label{pyramidpartition}
$(i)$ A word in $P$ is defined to be the concatenation of the  edge labels of some directed path in $P$, with the starting vertex of this path as the base. \\
$(ii)$ If $B_1, B_2$ are two words in $P$, define the multiplication  $B_1B_2$ by concatenating $B_1$ and $B_2$ if the result is  a well-defined word in $P$, otherwise $B_1B_2$ is defined to be zero. Then the set $\mathbb{C}P$ spanned by all words in $P$ forms a path algebra.\\
$(iii)$ Define the noncommutative quotient ring $A=\mathbb{C}P/\mathcal{I}$ where
\ben
\mathcal{I}=\langle \upsilon_1\omega_i\upsilon_2-\upsilon_2\omega_i\upsilon_1, \omega_1\upsilon_j\omega_2-\omega_2\upsilon_j\omega_1\rangle,\;\;\;\;\; i,j\in\{1,2\}.
\een
A brick is an element $[B]$ of $\mathbb{C}P/\mathcal{I}$, where $B$ is a word in $\mathbb{C}P$ based at the vertex $0$. Denote by $\mathfrak{B}$ the set of all bricks.\\
(iv) A pyramid partition $\pi$ is a finite subset of $\mathfrak{B}$ such that if $[B]\in\pi$, then every prefix of $B$ also represents a brick in $\pi$. Denote by $\mathfrak{P}$ the set of all pyramid partitions.  
\end{definition}

In [\cite{BY}, Section 5], the edge labels are interpreted as  vectors in $\mathbb{Z}^3$ by setting 
\ben
\upsilon_1=(-1,0,1),\;\;\upsilon_2=(1,0,1),\;\;\omega_1=(0,-1,1),\;\;\omega_2=(0,1,1).
\een
Then the position of a brick $[B]$ in $\mathbb{Z}^3$ is defined to be the sum of vectors corresponding to the edge labels in $[B]$. Here, the position of the brick corresponding to the empty word $[\,]$ is defined as  $(0,0,0)$. One can refer to [\cite{BY}, Figure 4] for some visualized bricks and pyramid partitions.

The color of any brick is given by
\begin{definition}([\cite{BY}, Definition 5.5])\label{color-brick-RPC}
The coloring on $\mathfrak{B}$  is define as the map 
\ben
K_{\mathfrak{P}}:\mathfrak{B}\to \mathbb{Z}_2\times\mathbb{Z}_2
\een
by setting $K_{\mathfrak{P}}([B])$ to be the final vertex of any path whose word is $B$. We call $K_{\mathfrak{P}}([B])$ the color of the brick $[B]$. 
\end{definition}
The bricks have the following relations.
\begin{lemma}\label{brick-relation}
Let $B_1$ and $B_2$ be two arbitrary words in $P$, then we have 
\ben
&&[B_1(\upsilon_1\omega_i\upsilon_2)B_2]=[B_1(\upsilon_2\omega_i\upsilon_1)B_2], \\
&& [B_1(\omega_1\upsilon_j\omega_2)B_2]=[B_1(\omega_2\upsilon_j\omega_1)B_2],\\
&& [B_1(\upsilon_i\omega_j)^s(\upsilon_k\omega_l)^tB_2]=[B_1(\upsilon_k\omega_l)^t(\upsilon_i\omega_j)^sB_2],\\
&&[B_1(\upsilon_i\omega_j)^s(\upsilon_j\omega_i)^sB_2]=[B_1(\upsilon_i\omega_i)^s(\upsilon_j\omega_j)^sB_2], \\
&&[B_1\upsilon_n(\omega_i\upsilon_j)^s(\omega_k\upsilon_l)^t)B_2]=[B_1(\upsilon_j\omega_i)^s(\upsilon_l\omega_k)^t\upsilon_nB_2],
\een
where $1\leq i,j,k,l,n\leq2$ and  $s,t\geq0$.
\end{lemma}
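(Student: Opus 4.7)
The plan is to derive all five identities from the two defining relations $\upsilon_1\omega_i\upsilon_2 = \upsilon_2\omega_i\upsilon_1$ and $\omega_1\upsilon_j\omega_2 = \omega_2\upsilon_j\omega_1$ of the ideal $\mathcal{I}$. Since $\mathcal{I}$ is two-sided, any local identity modulo $\mathcal{I}$ extends to one with arbitrary $B_1$ and $B_2$, and all intermediate words automatically remain valid paths because the two generators preserve source and target vertices. Identities (1) and (2) are nothing but the defining relations themselves.

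For identity (3), I would first establish the base case $s = t = 1$. Starting from $\upsilon_i\omega_j\upsilon_k\omega_l$, one application of the first generator to the subword $\upsilon_i\omega_j\upsilon_k$ (nontrivial only when $\{i,k\} = \{1,2\}$) swaps $\upsilon_i$ and $\upsilon_k$, after which one application of the second generator to the resulting middle subword of the form $\omega_j\upsilon_\bullet\omega_l$ swaps $\omega_j$ and $\omega_l$. The cases $i = k$ or $j = l$ are simpler or trivial. The statement for arbitrary $s, t$ then follows by induction on $t$, peeling one $\upsilon_k\omega_l$ block off the left of $(\upsilon_k\omega_l)^t$ and commuting it leftward across each of the $s$ copies of $(\upsilon_i\omega_j)$ via the base case.

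For identity (4), the base case $s = 1$ asserts $[\upsilon_i\omega_j\upsilon_j\omega_i] = [\upsilon_i\omega_i\upsilon_j\omega_j]$, which is trivial when $i = j$ and in the nontrivial case $\{i,j\} = \{1,2\}$ follows from a single application of the second generator to the middle subword $\omega_j\upsilon_j\omega_i$. The general case reduces to the base case via identity (3): interleave $(\upsilon_i\omega_j)^s(\upsilon_j\omega_i)^s$ as $s$ copies of $\upsilon_i\omega_j\upsilon_j\omega_i$ using commutativity, replace each by $\upsilon_i\omega_i\upsilon_j\omega_j$, and regroup to obtain $(\upsilon_i\omega_i)^s(\upsilon_j\omega_j)^s$. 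For identity (5), the base case $(s,t) = (1,0)$ is precisely the first generator, rewritten as $\upsilon_n\omega_i\upsilon_j = \upsilon_j\omega_i\upsilon_n$; an induction on $s$ yields $\upsilon_n(\omega_i\upsilon_j)^s = (\upsilon_j\omega_i)^s\upsilon_n$, and an analogous induction on $t$ handles the $(\omega_k\upsilon_l)^t$-suffix to finish.

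The hard part is not any single identity but the careful case analysis on the subscripts, verifying at each step whether the applied generator genuinely rewrites the word or acts as the identity, and checking that the intermediate expressions remain valid paths in $P$. Since the two generators preserve source and target vertices, the latter verification is automatic, and the degenerate subscript cases reduce to tautologies, so the bookkeeping is systematic rather than delicate.
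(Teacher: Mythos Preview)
Your proposal is correct and follows essentially the same approach as the paper: identities (1) and (2) are the defining relations of $\mathcal{I}$, and (3)--(5) are obtained by repeated applications of these. The paper's own proof is a single sentence to this effect, whereas you have helpfully spelled out the inductions and case analyses that make ``repeated applications'' precise.
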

\begin{proof}
The first two relations follow from Definition \ref{pyramidpartition} $(iii)$, and the other equations follow from repeated applications of the first two relations.
\end{proof}

For a pyramid partition $\pi$, let $\pi_k$ be  the $k$-th diagonal slice of $\pi$, i.e. the set of all bricks in $\pi$ whose position $(x,y,z)$ satisfying $x-y=k$. 

\begin{lemma}([\cite{BY}, Lemmas 5.8 and 5.9])\label{interlacing1}
For $k\geq0$,  we have 
\ben
&&\pi_{-2k}=\{[(\upsilon_{1}\omega_2)^kW]\}\cap \pi,\\
&&\pi_{2k}=\{[(\upsilon_{2}\omega_1)^kW]\}\cap \pi,
\een 
where $W$ runs over all words in $\upsilon_{1}\omega_{1}$ and $\upsilon_{2}\omega_{2}$, and 
\ben
&&\pi_{-2k-1}=\{[(\upsilon_{1}\omega_2)^k\upsilon_1W^\prime]\}\cap \pi,\\
&&\pi_{2k+1}=\{[(\upsilon_{2}\omega_1)^k\upsilon_2W^\prime]\}\cap \pi,
\een 
where $W^\prime$ runs over all words in $\omega_1\upsilon_1$ and $\omega_2\upsilon_2$. Moreover, the bricks of $\pi_k$ ($k\in\mathbb{Z}$) form a 2D Young diagram and its slices are single-colored as follows
\ben
K_{\mathfrak{P}}(\pi_k)=\left\{
\begin{aligned}
	&0 ,\;\;\;\;\;\; \mbox{if $k=0$ $($\mbox{mod} 4$)$}, \\
	&b,  \;\;\;\;\;\;\mbox{if $k=1$ $($\mbox{mod} 4$)$},\\
	&c , \;\;\;\;\;\; \mbox{if $k=2$ $($\mbox{mod} 4$)$}, \\
	&a,   \;\;\;\;\;\;\mbox{if $k=3$ $($\mbox{mod} 4$)$}.
\end{aligned}
\right.
\een

Moreover, for $k\geq0$ we have the following interlacing properties 
\ben
\pi_{2k}\succ\pi_{2k+1},\;\;\pi^\prime_{-2k}\succ\pi^\prime_{-2k-1},\;\;\pi_{2k+1}^\prime\succ\pi_{2k+2}^\prime,\;\;\pi_{-2k-1}\succ\pi_{-2k-2}
\een
where $\pi_{\pm k}$ is identified with  its corresponding 2D Young diagram (or partition) and $\pi_{\pm k}^\prime$ denotes the transposition of 2D Young diagram $\pi_{\pm k}$.
\end{lemma}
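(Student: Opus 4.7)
The plan is to reduce everything to a normal form for words representing bricks on each diagonal slice, from which the Young diagram structure, the colouring, and the interlacing properties can all be read off. The starting observation is that for any word $B$ in $P$, the $x$- and $y$-coordinates of the brick $[B]$ depend only on the counts $n_{\upsilon_1}, n_{\upsilon_2}, n_{\omega_1}, n_{\omega_2}$ of edge labels in $B$, so $[B]\in\pi_k$ iff $-n_{\upsilon_1}+n_{\upsilon_2}+n_{\omega_1}-n_{\omega_2}=k$. I would first prove, by induction on word length using the block-swap identity $[B_1(\upsilon_i\omega_j)^s(\upsilon_k\omega_l)^tB_2]=[B_1(\upsilon_k\omega_l)^t(\upsilon_i\omega_j)^sB_2]$ from Lemma~\ref{brick-relation}, that any word $B$ with this count condition for $k=-2m$ (or $-2m-1$) can be rewritten as $(\upsilon_1\omega_2)^mW$ (respectively $(\upsilon_1\omega_2)^m\upsilon_1W'$), where the tail $W$ uses only the two building blocks $\upsilon_1\omega_1, \upsilon_2\omega_2$ (respectively $W'$ uses only $\omega_1\upsilon_1, \omega_2\upsilon_2$). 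The positive-diagonal cases are handled in parallel with $\upsilon_1\leftrightarrow\upsilon_2$ and $\omega_1\leftrightarrow\omega_2$, exploiting the symmetry of the quiver $P$ under $0\leftrightarrow c$, $a\leftrightarrow b$.

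Given this normal form, the 2D Young diagram claim is immediate: within the tail $W$, the same block-swap relation lets one further reduce to $(\upsilon_1\omega_1)^a(\upsilon_2\omega_2)^b$, indexing bricks of $\pi_{-2k}$ by pairs $(a,b)\in\mathbb{Z}_{\geq 0}^2$; the prefix-closure axiom of Definition~\ref{pyramidpartition}(iv), combined with the freedom to reorder the blocks of $W$, forces the staircase property that $(a,b)\in\pi_{-2k}$ implies $(a',b')\in\pi_{-2k}$ for all $0\le a'\le a$, $0\le b'\le b$. The colouring is then a direct walk in $P$: starting at $0$, the block $\upsilon_1\omega_2$ traces the $4$-cycle $0\to a\to c\to b\to 0$, so $(\upsilon_1\omega_2)^m$ ends at $0$ or $c$ according to the parity of $m$; prepending $\upsilon_1$ shifts this endpoint to $a$ or $b$, and the tails $W$ or $W'$ return to the same vertex. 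Matching against $k\bmod 4$ yields exactly the four listed colours, and the colours on $\pi_{2k}, \pi_{2k+1}$ come out the same way using the block $\upsilon_2\omega_1$, which traces $0\to b\to c\to a\to 0$.

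The interlacing relations then follow from Lemma~\ref{interlacing}(ii) applied to this parameterization. For instance, to prove $\pi_{2k}\succ\pi_{2k+1}$, I would match a brick $[(\upsilon_2\omega_1)^k\upsilon_2W']\in\pi_{2k+1}$ (with $W'$ normalized in $\omega_1\upsilon_1,\omega_2\upsilon_2$) to the brick on $\pi_{2k}$ obtained by a suitable even-length prefix, and then use prefix closure and the bijection $W'\leftrightarrow W$ between the two tail alphabets to read off the column-wise count difference $\pi_{2k,i}^\prime-\pi_{2k+1,i}^\prime\in\{0,1\}$. The other three interlacings are handled by the same procedure after swapping the roles of $\upsilon$ and $\omega$ (for the primed relations) and of positive and negative diagonals. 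The main obstacle is the canonicalization step: the quotient $A=\mathbb{C}P/\mathcal{I}$ only supplies local exchange moves, and one must set up a confluent rewriting system on words to obtain a unique normal form $(\upsilon_1\omega_2)^mW$ (or its analogues). Establishing termination, via an invariant such as the count of non-canonical $\upsilon_i\omega_j$ pairs, and checking that all of the identities in Lemma~\ref{brick-relation} respect the reduction, is the combinatorial core of the argument; once it is in place, the Young diagram, colour and interlacing statements follow with relative ease.
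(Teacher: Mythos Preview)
Your approach is correct and is essentially the one taken in \cite{BY}, which is all the paper does here: the lemma is quoted from \cite[Lemmas~5.8 and~5.9]{BY} without an independent proof, so there is no ``paper's proof'' to compare against beyond the cited source.

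One remark on the canonicalization step you flag as the main obstacle: it is lighter than you suggest. Because paths from $0$ in $P$ alternate $\upsilon$'s and $\omega$'s, an even-length word is a product of blocks $\upsilon_i\omega_j$, and Lemma~\ref{brick-relation} shows both that any two such blocks commute in $A$ and that $(\upsilon_1\omega_2)(\upsilon_2\omega_1)=(\upsilon_1\omega_1)(\upsilon_2\omega_2)$. Together these imply that a brick is determined by the quadruple $(n_{\upsilon_1},n_{\upsilon_2},n_{\omega_1},n_{\omega_2})$, so the normal form $(\upsilon_1\omega_2)^k(\upsilon_1\omega_1)^j(\upsilon_2\omega_2)^i$ on $\pi_{-2k}$ (and its analogues) follows by direct count rather than by building a confluent rewriting system. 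With that shortcut, your outline for the Young-diagram structure, the colouring, and the interlacing via prefix closure and Lemma~\ref{interlacing}(ii) goes through exactly as in \cite{BY}.
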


\begin{remark}\label{brick-box1}
The  interlacing properties for a pyramid partition $\pi$ in Lemma \ref{interlacing1} are equivalent to 
\bea\label{diagonal-interlacing}
\cdots\prec\pi_{-5}\prec^\prime\pi_{-4}\prec\pi_{-3}\prec^\prime\pi_{-2}\prec\pi_{-1}\prec^\prime\pi_0\succ\pi_1\succ^\prime\pi_2\succ\pi_3\succ^\prime\pi_4\succ\pi_5\succ^\prime\cdots.
\eea
In the proof of [\cite{BY}, Lemma 5.8], we have the following correspondence between bricks of $\pi_{\pm k}$ and boxes of 2D Young diagram $\pi_{\pm k}$ for $k\in\mathbb{Z}_{\geq0}$:\\
$(i)$ the brick $[(\upsilon_{1}\omega_2)^k(\upsilon_1\omega_1)^j(\upsilon_2\omega_2)^i]$ in $\pi_{-2k}$ $\longleftrightarrow$ $(i,j)$-box in 2D Young diagram $\pi_{-2k}$;\\
$(ii)$ the brick $[(\upsilon_{2}\omega_1)^k(\upsilon_1\omega_1)^j(\upsilon_2\omega_2)^i]$ in $\pi_{2k}$ $\longleftrightarrow$ $(i,j)$-box in 2D Young diagram $\pi_{2k}$;\\
$(iii)$ the brick $[(\upsilon_{1}\omega_2)^k\upsilon_1(\omega_1\upsilon_1)^j(\omega_2\upsilon_2)^i]$ in $\pi_{-2k-1}$ $\longleftrightarrow$ $(i,j)$-box in 2D Young diagram $\pi_{-2k-1}$;\\
$(iv)$ the brick $[(\upsilon_{2}\omega_1)^k\upsilon_2(\omega_1\upsilon_1)^j(\omega_2\upsilon_2)^i]$ in $\pi_{2k+1}$ $\longleftrightarrow$ $(i,j)$-box in 2D Young diagram $\pi_{2k+1}$,\\
where $\upsilon_1\omega_1$ and $\upsilon_2\omega_2$ (resp. $\omega_1\upsilon_1$ and $\omega_2\upsilon_2$) commute in $\mathbb{C}P/\mathcal{I}$.
\end{remark}

 For a pyramid partition $\pi$, let $\widetilde{\pi}_k$ be  the $k$-th antidiagonal slice of $\pi$, i.e. the set of all bricks in $\pi$ whose position $(x,y,z)$ satisfying $x+y=k$. 
 Similarly, it is implicitly shown in [\cite{BY}, Lemma 7.3] that
 \begin{lemma}\label{interlacing2}
 For $k\geq0$, 
\ben
&&\widetilde{\pi}_{-2k}=\{[(\upsilon_{1}\omega_1)^k\widetilde{W}]\}\cap \pi,\\
&&\widetilde{\pi}_{2k}=\{[(\upsilon_{2}\omega_2)^k\widetilde{W}]\}\cap \pi,
\een 
where $\widetilde{W}$ runs over all words in $\upsilon_{1}\omega_{2}$ and $\upsilon_{2}\omega_{1}$, and 
\ben
&&\widetilde{\pi}_{-2k-1}=\{[(\upsilon_{1}\omega_1)^k\upsilon_1\widetilde{W}^\prime]\}\cap \pi,\\
&&\widetilde{\pi}_{2k+1}=\{[(\upsilon_{2}\omega_2)^k\upsilon_2\widetilde{W}^\prime]\}\cap \pi,
\een 
where $\widetilde{W}^\prime$ runs over all words in $\omega_2\upsilon_1$ and $\omega_1\upsilon_2$. Moreover, the bricks of $\widetilde{\pi}_k$ ($k\in\mathbb{Z}$) form a 2D Young diagram and we have the following interlacing properties for $k\geq0$, 
\ben
\widetilde{\pi}_{2k}\succ\widetilde{\pi}_{2k+1},\;\;\widetilde{\pi}^\prime_{-2k}\succ\widetilde{\pi}^\prime_{-2k-1},\;\;\widetilde{\pi}_{2k+1}^\prime\succ\widetilde{\pi}_{2k+2}^\prime,\;\;\widetilde{\pi}_{-2k-1}\succ\widetilde{\pi}_{-2k-2}.
\een
where $\widetilde{\pi}_{\pm k}$ is identified with  its corresponding 2D Young diagram (or partition) and $\widetilde{\pi}_{\pm k}^\prime$ denotes the transposition of 2D Young diagram $\widetilde{\pi}_{\pm k}$.
\end{lemma}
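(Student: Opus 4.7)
The plan is to mirror the proof of the diagonal case in \cite[Lemmas 5.8 and 5.9]{BY} (Lemma \ref{interlacing1} above), replacing the diagonal-preserving letter pair $(\upsilon_1\omega_1,\upsilon_2\omega_2)$ with the antidiagonal-preserving pair $(\upsilon_1\omega_2,\upsilon_2\omega_1)$. Using the vector interpretation of edge labels recorded in \cite[Section 5]{BY}, one checks that among the length-two blocks $\upsilon_i\omega_j$, only $\upsilon_1\omega_2$ and $\upsilon_2\omega_1$ preserve the antidiagonal coordinate $x+y$, while $\upsilon_1\omega_1$ contributes $-2$ and $\upsilon_2\omega_2$ contributes $+2$; a single $\upsilon_1$ or $\omega_1$ contributes $-1$ and a single $\upsilon_2$ or $\omega_2$ contributes $+1$. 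This already pinpoints the correct alphabet for each slice: words of the form $(\upsilon_1\omega_1)^k\widetilde{W}$ with $\widetilde{W}$ in the free monoid on $\{\upsilon_1\omega_2,\upsilon_2\omega_1\}$ land precisely on $x+y=-2k$, and similarly for the other three families.

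Next I would show that every brick on $\widetilde{\pi}_{-2k}$ admits a representative in the stated normal form. Since any word based at $0$ is forced by the edge orientations of $P$ to alternate $\upsilon$'s and $\omega$'s, a word of even length parses as a sequence of two-letter blocks $\upsilon_i\omega_j$. Using the fourth relation of Lemma \ref{brick-relation} to rewrite every pair $(\upsilon_1\omega_1)(\upsilon_2\omega_2)$ as $(\upsilon_1\omega_2)(\upsilon_2\omega_1)$, and the third relation to commute the remaining blocks so that all $\upsilon_1\omega_1$'s are pushed to the front, one arrives at the claimed normal form; the exponent $k$ is forced by the endpoint coordinate. For a word of odd length, the same rearrangement, together with the fifth relation (which slides a single $\upsilon_n$ across $(\omega\upsilon)$-blocks), isolates one additional $\upsilon_1$ immediately after the $(\upsilon_1\omega_1)^k$ prefix, giving the form stated for $\widetilde{\pi}_{-2k-1}$. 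The positive slices $\widetilde{\pi}_{2k}$ and $\widetilde{\pi}_{2k+1}$ follow by the evident symmetry $\upsilon_1\leftrightarrow\upsilon_2$, $\omega_1\leftrightarrow\omega_2$.

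For the 2D Young diagram structure, since $\upsilon_1\omega_2$ and $\upsilon_2\omega_1$ commute in $\mathbb{C}P/\mathcal{I}$ by the third relation in Lemma \ref{brick-relation}, the assignment $(i,j)\mapsto[(\upsilon_1\omega_1)^k(\upsilon_2\omega_1)^i(\upsilon_1\omega_2)^j]$ gives a well-defined bijection from $(\mathbb{Z}_{\geq 0})^2$ onto the bricks of $\widetilde{\pi}_{-2k}$, and the prefix-closure axiom defining a pyramid partition translates verbatim into the Young-diagram condition on the corresponding index set. To obtain the interlacing relations, I would match each brick on the thinner slice with the unique brick on the adjacent thicker slice obtained by stripping the last letter of its normal form; prefix-closure of $\pi$ then yields an inequality of the form $\lambda_i^\prime-\mu_i^\prime\in\{0,1\}$ (or its row-variant), which Lemma \ref{interlacing} converts into the precise $\succ$- or $\succ^\prime$-relation asserted. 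The main obstacle, as in the diagonal case, will be the bookkeeping required to check that the correct pattern ($\succ$ versus $\succ^\prime$) emerges as the sign of $k$ and its parity vary; but this is formal once the normal-form correspondence is in place.
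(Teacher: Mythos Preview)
Your proposal is correct and follows essentially the same approach the paper indicates: the paper does not give a self-contained proof but simply remarks that the statement is ``implicitly shown in [\cite{BY}, Lemma 7.3]'', pointing to the antidiagonal analogue of the diagonal argument in [\cite{BY}, Lemmas 5.8--5.9] (Lemma \ref{interlacing1}). Your outline carries out exactly this analogy---identifying the antidiagonal-preserving blocks $\upsilon_1\omega_2,\upsilon_2\omega_1$, using Lemma \ref{brick-relation} to obtain the normal form, and reading off the Young-diagram and interlacing structure from prefix-closure---so it matches the intended proof.
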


\begin{remark}\label{brick-box2}
The  interlacing properties for a pyramid partition $\pi$ in Lemma \ref{interlacing2} are equivalent to 
\bea\label{antidiagonal-interlacing}
\cdots\prec\widetilde{\pi}_{-5}\prec^\prime\widetilde{\pi}_{-4}\prec\widetilde{\pi}_{-3}\prec^\prime\widetilde{\pi}_{-2}\prec\widetilde{\pi}_{-1}\prec^\prime\widetilde{\pi}_0\succ\widetilde{\pi}_1\succ^\prime\widetilde{\pi}_2\succ\widetilde{\pi}_3\succ^\prime\widetilde{\pi}_4\succ\widetilde{\pi}_5\succ^\prime\cdots.
\eea
As in Remark \ref{brick-box1}, we have the following correspondence between bricks of $\widetilde{\pi}_{\pm k}$ and boxes of 2D Young diagram $\widetilde{\pi}_{\pm k}$ for $k\in\mathbb{Z}_{\geq0}$:\\
$(i)$ the brick $[(\upsilon_{1}\omega_1)^k(\upsilon_1\omega_2)^j(\upsilon_2\omega_1)^i]$ in $\widetilde{\pi}_{-2k}$ $\longleftrightarrow$ $(i,j)$-box in 2D Young diagram $\widetilde{\pi}_{-2k}$;\\
$(ii)$ the brick $[(\upsilon_{2}\omega_2)^k(\upsilon_1\omega_2)^j(\upsilon_2\omega_1)^i]$ in $\widetilde{\pi}_{2k}$ $\longleftrightarrow$ $(i,j)$-box in 2D Young diagram $\widetilde{\pi}_{2k}$;\\
$(iii)$ the brick $[(\upsilon_{1}\omega_1)^k\upsilon_1(\omega_2\upsilon_1)^j(\omega_1\upsilon_2)^i]$ in $\widetilde{\pi}_{-2k-1}$ $\longleftrightarrow$ $(i,j)$-box in 2D Young diagram $\widetilde{\pi}_{-2k-1}$;\\
$(iv)$ the brick $[(\upsilon_{2}\omega_2)^k\upsilon_2(\omega_2\upsilon_1)^j(\omega_1\upsilon_2)^i]$ in $\widetilde{\pi}_{2k+1}$ $\longleftrightarrow$ $(i,j)$-box in 2D Young diagram $\widetilde{\pi}_{2k+1}$,\\
where $\upsilon_1\omega_2$ and $\upsilon_2\omega_1$ (resp. $\omega_2\upsilon_1$ and $\omega_1\upsilon_2$) commute in $\mathbb{C}P/\mathcal{I}$.
\end{remark}
It is shown in [\cite{BY}, Section 7]  that any $k$-th antidiagonal slice $\widetilde{\pi}_k$ is checkerborder colored. One can refer to [\cite{BY}, Figure 6] for the coloring scheme.

\begin{remark}\label{characteristic-PP}
Let $\{\eta_k\,|\,k\in\mathbb{Z}\}$ be a sequence of partitions satisfying the interlacing properties \eqref{diagonal-interlacing} $($resp.  \eqref{antidiagonal-interlacing}$)$, and $\pi=\bigcup\limits_{k\in\mathbb{Z}}\eta_k$ be identified with  the corresponding bricks in $\mathfrak{P}$ by the  brick-box correspondence in  Remark \ref{brick-box1} $($resp.  Remark \ref{brick-box2}$)$. Then $\pi$ is a pyramid partition since for each brick $[B]$ in any $\eta_k$, every prefix of $B$ is also a brick in $\pi$ due to the interlacing properties \eqref{diagonal-interlacing} $($resp.  \eqref{antidiagonal-interlacing}$)$ as proved in  [\cite{BY}, Lemma 5.9]. 
\end{remark}

\subsection{The orbifold topological  vertex}
In this subsection, we will briefly recollect the definition of the    DT $\mathbb{Z}_2\times\mathbb{Z}_2$-vertex and its  known explicit formulas, and then recall the explict formula for the DT $\mathbb{Z}_n$-vertex ($n\geq2$), see [\cite{BY}, Section 1] and [\cite{BCY}, Section 3] for more details.

\begin{definition}([\cite{BY}, Section 1])\label{coloring}
The $\mathbb{Z}_n$-coloring is defined by a homomorphism of additive monoids $K_{\mathbb{Z}_n}: (\mathbb{Z}_{\geq0})^3\to\mathbb{Z}_n$ such that
\ben
&&K_{\mathbb{Z}_n}(1,0,0)=1,\\
&&K_{\mathbb{Z}_n}(0,1,0)=-1,\\
&&K_{\mathbb{Z}_n}(0,0,1)=0.
\een
The $\mathbb{Z}_2\times\mathbb{Z}_2$-coloring is defined by a homomorphism of additive monoids $K_{\mathbb{Z}_2\times\mathbb{Z}_2}: (\mathbb{Z}_{\geq0})^3\to\mathbb{Z}_2\times\mathbb{Z}_2$ such that
\ben
&&K_{\mathbb{Z}_2\times\mathbb{Z}_2}(1,0,0)=a,\\
&&K_{\mathbb{Z}_2\times\mathbb{Z}_2}(0,1,0)=b,\\
&&K_{\mathbb{Z}_2\times\mathbb{Z}_2}(0,0,1)=c,
\een
where $\mathbb{Z}_2\times\mathbb{Z}_2=\{0,a,b,c\}$.
\end{definition}	
Let $\mathcal{P}(\lambda,\mu,\nu)$ be the set of all 3D partitions $\pi$ asymptotic to $(\lambda,\mu,\nu)$ defined in [\cite{BCY}, Definition 4]. The three legs of $\pi\in\mathcal{P}(\lambda,\mu,\nu)$ are defined with respect to   $\lambda,\mu,\nu$ respectively  by
\ben
&&\{(i,j,k)\,|\,(j,k)\in\lambda\}\subset\pi,\\
&&\{(i,j,k)\,|\,(k,i)\in\mu\}\subset\pi,\\
&&\{(i,j,k)\,|\,(i,j)\in\nu\}\subset\pi,
\een
and we call $\lambda,\mu,\nu$ three leg partitions.
For any $(i,j,k)\in\pi$, set    
\ben
\xi_{\pi}(i,j,k)=1- \mbox{the number of legs containing $(i,j,k)$}.
\een
With the definition of the $\mathbb{Z}_2\times\mathbb{Z}_2$-coloring (see also the definition of $K_{G}$ in [\cite{BY}, Page 117]), we have  
\begin{definition}([\cite{BCY}, Equation (3)])\label{Z2Z2-vertex}
	The $\mathrm{DT}$ $\mathbb{Z}_{2}\times\mathbb{Z}_{2}$-vertex is defined by
	\ben
	V_{\lambda\mu\nu}^{\mathbb{Z}_{2}\times\mathbb{Z}_{2}}(q_0,q_a,q_b,q_c)=\sum_{\pi\in\mathcal{P}(\lambda,\mu,\nu)}q_{0}^{\Vert\pi\Vert_{0}}q_{a}^{\Vert\pi\Vert_{a}}q_{b}^{\Vert\pi\Vert_{b}} q_{c}^{\Vert\pi\Vert_{c}}
	\een
	where for $l=0,a,b,c$,
	\ben
	\Vert\pi\Vert_{l}=\sum_{\substack{(i,j,k)\in\pi\\ ia+jb+kc=l} }\xi_{\pi}(i,j,k)
	\een
	is called the renormalized number of boxes $(i,j,k)\in\pi$ with color $l\in\{0,a,b,c\}=\mathbb{Z}_2\times\mathbb{Z}_2$.
\end{definition}
\begin{remark}\label{Z2Z2-symmetry}
	It can be shown that the $\mathrm{DT}$ $\mathbb{Z}_{2}\times\mathbb{Z}_{2}$-vertex has the following symmetries:
	\ben
	V_{\lambda\mu\nu}^{\mathbb{Z}_{2}\times\mathbb{Z}_{2}}(q_0,q_a,q_b,q_c)=V_{\nu\lambda\mu}^{\mathbb{Z}_{2}\times\mathbb{Z}_{2}}(q_0,q_c,q_a,q_b)=V_{\mu\nu\lambda}^{\mathbb{Z}_{2}\times\mathbb{Z}_{2}}(q_0,q_b,q_c,q_a).
	\een
\end{remark}
Then we have the following explicit formula for the $\mathrm{DT}$ $\mathbb{Z}_{2}\times\mathbb{Z}_{2}$-vertex without legs.
\begin{theorem}([\cite{BY}, Theorem 1.5]) \label{Z2Z2-nolegs}
Let $q=q_0q_aq_bq_c$. Then  
\ben
V_{\emptyset\emptyset\emptyset}^{\mathbb{Z}_{2}\times\mathbb{Z}_{2}}(q_0,q_a,q_b,q_c)=\mathbf{M}(1,q)^4\cdot\frac{\widetilde{\mathbf{M}}(q_aq_b,q)\cdot\widetilde{\mathbf{M}}(q_aq_c,q)\cdot\widetilde{\mathbf{M}}(q_bq_c,q)}{\widetilde{\mathbf{M}}(-q_a,q)\cdot\widetilde{\mathbf{M}}(-q_b,q)\cdot\widetilde{\mathbf{M}}(-q_c,q)\cdot\widetilde{\mathbf{M}}(-q_aq_bq_c,q)}.
\een	
\end{theorem}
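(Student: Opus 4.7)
The plan is to follow the three-step strategy outlined in the introduction: compute the generating function $Z_{\mathfrak{P}}$ of pyramid partitions in two different ways and compare the outcomes to isolate $V^{\mathbb{Z}_2\times\mathbb{Z}_2}_{\emptyset\emptyset\emptyset}$.

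First, I would compute $Z_{\mathfrak{P}-}$ by slicing along diagonals. By Lemma \ref{interlacing1} and the chain \eqref{diagonal-interlacing}, each $\pi\in\mathfrak{P}$ is encoded as a bi-infinite sequence $\{\pi_k\}_{k\in\mathbb{Z}}$ with alternating $\succ$ and $\succ^\prime$ interlacings and a $4$-periodic color pattern on slices. Translating each $\succ$-interlacing into a $\Gamma_\pm$ operator, each $\succ^\prime$-interlacing into a $\Gamma^\prime_\pm$ operator, and inserting weight operators $Q_{gh}$ between consecutive slices to track the $\mathbb{Z}_2\times\mathbb{Z}_2$-coloring produces $Z_{\mathfrak{P}-}$ as a vacuum expectation of an explicit ordered product. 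Normal-ordering using the commutation relations between $\Gamma_\pm$ and $\Gamma^\prime_\pm$ together with the exchange relations of Lemma \ref{vertex-exchange2} then yields a closed-form infinite product of MacMahon-type factors.

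Second, I would compute $Z_{\mathfrak{P}+}$ by slicing along antidiagonals via Lemma \ref{interlacing2} and \eqref{antidiagonal-interlacing}. The antidiagonal slices are now checkerboard-colored rather than monochromatic (as noted after Remark \ref{brick-box2}), so a naive vertex-operator rewriting will not directly match the $3$D coloring in Definition \ref{Z2Z2-vertex}. The remedy is to insert the operators $E_\pm(x)$ alongside the $\Gamma^\prime_\pm$ factors: by Lemma \ref{vertex-exchange1} they commute with $\Gamma^\prime_\pm$, while their exchange with $\Gamma_\pm$ supplies exactly the sign-weighted border-strip corrections needed to convert the checkerboard-slice weights into the $3$D $\mathbb{Z}_2\times\mathbb{Z}_2$-coloring, and Lemma \ref{vertex-exchange2} governs their compatibility with the $Q_{gh}$ insertions. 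The boundary identities \eqref{E-left} and \eqref{E+right} ensure that stray $E_\pm$ factors vanish against the vacua. This recasts $Z_{\mathfrak{P}+}$ as $V^{\mathbb{Z}_2\times\mathbb{Z}_2}_{\emptyset\emptyset\emptyset}$ multiplied by an explicit ratio of MacMahon-tilde factors produced by normal-ordering the $E_\pm$'s past the $\Gamma_\pm$'s.

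Finally, the equality $Z_{\mathfrak{P}+}=Z_{\mathfrak{P}-}$ is tautological since both sides count the same set $\mathfrak{P}$; equating the two infinite-product expressions and solving for $V^{\mathbb{Z}_2\times\mathbb{Z}_2}_{\emptyset\emptyset\emptyset}$ yields the formula in Theorem \ref{Z2Z2-nolegs}. The main obstacle lies in the second step: one must argue that the checkerboard coloring on antidiagonal slices, once corrected by the sign-weighted border-strip contributions supplied by $E_\pm$, faithfully reproduces the $3$D $\mathbb{Z}_2\times\mathbb{Z}_2$-weighted count of $V^{\mathbb{Z}_2\times\mathbb{Z}_2}_{\emptyset\emptyset\emptyset}$ with no spurious contributions, and then carry out the normal-ordering with careful bookkeeping of the $E_\pm$ commutator factors. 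By contrast, the diagonal computation in the first step is a direct analog of the Okounkov-Reshetikhin-Vafa vertex calculation, and the final assembly reduces to routine $q$-series manipulation.
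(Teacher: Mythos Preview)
Your proposal is correct and follows essentially the same three-step strategy that the paper attributes to Bryan--Young in the introduction (Steps $(\mathrm{I})$--$(\mathrm{III})$): compute $Z_{\mathfrak{P}-}$ via diagonal slices and vertex operators, relate $Z_{\mathfrak{P}+}$ to $V^{\mathbb{Z}_2\times\mathbb{Z}_2}_{\emptyset\emptyset\emptyset}$ via antidiagonal slices using the $E_\pm$ operators to handle the checkerboard coloring, and equate the two. Note that the paper itself does not give a proof of this theorem but simply cites it as [\cite{BY}, Theorem~1.5]; your outline matches the original Bryan--Young argument as summarized in the introduction.
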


Next, we employ the notations in [\cite{BCY}, Section 3.4] as follows.
Let $\mathbf{\tilde{q}}_0=1$.  For $t\in\mathbb{Z}$, $\mathbf{\tilde{q}}_t$  is defined recursively by $\mathbf{\tilde{q}}_t=\tilde{q}_t\cdot\mathbf{\tilde{q}}_{t-1}$. Then 
\ben
\{\cdots,\mathbf{\tilde{q}}_{-2},\mathbf{\tilde{q}}_{-1},\mathbf{\tilde{q}}_0,\mathbf{\tilde{q}}_1,\mathbf{\tilde{q}}_2,\cdots\}=\{\cdots,\tilde{q}_0^{-1}\tilde{q}_{-1}^{-1},\tilde{q}_0^{-1},1,\tilde{q}_1,\tilde{q}_1\tilde{q}_2,\cdots\}.
\een
For any partition $\eta=(\eta_0,\eta_1,\eta_2,\cdots)$, let 
\ben
\mathbf{\tilde{q}}_{\bullet-\eta}=\{\mathbf{\tilde{q}}_{-\eta_0},\mathbf{\tilde{q}}_{1-\eta_1},\mathbf{\tilde{q}}_{2-\eta_2},\mathbf{\tilde{q}}_{3-\eta_3},\cdots\}.
\een
Let $\tilde{q}=\tilde{q}_0 \tilde{q}_1\cdots\tilde{q}_{n-1}$.  For any $k\in\mathbb{Z}$ and  partitions $\lambda$ and $\nu$, let 
\ben
&& \tilde{q}^{-A_{\lambda}}:=\prod_{k=0}^{n-1} \tilde{q}_k^{-A_{\lambda}(k,n)},\\
&&A_{\lambda}(k,n):=\sum_{(i,j)\in\lambda}\left\lfloor\frac{i+k}{n}\right\rfloor,\\
&&|\nu|_{\overline{k}}=\left|\{(i,j)\in\nu\,|\, i-j\equiv k\; (\mathrm{mod}\; n)\}\right|,\\
&&h^k_\nu(i,j)=\mbox{ the number of boxes of color $\overline{k}\in\mathbb{Z}_n$ in the $(i,j)$-hook of $\nu$}.
\een

Now we have the following explicit formula of Bryan-Cadman-Young for the DT $\mathbb{Z}_n$-vertex.
\begin{theorem}([\cite{BCY}, Theorem 12])\label{DT-Z_n}
The formula for the  $\mathrm{DT}$ $\mathbb{Z}_n$-vertex $V_{\lambda\mu\nu}^{\mathbb{Z}_n}(\tilde{q}_0,\tilde{q}_1,\cdots,\tilde{q}_{n-1})$ is
\ben
V_{\lambda\mu\nu}^{\mathbb{Z}_n}=V_{\emptyset\emptyset\emptyset}^{\mathbb{Z}_n}\cdot \tilde{q}^{-A_{\lambda}}\cdot \overline{\tilde{q}^{-A_{\mu^\prime}}}\cdot H_{\nu}\cdot O_{\nu}\cdot \sum_{\eta}\tilde{q}_{0}^{-|\eta|}\cdot \overline{s_{\lambda^\prime/\eta}(\mathbf{\tilde{q}}_{\bullet-\nu^\prime})}\cdot s_{\mu/\eta}(\mathbf{\tilde{q}}_{\bullet-\nu})
\een
where  $s_{\xi/\eta}$ is the skew Schur function associated to partitions $\eta\subset\xi$ ($s_{\xi/\eta}=0$  if $\eta\not\subset\xi$),  the overline denotes the exchange of variables $\tilde{q}_{k}\leftrightarrow \tilde{q}_{-k}$ with subscripts in $\mathbb{Z}_{n}$, and
\ben
&&V_{\emptyset\emptyset\emptyset}^{\mathbb{Z}_n}=\mathbf{M}(1,\tilde{q})^n\prod_{0<a\leq b<n}\widetilde{\mathbf{M}}(\tilde{q}_a\cdots \tilde{q}_b,\tilde{q}),\\
&&H_{\nu}=\prod_{(i,j)\in\nu}\frac{1}{1-\prod\limits_{k=0}^{n-1}\tilde{q}_k^{h^k_\nu(i,j)}},\\
&&O_\nu=\prod_{k=0}^{n-1}V_{\emptyset\emptyset\emptyset}^{\mathbb{Z}_n}(\tilde{q}_k,\tilde{q}_{k+1},\cdots, \tilde{q}_{n+k-1})^{-2|\nu|_{\overline{k}}+|\nu|_{\overline{k+1}}+|\nu|_{\overline{k-1}}}.
\een
\end{theorem}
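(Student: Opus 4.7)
The plan is to prove this theorem as the final step $(\mathfrak{I}_3)$ of the strategy laid out in the introduction, by combining the two generating-function identifications with the collapse of $\mathfrak{RP}_{+}(\nu)$ onto $\mathfrak{RP}_{-}(\nu)$ that holds precisely when $\nu$ is a staircase. First, I would invoke Lemma \ref{Z2Z2-RPC+} to write
\[
V_{\emptyset\emptyset\nu}^{\mathbb{Z}_2\times\mathbb{Z}_2}(q_0,q_a,q_b,q_c) \;=\; Z_{\mathfrak{RP}_{+}(\nu)}(q_0,q_a,q_b,q_c)\cdot F_{+}(q_0,q_a,q_b,q_c;m),
\]
where $F_{+}$ is the explicit "excess" factor produced by pulling the 1-leg bricks of $\nu$ out of the vertex-operator trace via the exchange relations of Lemmas \ref{vertex-exchange1}--\ref{vertex-exchange2}. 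Second, by Lemma \ref{Z4-RPC-}, I would write
\[
V_{\emptyset\emptyset\nu}^{\mathbb{Z}_4}(\tilde q_0,\tilde q_1,\tilde q_2,\tilde q_3) \;=\; Z_{\mathfrak{RP}_{-}(\nu)}(\tilde q_0,\tilde q_1,\tilde q_2,\tilde q_3)\cdot F_{-}(\tilde q_0,\tilde q_1,\tilde q_2,\tilde q_3;m),
\]
for an analogous explicit factor $F_{-}$, this time from a $\Gamma/\Gamma'$-commutation argument in the $\mathbb{Z}_4$ framework (Lemma \ref{vertex-exchange3}). By Proposition \ref{unique symmetric interlacing}, for staircase $\nu$ we have $\mathfrak{RP}_{+}(\nu)=\mathfrak{RP}_{-}(\nu)$ as sets, and under the variable identification of Section 4.1 (matching the $\mathbb{Z}_2\times\mathbb{Z}_2$ coloring with the $\mathbb{Z}_4$ coloring along diagonal slices) the two generating functions coincide.

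Dividing the $\mathbb{Z}_2\times\mathbb{Z}_2$ identity by the $\mathbb{Z}_4$ identity then yields
\[
V_{\emptyset\emptyset\nu}^{\mathbb{Z}_2\times\mathbb{Z}_2}(q_0,q_a,q_b,q_c) \;=\; V_{\emptyset\emptyset\nu}^{\mathbb{Z}_4}(\text{matched vars})\cdot\frac{F_{+}}{F_{-}},
\]
and I would identify the ratio $F_{+}/F_{-}$ with $\Phi(q_a,q_b,q_c,q;m)$. The three variants of the theorem (with $\nu$ in the first, second, or third slot) then follow from the cyclic $\mathbb{Z}_2\times\mathbb{Z}_2$ symmetry of Remark \ref{Z2Z2-symmetry}, which rotates $(q_a,q_b,q_c)$ cyclically. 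The split into the cases $m\equiv 0,3\pmod{4}$ versus $m\equiv 1,2\pmod{4}$ arises from the color at the "tip" of the staircase $\nu$: by Lemma \ref{interlacing1} the diagonal slices are single-colored according to the cycle $(0,b,c,a)$, so the $\mathbb{Z}_4\leftrightarrow \mathbb{Z}_2\times\mathbb{Z}_2$ variable identification that produces $Z_{\mathfrak{RP}_{-}(\nu)}=Z_{\mathfrak{RP}_{+}(\nu)}$ must be permuted when the index of the last populated slice falls in the opposite parity class. Equivalently, the shift parameter $l=\lfloor(m+1)/2\rfloor$ counts the full $\upsilon\omega$-blocks in $\nu$, while $2\{m/2\}\in\{0,1\}$ detects the residual $\upsilon$-block, which is exactly what decides whether the final factor in $\Phi$ uses $\widetilde{\mathbf{M}}_{0}$ or $\widetilde{\mathbf{M}}_{1}$ (and analogously for $\widehat{\mathbf{M}}_{0}/\widehat{\mathbf{M}}_{1}$).

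The architectural steps above are clean; the hard part will be the bookkeeping that turns $F_{+}/F_{-}$ into the compact closed form of $\Phi$. Concretely, one must track every commutator picked up when each row of bricks of $\nu$ is threaded past the infinite vertex-operator product, and past the weight operators $Q_{gh}$ and $Q_g$, and then re-assemble the resulting $q$-Pochhammer products into the MacMahon-type factors $\widehat{\mathbf{M}}$, $\widetilde{\mathbf{M}}$, $\widehat{\mathbf{M}}_{0/1}$, $\widetilde{\mathbf{M}}_{0/1}$. Many of the "bulk" MacMahon factors should cancel between the $\mathbb{Z}_2\times\mathbb{Z}_2$ side and the $\mathbb{Z}_4$ side, leaving only the leg-dependent residue that constitutes $\Phi$. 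The sharpest sanity checks will be the cases $m=1,2,3$, where both vertices admit direct low-order expansion and $\Phi$ can be verified against the computed ratio; the mod-$4$ split can also be cross-checked against $m=4$, which should match case (i) of the theorem with the unpermuted $\mathbb{Z}_4$ variables.
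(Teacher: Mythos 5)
Your proposal does not address the statement at hand. The statement is Theorem \ref{DT-Z_n}, i.e.\ the Bryan--Cadman--Young closed formula for the \emph{full three-leg} DT $\mathbb{Z}_n$-vertex,
\[
V_{\lambda\mu\nu}^{\mathbb{Z}_n}=V_{\emptyset\emptyset\emptyset}^{\mathbb{Z}_n}\cdot \tilde{q}^{-A_{\lambda}}\cdot \overline{\tilde{q}^{-A_{\mu^\prime}}}\cdot H_{\nu}\cdot O_{\nu}\cdot \sum_{\eta}\tilde{q}_{0}^{-|\eta|}\cdot \overline{s_{\lambda^\prime/\eta}(\mathbf{\tilde{q}}_{\bullet-\nu^\prime})}\cdot s_{\mu/\eta}(\mathbf{\tilde{q}}_{\bullet-\nu}),
\]
which is an external result quoted from [BCY, Theorem 12]; the present paper supplies no proof of it and simply uses it as an input in Section 4.5 (Lemma \ref{Z4-1-leg}). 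A proof of this theorem would have to produce the skew Schur function expansion, the hook factor $H_\nu$, the orientation factor $O_\nu$, and the prefactors $\tilde q^{-A_\lambda}$, $\overline{\tilde q^{-A_{\mu'}}}$ — typically via the transfer-matrix/vertex-operator computation of the $\mathbb{Z}_n$-colored 3D partition generating function with all three nontrivial legs, tracking the diagonal slices through the three asymptotic regions. None of these objects appear anywhere in your argument.

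What you have written instead is an outline of the proof of Theorem \ref{Z2Z2-Z4} (steps $(\mathfrak{I}_1)$--$(\mathfrak{I}_3)$ of the introduction): relating $V^{\mathbb{Z}_2\times\mathbb{Z}_2}_{\emptyset\emptyset\nu}$ to $Z_{\mathfrak{RP}_+(\nu)}$, relating $V^{\mathbb{Z}_4}_{\emptyset\emptyset\nu}$ to $Z_{\mathfrak{RP}_-(\nu)}$, and using the staircase collapse $\mathfrak{RP}_+(\nu)=\mathfrak{RP}_-(\nu)$ to extract the ratio $\Phi$. That outline is a reasonable sketch of the paper's Section 4, but it is a different theorem with a different conclusion; in particular it is restricted to one-leg staircase partitions, whereas Theorem \ref{DT-Z_n} is a statement about arbitrary triples $(\lambda,\mu,\nu)$ for $\mathbb{Z}_n$, and nothing in your argument could produce the $\sum_\eta$ skew-Schur structure or the hook product $H_\nu$. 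You would need to start over, either by reproducing the Bryan--Cadman--Young derivation or by explicitly treating the statement as a citation rather than something to be proved here.
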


\section{Restricted pyramid configurations}
In this section, we will introduce a notion of restricted pyramid configurations constructed from pyramid partitions. Let $\nu\neq\emptyset$ be a partition and $l\in\mathbb{Z}_{\geq0}$. We are interested in a special type of them called a class of restricted pyramid configurations $\widetilde{\bm{\wp}}_{\pi}(\nu,l)$ of antidiagonal type $(\nu,l)$, which are the natural 3D combinatorial objects for computing the 1-leg DT $\mathbb{Z}_2\times\mathbb{Z}_2$-vertex with the leg partition $\nu$. We also define a class of restricted pyramid configurations $\bm{\wp}_{\pi}(\nu,l)$ of diagonal type $(\nu,l)$, which is closely related to the 1-leg DT $\mathbb{Z}_4$-vertex $V_{\emptyset\emptyset\nu}^{\mathbb{Z}_4}$. See Definition \ref{unified-dfn-RPC} and Remark \ref{generalization of RPC notions} for notations.
In Section 3.2, we first show that   $\widetilde{\bm{\wp}}_{\pi}(\nu,0)=\bm{\wp}_{\pi}(\nu,0)$ for any  pyramid partition $\pi$  if and only if $\nu=(m,m-1,\cdots,2,1)$ where  $m\in\mathbb{Z}_{\geq1}$ is the length of $\nu$. In other word, among restricted pyramid configurations of antidiagonal (or diagonal) type $(\nu,0)$  for all partitions $\nu\neq\emptyset$, a class of  them  with $\nu=(m,m-1,\cdots,2,1)$ is the unique class with  the same interlacing property along their diagonal and antidiagonal slices. In Section 3.3, we generalize this uniqueness  for restricted pyramid configurations  of antidiagonal  (or diagonal)  type $(\nu,l)$ for any fixed  $l\in\mathbb{Z}_{\geq0}$.

\subsection{Restricted pyramid configurations}
We propose a notion of restricted pyramid configurations and 
define some classes of them according to the prescribed interlacing properties. And we show a sequence of partitions satisfying some interlacing property can be realized as a sequence of slices of  a restricted pyramid configuration.
 
\begin{definition}
For any pyramid partition $\pi\in\mathfrak{P}$, we call any finite subset $\bm{\wp}\subseteq\pi$ a restricted pyramid configuration.
\end{definition}

Actually a restricted pyramid configuration is constructed from some pyramid partition $\pi$ by removing  some finite  bricks in $\pi$. Notice  a pyramid partition is also a restricted pyramid configuration, but a restricted pyramid configuration is not necessary a pyramid partition.

Given any restricted pyramid configuration $\bm{\wp}\subseteq\pi$ for  some  $\pi\in\mathfrak{P}$, let 
\ben
\bm{\wp}_k:=\bm{\wp}\cap\pi_k
\een
 be  the $k$-th diagonal slice of $\bm{\wp}$, i.e., the set of all bricks in $\bm{\wp}$ whose position $(x,y,z)$ satisfying $x-y=k$ and  let 
 \ben
 \widetilde{\bm{\wp}}_k:=\bm{\wp}\cap\widetilde{\pi}_k
 \een
  be  the $k$-th antidiagonal slice of $\pi$, i.e., the set of all bricks in $\bm{\wp}$ whose position $(x,y,z)$ satisfying $x+y=k$.

To describe the interlacing property induced by the 1-leg partition $\nu$, we take the definition of  the edge sequence of a partition $\nu$ in [\cite{BCY}, Section 7.1]. The edge sequence of $\nu$ is defined for $t\in\mathbb{Z}$ by 
\ben
\nu(t)=\left\{
\begin{aligned}
	&+1 ,\;\;\;\;\;\; \mbox{if $t\in\mathit{S}(\nu)$}, \\
	& -1 ,\;\;\;\;\;\; \mbox{if $t\notin\mathit{S}(\nu)$}.
\end{aligned}
\right.
\een
where 
\ben
\mathit{S}(\nu)=\{\nu_j-j-1\,|\,j\geq0\}\subset\mathbb{Z}.
\een

\begin{remark}\label{charge0}
It is obvious that for a given partition $\nu$, we have  $\nu(t)=-1$  for $t\gg0$ and $\nu(t)=1$ for $t\ll0$. 
For a given partition $\nu$, we set 
\ben
\mathit{S}(\nu)_{+}=\mathit{S}(\nu)\setminus\mathbb{Z}_{<0},\;\;\;\;\mathit{S}(\nu)_{-}=\mathbb{Z}_{<0}\setminus \mathit{S}(\nu).
\een
Then we have 
\ben
|\mathit{S}(\nu)_{+}|=|\{t\,|\,\nu(t)=1,\,t\geq0\}|,\;\;\;\; |\mathit{S}(\nu)_{-}|=|\{t\,|\,\nu(t)=-1,\,t<0\}|.
\een
It is well known that $|\mathit{S}(\nu)_{+}|=|\mathit{S}(\nu)_{-}|$ for any partition $\nu$, which means the edge sequence $\nu(t)$ has charge zero, see [\cite{BCY}, Section 7.3] or [\cite{Okounkov}, Appendix A].
\end{remark}

\begin{definition}([\cite{BCY}, Definition 31])
	Let $\xi_1$ and $\xi_2$ be two partitions and $\tau=\pm1$, define
	\ben
	\xi_1\underset{\tau}{\prec\succ}\xi_2=\left\{
	\begin{aligned}
		&\xi_1\prec\xi_2 ,\;\;\;\;\;\; \mbox{if $\tau=+1$}, \\
		&\xi_1\succ\xi_2 ,\;\;\;\;\;\; \mbox{if $\tau=-1$}.
	\end{aligned}
	\right.
	\een
	and 
	\ben
	\xi_1\underset{\tau}{\prec\succ^\prime}\xi_2=\left\{
	\begin{aligned}
		&\xi_1\prec^\prime\xi_2 ,\;\;\;\;\;\; \mbox{if $\tau=+1$}, \\
		&\xi_1\succ^\prime\xi_2 ,\;\;\;\;\;\; \mbox{if $\tau=-1$}.
	\end{aligned}
	\right.
	\een
\end{definition}
 
\begin{definition}\label{dfn-interlacing}
	Let $\nu$ be a partition. We say that a sequence of partitions $\{\eta_k\}_{k\in\mathbb{Z}}$ satisfies the interlacing property of the first type $\nu$ if the following interlacing relations hold:
	\ben
	\cdots\underset{\nu^\prime(-5)}{\prec\succ}\eta_4\underset{\nu^\prime(-4)}{\prec\succ}\eta_3\underset{\nu^\prime(-3)}{\prec\succ}\eta_2\underset{\nu^\prime(-2)}{\prec\succ}\eta_1\underset{\nu^\prime(-1)}{\prec\succ}\eta_0\underset{\nu^\prime(0)}{\prec\succ}\eta_{-1}\underset{\nu^\prime(1)}{\prec\succ}\eta_{-2}\underset{\nu^\prime(2)}{\prec\succ}\eta_{-3}\underset{\nu^\prime(3)}{\prec\succ}\eta_{-4}\underset{\nu^\prime(4)}{\prec\succ}\cdots
	\een
	or equivalently 
	\ben
	\eta_{k}\underset{\nu^\prime(-k)}{\prec\succ}\eta_{k-1},\;\;\;\;\;\; \forall\, k\in\mathbb{Z}.
	\een

	We say that a sequence of partitions $\{\eta_k\}_{k\in\mathbb{Z}}$ satisfies the interlacing property of the second type $\nu$, if the following interlacing relations hold:
	\ben
	\cdots\underset{\nu^\prime(-5)}{\prec\succ}\eta_4\underset{\nu^\prime(-4)}{\prec\succ^\prime}\eta_3\underset{\nu^\prime(-3)}{\prec\succ}\eta_2\underset{\nu^\prime(-2)}{\prec\succ^\prime}\eta_1\underset{\nu^\prime(-1)}{\prec\succ}\eta_0\underset{\nu^\prime(0)}{\prec\succ^\prime}\eta_{-1}\underset{\nu^\prime(1)}{\prec\succ}\eta_{-2}\underset{\nu^\prime(2)}{\prec\succ^\prime}\eta_{-3}\underset{\nu^\prime(3)}{\prec\succ}\eta_{-4}\underset{\nu^\prime(4)}{\prec\succ^\prime}\cdots
	\een
	or equivalently 
	\ben
	\eta_{2k}\underset{\nu^\prime(-2k)}{\prec\succ^\prime}\eta_{2k-1}\;\;\mbox{and}\;\;\;\eta_{2k+1}\underset{\nu^\prime(-2k-1)}{\prec\succ}\eta_{2k},\;\;\;\;\;\; \forall\, k\in\mathbb{Z}.
	\een
	
	Here we use the edge sequence of $\nu^\prime$.
\end{definition}

The  interlacing property of the second type  $\nu$ describes  two kinds of restricted pyramid configurations with respect to  $\nu$ as follows.
\begin{definition}\label{interlacing-second-type}
	Let $\nu$ be a partition.  For any pyramid partition $\pi\in\mathfrak{P}$, we say that a restricted pyramid configuration $\bm{\wp}\subseteq\pi$  is of antidiagonal $($resp. diagonal$)$ type $\nu$ if the induced sequence $\{\widetilde{\bm{\wp}}_k\}_{k\in\mathbb{Z}}$  $($resp. $\{\bm{\wp}_k\}_{k\in\mathbb{Z}}$$)$ 
	satisfies the interlacing property of the second type $\nu$.
\end{definition}
A restricted pyramid configuration with the same interlacing property along its  diagonal and antidiagonal slices with respect to $\nu$ is defined as follows.
\begin{definition}\label{dfn-symmetric-interlacing}
	We say that a restricted pyramid configuration $\bm{\wp}$ has the symmetric interlacing property of type $\nu$ if $\bm{\wp}$ is  both  of antidiagonal  type $\nu$ and of  diagonal type $\nu$.
\end{definition}
\begin{remark}\label{interlacing-emptyset}
Since $\emptyset(t)=-1$ for $t\geq0$ and $\emptyset(t)=1$ for $t<0$, it follows from Remarks \ref{brick-box1} and  \ref{brick-box2} that any pyramid partition $\pi\in\mathfrak{P}$ is both of diagonal type  $\emptyset$ and of antidiagonal type $\emptyset$, that is, any pyramid partition $\pi\in\mathfrak{P}$ has the symmetric interlacing property of type $\emptyset$. 
\end{remark}
Let $\nu\neq\emptyset$ be a partition.
Next, we will  focus on constructing  a class of restricted pyramid configurations of antidiagonal type $\nu$ and a class of restricted pyramid configurations of diagonal type $\nu$ from pyramid partitions.
We first introduce some notations.
For $i,j\in\mathbb{Z}_{\geq0}$ and $k\in\mathbb{Z}$,
\ben
&&\widetilde{\mathbf{S}}_{2k}(i,j)=\left\{
\begin{aligned}
	& \left[(\upsilon_{1}\omega_1)^{-k}(\upsilon_1\omega_2)^j(\upsilon_2\omega_1)^i\right]\, \;\;\;\;\;\;\; \quad\quad \mbox{if  $k\leq 0$},\\
	& \left[(\upsilon_{2}\omega_2)^k(\upsilon_1\omega_2)^j(\upsilon_2\omega_1)^i\right], \;\;\;\;\;\;\;\;\quad\quad \mbox{if  $k>0$}.
\end{aligned}
\right.\\
&&\widetilde{\mathbf{S}}_{2k-1}(i,j)=\left\{
\begin{aligned}
	& \left[(\upsilon_{1}\omega_1)^{-k}\upsilon_1(\omega_2\upsilon_1)^j(\omega_1\upsilon_2)^i\right],  \;\;\;\;\;\; \mbox{if  $k\leq0$},\\
	& \left[(\upsilon_{2}\omega_2)^{k-1}\upsilon_2(\omega_2\upsilon_1)^j(\omega_1\upsilon_2)^i\right],\;\;\;\;\; \mbox{if  $k>0$}.
\end{aligned}
\right.\\
&&\mathbf{S}_{2k}(i,j)=\left\{
\begin{aligned}
	& \left[(\upsilon_{1}\omega_2)^{-k}(\upsilon_1\omega_1)^j(\upsilon_2\omega_2)^i\right],  \;\;\;\;\;\;\quad\quad\mbox{if  $k\leq0$},\\
	& \left[(\upsilon_{2}\omega_1)^k(\upsilon_1\omega_1)^j(\upsilon_2\omega_2)^i\right],\;\;\;\;\;\;\;\; \quad\quad\mbox{if  $k>0$}.
\end{aligned}
\right.\\
&&\mathbf{S}_{2k-1}(i,j)=\left\{
\begin{aligned}
	& \left[(\upsilon_{1}\omega_2)^{-k}\upsilon_1(\omega_1\upsilon_1)^j(\omega_2\upsilon_2)^i\right],  \;\;\;\;\;\;\mbox{if  $k\leq0$},\\
	& \left[(\upsilon_{2}\omega_1)^{k-1}\upsilon_2(\omega_1\upsilon_1)^j(\omega_2\upsilon_2)^i\right],\;\;\;\;\; \mbox{if  $k>0$}.
\end{aligned}
\right.
\een

The above bricks have the following relations.
\begin{lemma}\label{antidiagonal-diagonal}
	If $k\leq0$, we have
	\ben
	\widetilde{\mathbf{S}}_{2k}(i,j)=\left\{
	\begin{aligned}
		& \mathbf{S}_{-2(j-i)}(i,i-k),  \;\;\;\;\;\;\; \mbox{if  $j\geq i\geq0$},\\
		& \mathbf{S}_{2(i-j)}(j,j-k),\;\;\;\;\;\;\;\;\; \mbox{if  $i\geq j\geq0$}.
	\end{aligned}
	\right.
	\een
	and 
	\ben
	\widetilde{\mathbf{S}}_{2k-1}(i,j)=\left\{
	\begin{aligned}
		& \mathbf{S}_{-2(j-i)-1}(i,i-k),  \;\;\;\;\;\;\;\;\;\;\;\;\;\, \mbox{if  $j\geq i\geq0$},\\
		& \mathbf{S}_{2(i-j)-1}(j,j-k+1),\;\;\;\;\;\;\;\;\; \mbox{if  $i> j\geq0$}.
	\end{aligned}
	\right.
	\een
	If $k>0$, we have
	\ben
	\widetilde{\mathbf{S}}_{2k}(i,j)=\left\{
	\begin{aligned}
		& \mathbf{S}_{-2(j-i)}(i+k,i),  \;\;\;\;\;\;\; \mbox{if  $j\geq i\geq0$},\\
		& \mathbf{S}_{2(i-j)}(j+k,j),\;\;\;\;\;\;\;\;\; \mbox{if  $i\geq j\geq0$}.
	\end{aligned}
	\right.
	\een
	and 
	\ben
	\widetilde{\mathbf{S}}_{2k-1}(i,j)=\left\{
	\begin{aligned}
		& \mathbf{S}_{-2(j-i-1)-1}(i+k,i),   \;\;\;\;\;\;\;\;\;\;\;\;\;\, \mbox{if  $j> i\geq0$},\\
		& \mathbf{S}_{2(i-j+1)-1}(j+k-1,j),\;\;\;\;\;\;\;\;\; \mbox{if  $i\geq j\geq0$}.
	\end{aligned}
	\right.
	\een
\end{lemma}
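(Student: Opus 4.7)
The plan is to verify each of the eight sub-cases by direct word manipulation modulo $\mathcal{I}$, using the five relations of Lemma \ref{brick-relation}. The only essential identity is the swap relation (the fourth one in Lemma \ref{brick-relation})
\[
[(\upsilon_1\omega_2)^s(\upsilon_2\omega_1)^s]=[(\upsilon_1\omega_1)^s(\upsilon_2\omega_2)^s],
\]
which converts the ``antidiagonal'' alphabet $\{\upsilon_1\omega_2,\upsilon_2\omega_1\}$ appearing in the definition of $\widetilde{\mathbf{S}}_\bullet(i,j)$ into the ``diagonal'' alphabet $\{\upsilon_1\omega_1,\upsilon_2\omega_2\}$ appearing in $\mathbf{S}_\bullet(i,j)$. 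The other tools are the commutativity of any two $(\upsilon_i\omega_j)$-syllables (the third relation) and, for the odd-indexed bricks, the migration rule (the fifth relation)
\[
[\upsilon_n(\omega_i\upsilon_j)^s(\omega_k\upsilon_l)^t]=[(\upsilon_j\omega_i)^s(\upsilon_l\omega_k)^t\upsilon_n].
\]

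I illustrate with the first sub-case (even index, $k\leq 0$, $j\geq i$). Writing $(\upsilon_1\omega_2)^j=(\upsilon_1\omega_2)^{j-i}(\upsilon_1\omega_2)^i$, one commutes $(\upsilon_1\omega_2)^{j-i}$ past $(\upsilon_1\omega_1)^{-k}$, applies the swap to $(\upsilon_1\omega_2)^i(\upsilon_2\omega_1)^i=(\upsilon_1\omega_1)^i(\upsilon_2\omega_2)^i$, and combines powers of $(\upsilon_1\omega_1)$ to arrive at $[(\upsilon_1\omega_2)^{j-i}(\upsilon_1\omega_1)^{i-k}(\upsilon_2\omega_2)^i]=\mathbf{S}_{-2(j-i)}(i,i-k)$. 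The sub-case $i\geq j$ is strictly symmetric, factoring $(\upsilon_2\omega_1)^i=(\upsilon_2\omega_1)^{i-j}(\upsilon_2\omega_1)^j$ instead; and the two $k>0$ sub-cases are identical except that the prefix $(\upsilon_1\omega_1)^{-k}$ is replaced by $(\upsilon_2\omega_2)^k$, which is precisely why $k$ migrates from the ``second index'' $i-k$ (or $j-k$) to the ``first index'' in the form $i+k$ (or $j+k$) in the stated formulas.

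For the four odd-indexed cases I first pull the central $\upsilon$ to the far right by relation (5), for instance
\[
[\upsilon_1(\omega_2\upsilon_1)^j(\omega_1\upsilon_2)^i]=[(\upsilon_1\omega_2)^j(\upsilon_2\omega_1)^i\upsilon_1],
\]
and analogously with $\upsilon_1\leftrightarrow\upsilon_2$. The even-case manipulation is then performed on the prefix, and relation (5) is applied in reverse on the target $\mathbf{S}_\bullet(i',j')$ to reintroduce the central $\upsilon$ and complete the comparison. The main bookkeeping subtlety is the shift by $\pm 1$ in the second index (e.g.\ $j-k+1$ or $j+k-1$) appearing in the sub-cases where the excess of $i$ over $j$, or $j$ over $i$, is strict: one absorbs an extra leading $(\upsilon_2\omega_1)$ or $(\upsilon_1\omega_2)$ pair, regrouped via relation (5) as a $(\upsilon_1\omega_1)$ (or $(\upsilon_2\omega_2)$) syllable together with a change of the trailing letter, which shifts the exponent by one.

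The hardest point is keeping the index shifts correct across the four odd sub-cases. As an independent check, I will also verify each identity at the level of positions in $\mathbb{Z}^3$: both sides sum to the same vector under the assignments $\upsilon_1=(-1,0,1)$, $\upsilon_2=(1,0,1)$, $\omega_1=(0,-1,1)$, $\omega_2=(0,1,1)$, and Lemma \ref{interlacing1} together with Remarks \ref{brick-box1} and \ref{brick-box2} imply that within each (anti)diagonal slice a brick is uniquely determined by its position. Agreement of the two position vectors in each sub-case therefore provides a self-contained alternative proof that cleanly side-steps the combinatorics of manipulating words.
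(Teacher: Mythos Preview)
Your proposal is correct and follows essentially the same approach as the paper: the paper, too, proves the lemma by direct word manipulation via Lemma~\ref{brick-relation}, working out two representative sub-cases explicitly (the even case $k\le 0$, $j\ge i$ and the odd case $k\le 0$, $i>j$) and declaring the rest similar. Your supplementary position-vector check is a pleasant addition the paper does not make; just note that the injectivity of the position map on bricks is not stated as such in the cited remarks, though it follows immediately from the explicit position formulas one extracts from Remarks~\ref{brick-box1} and~\ref{brick-box2}.
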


\begin{proof}
	If $k\leq0$ and $j\geq i\geq0$, then 
	by Lemma \ref{brick-relation} we have 
	\ben
	\widetilde{\mathbf{S}}_{2k}(i,j)&=&\left[(\upsilon_{1}\omega_1)^{-k}(\upsilon_1\omega_2)^j(\upsilon_2\omega_1)^i\right]\\
	&=&\left[(\upsilon_1\omega_2)^j(\upsilon_2\omega_1)^i(\upsilon_{1}\omega_1)^{-k}\right]\\
	&=&\left[(\upsilon_1\omega_2)^{j-i}(\upsilon_1\omega_2)^{i}(\upsilon_2\omega_1)^i(\upsilon_{1}\omega_1)^{-k}\right]\\
	&=&\left[(\upsilon_1\omega_2)^{j-i}(\upsilon_{1}\omega_1)^{i-k}(\upsilon_2\omega_2)^i\right]\\
	&=&\mathbf{S}_{-2(j-i)}(i,i-k)
	\een
	If $k\leq0$ and $i> j\geq0$, by Lemma  \ref{brick-relation} again we have 
	\ben
	\widetilde{\mathbf{S}}_{2k-1}(i,j)&=&\left[(\upsilon_{1}\omega_1)^{-k}\upsilon_1(\omega_2\upsilon_1)^j(\omega_1\upsilon_2)^i\right]\\
	&=&\left[(\upsilon_{1}\omega_1)^{-k}(\upsilon_1\omega_2)^j(\upsilon_2\omega_1)^i\upsilon_1\right]\\
	&=&\left[(\upsilon_2\omega_1)^i(\upsilon_1\omega_2)^j(\upsilon_{1}\omega_1)^{-k}\upsilon_1\right]\\
	&=&\left[(\upsilon_2\omega_1)^{i-j}(\upsilon_2\omega_1)^{j}(\upsilon_1\omega_2)^j(\upsilon_{1}\omega_1)^{-k}\upsilon_1\right]\\
	&=&\left[(\upsilon_2\omega_1)^{i-j}(\upsilon_{1}\omega_1)^{j-k}(\upsilon_2\omega_2)^j\upsilon_1\right]\\
	&=&\left[(\upsilon_2\omega_1)^{i-j-1}(\upsilon_2\omega_1)(\upsilon_{1}\omega_1)^{j-k}(\upsilon_2\omega_2)^j\upsilon_1\right]\\
	&=&\left[(\upsilon_2\omega_1)^{i-j-1}(\upsilon_2\omega_1)\upsilon_1(\omega_1\upsilon_{1})^{j-k}(\omega_2\upsilon_2)^j\right]\\
	&=&\left[(\upsilon_2\omega_1)^{i-j-1}\upsilon_2(\omega_1\upsilon_{1})^{j-k+1}(\omega_2\upsilon_2)^j\right]\\
	&=&\mathbf{S}_{2(i-j)-1}(j,j-k+1).
	\een
	
	The other cases are similar.	
\end{proof}
To define a class of restricted pyramid configurations with the expected interlacing property, we also need to employ the following notations.
For a fixed partition $\nu$, let 
\ben
&&\varepsilon_1(\nu;\imath)=\sum_{t=0}^\imath\frac{\nu^\prime(2t)+1}{2},\\
&&\varepsilon_2(\nu;\imath)=\sum_{t=0}^\imath\frac{\nu^\prime(2t+1)+1}{2},\\
&&\varepsilon_3(\nu;\jmath)=\sum_{t=1}^\jmath\frac{1-\nu^\prime(-2t)}{2},\\
&&\varepsilon_4(\nu;\jmath)=\sum_{t=1}^\jmath\frac{1-\nu^\prime(-2t+1)}{2},
\een
where  if $\imath<0$, define $\varepsilon_i(\nu;\imath)=0$  for $i=1,2$ and if $\jmath<1$, define $\varepsilon_j(\nu;\jmath)=0$  for $j=3,4$. It is obvious that  for $i=1,2$ and $j=3,4$,  
\ben
&&0\leq\varepsilon_i(\nu;\imath+1)-\varepsilon_i(\nu;\imath)\leq1, \;\;\forall\;\imath\in\mathbb{Z};\\
&&0\leq\varepsilon_j(\nu;\jmath+1)-\varepsilon_j(\nu;\jmath)\leq1, \;\;\forall\;\jmath\in\mathbb{Z}.
\een
For a given partition $\nu$, we have  $\nu^\prime(t)=-1$  for $t\gg0$ and $\nu^\prime(t)=1$ for $t\ll0$, and hence there exists a positive integer $\varrho(\nu)$ such that for $i=1,2$ and $j=3,4$,
\ben
&&0\leq\varepsilon_i(\nu;\imath)\leq\varepsilon_i(\nu;\varrho(\nu)), \;\;\forall\;\imath\in\mathbb{Z};\\
&&0\leq\varepsilon_j(\nu;\jmath)\leq\varepsilon_j(\nu;\varrho(\nu)), \;\;\forall\;\jmath\in\mathbb{Z}.
\een
Define  $\varepsilon_i(\nu;+\infty)=\varepsilon_i(\nu;\varrho(\nu))$ and $\varepsilon_j(\nu;+\infty)=\varepsilon_j(\nu;\varrho(\nu))$  for $i=1,2$ and $j=3,4$. Let
\ben
&&\rho_1(\nu)=\max\left\{\varepsilon_2(\nu;+\infty),\; \varepsilon_4(\nu;+\infty)\right\};\\
&&\rho_2(\nu)=\max\left\{\varepsilon_1(\nu;+\infty),\; \varepsilon_3(\nu;+\infty)\right\}.
\een
Then $\rho_1(\nu)\geq\varepsilon_{l_1}(\nu;k)\geq0$ and  $\rho_2(\nu)\geq\varepsilon_{l_2}(\nu;k)\geq0$ for any $l_1\in\{2,4\}$, $l_2\in\{1,3\}$ and $k\in\mathbb{Z}$.
\begin{remark}\label{empty-case}
It is easy to show that  $\nu=\emptyset$ if and only if $\varepsilon_1(\nu;+\infty)=\varepsilon_2(\nu;+\infty)=\varepsilon_3(\nu;+\infty)=\varepsilon_4(\nu;+\infty)=0$, if and only if   $\rho_1(\nu)=\rho_2(\nu)=0$.
\end{remark}

 For a given partition $\nu$, we set
\ben
&&\widetilde{\mathcal{S}}_{2k}(\nu)=\left\{
\begin{aligned}
	&\left\{ \widetilde{\mathbf{S}}_{2k}(i,j)\,\bigg|\,i\geq\rho_{1}(\nu)-\varepsilon_2(\nu;-k-1),\; j\geq \rho_{2}(\nu)-\varepsilon_1(\nu;-k-1) \right\},  \;\; \mbox{if  $k\leq 0$},\\
	&\left\{ \widetilde{\mathbf{S}}_{2k}(i,j)\,\bigg|\,i\geq\rho_{1}(\nu)-\varepsilon_4(\nu;k),\; j\geq\rho_{2}(\nu)- \varepsilon_3(\nu;k)\right\},  \;\; \;\; \;\; \;\; \;\; \;\; \;\; \;\; \;\; \;\; \mbox{if  $k> 0$}.
\end{aligned}
\right.
\\
&&\widetilde{\mathcal{S}}_{2k-1}(\nu)=\left\{
\begin{aligned}
	&\left\{ \widetilde{\mathbf{S}}_{2k-1}(i,j)\,\bigg|\,i\geq\rho_{1}(\nu)- \varepsilon_2(\nu;-k-1),\; j\geq \rho_{2}(\nu)-\varepsilon_1(\nu;-k) \right\},  \;\; \mbox{if  $k\leq 0$},\\
	&\left\{ \widetilde{\mathbf{S}}_{2k-1}(i,j)\,\bigg|\,i\geq\rho_{1}(\nu)-\varepsilon_4(\nu;k),\; j\geq\rho_{2}(\nu)- \varepsilon_3(\nu;k-1) \right\},  \;\;\;\; \;\; \;\, \mbox{if  $k> 0$}.
\end{aligned}
\right.
\een
and 
\ben
&&\mathcal{S}_{2k}(\nu)=\left\{
\begin{aligned}
	&\left\{ \mathbf{S}_{2k}(i,j)\,\bigg|\,i\geq\rho_{1}(\nu)-\varepsilon_2(\nu;-k-1),\; j\geq \rho_{2}(\nu)-\varepsilon_1(\nu;-k-1) \right\},  \;\; \mbox{if  $k\leq 0$},\\
	&\left\{ \mathbf{S}_{2k}(i,j)\,\bigg|\,i\geq\rho_{1}(\nu)- \varepsilon_4(\nu;k),\; j\geq\rho_{2}(\nu)- \varepsilon_3(\nu;k)\right\},  \;\; \;\; \;\; \;\; \;\; \;\; \;\; \;\; \;\; \;\; \mbox{if  $k> 0$}.
\end{aligned}
\right.
\\
&&\mathcal{S}_{2k-1}(\nu)=\left\{
\begin{aligned}
	&\left\{ \mathbf{S}_{2k-1}(i,j)\,\bigg|\,i\geq\rho_{1}(\nu)- \varepsilon_2(\nu;-k-1),\; j\geq \rho_{2}(\nu)-\varepsilon_1(\nu;-k)  \right\},  \;\; \mbox{if  $k\leq 0$},\\
	&\left\{ \mathbf{S}_{2k-1}(i,j)\,\bigg|\,i\geq\rho_{1}(\nu)-\varepsilon_4(\nu;k),\; j\geq\rho_{2}(\nu)- \varepsilon_3(\nu;k-1) \right\},  \;\;\;\; \;\; \;\, \mbox{if  $k> 0$}.
\end{aligned}
\right.
\een
For any $k\in\mathbb{Z}$, the bricks of  $\widetilde{\mathcal{S}}_{k}(\nu)$ (resp. $\mathcal{S}_{k}(\nu)$) are all locating in $k$-th antidiagonal (resp. diagonal) slice positions by Lemma \ref{interlacing2} (resp. Lemma \ref{interlacing1}).
\begin{remark}\label{empty-case1}
If $\nu=\emptyset$, by Remark \ref{empty-case}, we have for any $k\in\mathbb{Z}$,
\ben
\widetilde{\mathcal{S}}_{k}(\emptyset)=\left\{ \widetilde{\mathbf{S}}_{k}(i,j)\,\bigg|\,i\geq0,\; j\geq 0 \right\}\quad\mbox{and}\quad
\mathcal{S}_{k}(\emptyset)=\left\{ \mathbf{S}_{k}(i,j)\,\bigg|\,i\geq0,\; j\geq 0 \right\}.
\een
\end{remark}
For any subset $A\subseteq(\mathbb{Z}_{\geq0})^2$ and $\mathcal{A}=\{a(i,j)\,|\,(i,j)\in A\}$, define
\ben
\mathcal{A}^c=\left\{a(i,j)\,|\,(i,j)\in (\mathbb{Z}_{\geq0})^2\setminus A\right\}.
\een
And  we take convention that $\mathcal{B}=\{j\in\mathbb{Z}\,|\,b_1\leq j\leq b_2\}=\emptyset$ whenever integers $b_1,b_2$ satisfy $b_1>b_2$. 

In the following, we define two restricted pyramid configurations by removing bricks in $\bigcup\limits_{k\in\mathbb{Z}}\left(\widetilde{\mathcal{S}}_{k}(\nu)\right)^c$ $\left(\mbox{resp. $\bigcup\limits_{k\in\mathbb{Z}}\left(\mathcal{S}_{2k}(\nu)\right)^c$}\right)$ from any pyramid partition $\pi$.
\begin{definition}\label{dfn-antidiag-diag-RPC}
Let $\nu$ be a partition. For any pyramid partition $\pi\in\mathfrak{P}$, define  
\ben
&&\widetilde{\bm{\wp}}_{\pi}(\nu)=\bigcup_{k\in\mathbb{Z}}\widetilde{\mathcal{S}}_{k}(\nu)\cap\widetilde{\pi}_{k};\\
&&\bm{\wp}_{\pi}(\nu)=\bigcup_{k\in\mathbb{Z}}\mathcal{S}_{k}(\nu)\cap\pi_{k}.
\een
\end{definition}

For any $k\in\mathbb{Z}$, set 
\ben
&&\widetilde{\bm{\wp}}_{\pi}(\nu)_k:=\widetilde{\bm{\wp}}_{\pi}(\nu)\cap\widetilde{\pi}_{k}=\widetilde{\mathcal{S}}_{k}(\nu)\cap\widetilde{\pi}_{k};\\
&& \bm{\wp}_{\pi}(\nu)_k:=\bm{\wp}_{\pi}(\nu)\cap\pi_{k}=\mathcal{S}_{k}(\nu)\cap\pi_{k}.
\een
 It is obvious that the  bricks in   $\widetilde{\bm{\wp}}_{\pi}(\nu)_k$ (resp. $\bm{\wp}_{\pi}(\nu)_k$) also form a 2D Young diagram by the definition of $\widetilde{\mathcal{S}}_{k}(\nu)$ (resp. $\mathcal{S}_{k}(\nu)$) together with Remarks \ref{brick-box1} and \ref{brick-box2}. And we also identify $\widetilde{\bm{\wp}}_{\pi}(\nu)_k$ (resp. $\bm{\wp}_{\pi}(\nu)_k$) with its corresponding 2D Young diagram (or partition).  Then we have two sequences of partitions $\{\widetilde{\bm{\wp}}_{\pi}(\nu)_k\}_{k\in\mathbb{Z}}$ and $\{\bm{\wp}_{\pi}(\nu)_k\}_{k\in\mathbb{Z}}$. They satisfy the following interlacing properties.

\begin{lemma}\label{RPC-interlacing1}
Let $\nu$ be a partition. For any pyramid partition $\pi\in\mathfrak{P}$, the restricted pyramid configuration $\widetilde{\bm{\wp}}_{\pi}(\nu)$ $($resp. $\bm{\wp}_{\pi}(\nu)$$)$ is of antidiagonal $($resp. diagonal$)$ type $\nu$.
\end{lemma}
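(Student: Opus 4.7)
The plan is to verify the interlacing property of the second type $\nu$ for the antidiagonal sequence $\{\widetilde{\bm{\wp}}_\pi(\nu)_k\}_{k\in\mathbb{Z}}$ by bootstrapping from the unrestricted pyramid interlacing of Lemma \ref{interlacing2}; the diagonal statement for $\{\bm{\wp}_\pi(\nu)_k\}$ then follows by the parallel argument using Lemma \ref{interlacing1} and the brick translations of Lemma \ref{antidiagonal-diagonal} in place of Lemma \ref{interlacing2} and Remark \ref{brick-box2}. Note that for $\nu=\emptyset$ all the $\varepsilon_i$ vanish by Remark \ref{empty-case}, no bricks are removed (Remark \ref{empty-case1}), and the claim reduces to Remark \ref{interlacing-emptyset}; so we concentrate on $\nu\neq\emptyset$.

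Under the bijection of Remark \ref{brick-box2}, the slice $\widetilde{\bm{\wp}}_\pi(\nu)_k=\widetilde{\mathcal{S}}_k(\nu)\cap\widetilde{\pi}_k$ is the subdiagram of the 2D Young diagram $\widetilde{\pi}_k$ consisting of boxes $(i,j)$ with $i\geq i_0(k)$ and $j\geq j_0(k)$, where $(i_0(k),j_0(k))$ is the lower-bound pair read off from Definition \ref{dfn-antidiag-diag-RPC}. Translating the origin by $(-i_0(k),-j_0(k))$ realizes this as a genuine 2D Young diagram with row lengths $\max((\widetilde{\pi}_k)_{I+i_0(k)}-j_0(k),0)$ for $I\geq 0$. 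The key computation is the comparison of $(i_0,j_0)$ between two adjacent slices: using the telescoping identities $\varepsilon_1(\nu;\imath)-\varepsilon_1(\nu;\imath-1)=\tfrac{\nu^\prime(2\imath)+1}{2}$ and $\varepsilon_2(\nu;\imath)-\varepsilon_2(\nu;\imath-1)=\tfrac{\nu^\prime(2\imath+1)+1}{2}$, together with their analogues for $\varepsilon_3,\varepsilon_4$ in terms of $\tfrac{1-\nu^\prime(-2\jmath)}{2}$ and $\tfrac{1-\nu^\prime(-2\jmath+1)}{2}$, a direct inspection of the piecewise formula for $\widetilde{\mathcal{S}}_k(\nu)$ shows that passing from slice $s-1$ to slice $s$ changes exactly one of $i_0,j_0$ (namely $j_0$ if $s$ is even and $i_0$ if $s$ is odd), by either $0$ or $\pm 1$, with the precise value determined by $\nu^\prime(-s)\in\{\pm 1\}$ and the sign of $s$.

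The prescribed interlacing then follows from the following combinatorial observation, verified by a short computation using the max-with-zero formula for shifted row lengths: if two 2D Young diagrams satisfy $\lambda\succ^\prime\mu$ (equivalently $\lambda_i-\mu_i\in\{0,1\}$ for $i\geq 1$ by Lemma \ref{interlacing}) and one applies an extra unit shift in $j$ to $\lambda$ only, the resulting shifted pair satisfies the reversed primed relation $\tilde{\lambda}\prec^\prime\tilde{\mu}$; an extra unit shift in $i$ applied only to one of $\lambda\succ\mu$ analogously converts the relation into $\tilde{\lambda}\prec\tilde{\mu}$; and when no extra shift is present the inherited interlacing is preserved under the common shift. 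Running through the eight cases indexed by the parity of $s$, the sign of $s$, and the value $\nu^\prime(-s)\in\{\pm 1\}$, one matches each outcome to the symbol $\underset{\nu^\prime(-s)}{\prec\succ}$ or $\underset{\nu^\prime(-s)}{\prec\succ^\prime}$ required by Definition \ref{dfn-interlacing}.

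The principal obstacle is this eight-way case analysis, together with ensuring that the inherited row-length inequality extends down to the boundary index $I=0$ after the shift, so that the primed versus unprimed relation is obtained rather than a mere inclusion. This is handled using the bound $i_0(k)\geq\rho_1-\varepsilon_l(\nu;+\infty)\geq 0$ built into the definition of $\rho_1$, which places the shift origin deep inside a region where $\widetilde{\pi}_k$ has full corner depth. For $|k|\gg 0$ the cut-offs stabilize (since $\varepsilon_i(\nu;\imath)=\varepsilon_i(\nu;+\infty)$) and the interlacing reduces to the pure pyramid case matching the constant tail values of $\nu^\prime$, closing the verification.
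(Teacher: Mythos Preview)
Your approach is essentially the paper's. Both arguments identify $\widetilde{\bm{\wp}}_\pi(\nu)_k$ with the translate of $\widetilde{\pi}_k$ by an origin shift $(i_0(k),j_0(k))$, observe that between two adjacent slices the shift in one coordinate is \emph{identical} while the shift in the other differs by $\tfrac{\nu^\prime(-s)+1}{2}$ or $\tfrac{1-\nu^\prime(-s)}{2}\in\{0,1\}$, and combine this with the pyramid interlacing of Lemma~\ref{interlacing2}. The paper carries this out by a direct computation of the shifted row/column lengths $\widetilde{\ell}_{2k,i}-\widehat{\varepsilon}_1(\nu;-k-1)$, etc., showing the relevant difference lands in $\{0,1\}$; your ``combinatorial observation'' is the same computation repackaged.

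One remark: your explanation of the boundary issue (``places the shift origin deep inside a region where $\widetilde{\pi}_k$ has full corner depth'') is not the right reason and is not needed. The pyramid $\pi$ need not be large at all near $(i_0,j_0)$ --- it could even be empty there. What actually resolves the $I=0$ boundary is the elementary fact that if $a-b\in\{0,1\}$ then $\max(a,0)-\max(b,0)\in\{0,1\}$; this makes the shifted row lengths inherit the interlacing regardless of where the shift sits relative to $\widetilde{\pi}_k$. The paper also glosses over this $\max$-with-zero step, writing the signed differences $\widetilde{\ell}_{2k,i}-\widehat{\varepsilon}_1(\nu;-k-1)$ directly, but it is the same point. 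With that clarification your eight-case analysis goes through and matches the paper's proof.
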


\begin{proof}
We will prove that $\widetilde{\bm{\wp}}_{\pi}(\nu)$ is of antidiagonal type $\nu$, the case for $\bm{\wp}_{\pi}(\nu)$ of diagonal type $\nu$ is similar.
One needs to show the following interlacing relations:
\ben
\widetilde{\bm{\wp}}_{\pi}(\nu)_{2k}\underset{\nu^\prime(-2k)}{\prec\succ^\prime}\widetilde{\bm{\wp}}_{\pi}(\nu)_{2k-1}\;\;\mbox{and}\;\;\;\widetilde{\bm{\wp}}_{\pi}(\nu)_{2k+1}\underset{\nu^\prime(-2k-1)}{\prec\succ}\widetilde{\bm{\wp}}_{\pi}(\nu)_{2k},\;\;\;\;\;\; \forall\, k\in\mathbb{Z}.
\een
For simplicity of presentation, we take the following notations for any $\imath,\jmath\in\mathbb{Z}$
\bea\label{notations-interlacing1}
&&\widehat{\varepsilon}_1(\nu;\imath)=\rho_{2}(\nu)-\varepsilon_1(\nu;\imath);\quad\quad\quad \widehat{\varepsilon}_2(\nu;\imath)=\rho_{1}(\nu)-\varepsilon_2(\nu;\imath);\\
\label{notations-interlacing2}
&&\widehat{\varepsilon}_3(\nu;\jmath)=\rho_{2}(\nu)-\varepsilon_3(\nu;\jmath);\quad\quad\quad
\widehat{\varepsilon}_4(\nu;\jmath)=\rho_{1}(\nu)-\varepsilon_4(\nu;\jmath).
\eea
Let $\widetilde{R}_k^j$ and $\widetilde{R}_{k,i}$ be the set of bricks in the $j$-th column and $i$-th row of $\widetilde{\pi}_{k}$ respectively. Set $|\widetilde{R}_k^j|=\widetilde{\ell}_{k}^j$ and $|\widetilde{R}_{k,i}|=\widetilde{\ell}_{k,i}$. That is, we have for any $i,j\geq0$ and $k\in\mathbb{Z}$,
\ben
&&\widetilde{R}_k^j=\left\{\widetilde{\mathbf{S}}_{k}(i,j)\;\bigg|\;0\leq i< \widetilde{\ell}_{k}^j\right\},\\
&&\widetilde{R}_{k,i}=\left\{\widetilde{\mathbf{S}}_{k}(i,j)\;\bigg|\;0\leq j<\widetilde{\ell}_{k,i}\right\}.
\een
If $k\leq0$, by Lemma \ref{interlacing2} we have
\ben
\widetilde{\bm{\wp}}_{\pi}(\nu)_{2k}&=&\left\{ \widetilde{\mathbf{S}}_{2k}(i,j)\,\bigg|\,i\geq \widehat{\varepsilon}_2(\nu;-k-1),\; j\geq  \widehat{\varepsilon}_1(\nu;-k-1) \right\}\bigcap\widetilde{\pi}_{2k}\\
&=&\left\{ \widetilde{\mathbf{S}}_{2k}(i,j)\,\bigg|\,i\geq \widehat{\varepsilon}_2(\nu;-k-1),\; \widehat{\varepsilon}_1(\nu;-k-1)\leq j< \widetilde{\ell}_{2k,i}\right\},\\
\widetilde{\bm{\wp}}_{\pi}(\nu)_{2k-1}&=&\left\{ \widetilde{\mathbf{S}}_{2k-1}(i,j)\,\bigg|\,i\geq \widehat{\varepsilon}_2(\nu;-k-1),\; j\geq \widehat{\varepsilon}_1(\nu;-k) \right\}\bigcap\widetilde{\pi}_{2k-1}\\
&=&\left\{ \widetilde{\mathbf{S}}_{2k-1}(i,j)\,\bigg|\,i\geq \widehat{\varepsilon}_2(\nu;-k-1),\;  \widehat{\varepsilon}_1(\nu;-k)\leq j< \widetilde{\ell}_{2k-1,i} \right\}.
\een
By Lemma \ref{interlacing2}, we have $\widetilde{\pi}_{2k}\succ^\prime\widetilde{\pi}_{2k-1}$, which means that $\widetilde{\ell}_{2k,i}-\widetilde{\ell}_{2k-1,i}\in\{0,1\}$ for any $i\geq0$.

For any $i\geq \widehat{\varepsilon}_2(\nu;-k-1)$, if $\nu^\prime(-2k)=1$, then we have 
\ben
&&\left[\widetilde{\ell}_{2k-1,i}-\widehat{\varepsilon}_1(\nu;-k)\right]-\left[\widetilde{\ell}_{2k,i}-\widehat{\varepsilon}_1(\nu;-k-1)\right]\\
&=&\frac{\nu^\prime(-2k)+1}{2}-\left(\widetilde{\ell}_{2k,i}-\widetilde{\ell}_{2k-1,i}\right)\\
&=&\left[1-\left(\widetilde{\ell}_{2k,i}-\widetilde{\ell}_{2k-1,i}\right)\right]\in\{0,1\}.
\een
This implies that $\widetilde{\bm{\wp}}_{\pi}(\nu)_{2k}\prec^\prime\widetilde{\bm{\wp}}_{\pi}(\nu)_{2k-1}$ by Lemma \ref{interlacing}. If $\nu^\prime(-2k)=-1$, then we have 
\ben
\left[\widetilde{\ell}_{2k,i}-\widehat{\varepsilon}_1(\nu;-k-1)\right]-\left[\widetilde{\ell}_{2k-1,i}-\widehat{\varepsilon}_1(\nu;-k)\right]=\left(\widetilde{\ell}_{2k,i}-\widetilde{\ell}_{2k-1,i}\right)\in\{0,1\},
\een
which implies that $\widetilde{\bm{\wp}}_{\pi}(\nu)_{2k}\succ^\prime\widetilde{\bm{\wp}}_{\pi}(\nu)_{2k-1}$.
Then we  prove $\widetilde{\bm{\wp}}_{\pi}(\nu)_{2k}\underset{\nu^\prime(-2k)}{\prec\succ^\prime}\widetilde{\bm{\wp}}_{\pi}(\nu)_{2k-1}$ for $k\leq0$.

Similarly, if $k<0$, we also have 
\ben
\widetilde{\bm{\wp}}_{\pi}(\nu)_{2k}&=&\left\{ \widetilde{\mathbf{S}}_{2k}(i,j)\,\bigg|\, \widehat{\varepsilon}_2(\nu;-k-1)\leq i<\widetilde{\ell}_{2k}^j, \; j\geq  \widehat{\varepsilon}_1(\nu;-k-1) \right\},\\
\widetilde{\bm{\wp}}_{\pi}(\nu)_{2k+1}
&=&\left\{ \widetilde{\mathbf{S}}_{2k+1}(i,j)\,\bigg|\, \widehat{\varepsilon}_2(\nu;-k-2)\leq i<\widetilde{\ell}_{2k+1}^j,\; j\geq \widehat{\varepsilon}_1(\nu;-k-1)\right\}.
\een
By Lemma \ref{interlacing2}, we have $\widetilde{\pi}_{2k+1}\succ\widetilde{\pi}_{2k}$, which means that $\widetilde{\ell}_{2k+1}^j-\widetilde{\ell}_{2k}^j\in\{0,1\}$ for any $j\geq0$.

For any $j\geq \widehat{\varepsilon}_1(\nu;-k-1)$, if $\nu^\prime(-2k-1)=1$, then 
\ben
&&\left[\widetilde{\ell}_{2k}^j- \widehat{\varepsilon}_2(\nu;-k-1)\right]-\left[\widetilde{\ell}_{2k+1}^j- \widehat{\varepsilon}_2(\nu;-k-2)\right]
=\left[1-\left(\widetilde{\ell}_{2k+1}^j-\widetilde{\ell}_{2k}^j\right)\right]\in\{0,1\}.
\een
Hence $\widetilde{\bm{\wp}}_{\pi}(\nu)_{2k+1}\prec\widetilde{\bm{\wp}}_{\pi}(\nu)_{2k}$. If $\nu^\prime(-2k-1)=-1$, then 
\ben
\left[\widetilde{\ell}_{2k+1}^j- \widehat{\varepsilon}_2(\nu;-k-2)\right]-\left[\widetilde{\ell}_{2k}^j- \widehat{\varepsilon}_2(\nu;-k-1)\right]=\widetilde{\ell}_{2k+1}^j-\widetilde{\ell}_{2k}^j\in\{0,1\}.
\een
By Lemma \ref{interlacing}, we have $\widetilde{\bm{\wp}}_{\pi}(\nu)_{2k+1}\succ\widetilde{\bm{\wp}}_{\pi}(\nu)_{2k}$. Then  $\widetilde{\bm{\wp}}_{\pi}(\nu)_{2k+1}\underset{\nu^\prime(-2k-1)}{\prec\succ}\widetilde{\bm{\wp}}_{\pi}(\nu)_{2k}$ holds for $k<0$.

The other cases for $k\geq0$ are similar.
\end{proof}
\begin{remark}
For the later generalization, we also call $\widetilde{\bm{\wp}}_{\pi}(\nu)$ $($resp. $\bm{\wp}_{\pi}(\nu)$$)$ a restricted pyramid configuration of antidiagonal $($resp. diagonal$)$ type $(\nu,0)$, see Definition \ref{unified-dfn-RPC} and Remark \ref{generalization of RPC notions}.	
\end{remark}

Now, we define the following two classes of restricted pyramid configurations as mentioned above.

\begin{definition}\label{Two classes of RPC}
	Let $\nu$ be a partition. Define two sets of restricted pyramid configurations
	\ben
	&&\mathfrak{RP}_{+}(\nu)=\left\{\widetilde{\bm{\wp}}_{\pi}(\nu)\;\bigg| \;\pi\in\mathfrak{P}\right\},\\
	&&\mathfrak{RP}_{-}(\nu)=\left\{\bm{\wp}_{\pi}(\nu)\;\bigg| \;\pi\in\mathfrak{P}\right\}.
	\een
And we call $\mathfrak{RP}_{+}(\nu)$ $($resp. $\mathfrak{RP}_{-}(\nu)$$)$ a class of  restricted pyramid configurations of antidiagonal $($resp. diagonal$)$ type $\nu$.
\end{definition}
\begin{remark}\label{v-0-type-classes}
	We also call $\mathfrak{RP}_{+}(\nu)$ $($resp. $\mathfrak{RP}_{-}(\nu)$$)$ a class of restricted pyramid configurations of antidiagonal $($resp. diagonal$)$ type $(\nu,0)$, see Definition \ref{unified-dfn-RPC} and Remark \ref{generalization of RPC notions}.	
\end{remark}
\begin{remark}\label{empty1}
When $\nu=\emptyset$, we have $\widetilde{\bm{\wp}}_{\pi}(\nu)=\bm{\wp}_{\pi}(\nu)=\pi$ by Remark \ref{empty-case1} and hence $\mathfrak{RP}_{+}(\emptyset)=\mathfrak{RP}_{-}(\emptyset)=\mathfrak{P}$. Then $\mathfrak{P}$ is a class of restricted pyramid configurations of antidiagonal $($resp. diagonal$)$ type $\emptyset$.  Notice that if $\pi_1, \pi_2\in\mathfrak{P}$ and $\pi_1\neq\pi_2$, it is still possible to have $\widetilde{\bm{\wp}}_{\pi_1}(\nu)=\widetilde{\bm{\wp}}_{\pi_2}(\nu)$ or $\bm{\wp}_{\pi_1}(\nu)=\bm{\wp}_{\pi_2}(\nu)$ when $\nu\neq\emptyset$.
\end{remark}
To establish the relation between 1-leg DT $\mathbb{Z}_2\times\mathbb{Z}_2$-vertex (resp. 1-leg DT $\mathbb{Z}_4$-vertex) and restricted pyramid configurations $\mathfrak{RP}_{+}(\nu)$ (resp. $\mathfrak{RP}_{-}(\nu)$) later, we need the following 

\begin{lemma}\label{RPC-interlacing}
	Let $\nu$ be a partition. Let $\{\eta_k\}_{k\in\mathbb{Z}}$ be any sequence  of partitions  satisfying  the interlacing property of the second type $\nu$ and $\eta_k=\emptyset$ for all $|k|\gg0$, then there exists a pyramid partition $\pi\in\mathfrak{P}$ such that  $\eta_k=\widetilde{\bm{\wp}}_{\pi}(\nu)_k$ $($as the same 2D Young diagrams$)$ for any $k\in\mathbb{Z}$. Similarly, there exists a pyramid pyramid partition $\vartheta\in\mathfrak{P}$ such that $\eta_k=\bm{\wp}_{\vartheta}(\nu)_k$ for any $k\in\mathbb{Z}$. 	
\end{lemma}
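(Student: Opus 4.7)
The plan is to construct $\pi$ by augmenting each $\eta_k$ with a canonical baseline of bricks from $(\widetilde{\mathcal{S}}_k(\nu))^c$, so that the enlarged slices $\{\widetilde{\pi}_k\}_{k\in\mathbb{Z}}$ satisfy the unrestricted antidiagonal interlacing \eqref{antidiagonal-interlacing}; once this is verified, Remark \ref{characteristic-PP} assembles them into a pyramid partition $\pi\in\mathfrak{P}$, and by construction $\widetilde{\bm{\wp}}_\pi(\nu)_k = \widetilde{\mathcal{S}}_k(\nu)\cap\widetilde{\pi}_k = \eta_k$. The second statement, producing $\vartheta$ with $\bm{\wp}_\vartheta(\nu)_k = \eta_k$, then follows from the identical argument with Lemma \ref{interlacing1}, Remark \ref{brick-box1}, and $\mathcal{S}_k(\nu)$ in place of their antidiagonal counterparts.

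To construct $\widetilde{\pi}_k$, first use Remark \ref{brick-box2} to embed each $\eta_k$ into $\widetilde{\mathcal{S}}_k(\nu)$ by sending the $(i',j')$-box of $\eta_k$ to the brick $\widetilde{\mathbf{S}}_k(\widehat{\varepsilon}_2+i',\widehat{\varepsilon}_1+j')$, where $\widehat{\varepsilon}_1(\nu;\cdot),\widehat{\varepsilon}_2(\nu;\cdot)$ are the lower bounds appearing in the proof of Lemma \ref{RPC-interlacing1}; see \eqref{notations-interlacing1}--\eqref{notations-interlacing2}. Then take $\widetilde{\pi}_k$ to be the smallest 2D Young diagram containing this embedded copy; equivalently, place $\eta_k$ at offset $(\widehat{\varepsilon}_2,\widehat{\varepsilon}_1)$ and fill in the rectangular baseline below and to the left needed to make the result a partition. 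When $\eta_k=\emptyset$ this baseline is also empty and so $\widetilde{\pi}_k=\emptyset$; combined with the hypothesis that $\eta_k=\emptyset$ for $|k|\gg 0$, this ensures $\pi := \bigcup_{k\in\mathbb{Z}}\widetilde{\pi}_k$ is a finite subset of $\mathfrak{B}$.

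The main obstacle is verifying that $\{\widetilde{\pi}_k\}$ satisfies \eqref{antidiagonal-interlacing}; this amounts to running the computation from the proof of Lemma \ref{RPC-interlacing1} in reverse. The crucial numerical input is that each increment $\varepsilon_i(\nu;\imath)-\varepsilon_i(\nu;\imath-1)$ equals $(\nu^\prime(2\imath)+1)/2$ or $(\nu^\prime(2\imath-1)+1)/2$, hence lies in $\{0,1\}$ with value dictated by $\nu^\prime$. Consequently, between consecutive slices the baseline bounds $\widehat{\varepsilon}_1,\widehat{\varepsilon}_2$ shift by exactly the amount needed to compensate for the $\nu^\prime$-driven switches between $\prec$ and $\succ$ (and between $\prec^\prime$ and $\succ^\prime$) appearing in the hypothesis on $\{\eta_k\}$. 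In effect, adjoining the baseline converts the $\nu$-interlacing of $\{\eta_k\}$ into the $\emptyset$-interlacing of $\{\widetilde{\pi}_k\}$; once this is checked column-by-column and row-by-row using Lemma \ref{interlacing}, Remark \ref{characteristic-PP} produces the required pyramid partition and the identity $\widetilde{\bm{\wp}}_\pi(\nu)_k=\eta_k$ is immediate from the construction.
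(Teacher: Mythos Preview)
Your high-level plan---embed each $\eta_k$ at the offset $(\widehat{\varepsilon}_2,\widehat{\varepsilon}_1)$, adjoin a baseline to make it a Young diagram, and then invoke Remark~\ref{characteristic-PP}---is precisely the paper's strategy. The gap is in your choice of baseline: taking the \emph{smallest} Young diagram containing the embedded $\eta_k$ does not produce slices satisfying \eqref{antidiagonal-interlacing}. For a concrete failure, let $\nu=(1)$ and take $\eta_{-1}=(1)$ with $\eta_k=\emptyset$ for $k\neq -1$; this sequence satisfies the second-type-$\nu$ interlacing (note $\nu'(0)=+1$ gives $\eta_0\prec'\eta_{-1}$, and $\nu'(1)=-1$ gives $\eta_{-1}\succ\eta_{-2}$). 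The offset for slice $-1$ is $(\widehat{\varepsilon}_2(\nu;-1),\widehat{\varepsilon}_1(\nu;0))=(1,0)$, so your $\widetilde{\pi}_{-1}$ is the partition $(1,1)$, while your rule ``$\eta_k=\emptyset\Rightarrow\widetilde{\pi}_k=\emptyset$'' forces $\widetilde{\pi}_0=\emptyset$. Then $\widetilde{\pi}_0\succ'\widetilde{\pi}_{-1}$ fails, and Remark~\ref{characteristic-PP} does not apply. The same breakdown occurs whenever a nonempty $\eta_k$ sits next to an empty one, which the $\nu$-interlacing permits exactly where $\nu'$ flips sign.

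The paper fixes this by using a \emph{global} baseline: the first $\widehat{\varepsilon}_2$ rows of $\widetilde{\underline{\pi}}_{2k}$ are given length $\widehat{\varepsilon}_1+\Xi_{\{\eta_\star\}}$ with $\Xi_{\{\eta_\star\}}=\max_k(\eta_k)_0$, regardless of whether $\eta_{2k}$ itself is empty. This makes the baseline rows vary between adjacent slices only through the shift $\widehat{\varepsilon}_1$, so the interlacing there reduces to $\widehat{\varepsilon}_1^{(2k)}-\widehat{\varepsilon}_1^{(2k-1)}\in\{0,1\}$, which holds by construction. The cost is that $\widetilde{\underline{\pi}}_k$ is nonempty well beyond the support of $\{\eta_k\}$, so finiteness of $\pi$ requires a separate tapering of the baseline to $\emptyset$ over a controlled range (the paper's cases (ii)--(iv) and (vi)--(viii)), with its own interlacing checks. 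Your claim that the baseline vanishes with $\eta_k$ is thus both the source of the interlacing failure and incompatible with the correct construction; once you replace it by the paper's uniform baseline and add the tapering, the ``reverse of Lemma~\ref{RPC-interlacing1}'' computation you outline does go through.
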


\begin{proof}
We will prove the first statement, the  second one is similar. The statement holds when $\nu=\emptyset$ by Remark \ref{characteristic-PP}, it remains to consider the case when $\nu\neq\emptyset$. The strategy is to construct a pyramid partition $\pi$ by adding certain bricks to the partition $\eta_k$ for any $k\in\mathbb{Z}$, where  each $\eta_k$ is viewed as the $k$-th diagonal slice of the expected restricted pyramid configuration $\widetilde{\bm{\wp}}_{\pi}(\nu)$ below.

Since $\eta_k=\emptyset$ for all $|k|\gg0$,  $\nu^\prime(k)=-1$ for $k\gg0$, and $\nu^\prime(k)=1$ for $k\ll0$, there is an positive even integer $\hbar$ depending on $\nu$ such that $\eta_k=\emptyset$ for all $|k|\geq\hbar$, $\nu^\prime(k)=-1$ for $k\geq\hbar$, and $\nu^\prime(k)=1$ for $k\leq-\hbar$. Let 
\ben
&&\Theta_{\{\eta_\star\}}:=\max\left\{(\eta_k)^0\,|\,k\in\mathbb{Z}\right\},\\
&&\Xi_{\{\eta_\star\}}:=\max\left\{(\eta_k)_0\,|\,k\in\mathbb{Z}\right\},
\een
where $(\eta_k)_i$ is the $i$-th row length of $\eta_k$ and $(\eta_k)^j$ is the $j$-th column length of $\eta_k$ for any $k\in\mathbb{Z}$ and $i, j\in\mathbb{Z}_{\geq0}$. Explicitly, the desired pyramid partition $\pi$ is  constructed by defining its $k$-th antidiagonal slices for all $k\in\mathbb{Z}$ and then verifying the interlacing property \eqref{antidiagonal-interlacing} as follows.  \\
(i) If $-\frac{\hbar}{2}\leq k\leq0$, set
\ben
\widetilde{\underline{\pi}}_{2k}(\nu):&=&\left\{ \widetilde{\mathbf{S}}_{2k}(i,j)\,\bigg|\,0\leq i<\widehat{\varepsilon}_2(\nu;-k-1),\; 0\leq j< \widehat{\varepsilon}_1(\nu;-k-1)+\Xi_{\{\eta_\star\}} \right\}\\
&\bigcup&\left\{\widetilde{\mathbf{S}}_{2k}(i,j)\,\bigg|\,0\leq i-\widehat{\varepsilon}_2(\nu;-k-1)<\Theta_{\{\eta_\star\}},\; 0\leq j< \widehat{\varepsilon}_1(\nu;-k-1)+(\eta_{2k})_{i-\widehat{\varepsilon}_2(\nu;-k-1)}\right\},\\
\widetilde{\underline{\pi}}_{2k-1}(\nu):&=&\left\{ \widetilde{\mathbf{S}}_{2k-1}(i,j)\,\bigg|\,0\leq i<\widehat{\varepsilon}_2(\nu;-k-1),\; 0\leq j< \widehat{\varepsilon}_1(\nu;-k)+\Xi_{\{\eta_\star\}} \right\}\\
&\bigcup&\left\{\widetilde{\mathbf{S}}_{2k-1}(i,j)\,\bigg|\,0\leq i-\widehat{\varepsilon}_2(\nu;-k-1)<\Theta_{\{\eta_\star\}},\; 0\leq j< \widehat{\varepsilon}_1(\nu;-k)+(\eta_{2k-1})_{i-\widehat{\varepsilon}_2(\nu;-k-1)}\right\}.
\een
(ii) If  $-\frac{\hbar}{2}-\Xi_{\{\eta_\star\}}\leq k<-\frac{\hbar}{2}$, set
\ben
\widetilde{\underline{\pi}}_{2k}(\nu):&=&\left\{ \widetilde{\mathbf{S}}_{2k}(i,j)\,\bigg|\,0\leq i<\widehat{\varepsilon}_2\left(\nu;\frac{\hbar}{2}-1\right),\; 0\leq j< \widehat{\varepsilon}_1\left(\nu;\frac{\hbar}{2}-1\right)+\Xi_{\{\eta_\star\}}+k+\frac{\hbar}{2}+1\right\}\\
&\bigcup&\left\{\widetilde{\mathbf{S}}_{2k}(i,j)\,\bigg|\,0\leq i-\widehat{\varepsilon}_2\left(\nu;\frac{\hbar}{2}-1\right)<\Theta_{\{\eta_\star\}},\; 0\leq j< \widehat{\varepsilon}_1\left(\nu;\frac{\hbar}{2}-1\right)\right\},\\
\widetilde{\underline{\pi}}_{2k-1}(\nu):&=&\left\{ \widetilde{\mathbf{S}}_{2k-1}(i,j)\,\bigg|\,0\leq i<\widehat{\varepsilon}_2\left(\nu;\frac{\hbar}{2}-1\right),\; 0\leq j< \widehat{\varepsilon}_1\left(\nu;\frac{\hbar}{2}-1\right)+\Xi_{\{\eta_\star\}}+k+\frac{\hbar}{2} \right\}\\
&\bigcup&\left\{\widetilde{\mathbf{S}}_{2k-1}(i,j)\,\bigg|\,0\leq i-\widehat{\varepsilon}_2\left(\nu;\frac{\hbar}{2}-1\right)<\Theta_{\{\eta_\star\}},\; 0\leq j< \widehat{\varepsilon}_1\left(\nu;\frac{\hbar}{2}-1\right)\right\}.
\een
(iii) If  $-\frac{\hbar}{2}-\Xi_{\{\eta_\star\}}-\widehat{\varepsilon}_1\left(\nu;\frac{\hbar}{2}-1\right)\leq k<-\frac{\hbar}{2}-\Xi_{\{\eta_\star\}}$, set
\ben
\widetilde{\underline{\pi}}_{2k}(\nu):&=&\left\{\widetilde{\mathbf{S}}_{2k}(i,j)\,\bigg|\,0\leq i<\widehat{\varepsilon}_2\left(\nu;\frac{\hbar}{2}-1\right)+\Theta_{\{\eta_\star\}},\; 0\leq j< \widehat{\varepsilon}_1\left(\nu;\frac{\hbar}{2}-1\right)+k+\frac{\hbar}{2}+\Xi_{\{\eta_\star\}}+1\right\},\\
\widetilde{\underline{\pi}}_{2k-1}(\nu):&=&\left\{\widetilde{\mathbf{S}}_{2k-1}(i,j)\,\bigg|\,0\leq i<\widehat{\varepsilon}_2\left(\nu;\frac{\hbar}{2}-1\right)+\Theta_{\{\eta_\star\}},\; 0\leq j< \widehat{\varepsilon}_1\left(\nu;\frac{\hbar}{2}-1\right)+k+\frac{\hbar}{2}+\Xi_{\{\eta_\star\}}\right\}.
\een
(iv) If  $k<-\frac{\hbar}{2}-\Xi_{\{\eta_\star\}}-\widehat{\varepsilon}_1\left(\nu;\frac{\hbar}{2}-1\right)$, set 
$\widetilde{\underline{\pi}}_{2k-1}(\nu)=\widetilde{\underline{\pi}}_{2k}(\nu):=\emptyset$.\\
(v) If  $0< k\leq\frac{\hbar}{2}$, set
\ben
\widetilde{\underline{\pi}}_{2k}(\nu):&=&\left\{ \widetilde{\mathbf{S}}_{2k}(i,j)\,\bigg|\,0\leq i<\widehat{\varepsilon}_4(\nu;k),\; 0\leq j< \widehat{\varepsilon}_3(\nu;k)+\Xi_{\{\eta_\star\}} \right\}\\
&\bigcup&\left\{\widetilde{\mathbf{S}}_{2k}(i,j)\,\bigg|\,0\leq i-\widehat{\varepsilon}_4(\nu;k)<\Theta_{\{\eta_\star\}},\; 0\leq j< \widehat{\varepsilon}_3(\nu;k)+(\eta_{2k})_{i-\widehat{\varepsilon}_4(\nu;k)}\right\},\\
\widetilde{\underline{\pi}}_{2k-1}(\nu):&=&\left\{ \widetilde{\mathbf{S}}_{2k-1}(i,j)\,\bigg|\,0\leq i<\widehat{\varepsilon}_4(\nu;k),\; 0\leq j< \widehat{\varepsilon}_3(\nu;k-1) +\Xi_{\{\eta_\star\}} \right\}\\
&\bigcup&\left\{\widetilde{\mathbf{S}}_{2k-1}(i,j)\,\bigg|\,0\leq i-\widehat{\varepsilon}_4(\nu;k)<\Theta_{\{\eta_\star\}},\; 0\leq j< \widehat{\varepsilon}_3(\nu;k-1)+(\eta_{2k-1})_{i-\widehat{\varepsilon}_4(\nu;k)}\right\}.
\een
(vi) If  $\frac{\hbar}{2}<k\leq\frac{\hbar}{2}+\Theta_{\{\eta_\star\}}$, set 
\ben
\widetilde{\underline{\pi}}_{2k}(\nu)&:=&\left\{ \widetilde{\mathbf{S}}_{2k}(i,j)\,\bigg|\,0\leq i<\widehat{\varepsilon}_4\left(\nu;\frac{\hbar}{2}\right),\; 0\leq j< \widehat{\varepsilon}_3\left(\nu;\frac{\hbar}{2}\right)+\Xi_{\{\eta_\star\}}  \right\}\\
&\bigcup&\left\{\widetilde{\mathbf{S}}_{2k}(i,j)\,\bigg|\,0\leq i-\widehat{\varepsilon}_4\left(\nu;\frac{\hbar}{2}\right)<\Theta_{\{\eta_\star\}}-k+\frac{\hbar}{2},\; 0\leq j< \widehat{\varepsilon}_3\left(\nu;\frac{\hbar}{2}\right)\right\}\\
\widetilde{\underline{\pi}}_{2k-1}(\nu)&:=&\left\{ \widetilde{\mathbf{S}}_{2k-1}(i,j)\,\bigg|\,0\leq i<\widehat{\varepsilon}_4\left(\nu;\frac{\hbar}{2}\right),\; 0\leq j< \widehat{\varepsilon}_3\left(\nu;\frac{\hbar}{2}\right)+\Xi_{\{\eta_\star\}} \right\}\\
&\bigcup&\left\{\widetilde{\mathbf{S}}_{2k-1}(i,j)\,\bigg|\,0\leq i-\widehat{\varepsilon}_4\left(\nu;\frac{\hbar}{2}\right)<\Theta_{\{\eta_\star\}}-k+\frac{\hbar}{2},\; 0\leq j< \widehat{\varepsilon}_3\left(\nu;\frac{\hbar}{2}\right)\right\}.
\een
(vii) If  $\frac{\hbar}{2}+\Theta_{\{\eta_\star\}}<k\leq\frac{\hbar}{2}+\Theta_{\{\eta_\star\}}+\widehat{\varepsilon}_4\left(\nu;\frac{\hbar}{2}\right)$, set 
\ben
\widetilde{\underline{\pi}}_{2k}(\nu)&:=&\left\{ \widetilde{\mathbf{S}}_{2k}(i,j)\,\bigg|\,0\leq i<\widehat{\varepsilon}_4\left(\nu;\frac{\hbar}{2}\right)-k+\frac{\hbar}{2}+\Theta_{\{\eta_\star\}},\; 0\leq j< \widehat{\varepsilon}_3\left(\nu;\frac{\hbar}{2}\right)+\Xi_{\{\eta_\star\}}  \right\},\\
\widetilde{\underline{\pi}}_{2k-1}(\nu):&=&\left\{ \widetilde{\mathbf{S}}_{2k-1}(i,j)\,\bigg|\,0\leq i<\widehat{\varepsilon}_4\left(\nu;\frac{\hbar}{2}\right)-k+\frac{\hbar}{2}+\Theta_{\{\eta_\star\}},\; 0\leq j< \widehat{\varepsilon}_3\left(\nu;\frac{\hbar}{2}\right)+\Xi_{\{\eta_\star\}} \right\}.
\een
(viii) If  $k>\frac{\hbar}{2}+\Theta_{\{\eta_\star\}}+\widehat{\varepsilon}_4\left(\nu;\frac{\hbar}{2}\right)$, set
$
\widetilde{\underline{\pi}}_{2k-1}(\nu)=\widetilde{\underline{\pi}}_{2k}(\nu):=\emptyset.\\
$
In the above construction of $\{\widetilde{\underline{\pi}}_k\,|\,k\in\mathbb{Z}\}$, we take the notations in  $\eqref{notations-interlacing1}$ and $\eqref{notations-interlacing2}$.

Let $\pi=\bigcup\limits_{k\in\mathbb{Z}}\widetilde{\underline{\pi}}_k$. It is clear that  $\widetilde{\underline{\pi}}_k$ is a partition for any $k\in\mathbb{Z}$. By Remark \ref{characteristic-PP}, to show $\pi$ is  a pyramid partition, one only needs to show it satisfies the following interlacing property along its antidiagonal slices:
\bea\label{interlacing<=0}
&&\widetilde{\underline{\pi}}_{2k}(\nu)\succ^\prime\widetilde{\underline{\pi}}_{2k-1}(\nu)\;\;\mbox{and}\;\;\;\widetilde{\underline{\pi}}_{2k-1}(\nu)\succ\widetilde{\underline{\pi}}_{2k-2}(\nu),\;\;\;\;\;\; \forall\, k\in\mathbb{Z}_{\leq0},\\
\label{interlacing>0}
&&\widetilde{\underline{\pi}}_{2k}(\nu)\prec^\prime\widetilde{\underline{\pi}}_{2k-1}(\nu)\;\;\mbox{and}\;\;\;\widetilde{\underline{\pi}}_{2k-1}(\nu)\prec\widetilde{\underline{\pi}}_{2k-2}(\nu),\;\;\;\;\;\;\, \forall\, k\in\mathbb{Z}_{>0}.
\eea
By assumption, we have 
	\ben
	\eta_{2k}\underset{\nu^\prime(-2k)}{\prec\succ^\prime}\eta_{2k-1}\;\;\mbox{and}\;\;\;\eta_{2k+1}\underset{\nu^\prime(-2k-1)}{\prec\succ}\eta_{2k},\;\;\;\;\;\; \forall\, k\in\mathbb{Z}.
	\een
Suppose  for any $i,j\geq0$ and $k\in\mathbb{Z}$,
\ben
&&\widetilde{\underline{R}}_k^j(\nu)=\left\{\widetilde{\mathbf{S}}_{k}(i,j)\;\bigg|\;0\leq i< \widetilde{\underline{\ell}}_{k}^j(\nu)\right\},\\
&&\widetilde{\underline{R}}_{k,i}(\nu)=\left\{\widetilde{\mathbf{S}}_{k}(i,j)\;\bigg|\;0\leq j<\widetilde{\underline{\ell}}_{k,i}(\nu)\right\},
\een
where $\widetilde{\underline{R}}_k^j(\nu)$  (resp. $\widetilde{\underline{R}}_{k,i}(\nu)$) is the set of bricks in the $j$-th column  (resp. $i$-th row) of $\widetilde{\underline{\pi}}_{k}(\nu)$, and $|\widetilde{\underline{R}}_k^j(\nu)|=\widetilde{\underline{\ell}}_{k}^j(\nu)$ (resp. $|\widetilde{\underline{R}}_{k,i}(\nu)|=\widetilde{\underline{\ell}}_{k,i}(\nu)$).

If $-\frac{\hbar}{2}\leq k\leq0$, then we have
\ben
\widetilde{\underline{\ell}}_{2k,i}(\nu)=\left\{
\begin{aligned}
	&\widehat{\varepsilon}_1(\nu;-k-1)+\Xi_{\{\eta_\star\}},   \;\;  \;\;  \;\;  \;\;  \;\;  \;\;  \;\;  \;\;  \;\;  \;\;  \;\;  \;\, \mbox{if  $0\leq i< \widehat{\varepsilon}_2(\nu;-k-1)$},\\
	&\widehat{\varepsilon}_1(\nu;-k-1)+(\eta_{2k})_{i-\widehat{\varepsilon}_2(\nu;-k-1)}, \;\; \;\; \;\;  \mbox{if  $\widehat{\varepsilon}_2(\nu;-k-1)\leq i< \widehat{\varepsilon}_2(\nu;-k-1)+\Theta_{\{\eta_\star\}}$}.
\end{aligned}
\right.
\een
and 
\ben
\widetilde{\underline{\ell}}_{2k-1,i}(\nu)=\left\{
\begin{aligned}
	&\widehat{\varepsilon}_1(\nu;-k)+\Xi_{\{\eta_\star\}},   \;\;  \;\;  \;\;  \;\;  \;\;  \;\;  \;\;  \;\;  \;\;  \;\;  \;\;  \;\;\; \;\; \mbox{if  $0\leq i< \widehat{\varepsilon}_2(\nu;-k-1)$},\\
	&\widehat{\varepsilon}_1(\nu;-k)+(\eta_{2k-1})_{i-\widehat{\varepsilon}_2(\nu;-k-1)}, \;\; \;\; \;\;   \mbox{if  $\widehat{\varepsilon}_2(\nu;-k-1)\leq i< \widehat{\varepsilon}_2(\nu;-k-1)+\Theta_{\{\eta_\star\}}$}.
\end{aligned}
\right.
\een
Combined with $\eta_{2k}\underset{\nu^\prime(-2k)}{\prec\succ^\prime}\eta_{2k-1}$, one can show that $\widetilde{\underline{\ell}}_{2k,i}(\nu)-\widetilde{\underline{\ell}}_{2k-1,i}(\nu)\in\{0,1\}$ for any $0\leq i< \widehat{\varepsilon}_2(\nu;-k-1)+\Theta_{\{\eta_\star\}}$, which implies $\widetilde{\underline{\pi}}_{2k}(\nu)\succ^\prime\widetilde{\underline{\pi}}_{2k-1}(\nu)$. On the other hand, we also have 
\ben
\widetilde{\underline{\pi}}_{2k}(\nu):&=&\left\{ \widetilde{\mathbf{S}}_{2k}(i,j)\,\bigg|\,0\leq j<\widehat{\varepsilon}_1(\nu;-k-1),\; 0\leq i< \widehat{\varepsilon}_2(\nu;-k-1)+\Theta_{\{\eta_\star\}} \right\}\\
&\bigcup&\left\{\widetilde{\mathbf{S}}_{2k}(i,j)\,\bigg|\,0\leq j-\widehat{\varepsilon}_1(\nu;-k-1)<\Xi_{\{\eta_\star\}},\; 0\leq i< \widehat{\varepsilon}_2(\nu;-k-1)+(\eta_{2k})^{j-\widehat{\varepsilon}_1(\nu;-k-1)}\right\},\\
\widetilde{\underline{\pi}}_{2k-1}(\nu):&=&\left\{ \widetilde{\mathbf{S}}_{2k-1}(i,j)\,\bigg|\,0\leq j<\widehat{\varepsilon}_1(\nu;-k),\; 0\leq i< \widehat{\varepsilon}_2(\nu;-k-1)+\Theta_{\{\eta_\star\}} \right\}\\
&\bigcup&\left\{\widetilde{\mathbf{S}}_{2k-1}(i,j)\,\bigg|\,0\leq j-\widehat{\varepsilon}_1(\nu;-k)<\Xi_{\{\eta_\star\}},\; 0\leq i< \widehat{\varepsilon}_2(\nu;-k-1)+(\eta_{2k-1})^{j-\widehat{\varepsilon}_1(\nu;-k)}\right\}.
\een
Then if $-\frac{\hbar}{2}< k\leq0$, we have 
\ben
\widetilde{\underline{\ell}}_{2k-2}^j(\nu)=\left\{
\begin{aligned}
	&\widehat{\varepsilon}_2(\nu;-k)+\Theta_{\{\eta_\star\}},   \;\;  \;\;  \;\;  \;\;  \;\;  \;\;  \;\;  \;\;  \;\;  \;\;  \;\;  \;\; \; \;\, \mbox{if  $0\leq j< \widehat{\varepsilon}_1(\nu;-k)$},\\
	&\widehat{\varepsilon}_2(\nu;-k)+(\eta_{2k-2})^{j-\widehat{\varepsilon}_1(\nu;-k)}, \;\; \;\; \;\; \;\; \;\, \mbox{if  $\widehat{\varepsilon}_1(\nu;-k)\leq j< \widehat{\varepsilon}_1(\nu;-k)+\Xi_{\{\eta_\star\}}$}.
\end{aligned}
\right.
\een
and 
\ben
\widetilde{\underline{\ell}}_{2k-1}^j(\nu)=\left\{
\begin{aligned}
	&\widehat{\varepsilon}_2(\nu;-k-1)+\Theta_{\{\eta_\star\}},   \;\;  \;\;  \;\;  \;\;  \;\;  \;\;  \;\;  \;\;  \;\;  \;\;  \;\;  \;\, \mbox{if  $0\leq j< \widehat{\varepsilon}_1(\nu;-k)$},\\
	&\widehat{\varepsilon}_2(\nu;-k-1)+(\eta_{2k-1})^{j-\widehat{\varepsilon}_1(\nu;-k)}, \;\; \;\; \;\;   \mbox{if  $\widehat{\varepsilon}_1(\nu;-k)\leq j< \widehat{\varepsilon}_1(\nu;-k)+\Xi_{\{\eta_\star\}}$}.
\end{aligned}
\right.
\een
Using $\eta_{2k-1}\underset{\nu^\prime(-2k+1)}{\prec\succ}\eta_{2k-2}$, one can show  $\widetilde{\underline{\ell}}_{2k-1}^j(\nu)-\widetilde{\underline{\ell}}_{2k-2}^j(\nu)\in\{0,1\}$ for any $0\leq j< \widehat{\varepsilon}_1(\nu;-k)+\Xi_{\{\eta_\star\}}$. Then 
$\widetilde{\underline{\pi}}_{2k-1}(\nu)\succ\widetilde{\underline{\pi}}_{2k-2}(\nu)$. Now the interlacing property $\eqref{interlacing<=0}$ of  $\widetilde{\underline{\pi}}_k$ holds when $-\hbar-1\leq k\leq0$.

Similary, one can show the interlacing property $\eqref{interlacing<=0}$ of  $\widetilde{\underline{\pi}}_k$ holds for the following cases:\\
$(a)$ $-\hbar-2\Xi_{\{\eta_\star\}}-1\leq k\leq-\hbar-2$,\\
$(b)$ $-\hbar-2\Xi_{\{\eta_\star\}}-2\widehat{\varepsilon}_1\left(\nu;\frac{\hbar}{2}-1\right)-1\leq k\leq-\hbar-2\Xi_{\{\eta_\star\}}-2$,\\
$(c)$ $k\leq-\hbar-2\Xi_{\{\eta_\star\}}-2\widehat{\varepsilon}_1\left(\nu;\frac{\hbar}{2}-1\right)-2$,\\
and the interlacing property $\eqref{interlacing>0}$ of  $\widetilde{\underline{\pi}}_k$ holds for the following cases:\\
$(d)$ $1\leq k\leq \hbar$,\\
$(e)$ $\hbar+1\leq k\leq \hbar+2\Theta_{\{\eta_\star\}}$,\\
$(f)$ $\hbar+2\Theta_{\{\eta_\star\}}+1\leq k\leq\hbar+2\Theta_{\{\eta_\star\}}+2\widehat{\varepsilon}_4\left(\nu;\frac{\hbar}{2}\right)$,\\
$(g)$ $k\geq \hbar+2\Theta_{\{\eta_\star\}}+2\widehat{\varepsilon}_4\left(\nu;\frac{\hbar}{2}\right)+1$.

To prove that $\pi$ is a pyramid partition, it remains to show the following interlacing relations:\\
$(A)$ $\widetilde{\underline{\pi}}_{-\hbar-1}(\nu)\succ\widetilde{\underline{\pi}}_{-\hbar-2}(\nu)$;\;\; $\widetilde{\underline{\pi}}_{-\hbar-2\Xi_{\{\eta_\star\}}-1}(\nu)\succ\widetilde{\underline{\pi}}_{-\hbar-2\Xi_{\{\eta_\star\}}-2}(\nu)$;\;\; $\widetilde{\underline{\pi}}_{-\hbar-2\Xi_{\{\eta_\star\}}-2\widehat{\varepsilon}_1\left(\nu;\frac{\hbar}{2}-1\right)-1}(\nu)\succ\emptyset$;\\
$(B)$
$\widetilde{\underline{\pi}}_0(\nu)\succ\widetilde{\underline{\pi}}_1(\nu)$;\;\;
$\widetilde{\underline{\pi}}_\hbar(\nu)\succ\widetilde{\underline{\pi}}_{\hbar+1}(\nu)$;\;\; $\widetilde{\underline{\pi}}_{\hbar+2\Theta_{\{\eta_\star\}}}(\nu)\succ\widetilde{\underline{\pi}}_{\hbar+2\Theta_{\{\eta_\star\}}+1}(\nu)$;\;\; $\widetilde{\underline{\pi}}_{\hbar+2\Theta_{\{\eta_\star\}}+2\widehat{\varepsilon}_4\left(\nu;\frac{\hbar}{2}\right)}(\nu)\succ\emptyset$.

We only deal with  $\widetilde{\underline{\pi}}_{-\hbar-1}(\nu)\succ\widetilde{\underline{\pi}}_{-\hbar-2}(\nu)$ and $\widetilde{\underline{\pi}}_0(\nu)\succ\widetilde{\underline{\pi}}_1(\nu)$, the other cases are similar. It can  be shown that
\ben
&&\widetilde{\underline{\ell}}_{-\hbar-1}^j(\nu)=\left\{
\begin{aligned}
	&\widehat{\varepsilon}_2\left(\nu;\frac{\hbar}{2}-1\right)+\Theta_{\{\eta_\star\}},   \;\;  \;\;  \;\;  \;\;  \;\;  \;\;  \;\;  \;\;  \;\;  \;\;  \;\;  \;\, \mbox{if  $0\leq j< \widehat{\varepsilon}_1(\nu;\frac{\hbar}{2})$},\\
	&\widehat{\varepsilon}_2\left(\nu;\frac{\hbar}{2}-1\right)+(\eta_{-\hbar-1})^{j-\widehat{\varepsilon}_1(\nu;\frac{\hbar}{2})}, \;\; \;\; \;\;\,   \mbox{if  $\widehat{\varepsilon}_1(\nu;\frac{\hbar}{2})\leq j< \widehat{\varepsilon}_1(\nu;\frac{\hbar}{2})+\Xi_{\{\eta_\star\}}$}.
\end{aligned}
\right.\\
&&\widetilde{\underline{\ell}}_{-\hbar-2}^j(\nu)=\left\{
\begin{aligned}
	&\widehat{\varepsilon}_2\left(\nu;\frac{\hbar}{2}-1\right)+\Theta_{\{\eta_\star\}},   \;\;  \;\;  \;\;  \;\;  \;\;  \;\;  \;\;  \;\;  \;\;  \;\;  \;\;  \;\, \mbox{if  $0\leq j< \widehat{\varepsilon}_1(\nu;\frac{\hbar}{2}-1)$},\\
	&\widehat{\varepsilon}_2\left(\nu;\frac{\hbar}{2}-1\right), \;\; \;\; \;\;  \;\; \;\; \;\;  \;\; \;\; \;\;  \;\; \;\; \;\;  \;\; \;\; \;\;  \;\; \;\; \;\; \;  \mbox{if  $\widehat{\varepsilon}_1(\nu;\frac{\hbar}{2}-1)\leq j< \widehat{\varepsilon}_1(\nu;\frac{\hbar}{2})+\Xi_{\{\eta_\star\}}$}.
\end{aligned}
\right.
\een
and 
\ben
&&\widetilde{\underline{\ell}}_{0}^j(\nu)=\left\{
\begin{aligned}
	&\rho_1(\nu)+\Theta_{\{\eta_\star\}},   \;\;  \;\;  \;\;  \;\;  \;\;  \;\;  \;\;  \;\;  \;\;  \;\;  \;\;  \;\;  \;\;  \;\;  \;\;  \;\;  \;\;  \;\;  \;\, \mbox{if  $0\leq j< \rho_2(\nu)$},\\
	&\rho_1(\nu)+(\eta_0)^{j-\rho_2(\nu)}, \;\; \;\; \;\;   \;\;  \;\;  \;\;  \;\;  \;\;  \;\;  \;\;  \;\;  \;\;  \;\;  \;\;  \;\;   \mbox{if  $\rho_2(\nu)\leq j< \rho_2(\nu)+\Xi_{\{\eta_\star\}}$}.
\end{aligned}
\right.\\
&&\widetilde{\underline{\ell}}_{1}^j(\nu)=\left\{
\begin{aligned}
	&\rho_1(\nu)-\frac{1-\nu^\prime(-1)}{2}+\Theta_{\{\eta_\star\}},   \;\;  \;\;  \;\;  \;\;  \;\;  \;\;  \;\;  \;\;   \mbox{if  $0\leq j< \rho_2(\nu)$},\\
	&\rho_1(\nu)-\frac{1-\nu^\prime(-1)}{2}+(\eta_1)^{j-\rho_2(\nu)}, \;\; \;\; \;\;  \;\;    \mbox{if  $\rho_2(\nu)\leq j< \rho_2(\nu)+\Xi_{\{\eta_\star\}}$}.
\end{aligned}
\right.
\een
Then  $\widetilde{\underline{\ell}}_{-\hbar-1}^j(\nu)-\widetilde{\underline{\ell}}_{-\hbar-2}^j(\nu)=0$ for any $0\leq j< \widehat{\varepsilon}_1(\nu;\frac{\hbar}{2})+\Xi_{\{\eta_\star\}}$ since $\eta_{-\hbar-1}=\emptyset$ and $\widehat{\varepsilon}_1(\nu;\frac{\hbar}{2})= \widehat{\varepsilon}_1(\nu;\frac{\hbar}{2}-1)=\rho_2(\nu)-\widehat{\varepsilon}_1(\nu;+\infty)$. This implies that $\widetilde{\underline{\pi}}_{-\hbar-1}(\nu)\succ\widetilde{\underline{\pi}}_{-\hbar-2}(\nu)$.  And $\widetilde{\underline{\ell}}_{0}^j(\nu)-\widetilde{\underline{\ell}}_{1}^j(\nu)\in\{0,1\}$ for any $0\leq j< \rho_2(\nu)+\Xi_{\{\eta_\star\}}$ due to the assumption $\eta_{1}\underset{\nu^\prime(-1)}{\prec\succ}\eta_{0}$. Then $\widetilde{\underline{\pi}}_0(\nu)\succ\widetilde{\underline{\pi}}_1(\nu)$.

With the pyramid partition $\pi=\bigcup\limits_{k\in\mathbb{Z}}\widetilde{\underline{\pi}}_k$, we have if $-\frac{\hbar}{2}\leq k\leq0$,
\ben
\widetilde{\bm{\wp}}_{\pi}(\nu)_{2k}&=&\widetilde{\mathcal{S}}_{2k}(\nu)\cap\widetilde{\underline{\pi}}_{2k}(\nu)\\
&=&\left\{\widetilde{\mathbf{S}}_{2k}(i,j)\,\bigg|\,0\leq i-\widehat{\varepsilon}_2(\nu;-k-1)<\Theta_{\{\eta_\star\}},\; 0\leq j-\widehat{\varepsilon}_1(\nu;-k-1)< (\eta_{2k})_{i-\widehat{\varepsilon}_2(\nu;-k-1)}\right\}
\een
which has the same 2D Young diagram as $\eta_{2k}$, that is, $\eta_{2k}=\widetilde{\bm{\wp}}_{\pi}(\nu)_{2k}$. The other cases are similar.
\end{proof}

\subsection{Restricted pyramid configurations with symmetric interlacing property}
Let $\nu\neq\emptyset$ be a partition. 
In this subsection, we investigate a class of restricted pyramid configurations $\mathfrak{RP}_{+}(\nu)$ $($or  $\mathfrak{RP}_{-}(\nu)$$)$ of antidiagonal $($or diagonal$)$   type $(\nu,0)$, see Definition \ref{Two classes of RPC} and Remark \ref{v-0-type-classes}. And we show $\mathfrak{RP}_{+}(\nu)$  $($or  $\mathfrak{RP}_{-}(\nu)$$)$ is the unique class of restricted pyramid configurations of antidiagonal $($or diagonal$)$   type $(\nu,0)$ with the symmetric interlacing property of type $\nu$ if $\nu=(m,m-1,\cdots,2,1)$ for some $m\geq1$, see Proposition \ref{unique symmetric interlacing}. Now let us start with the case when $\nu=(m,m-1,\cdots,2,1)$ with $m\in\mathbb{Z}_{\geq1}$ and then show   $\widetilde{\bm{\wp}}_{\pi}(\nu)=\bm{\wp}_{\pi}(\nu)$ for any  pyramid partition $\pi$. We need the following
\begin{lemma}\label{even-symmetry}
	Assume  $\nu=(m,m-1,\cdots,2,1)$ with $m\in\mathbb{Z}_{\geq1}$. Then we  have 
	\ben
	\bigcup_{k\in\mathbb{Z}}\left(\widetilde{\mathcal{S}}_{2k}(\nu)\right)^c=\bigcup_{k\in\mathbb{Z}}\left(\mathcal{S}_{2k}(\nu)\right)^c.
	\een
\end{lemma}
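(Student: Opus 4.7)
The plan is to translate each brick between its antidiagonal representation $\widetilde{\mathbf{S}}_{2k}(i,j)$ and its diagonal representation $\mathbf{S}_{2\ell}(i^\prime,j^\prime)$ via Lemma~\ref{antidiagonal-diagonal}, and then show that the ``bad region'' defining the two complements coincides under this bijection. The crucial input is the self-conjugacy of the staircase $\nu=(m,m-1,\cdots,2,1)$: since $\nu=\nu^\prime$, its edge sequence satisfies $\nu^\prime(t)=-\nu^\prime(-1-t)$ for every $t\in\bZ$. Substituting into the definitions of $\varepsilon_i(\nu;\cdot)$ and reindexing the summations yields
\ben
\varepsilon_1(\nu;\imath)=\varepsilon_4(\nu;\imath+1),\qquad\varepsilon_2(\nu;\imath)=\varepsilon_3(\nu;\imath+1),
\een
and hence $\rho_1(\nu)=\rho_2(\nu)$ together with
\ben
\widehat{\varepsilon}_1(\nu;\imath)=\widehat{\varepsilon}_4(\nu;\imath+1),\qquad\widehat{\varepsilon}_2(\nu;\imath)=\widehat{\varepsilon}_3(\nu;\imath+1),
\een
for every $\imath\in\bZ$.

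With these identities in hand, I would proceed by case analysis on the four possibilities of Lemma~\ref{antidiagonal-diagonal}, one for each sign combination of $k$ and $i-j$, writing $(i^\prime,j^\prime)$ explicitly in terms of $(i,j,k)$ in each case; the slice indices are linked by $k=i^\prime-j^\prime$ while $\ell=\pm(i-j)$ with the sign determined by whether $i\geq j$. The task reduces to verifying, in each of the four cases, the equivalence
\ben
(i,j)\in\widetilde{\mathcal{S}}_{2k}(\nu)\quad\Longleftrightarrow\quad(i^\prime,j^\prime)\in\mathcal{S}_{2\ell}(\nu).
\een
In the two ``same-sign'' cases (both $k,\ell\leq 0$, or both $k,\ell\geq 0$), the two conditions involve the same pair of $\widehat{\varepsilon}$-functions, and the equivalence reduces to a direct arithmetic check using the piecewise form of $\widehat{\varepsilon}_i(\nu;\cdot)$ for the staircase. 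In the two ``opposite-sign'' cases (e.g.\ $k\leq 0$ but $\ell\geq 0$), the antidiagonal side uses the pair $\widehat{\varepsilon}_1,\widehat{\varepsilon}_2$ while the diagonal side uses $\widehat{\varepsilon}_3,\widehat{\varepsilon}_4$, and the shift identities above are precisely what converts between the two pairs.

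The main obstacle is the bookkeeping: the index shift $\imath\mapsto\imath+1$ appearing in the symmetry does not obviously match the algebraic relation between $(k,i,j)$ and $(\ell,i^\prime,j^\prime)$, so in each case one must carefully track how the arguments $-k-1$, $k$, $-\ell-1$, $\ell$ of the various $\widehat{\varepsilon}_i$ transform under the bijection and check the resulting numerical equivalences. The verifications are elementary but not automatic, and the equality $\rho_1(\nu)=\rho_2(\nu)$ is essential at the boundary between the $k\leq 0$ and $k>0$ (resp.\ $\ell\leq 0$ and $\ell\geq 1$) cases, where the defining formulas for $\widetilde{\mathcal{S}}_{2k}(\nu)$ and $\mathcal{S}_{2\ell}(\nu)$ change form.
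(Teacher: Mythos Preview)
Your plan is sound and will work, but it differs from the paper's proof in organization rather than in essential content. The paper proceeds by direct enumeration: for $m$ even it writes out $\left(\widetilde{\mathcal{S}}_{2k}(\nu)\right)^c$ explicitly in four $k$-ranges, applies Lemma~\ref{antidiagonal-diagonal} brick by brick to rewrite each piece in diagonal coordinates (producing the sets $A_{k,\pm}(\nu)$, $B_{k,\pm}(\nu)$), takes the union over all $k$, regroups by the diagonal slice index $l$, and checks that the result is exactly $\bigcup_{l}\left(\mathcal{S}_{2l}(\nu)\right)^c$. The self-conjugacy of the staircase is never invoked as such; the explicit piecewise formulas for $\widehat{\varepsilon}_i$ are used from the outset.

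Your route instead isolates the structural identity $\varepsilon_1(\nu;\imath)=\varepsilon_4(\nu;\imath+1)$, $\varepsilon_2(\nu;\imath)=\varepsilon_3(\nu;\imath+1)$ coming from $\nu=\nu'$, and aims for a pointwise equivalence. This is a cleaner conceptual framing, but be aware that the shift identities alone do \emph{not} finish the opposite-sign cases: even after converting $\widehat{\varepsilon}_{3},\widehat{\varepsilon}_{4}$ into $\widehat{\varepsilon}_{1},\widehat{\varepsilon}_{2}$, the two conditions you must compare involve the \emph{same} functions evaluated at \emph{different} arguments (e.g.\ $-k-1$ versus $j-i-1$), and matching them still requires the explicit fact that, for the staircase, one of $\widehat{\varepsilon}_1,\widehat{\varepsilon}_2$ (depending on the parity of $m$) is constant while the other is piecewise linear. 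So in the end you will compute essentially the same case table as the paper, just organized brick-by-brick rather than slice-by-slice. Both approaches work; yours explains \emph{why} the staircase is special, while the paper's is more mechanical but leaves less to the reader.
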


\begin{proof}
 We will show the case	when $m$ is even, the other case is similar.   One can show
	\ben
	\left(\widetilde{\mathcal{S}}_{2k}(\nu)\right)^c=\left\{
	\begin{aligned}
		&\left\{ \widetilde{\mathbf{S}}_{2k}(i,j)\,\bigg|\,i\geq0, \; 0\leq j\leq \frac{m}{2}-1 \right\},  \;\;\;\;\;\;\;\;\;\;\;\;\; \;\;\,\quad\quad\quad\quad\quad\quad\quad\quad\quad\quad \mbox{if  $k\leq \displaystyle-\frac{m}{2}$},\\
		&\left\{ \widetilde{\mathbf{S}}_{2k}(i,j)\,\bigg|\,i\geq0, \; 0\leq j\leq \frac{m}{2}-1\mbox{ or }  0\leq i\leq\frac{m}{2}+k-1,\;j\geq\frac{m}{2}	 \right\},  \;\;\;\;\mbox{if  $\displaystyle-\frac{m}{2}< k\leq0$},\\
		&\left\{ \widetilde{\mathbf{S}}_{2k}(i,j)\,\bigg|\,0\leq i \leq \frac{m}{2}-1,\; j\geq0 \mbox{ or } i\geq\frac{m}{2}, \;0\leq j\leq \frac{m}{2}-k-1 \right\},\;\;\;\; \mbox{if  $0< k\leq\displaystyle\frac{m}{2}$},\\
		&\left\{ \widetilde{\mathbf{S}}_{2k}(i,j)\,\bigg|\,0\leq i \leq \frac{m}{2}-1,\; j\geq0  \right\},\;\;\;\;\;\;\;\;\;\;\;\; \;\;\,\;\;\quad\quad\quad\quad\quad\quad\quad\quad\quad\quad\mbox{if  $k>\displaystyle\frac{m}{2}$}.
	\end{aligned}
	\right.
	\een
	By Lemma \ref{antidiagonal-diagonal}, we have 
	\ben
	\left(\widetilde{\mathcal{S}}_{2k}(\nu)\right)^c=\left\{
	\begin{aligned}
		&A_{k,-}(\nu)\cup A_{k,+}(\nu),  \;\;\;\;\;\;\;\;\;\;\;\; \;\;\,\quad\quad\;\;\, \mbox{if  $k\leq \displaystyle-\frac{m}{2}$},\\
		&A_{k,-}(\nu)\cup A_{k,+}(\nu) \cup B_{k,-}(\nu),  \;\;\;\;\;\; \;\;  \mbox{if  $\displaystyle-\frac{m}{2}< k\leq0$},\\
		&A_{k,-}(\nu)\cup A_{k,+}(\nu) \cup B_{k,+}(\nu),\;\;\;\;\;\;\;\; \mbox{if  $0< k\leq\displaystyle\frac{m}{2}$},\\
		&A_{k,-}(\nu)\cup A_{k,+}(\nu),\;\;\;\;\;\;\;\;\;\;\;\; \;\;\,\;\; \quad\quad\; \mbox{if  $k>\displaystyle\frac{m}{2}$}.
	\end{aligned}
	\right.
	\een
	where 
	\ben
	&&A_{k,-}(\nu)=\left\{
	\begin{aligned}
		&\left\{ \mathbf{S}_{-2(j-i)}(i,i-k)\,\bigg|\,0\leq i\leq j,\; 0\leq j\leq \frac{m}{2}-1 \right\},  \;\;\;\;\;\;\, \mbox{if  $k\leq 0$},\\
		&\left\{ \mathbf{S}_{-2(j-i)}(i+k,i)\,\bigg|\,0\leq i\leq\frac{m}{2}-1,\; j\geq i \right\},  \;\;\;\;\;\;\quad\quad  \mbox{if  $k>0$}.
	\end{aligned}
	\right.\\
	&&A_{k,+}(\nu)=\left\{
	\begin{aligned}
		&\left\{ \mathbf{S}_{2(i-j)}(j,j-k)\,\bigg|\, i> j,\; 0\leq j\leq \frac{m}{2}-1 \right\},  \;\;\;\;\;\;\;\;\;\;\;\; \;\;\, \mbox{if  $k\leq 0$},\\
		&\left\{ \mathbf{S}_{2(i-j)}(j+k,j)\,\bigg|\,0\leq i\leq\frac{m}{2}-1,\;0\leq j< i \right\},  \;\;\;\;\;\;\;\;  \mbox{if  $k>0$}.
	\end{aligned}
	\right.\\
	&&B_{k,-}(\nu)=\left\{ \mathbf{S}_{-2(j-i)}(i,i-k)\,\bigg|\,0\leq i\leq\frac{m}{2}+k-1,\;  j\geq \frac{m}{2} \right\},  \;\;\;\;\;\;\;\, \mbox{if  $\displaystyle-\frac{m}{2}<k\leq 0$},\\
	&&B_{k,+}(\nu)=\left\{ \mathbf{S}_{2(i-j)}(j+k,j)\,\bigg|\, i\geq\frac{m}{2},\;0\leq j\leq\frac{m}{2}-k-1 \right\},  \;\;\;\;\;\;\;\;\;   \mbox{if  $0<k\leq\displaystyle\frac{m}{2}$}.
	\een
	Then we have
	\ben
	\bigcup_{k\in\mathbb{Z}}\left(\widetilde{\mathcal{S}}_{2k}(\nu)\right)^c=\left(\bigcup_{k\in\mathbb{Z}}A_{k,-}(\nu)\cup A_{k,+}(\nu)\right)\bigcup \left(\bigcup_{-\frac{m}{2}<k\leq0} B_{k,-}(\nu)\right)\bigcup \left(\bigcup_{0<k\leq\frac{m}{2}}B_{k,+}(\nu)\right).
	\een
	Since we have
	\ben
	&&\bigcup_{k\leq0}A_{k,-}(\nu)=\bigcup_{0\leq l< \frac{m}{2}}\left\{\mathbf{S}_{-2l}(i,i-k)\,\bigg|\,0\leq i\leq\frac{m}{2}-1-l, \,k\leq0 \right\},\\
	&&\bigcup_{k>0}A_{k,-}(\nu)=\bigcup_{l\geq 0}\left\{\mathbf{S}_{-2l}(i+k,i)\,\bigg|\,0\leq i\leq\frac{m}{2}-1, \,k>0 \right\},\\
	&&\bigcup_{-\frac{m}{2}<k\leq0} B_{k,-}(\nu)=\bigcup_{1\leq l< \frac{m}{2}}\left\{\mathbf{S}_{-2l}(i,i-k)\,\bigg|\,\frac{m}{2}-l\leq i\leq\frac{m}{2}+k-1, \,1-l\leq k\leq0 \right\}\\
	&&\quad\quad\quad\quad\quad\quad\quad\quad\;\;\bigcup\bigcup_{l\geq \frac{m}{2}}\left\{\mathbf{S}_{-2l}(i,i-k)\,\bigg|\,0\leq i\leq\frac{m}{2}+k-1, \,-\frac{m}{2}< k\leq0 \right\}
	\een
	and
		\ben
		&&\bigcup_{k\leq0}A_{k,+}(\nu)=\bigcup_{l\geq 1}\left\{\mathbf{S}_{2l}(j,j-k)\,\bigg|\,0\leq j\leq\frac{m}{2}-1, \,k\leq0 \right\}\\
		&&\bigcup_{k>0}A_{k,+}(\nu)=\bigcup_{1\leq l<\frac{m}{2}}\left\{\mathbf{S}_{2l}(j+k,j)\,\bigg|\,0\leq j\leq\frac{m}{2}-1-l, \,k>0 \right\}\\
		&&\bigcup_{0<k\leq\frac{m}{2}} B_{k,+}(\nu)=\bigcup_{2\leq l\leq \frac{m}{2}}\left\{\mathbf{S}_{2l}(j+k,j)\,\bigg|\,\frac{m}{2}-l\leq j\leq\frac{m}{2}-k-1, \,0< k\leq l-1 \right\}\\
		&&\quad\quad\quad\quad\quad\quad\quad\quad\bigcup\bigcup_{l> \frac{m}{2}}\left\{\mathbf{S}_{2l}(j+k,j)\,\bigg|\,0\leq j\leq\frac{m}{2}-k-1, \,0<k\leq\frac{m}{2} \right\},
		\een
	then one can show
	\ben
	&&\left(\bigcup_{k\in\mathbb{Z}}A_{k,-}(\nu)\right)\bigcup \left(\bigcup_{-\frac{m}{2}<k\leq0} B_{k,-}(\nu)\right)\\
	&=&\left(\bigcup_{l\leq-\frac{m}{2}}\left\{ \mathbf{S}_{2l}(i,j)\,\bigg|\,i\geq0, \,j\geq \frac{m}{2} \right\}^c\right)\bigcup \left(\bigcup_{-\frac{m}{2}<l\leq0}\left\{ \mathbf{S}_{2l}(i,j)\,\bigg|\,i\geq\frac{m}{2}+l,\, j\geq \frac{m}{2} \right\}^c\right)
	\een
 and	
	\ben
	&&\left(\bigcup_{k\in\mathbb{Z}} A_{k,+}(\nu)\right)\bigcup \left(\bigcup_{0<k\leq\frac{m}{2}}B_{k,+}(\nu)\right)\\
	&=&\left(\bigcup_{l>\frac{m}{2}}\left\{ \mathbf{S}_{2l}(i,j)\,\bigg|\,i\geq\frac{m}{2},\, j\geq 0 \right\}^c\right)\bigcup \left(\bigcup_{0<l\leq\frac{m}{2}}\left\{ \mathbf{S}_{2l}(i,j)\,\bigg|\,i\geq\frac{m}{2}, \,j\geq \frac{m}{2}-l \right\}^c\right)
	\een
	Now the proof  follows from the definition of $\mathcal{S}_{2k}(\nu)$  for any $k\in\mathbb{Z}$ when $\nu=(m,m-1,\cdots,2,1)$.
\end{proof}

Similarly, we have 
\begin{lemma}\label{odd-symmetry}
Assume  $\nu=(m,m-1,\cdots,2,1)$ with $m\in\mathbb{Z}_{\geq1}$. Then we  have 
\ben
\bigcup_{k\in\mathbb{Z}}\left(\widetilde{\mathcal{S}}_{2k-1}(\nu)\right)^c=\bigcup_{k\in\mathbb{Z}}\left(\mathcal{S}_{2k-1}(\nu)\right)^c.
\een
\end{lemma}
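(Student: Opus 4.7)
The plan is to mirror the proof of Lemma \ref{even-symmetry}, adapting the bookkeeping to accommodate the $+1$ shifts that appear in the odd-index conversion formulas of Lemma \ref{antidiagonal-diagonal}. I will treat the case $m$ even in detail; the odd case is completely analogous. First, for $\nu=(m,m-1,\ldots,2,1)$, I would compute $\rho_1(\nu),\rho_2(\nu)$ and the step functions $\varepsilon_i(\nu;\cdot)$ from the edge sequence of $\nu^\prime$. Inserting these into the definitions of $\widetilde{\mathcal{S}}_{2k-1}(\nu)$ and $\mathcal{S}_{2k-1}(\nu)$ yields explicit formulas for the complements, differing from the even-index formulas only by $\varepsilon_1(\nu;-k)$ in place of $\varepsilon_1(\nu;-k-1)$ when $k\leq 0$, and $\varepsilon_3(\nu;k-1)$ in place of $\varepsilon_3(\nu;k)$ when $k>0$.

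Next, I would apply Lemma \ref{antidiagonal-diagonal} to rewrite each antidiagonal brick $\widetilde{\mathbf{S}}_{2k-1}(i,j)$ as a diagonal brick $\mathbf{S}_{2(i-j)-1}(i,i-k)$ or $\mathbf{S}_{2(i-j)-1}(j,j-k+1)$ depending on the case $j\geq i$ or $i>j$, and similarly for $k>0$. This splits $\bigl(\widetilde{\mathcal{S}}_{2k-1}(\nu)\bigr)^c$ into four types of pieces $A^{\mathrm{odd}}_{k,\pm}(\nu)$ and $B^{\mathrm{odd}}_{k,\pm}(\nu)$ in direct analogy with $A_{k,\pm}(\nu)$ and $B_{k,\pm}(\nu)$. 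I would then regroup the total union $\bigcup_{k\in\mathbb{Z}}\bigl(\widetilde{\mathcal{S}}_{2k-1}(\nu)\bigr)^c$ by the new diagonal index $2l-1=\pm(2(i-j)-1)$ (or its shifted variant from case (iv) of Lemma \ref{antidiagonal-diagonal}) and check that, for each $l\in\mathbb{Z}$, the $i$-range and $j$-range of the resulting union agree precisely with $\bigl(\mathcal{S}_{2l-1}(\nu)\bigr)^c$ as computed in Step 1.

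The principal obstacle is keeping track of the extra unit shifts $j-k+1$ and $j+k-1$ introduced in the odd case of Lemma \ref{antidiagonal-diagonal}, which break the clean symmetry exploited in the proof of Lemma \ref{even-symmetry}. In particular, the boundary values between the strict and non-strict inequalities $i\geq j$ versus $i>j$ interact with the $+1$ shift, so the piecewise decomposition of $A^{\mathrm{odd}}_{k,-}(\nu)\cup A^{\mathrm{odd}}_{k,+}(\nu)\cup B^{\mathrm{odd}}_{k,-}(\nu)$ must be examined carefully near $l=0$ and $l=\pm\frac{m}{2}$. No new idea is required beyond the strategy of Lemma \ref{even-symmetry}: the verification reduces to a finite case analysis indexed by the sign of $k$ and the comparison of $|k|$ with $\frac{m}{2}$, and the shifts are absorbed by the replacement $\varepsilon_1(\nu;-k-1)\rightsquigarrow\varepsilon_1(\nu;-k)$ (and its analogue for $\varepsilon_3$) noted above.
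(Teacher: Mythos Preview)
Your proposal is correct and follows essentially the same strategy as the paper: compute the complements $\bigl(\widetilde{\mathcal{S}}_{2k-1}(\nu)\bigr)^c$ explicitly, convert each antidiagonal brick to a diagonal brick via Lemma \ref{antidiagonal-diagonal}, split into four families of pieces, and regroup by the diagonal index to recover $\bigcup_l\bigl(\mathcal{S}_{2l-1}(\nu)\bigr)^c$. The only cosmetic difference is that the paper details the case $m$ odd (calling the pieces $C_{k,\pm}(\nu)$ and $D_{k,\pm}(\nu)$) while you propose to detail $m$ even; both declare the remaining parity ``similar.''
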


\begin{proof}
For the convenience of readers, we present the proof of  the case when $m$ is odd. The other case is similar. Actually, one can show
\ben
\left(\widetilde{\mathcal{S}}_{2k-1}(\nu)\right)^c=\left\{
\begin{aligned}
	&\left\{ \widetilde{\mathbf{S}}_{2k-1}(i,j)\,\bigg|\,0\leq i\leq \frac{m-1}{2},\,j\geq0 \right\},  \;\;\;\;\;\;\;\;\;\;\;\; \;\;\,\quad\quad\quad\quad\quad\quad\quad\quad\quad\quad\quad\; \mbox{if  $k\leq \displaystyle-\frac{m+1}{2}$},\\
	&\left\{ \widetilde{\mathbf{S}}_{2k-1}(i,j)\,\bigg|\,0\leq i\leq \frac{m-1}{2},\,j\geq0 \mbox{ or } i\geq\frac{m+1}{2}, \; 0\leq j\leq\frac{m-3}{2}+k	 \right\},  \; \mbox{if  $\displaystyle-\frac{m+1}{2}< k\leq0$},\\
	&\left\{ \widetilde{\mathbf{S}}_{2k-1}(i,j)\,\bigg|\,i\geq0,\,0\leq j \leq \frac{m-1}{2} \mbox{ or } 0\leq i\leq \frac{m-1}{2}-k,\,j\geq\frac{m+1}{2} \right\},\;\; \mbox{if  $0< k\leq\displaystyle\frac{m+1}{2}$},\\
	&\left\{ \widetilde{\mathbf{S}}_{2k-1}(i,j)\,\bigg|\,i\geq0,\,0\leq j \leq \frac{m-1}{2}  \right\},\;\;\;\;\;\;\;\;\;\;\;\; \;\;\;\;\,\quad\quad\quad\quad\quad\quad\quad\quad\quad\quad\quad\mbox{if  $k>\displaystyle\frac{m+1}{2}$}.
\end{aligned}
\right.
\een
By Lemma \ref{antidiagonal-diagonal}, we have 
\ben
\left(\widetilde{\mathcal{S}}_{2k-1}(\nu)\right)^c=\left\{
\begin{aligned}
	&C_{k,-}(\nu)\cup C_{k,+}(\nu),  \;\;\;\;\;\;\;\;\;\;\;\; \;\;\, \;\;\;\;\;\;\;\;\mbox{if  $k\leq \displaystyle-\frac{m+1}{2}$},\\
	&C_{k,-}(\nu)\cup C_{k,+}(\nu) \cup D_{k,+}(\nu),  \;\;\;\;\;\;\mbox{if  $\displaystyle-\frac{m+1}{2}< k\leq0$},\\
	&C_{k,-}(\nu)\cup C_{k,+}(\nu) \cup D_{k,-}(\nu),\;\;\;\;\;\;\, \mbox{if  $0< k\leq\displaystyle\frac{m+1}{2}$},\\
	&C_{k,-}(\nu)\cup C_{k,+}(\nu),\;\;\;\;\;\;\;\;\;\;\;\; \;\;\,\;\;\;\;\;\;\;\;\, \mbox{if  $k>\displaystyle\frac{m+1}{2}$}.
\end{aligned}
\right.
\een
where
\ben
&&C_{k,-}(\nu)=\left\{
\begin{aligned}
	&\left\{ \mathbf{S}_{-2(j-i)-1}(i,i-k)\,\bigg|\,0\leq i\leq\frac{m+1}{2},\; j\geq i  \right\},  \;\;\;\;\;\;\;\;\;\;\;\; \;\;\,\;\;\;\;\;\;\;\;\;\, \mbox{if  $k\leq 0$},\\
	&\left\{ \mathbf{S}_{-2(j-i-1)-1}(i+k,i)\,\bigg|\, 0\leq i< j,\; 0\leq j< \frac{m+1}{2}\right\},  \;\;\;\;\;\;\;\;\;\;\;\;\;\;  \mbox{if  $k>0$}.
\end{aligned}
\right.\\
&&C_{k,+}(\nu)=\left\{
\begin{aligned}
	&\left\{ \mathbf{S}_{2(i-j)-1}(j,j-k+1)\,\bigg|\,0\leq i<\frac{m+1}{2},\;0\leq j< i  \right\},  \;\;\;\;\;\;\;\;\;\;\;\; \;\, \mbox{if  $k\leq 0$},\\
	&\left\{ \mathbf{S}_{2(i-j+1)-1}(j+k-1,j)\,\bigg|\, i\geq j,\; 0\leq j< \frac{m+1}{2} \right\},  \;\;\;\;\;\;\;\;\;\;\;\;\;\;\;\; \mbox{if  $k>0$}.
\end{aligned}
\right.\\
&&D_{k,-}(\nu)=\left\{ \mathbf{S}_{-2(j-i-1)-1}(i+k,i)\,\bigg|\,0\leq i\leq \frac{m+1}{2}-k-1,\; j\geq \frac{m+1}{2} \right\},  \;\;\; \mbox{if  $0<k\leq\displaystyle\frac{m+1}{2}$},\\
&&D_{k,+}(\nu)=\left\{ \mathbf{S}_{2(i-j)-1}(j,j-k+1)\,\bigg|\, i\geq\frac{m+1}{2},\;0\leq j\leq \frac{m-1}{2}+k-1 \right\},  \;\; \mbox{if  $\displaystyle-\frac{m+1}{2}<k\leq 0$}.
\een
Then we have
\ben
\bigcup_{k\in\mathbb{Z}}\left(\widetilde{\mathcal{S}}_{2k-1}(\nu)\right)^c=\left(\bigcup_{k\in\mathbb{Z}} C_{k,-}(\nu)\cup C_{k,+}(\nu)\right)\bigcup \left(\bigcup_{0<k\leq\frac{m+1}{2}} D_{k,-}(\nu)\right)\bigcup \left(\bigcup_{-\frac{m+1}{2}<k\leq0}D_{k,+}(\nu)\right).
\een
Since we have
\ben
&&\bigcup_{k\leq0}C_{k,-}(\nu)=\bigcup_{l\geq0 }\left\{\mathbf{S}_{-2l-1}(i,i-k)\,\bigg|\,0\leq i\leq\frac{m-1}{2}, \,k\leq0 \right\}\\
&&\bigcup_{k>0}C_{k,-}(\nu)=\bigcup_{0\leq l\leq \frac{m-3}{2}}\left\{\mathbf{S}_{-2l-1}(i+k,i)\,\bigg|\,0\leq i\leq\frac{m-3}{2}-l, \,k>0 \right\}
\\
&&\bigcup_{0<k\leq\frac{m+1}{2}} D_{k,-}(\nu)=\bigcup_{1\leq l\leq \frac{m-1}{2}}\left\{\mathbf{S}_{-2l-1}(i+k,i)\,\bigg|\,\frac{m-1}{2}-l\leq i\leq\frac{m-1}{2}-k, \,0< k\leq l \right\}\\
&&\quad\quad\quad\quad\quad\quad\quad\quad\quad\bigcup\bigcup_{l\geq \frac{m+1}{2}}\left\{\mathbf{S}_{-2l-1}(i+k,i)\,\bigg|\,0\leq i\leq\frac{m-1}{2}-k, \,0<k\leq\frac{m+1}{2} \right\}
\een
and
\ben
&&\bigcup_{k\leq0}C_{k,+}(\nu)=\bigcup_{1\leq l\leq\frac{m-1}{2} }\left\{\mathbf{S}_{2l-1}(j,j-k+1)\,\bigg|\,0\leq j\leq\frac{m-1}{2}-l, \,k\leq0 \right\}\\
&&\bigcup_{k>0}C_{k,+}(\nu)=\bigcup_{l\geq 1}\left\{\mathbf{S}_{2l-1}(j+k-1,j)\,\bigg|\,0\leq j\leq\frac{m-1}{2}, \,k>0 \right\}
\\
&&\bigcup_{-\frac{m+1}{2}<k\leq0} D_{k,+}(\nu)=\bigcup_{2\leq l\leq \frac{m+1}{2}}\left\{\mathbf{S}_{2l-1}(j,j-k+1)\,\bigg|\,\frac{m+1}{2}-l\leq j\leq\frac{m-3}{2}+k, \,2-l \leq k\leq 0 \right\}\\
&&\quad\quad\quad\quad\quad\quad\quad\quad\quad\quad\bigcup\bigcup_{l> \frac{m+1}{2}}\left\{\mathbf{S}_{2l-1}(j,j-k+1)\,\bigg|\,0\leq j\leq\frac{m-3}{2}+k, \,-\frac{m+1}{2}<k\leq0 \right\}.
\een
then 
\ben
&&\left(\bigcup_{k\in\mathbb{Z}}C_{k,-}(\nu)\right)\bigcup \left(\bigcup_{0<k\leq\frac{m+1}{2}} D_{k,-}(\nu)\right)\\
&=&\left(\bigcup_{l\leq-\frac{m+1}{2}}\left\{ \mathbf{S}_{-2l-1}(i,j)\,\bigg|\,i\geq\frac{m+1}{2}, \,j\geq 0 \right\}^c\right)\bigcup \left(\bigcup_{-\frac{m+1}{2}<l\leq0}\left\{ \mathbf{S}_{-2l-1}(i,j)\,\bigg|\,i\geq\frac{m+1}{2},\, j\geq \frac{m-1}{2}-l \right\}^c\right)
\een
and	
\ben
&&\left(\bigcup_{k\in\mathbb{Z}} C_{k,+}(\nu)\right)\bigcup \left(\bigcup_{-\frac{m+1}{2}<k\leq0}D_{k,+}(\nu)\right)\\
&=&\left(\bigcup_{l\geq\frac{m+1}{2}}\left\{ \mathbf{S}_{2l-1}(i,j)\,\bigg|\,i\geq0,\, j\geq \frac{m+1}{2} \right\}^c\right)\bigcup \left(\bigcup_{0<l<\frac{m+1}{2}}\left\{ \mathbf{S}_{2l-1}(i,j)\,\bigg|\,i\geq\frac{m+1}{2}-l, \,j\geq \frac{m+1}{2} \right\}^c\right)
\een

Then the proof is completed by combining all the above results with the definition of $\mathcal{S}_{2k-1}(\nu)$.
\end{proof}

\begin{lemma}\label{even-odd-symmetry}
	Assume  $\nu=(m,m-1,\cdots,2,1)$ with $m\in\mathbb{Z}_{\geq1}$. Then we  have 
	\ben
	\bigcup_{k\in\mathbb{Z}}\left(\widetilde{\mathcal{S}}_{k}(\nu)\right)^c=\bigcup_{k\in\mathbb{Z}}\left(\mathcal{S}_{k}(\nu)\right)^c.
	\een
\end{lemma}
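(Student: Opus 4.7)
The plan is to derive this statement as a direct consequence of the two preceding lemmas by splitting the index $k \in \mathbb{Z}$ into its even and odd parts. Specifically, I would write
\[
\bigcup_{k\in\mathbb{Z}}\left(\widetilde{\mathcal{S}}_{k}(\nu)\right)^c = \bigcup_{k\in\mathbb{Z}}\left(\widetilde{\mathcal{S}}_{2k}(\nu)\right)^c \;\cup\; \bigcup_{k\in\mathbb{Z}}\left(\widetilde{\mathcal{S}}_{2k-1}(\nu)\right)^c,
\]
and analogously for the diagonal slice sets $\mathcal{S}_{k}(\nu)$. Both decompositions are disjoint because bricks in even-index slices and bricks in odd-index slices occupy disjoint positions in $\mathbb{Z}^3$ by Lemma \ref{interlacing2} (respectively Lemma \ref{interlacing1}), so no compatibility issue arises from combining the two families.

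Then I would apply Lemma \ref{even-symmetry} to equate the two even-index unions and Lemma \ref{odd-symmetry} to equate the two odd-index unions. Taking the union of these two equalities yields the desired identity. There is no genuine obstacle here, since Lemmas \ref{even-symmetry} and \ref{odd-symmetry} have already absorbed all of the combinatorial work (the careful enumeration of the sets $A_{k,\pm}, B_{k,\pm}, C_{k,\pm}, D_{k,\pm}$ and their translation between antidiagonal and diagonal coordinates via Lemma \ref{antidiagonal-diagonal}). The only thing to check is that the even/odd split is clean, which follows from the very definition of $\widetilde{\mathcal{S}}_{k}(\nu)$ and $\mathcal{S}_{k}(\nu)$, since the parity of $k$ is encoded in the defining words $(\upsilon_1\omega_1)^{-k}$ versus $(\upsilon_1\omega_1)^{-k}\upsilon_1$ (and their positive-$k$ analogues).

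Thus the argument is just a one-line reduction: split by parity of $k$, apply Lemma \ref{even-symmetry} and Lemma \ref{odd-symmetry}, and recombine. I would present it as a short paragraph rather than repeating the heavy case analysis.
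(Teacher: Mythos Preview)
Your proposal is correct and matches the paper's own proof exactly: the paper simply states that the result follows from Lemma \ref{even-symmetry} and Lemma \ref{odd-symmetry}, which is precisely your parity-splitting argument. Your additional remarks about the even/odd split being clean are accurate but not strictly needed, since the decomposition is immediate from the definitions.
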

\begin{proof}
It follows from Lemma \ref{even-symmetry} and Lemma \ref{odd-symmetry}.
\end{proof}
Then we have
\begin{lemma}\label{symmetric-interlacing}
Assume  $\nu=(m,m-1,\cdots,2,1)$ with $m\in\mathbb{Z}_{\geq1}$. 
For any pyramid partition $\pi\in\mathfrak{P}$, we have  $\widetilde{\bm{\wp}}_{\pi}(\nu)=\bm{\wp}_{\pi}(\nu)$.
Hence we have  $\mathfrak{RP}_{+}(\nu)=\mathfrak{RP}_{-}(\nu)$.
\end{lemma}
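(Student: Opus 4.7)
The plan is to deduce this lemma directly from Lemma \ref{even-odd-symmetry}, which already does the combinatorial heavy lifting of identifying the two ``forbidden'' sets of bricks. The observation is that both $\widetilde{\bm{\wp}}_{\pi}(\nu)$ and $\bm{\wp}_{\pi}(\nu)$ are obtained from the same pyramid partition $\pi$ by removing a prescribed family of bricks that depends only on $\nu$ (not on $\pi$), and Lemma \ref{even-odd-symmetry} says exactly that these two families coincide when $\nu=(m,m-1,\dots,2,1)$.

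More precisely, I would first rewrite the definition. Since $\widetilde{\pi}_k$ and $\pi_k$ partition $\pi$ along antidiagonal and diagonal slices respectively, Definition \ref{dfn-antidiag-diag-RPC} gives
\ben
\widetilde{\bm{\wp}}_{\pi}(\nu)=\bigcup_{k\in\mathbb{Z}}\bigl(\widetilde{\mathcal{S}}_{k}(\nu)\cap\widetilde{\pi}_{k}\bigr)
=\pi\setminus\Bigl(\bigcup_{k\in\mathbb{Z}}(\widetilde{\mathcal{S}}_{k}(\nu))^{c}\Bigr),
\een
and symmetrically
\ben
\bm{\wp}_{\pi}(\nu)=\pi\setminus\Bigl(\bigcup_{k\in\mathbb{Z}}(\mathcal{S}_{k}(\nu))^{c}\Bigr),
\een
where each complement is taken inside the full set of possible brick positions (not inside $\pi$), so that these sets are independent of the choice of $\pi\in\mathfrak{P}$. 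The key point that makes this rewriting legitimate is that $\widetilde{\mathcal{S}}_{k}(\nu)^{c}\cap\widetilde{\pi}_{k}=\widetilde{\pi}_{k}\setminus\widetilde{\mathcal{S}}_{k}(\nu)$ and likewise for the diagonal version, so the two descriptions agree after taking the union over $k$.

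With this in hand, Lemma \ref{even-odd-symmetry} immediately gives $\bigcup_{k}(\widetilde{\mathcal{S}}_{k}(\nu))^{c}=\bigcup_{k}(\mathcal{S}_{k}(\nu))^{c}$, so the two displayed expressions above remove the \emph{same} set of bricks from $\pi$, and hence $\widetilde{\bm{\wp}}_{\pi}(\nu)=\bm{\wp}_{\pi}(\nu)$. The equality of the two collections $\mathfrak{RP}_{+}(\nu)=\{\widetilde{\bm{\wp}}_{\pi}(\nu):\pi\in\mathfrak{P}\}$ and $\mathfrak{RP}_{-}(\nu)=\{\bm{\wp}_{\pi}(\nu):\pi\in\mathfrak{P}\}$ is then formal from Definition \ref{Two classes of RPC}.

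The only real subtlety, and the single point I would want to double-check carefully when writing this out, is that the identification of bricks in Lemma \ref{antidiagonal-diagonal} is used implicitly when comparing the ambient sets: a given brick $[B]\in\mathfrak{B}$ has a well-defined position in $\mathbb{Z}^{3}$, and it appears once in exactly one antidiagonal slice and once in exactly one diagonal slice. Hence treating ``bricks removed from $\pi$'' as a single subset of $\mathfrak{B}$ (independent of whether we organised the removal antidiagonally or diagonally) is unambiguous, and this is precisely what makes Lemma \ref{even-odd-symmetry} translate to the statement of the present lemma with no further computation required.
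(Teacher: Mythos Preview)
Your proposal is correct and is essentially the same argument as the paper's: both reduce the claim to Lemma~\ref{even-odd-symmetry} and then observe that $\widetilde{\bm{\wp}}_{\pi}(\nu)$ and $\bm{\wp}_{\pi}(\nu)$ are obtained from $\pi$ by deleting the same $\pi$-independent set of bricks. Your set-difference phrasing $\pi\setminus\bigcup_k(\widetilde{\mathcal{S}}_k(\nu))^c$ is a slightly cleaner packaging of the paper's implication chain, but the content is identical.
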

\begin{proof}
Since for any  pyramid partition $\pi\in\mathfrak{P}$, we have
\ben
&&\widetilde{\bm{\wp}}_{\pi}(\nu)=\bigcup_{k\in\mathbb{Z}}\widetilde{\mathcal{S}}_{k}(\nu)\cap\widetilde{\pi}_{k}\subseteq\pi,\\
&&\bm{\wp}_{\pi}(\nu)=\bigcup_{k\in\mathbb{Z}}\mathcal{S}_{k}(\nu)\cap\pi_{k}\subseteq\pi.
\een
By Lemma \ref{even-odd-symmetry}, we have  
\ben
\bigcup\limits_{k\in\mathbb{Z}}\left(\widetilde{\mathcal{S}}_{k}(\nu)\right)^c\bigcap\widetilde{\pi}_{k}&=&\left(\bigcup\limits_{k\in\mathbb{Z}}\left(\widetilde{\mathcal{S}}_{k}(\nu)\right)^c\right)\bigcap\left(\bigcup\limits_{k\in\mathbb{Z}}\widetilde{\pi}_{k}\right)\\
&=&\left(\bigcup_{k\in\mathbb{Z}}\left(\mathcal{S}_{k}(\nu)\right)^c\right)\bigcap\left(\bigcup\limits_{k\in\mathbb{Z}}\pi_{k}\right)=\bigcup_{k\in\mathbb{Z}}\left(\mathcal{S}_{k}(\nu)\right)^c\bigcap\pi_{k}.
\een
Then we have
\ben
[B]\in\widetilde{\bm{\wp}}_{\pi}(\nu)\Longrightarrow [B]\notin\bigcup\limits_{k\in\mathbb{Z}}\left(\widetilde{\mathcal{S}}_{k}(\nu)\right)^c\bigcap\widetilde{\pi}_{k}
\Longrightarrow[B]\notin\bigcup_{k\in\mathbb{Z}}\left(\mathcal{S}_{k}(\nu)\right)^c\bigcap\pi_{k}\Longrightarrow[B]\in\bm{\wp}_{\pi}(\nu).
\een
where  $[B]$ is any brick  in  $\widetilde{\bm{\wp}}_{\pi}(\nu)$ and the last implication can be proved by negation.
Then  $\widetilde{\bm{\wp}}_{\pi}(\nu)\subseteq\bm{\wp}_{\pi}(\nu)$. Similarly, we have $\bm{\wp}_{\pi}(\nu)\subseteq\widetilde{\bm{\wp}}_{\pi}(\nu)$. Then the proof is completed.
\end{proof}	

Next, we will show	if $\widetilde{\bm{\wp}}_{\pi}(\nu)=\bm{\wp}_{\pi}(\nu)$  holds for
 any pyramid partition $\pi\in\mathfrak{P}$ where $\nu\neq\emptyset$, then we must have
 $\nu=(m,m-1,\cdots,2,1)$  where  $m\in\mathbb{Z}_{\geq1}$ is the length of $\nu$. We need the following steps.
\begin{lemma}\label{symmetric-step1}
Let $\nu\neq\emptyset$ be a  partition. Assume that 
\ben
\bigcup_{k\in\mathbb{Z}}\left(\widetilde{\mathcal{S}}_{2k}(\nu)\right)^c=\bigcup_{k\in\mathbb{Z}}\left(\mathcal{S}_{2k}(\nu)\right)^c,
\een
then we have $\rho_1(\nu)=\rho_2(\nu)\geq1$.
\end{lemma}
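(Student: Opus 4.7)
The strategy is to convert the set equality $\bigcup_k (\widetilde{\mathcal{S}}_{2k}(\nu))^c = \bigcup_k (\mathcal{S}_{2k}(\nu))^c$ into a family of pointwise equivalences via Lemma \ref{antidiagonal-diagonal}: every brick has a unique antidiagonal representation $\widetilde{\mathbf{S}}_{2k}(i,j)$ and a unique diagonal representation $\mathbf{S}_{2l}(i^\prime, j^\prime)$, and the assumed equality forces the defining inequalities for $(\widetilde{\mathcal{S}}_{2k}(\nu))^c$ at $(i,j)$ to be equivalent to those for $(\mathcal{S}_{2l}(\nu))^c$ at $(i^\prime, j^\prime)$ for each brick.

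To establish $\rho_2(\nu) \geq 1$, suppose for contradiction $\rho_2(\nu) = 0$, so $\varepsilon_1(\nu;+\infty) = \varepsilon_3(\nu;+\infty) = 0$, forcing $\nu^\prime(2t) = -1$ for $t \geq 0$ and $\nu^\prime(-2t) = 1$ for $t \geq 1$. Comparing $\widetilde{\mathbf{S}}_{2k}(0,0) = \mathbf{S}_0(k, 0)$ as $k \to +\infty$, together with $\nu \neq \emptyset$ via Remark \ref{empty-case}, yields $\rho_1(\nu) = \varepsilon_4(\nu;+\infty) \geq 1$. Comparing $\widetilde{\mathbf{S}}_0(m,0) = \mathbf{S}_{2m}(0,0)$ for each $m \geq 1$ forces $\varepsilon_4(\nu; m) < \rho_1(\nu)$ iff $m < \rho_1(\nu)$, which pins down $\nu^\prime(-2t+1) = -1$ for $t = 1, \ldots, \rho_1(\nu)$ and $\nu^\prime(-2t+1) = 1$ for $t > \rho_1(\nu)$. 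Finally, comparing $\widetilde{\mathbf{S}}_{-2s}(n,0) = \mathbf{S}_{2n}(0, s)$ as $n \to +\infty$ for each $s \geq 1$ forces $\varepsilon_2(\nu; s-1) = 0$, so $\nu^\prime(2t+1) = -1$ for all $t \geq 0$. The resulting edge sequence satisfies $|S(\nu^\prime)_+| = 0$ while $|S(\nu^\prime)_-| = \rho_1(\nu) \geq 1$, contradicting the charge-zero property (Remark \ref{charge0}). By a symmetric argument swapping antidiagonal/diagonal roles, $\rho_1(\nu) = 0$ is likewise impossible.

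To establish $\rho_1(\nu) = \rho_2(\nu)$, revisiting the asymptotic limits of the above equivalences yields the two disjunctions ``$\rho_1 = \varepsilon_4(\nu;+\infty) > \varepsilon_2(\nu;+\infty)$ or $\rho_2 = \varepsilon_3(\nu;+\infty) > \varepsilon_1(\nu;+\infty)$'' and, symmetrically, ``$\rho_1 = \varepsilon_2(\nu;+\infty) > \varepsilon_4(\nu;+\infty)$ or $\rho_2 = \varepsilon_1(\nu;+\infty) > \varepsilon_3(\nu;+\infty)$''. Since the corresponding disjuncts are pairwise incompatible, we fall into one of two symmetric subcases; treat $(\rho_1, \rho_2) = (\varepsilon_4(\nu;+\infty), \varepsilon_1(\nu;+\infty))$ with $\varepsilon_4 > \varepsilon_2$ and $\varepsilon_1 > \varepsilon_3$. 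Then analyzing $\widetilde{\mathbf{S}}_0(m, n)$ in the two regimes $m > n$ and $n > m$ yields the identities
\[
\max(\rho_1 - r, \rho_2) = \max(\rho_1 - \varepsilon_4(\nu;r), \rho_2 - \varepsilon_3(\nu;r))\quad\text{and}\quad \max(\rho_1, \rho_2 - r) = \max(\rho_1 - \varepsilon_2(\nu;r-1), \rho_2 - \varepsilon_1(\nu;r-1)),
\]
and taking $r \to +\infty$ in each forces $\varepsilon_2(\nu;+\infty) = \varepsilon_3(\nu;+\infty) = 0$. Together with the strict inequalities, this yields $\nu^\prime(2t+1) = -1$ for $t \geq 0$ and $\nu^\prime(-2t) = 1$ for $t \geq 1$, so $|S(\nu^\prime)_+| = \varepsilon_1(\nu;+\infty) = \rho_2(\nu)$ and $|S(\nu^\prime)_-| = \varepsilon_4(\nu;+\infty) = \rho_1(\nu)$. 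The charge-zero identity (Remark \ref{charge0}) then gives $\rho_1(\nu) = \rho_2(\nu)$. The other subcase is handled symmetrically.

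The main obstacle is bookkeeping: choosing the correct families of bricks to extract the needed equivalences, and translating them cleanly between antidiagonal and diagonal indexing via Lemma \ref{antidiagonal-diagonal}. Once organized, the charge-zero property of $\nu^\prime$ (Remark \ref{charge0}) is the decisive structural constraint linking the $\rho_i(\nu)$'s with the $\varepsilon_i(\nu;+\infty)$'s.
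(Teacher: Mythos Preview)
Your approach is correct and rests on the same two essential ingredients as the paper—Lemma~\ref{antidiagonal-diagonal} to translate between antidiagonal and diagonal brick representations, and the charge-zero property of Remark~\ref{charge0}—but the organization is genuinely different. The paper assumes $\rho_1(\nu)\neq\rho_2(\nu)$ directly (say $\rho_2>\rho_1$), intersects both sides of the hypothesis with the full $0$-th diagonal $\mathcal{S}_0(\emptyset)$, computes the two resulting sets explicitly via Lemma~\ref{antidiagonal-diagonal}, and by induction on $k$ extracts constraints on the $\varepsilon_i$'s that contradict charge zero; the inequality $\rho_1(\nu)=\rho_2(\nu)\geq 1$ then comes for free from Remark~\ref{empty-case}. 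You instead first establish $\rho_i(\nu)\geq 1$ separately (which is needed, since your later brick comparisons at $(0,0)$ rely on $\rho_i\geq 1$ to force membership in the complements), then derive the two disjunctions and the $\max$-identities on $\widetilde{\mathbf S}_0(m,n)$ to pin down $\varepsilon_2(\nu;+\infty)=\varepsilon_3(\nu;+\infty)=0$ (or the symmetric case) before invoking charge zero. Your brick-by-brick extraction is arguably more transparent about which comparisons drive each constraint; the paper's wholesale intersection with $\mathcal{S}_0(\emptyset)$ is more economical and avoids the preliminary step.

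One small slip: in your first part, the claim that comparing $\widetilde{\mathbf S}_{-2s}(n,0)=\mathbf S_{2n}(0,s)$ ``as $n\to+\infty$'' forces $\varepsilon_2(\nu;s-1)=0$ is not right as stated—both membership conditions become false for large $n$, so the limit carries no information. What actually works is the comparison at $n=\rho_1(\nu)-1$ when $\rho_1(\nu)\geq 2$, or at $n=0$ (landing on $\mathbf S_0(0,s)$) when $\rho_1(\nu)=1$: there the already established identity $\varepsilon_4(\nu;n)=n$ for $0\leq n\leq\rho_1(\nu)$ makes the diagonal side lie in the complement, forcing $\varepsilon_2(\nu;s-1)<1$. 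With that fix your argument goes through.
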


\begin{proof}
Assume that $\rho_1(\nu)\neq\rho_2(\nu)$. Without loss of generality, we 
suppose $\rho_2(\nu)>\rho_1(\nu)$, the other case takes the similar argument. Let 
\ben
\rho_2(\nu)=\rho_1(\nu)+\ell, \;\;\;\ell\in\mathbb{Z}_{\geq1}
\een
By assumption and Remark \ref{empty-case1}, we have 
\bea\label{S_0+=S_0-}
\left(\mathcal{S}_{0}(\nu)\right)^c=\mathcal{S}_{0}(\emptyset)\bigcap\left(\displaystyle\bigcup\limits_{k\in\mathbb{Z}}\left(\widetilde{\mathcal{S}}_{2k}(\nu)\right)^c\right).
\eea
Since if $k\leq0$,
\ben
\left(\widetilde{\mathcal{S}}_{2k}(\nu)\right)^c&=&\left\{ \widetilde{\mathbf{S}}_{2k}(i,j)\,\bigg|\,i\geq0,\; 0\leq j\leq \rho_{2}(\nu)-\varepsilon_1(\nu;-k-1)-1\right\}\\
&&\bigcup\left\{ \widetilde{\mathbf{S}}_{2k}(i,j)\,\bigg|\,0\leq i\leq \rho_{1}(\nu)-\varepsilon_2(\nu;-k-1)-1,\;  j\geq \rho_{2}(\nu)-\varepsilon_1(\nu;-k-1)\right\}
\een
and if $k>0$,
\ben
\left(\widetilde{\mathcal{S}}_{2k}(\nu)\right)^c&=&\left\{ \widetilde{\mathbf{S}}_{2k}(i,j)\,\bigg|\,i\geq0,\; 0\leq j\leq \rho_{2}(\nu)-\varepsilon_3(\nu;k)-1\right\}\\
&&\bigcup\left\{ \widetilde{\mathbf{S}}_{2k}(i,j)\,\bigg|\,0\leq i\leq \rho_{1}(\nu)-\varepsilon_4(\nu;k)-1,\;  j\geq \rho_{2}(\nu)-\varepsilon_3(\nu;k)\right\},
\een
then by Lemma \ref{antidiagonal-diagonal}, we have
\bea\label{S_0+}
&&\mathcal{S}_{0}(\emptyset)\bigcap\left(\displaystyle\bigcup\limits_{k\in\mathbb{Z}}\left(\widetilde{\mathcal{S}}_{2k}(\nu)\right)^c\right)\\
&=&\bigcup_{k\leq0}\left\{ \mathbf{S}_{0}(j,j-k)\,\bigg|\,0\leq j\leq \rho_{2}(\nu)-\varepsilon_1(\nu;-k-1)-1\right\}\nonumber\\
&&\bigcup\bigcup_{k\leq 0}\left\{ \mathbf{S}_{0}(j,j-k)\,\bigg|\,\rho_{2}(\nu)-\varepsilon_1(\nu;-k-1)\leq j\leq \rho_{1}(\nu)-\varepsilon_2(\nu;-k-1)-1\right\}\nonumber\\
&&\bigcup\bigcup_{k> 0}\left\{ \mathbf{S}_{0}(j+k,j)\,\bigg|\, 0\leq j\leq \rho_{2}(\nu)-\varepsilon_3(\nu;k)-1\right\}\nonumber\\
&&\bigcup\bigcup_{k> 0}\left\{\mathbf{S}_{0}(j+k,j)\,\bigg|\, \rho_{2}(\nu)-\varepsilon_3(\nu;k)\leq j\leq \rho_{1}(\nu)-\varepsilon_4(\nu;k)-1\right\}.\nonumber
\eea
On the other hand, we have
\bea\label{S_0-}
\left(\mathcal{S}_{0}(\nu)\right)^c&=&\left\{ \mathbf{S}_{0}(i,j)\,\bigg|\,i\geq\rho_1(\nu),\; j\geq \rho_2(\nu) \right\}^c\\
&=&\bigcup_{k<-\ell}\left\{\mathbf{S}_{0}(j,j-k)\,\bigg|\,0\leq j\leq \rho_{1}(\nu)-1\right\}\nonumber\\
&&\bigcup\bigcup_{-\ell\leq k\leq0}\left\{ \mathbf{S}_{0}(j,j-k)\,\bigg|\,0\leq j\leq \rho_{2}(\nu)+k-1\right\}\nonumber\\
&&\bigcup\bigcup_{k>0}\left\{ \mathbf{S}_{0}(j+k,j)\,\bigg|\,0\leq j\leq \rho_{2}(\nu)-1\right\}.\nonumber
\eea
By \eqref{S_0+=S_0-},  \eqref{S_0+} and \eqref{S_0-}, one can show by induction  on $k\leq0$ and $k>0$ that
\ben
\mbox{either}\;\;\;\;
\left\{
\begin{aligned}
	&\varepsilon_1(\nu;\imath-1)=\imath, \;\;\forall\;1\leq\imath\leq\ell, \\
	&\varepsilon_1(\nu;+\infty)=\ell, \\
	&\varepsilon_3(\nu;+\infty)=0,
\end{aligned}
\right.\;\;\;\;\mbox{or }\;\;\;\;
\left\{
\begin{aligned}
	&\varepsilon_1(\nu;\imath-1)=\imath, \;\;\;\;\;\;\forall\;1\leq\imath\leq\ell, \\
	&\varepsilon_1(\nu;+\infty)\geq\ell+1, \\
	&\varepsilon_2(\nu;+\infty)=0,\\
	&\varepsilon_3(\nu;+\infty)=0.
\end{aligned}
\right.
\een
For the first case, we have $\rho_2(\nu)=\ell$ and $\rho_1(\nu)=0$, and hence $\varepsilon_2(\nu;+\infty)=\varepsilon_4(\nu;+\infty)=0$. Then
\ben
\left\{
\begin{aligned}
	&\nu^\prime(t)=1,\;\; \quad\quad\quad \;\;\forall\, t<0,\\
	&\nu^\prime(2t+1)=-1, \,\;\;\;\forall\; t\geq0, \\
	&\nu^\prime(2t)=1,\;\;\quad\quad\quad \forall\; 0\leq t\leq \ell-1,\\
	&\nu^\prime(2t)=-1, \quad\quad\quad \forall\, t\geq \ell,
\end{aligned}
\right.
\een
which implies $|\mathit{S}(\nu^\prime)_{+}|=\ell=\rho_2(\nu)$ and  $|\mathit{S}(\nu^\prime)_{-}|=0=\rho_1(\nu)$, which contradicts $|\mathit{S}(\nu^\prime)_{+}|=|\mathit{S}(\nu^\prime)_{-}|$ by Remark \ref{charge0}.
Similarly, for the second case, we have $|\mathit{S}(\nu^\prime)_{+}|=\rho_2(\nu)$ and $|\mathit{S}(\nu^\prime)_{-}|=\rho_1(\nu)$, which again contradicts $|\mathit{S}(\nu^\prime)_{+}|=|\mathit{S}(\nu^\prime)_{-}|$ by Remark \ref{charge0}. Now the proof is completed by Remark \ref{empty-case}.
\end{proof}

The second step is 
\begin{lemma}\label{symmetric-step2}
	Let $\nu\neq\emptyset$ be a  partition. Assume that 
	\ben
	\bigcup_{k\in\mathbb{Z}}\left(\widetilde{\mathcal{S}}_{2k}(\nu)\right)^c=\bigcup_{k\in\mathbb{Z}}\left(\mathcal{S}_{2k}(\nu)\right)^c,
	\een
	then 
	\ben
	\mbox{either}\;\;\;\;
	\left\{
	\begin{aligned}
		&\varepsilon_1(\nu;+\infty)\geq1,  \\
		&\varepsilon_2(\nu;+\infty)=0, \\
		&\varepsilon_3(\nu;+\infty)=0,\\
		&\varepsilon_4(\nu;+\infty)\geq1.
	\end{aligned}
	\right.
	\;\;\;\;\mbox{or }\;\;\;\;
	\left\{
	\begin{aligned}
		&\varepsilon_1(\nu;+\infty)=0,  \\
		&\varepsilon_2(\nu;+\infty)\geq1, \\
		&\varepsilon_3(\nu;+\infty)\geq1,\\
		&\varepsilon_4(\nu;+\infty)=0.
	\end{aligned}
	\right.
	\een
\end{lemma}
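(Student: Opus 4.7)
The strategy is to reuse the device from the proof of Lemma~\ref{symmetric-step1}, namely to restrict the hypothesis to the $0$-th diagonal slice. By Lemma~\ref{symmetric-step1} we already have $\rho := \rho_{1}(\nu) = \rho_{2}(\nu) \geq 1$. Intersecting both sides of the hypothesis with $\mathcal{S}_{0}(\emptyset)$, only the $k=0$ term survives on the right, yielding
\begin{equation*}
\mathcal{S}_{0}(\emptyset) \cap \bigcup_{k\in\mathbb{Z}}\bigl(\widetilde{\mathcal{S}}_{2k}(\nu)\bigr)^{c} = \bigl(\mathcal{S}_{0}(\nu)\bigr)^{c} = \bigl\{\mathbf{S}_{0}(a,b) : 0 \leq a < \rho \text{ or } 0 \leq b < \rho\bigr\}.
\end{equation*}

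Next I would rewrite the left-hand side in $(a,b)$-coordinates. By Lemma~\ref{antidiagonal-diagonal}, a brick $\mathbf{S}_{0}(a,b)$ lies on the $2(a-b)$-th antidiagonal and equals $\widetilde{\mathbf{S}}_{2(a-b)}(\min(a,b),\min(a,b))$. Unfolding the definition of $\widetilde{\mathcal{S}}_{2k}(\nu)$, the brick $\mathbf{S}_{0}(a,b)$ lies in $\mathcal{S}_{0}(\emptyset) \cap (\widetilde{\mathcal{S}}_{2(a-b)}(\nu))^{c}$ exactly when $a < \rho - \min\{\varepsilon_{1}(\nu; b-a-1), \varepsilon_{2}(\nu; b-a-1)\}$ (for $a \leq b$, so $k = a-b \leq 0$) or when $b < \rho - \min\{\varepsilon_{3}(\nu; a-b), \varepsilon_{4}(\nu; a-b)\}$ (for $a > b$, so $k = a-b > 0$). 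Comparing with the simplified right-hand description ($a<\rho$ on $a\leq b$, $b<\rho$ on $a>b$) and testing at the boundary $a=\rho-1$ (resp.\ $b=\rho-1$), one sees that both minima must vanish for every non-negative value of their argument. Passing to $+\infty$ (using that each $\varepsilon_{i}(\nu;\cdot)$ is non-decreasing with a finite limit) gives
\begin{equation*}
\min\{\varepsilon_{1}(\nu;+\infty), \varepsilon_{2}(\nu;+\infty)\} = 0, \qquad \min\{\varepsilon_{3}(\nu;+\infty), \varepsilon_{4}(\nu;+\infty)\} = 0.
\end{equation*}

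Finally I would combine these two vanishings with the equalities $\rho_{1}(\nu) = \max\{\varepsilon_{2}, \varepsilon_{4}\}(\nu;+\infty) = \rho$ and $\rho_{2}(\nu) = \max\{\varepsilon_{1}, \varepsilon_{3}\}(\nu;+\infty) = \rho$ inherited from Lemma~\ref{symmetric-step1}. Of the four ways to choose which of $\{\varepsilon_{1},\varepsilon_{2}\}$ and which of $\{\varepsilon_{3},\varepsilon_{4}\}$ vanishes at infinity, the pairings $(\varepsilon_{1}(\nu;+\infty)=0, \varepsilon_{3}(\nu;+\infty)=0)$ and $(\varepsilon_{2}(\nu;+\infty)=0, \varepsilon_{4}(\nu;+\infty)=0)$ would force $\rho_{2}=0$ or $\rho_{1}=0$ respectively, contradicting $\rho \geq 1$. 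The surviving pairings $(\varepsilon_{2}(\nu;+\infty)=0, \varepsilon_{3}(\nu;+\infty)=0)$ and $(\varepsilon_{1}(\nu;+\infty)=0, \varepsilon_{4}(\nu;+\infty)=0)$ then force the complementary $\varepsilon$'s to attain $\rho \geq 1$; these are exactly the two cases in the statement.

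The main obstacle is purely bookkeeping: the definition of $\widetilde{\mathcal{S}}_{2k}(\nu)$ splits between $k \leq 0$ (involving $\varepsilon_{1},\varepsilon_{2}$) and $k > 0$ (involving $\varepsilon_{3},\varepsilon_{4}$), and this split has to be reconciled with the antidiagonal-to-diagonal translation of Lemma~\ref{antidiagonal-diagonal} (which splits on $a \leq b$ vs $a > b$). Once the indexing is aligned, everything reduces to an elementary comparison of two complements on a single diagonal slice, and the charge-zero type consequences of Lemma~\ref{symmetric-step1} close the argument through the short final case analysis.
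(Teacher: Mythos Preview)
Your proof is correct and follows essentially the same approach as the paper: both restrict the hypothesis to the $0$-th diagonal slice using the identity $(\mathcal{S}_0(\nu))^c = \mathcal{S}_0(\emptyset)\cap\bigcup_k(\widetilde{\mathcal{S}}_{2k}(\nu))^c$, translate antidiagonal coordinates to diagonal via Lemma~\ref{antidiagonal-diagonal}, and then compare the resulting descriptions term-by-term in $k$ (your ``boundary test at $a=\rho-1$'' is the content of the paper's terse ``induction on $k\le 0$ and $k>0$''). Your write-up actually fills in more of the bookkeeping than the paper does, including the explicit formula $\mathbf{S}_0(a,b)=\widetilde{\mathbf{S}}_{2(a-b)}(\min(a,b),\min(a,b))$ and the final case analysis eliminating the two incompatible pairings via $\rho_1=\rho_2\ge 1$.
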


\begin{proof}
By Lemma \ref{symmetric-step1}, we have $\rho_1(\nu)=\rho_2(\nu)\geq1$. 
Now we have 
\bea\label{newS_0-}
\;\;\;\;\;\;\quad\left(\mathcal{S}_{0}(\nu)\right)^c
=\bigcup_{k\leq0}\left\{\mathbf{S}_{0}(j,j-k)\,\bigg|\,0\leq j\leq \rho_{1}(\nu)-1\right\}\bigcup\bigcup_{k>0}\left\{ \mathbf{S}_{0}(j+k,j)\,\bigg|\,0\leq j\leq \rho_{2}(\nu)-1\right\}.
\eea
Then the proof follows from $\rho_1(\nu)=\rho_2(\nu)\geq1$ and \eqref{S_0+=S_0-},  \eqref{S_0+}, \eqref{newS_0-} by induction  on $k\leq0$ and $k>0$.
\end{proof}

The third step is 
\begin{lemma}\label{symmetric-step3}
	Let $\nu\neq\emptyset$ be a  partition. Assume that 
	\ben
	\bigcup_{k\in\mathbb{Z}}\left(\widetilde{\mathcal{S}}_{2k}(\nu)\right)^c=\bigcup_{k\in\mathbb{Z}}\left(\mathcal{S}_{2k}(\nu)\right)^c,
	\een
	then we have 
	\ben
	\mbox{either}\;\;\;\;
	\left\{
	\begin{aligned}
		&\varepsilon_1(\nu;0)=1,  \\
		&\varepsilon_2(\nu;+\infty)=0, \\
		&\varepsilon_3(\nu;+\infty)=0,\\
		&\varepsilon_4(\nu;1)=1.
	\end{aligned}
	\right.
	\;\;\;\;\mbox{or }\;\;\;\;
	\left\{
	\begin{aligned}
		&\varepsilon_1(\nu;+\infty)=0,  \\
		&\varepsilon_2(\nu;0)=1, \\
		&\varepsilon_3(\nu;1)=1,\\
		&\varepsilon_4(\nu;+\infty)=0.
	\end{aligned}
	\right.
	\een
\end{lemma}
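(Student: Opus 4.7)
The plan is to run the same bookkeeping strategy as in Lemmas \ref{symmetric-step1} and \ref{symmetric-step2}, but now localized at the slices $\mathcal{S}_{\pm 2}(\nu)$ rather than only at $\mathcal{S}_0(\nu)$, so as to detect the earliest-index information on the edge sequence. Lemma \ref{symmetric-step2} leaves exactly two dichotomous scenarios: Case (A) with $\varepsilon_1(\nu;+\infty),\varepsilon_4(\nu;+\infty)\geq 1$ and $\varepsilon_2\equiv\varepsilon_3\equiv 0$, and the symmetric Case (B). I would work out Case (A) in detail; Case (B) follows by the identical argument with $\varepsilon_1\leftrightarrow\varepsilon_2$, $\varepsilon_4\leftrightarrow\varepsilon_3$ and the slice $\mathcal{S}_{-2}$ replaced by $\mathcal{S}_{2}$.

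In Case (A), Lemma \ref{symmetric-step1} gives $\rho_1(\nu)=\rho_2(\nu)\geq 1$. Starting from the consequence of the hypothesis
\[
\left(\mathcal{S}_{-2}(\nu)\right)^c=\mathcal{S}_{-2}(\emptyset)\bigcap\Bigl(\bigcup_{k\in\mathbb{Z}}\left(\widetilde{\mathcal{S}}_{2k}(\nu)\right)^c\Bigr),
\]
Lemma \ref{antidiagonal-diagonal} shows that a brick $\widetilde{\mathbf{S}}_{2k}(i,j)$ lies in $\mathcal{S}_{-2}(\emptyset)$ precisely when $j=i+1$, in which case it equals $\mathbf{S}_{-2}(i,i-k)$ for $k\leq 0$ and $\mathbf{S}_{-2}(i+k,i)$ for $k>0$. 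Inserting the Case (A) simplifications into the definition of $\widetilde{\mathcal{S}}_{2k}(\nu)$ and then taking complements reduces the right-hand side to
\[
\bigcup_{k\leq 0}\{\mathbf{S}_{-2}(i,i-k):0\leq i\leq \rho_1-1\}\cup\bigcup_{k>0}\{\mathbf{S}_{-2}(i+k,i):0\leq i<\rho_1-\min(\varepsilon_4(\nu;k),1)\},
\]
while on the diagonal side
\[
\left(\mathcal{S}_{-2}(\nu)\right)^c=\{\mathbf{S}_{-2}(a,b):0\leq a\leq \rho_1-1\}\cup\{\mathbf{S}_{-2}(a,b):a\geq \rho_1,\;0\leq b\leq \rho_1-\varepsilon_1(\nu;0)-1\}.
\]

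Setting $k_0:=\min\{k\geq 1:\varepsilon_4(\nu;k)\geq 1\}$, which is finite by Case (A), the antidiagonal union contains $\{\mathbf{S}_{-2}(a,\rho_1-1):a\geq \rho_1\}$ only for $a<\rho_1-1+k_0$, whereas the diagonal complement contains the full ray $\{a\geq \rho_1\}$ at $b=\rho_1-1$ exactly when $\varepsilon_1(\nu;0)=0$. Matching at this boundary row therefore forces both $\varepsilon_1(\nu;0)=1$ and $k_0=1$; the latter is exactly $\varepsilon_4(\nu;1)=1$. Applying the symmetric argument at $\mathcal{S}_{2}(\nu)$ in Case (B) yields $\varepsilon_2(\nu;0)=1$ and $\varepsilon_3(\nu;1)=1$.

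The main obstacle is the combinatorial bookkeeping in the union decomposition: carefully identifying, via Lemma \ref{antidiagonal-diagonal}, how each brick $\mathbf{S}_{-2}(a,b)$ arises as some $\widetilde{\mathbf{S}}_{2k}(i,i+1)$, and correctly tracking when the inequality $i<\rho_1-\varepsilon_4(\nu;k)$ dominates over $i<\rho_2-1$. Isolating just the single boundary row $b=\rho_1-1$ is what translates the global set-theoretic hypothesis into the two sharp local equalities, and ensuring that no overcounting occurs between the $k\leq 0$ and $k>0$ contributions is where care is required.
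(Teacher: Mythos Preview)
Your proposal is correct and follows essentially the same approach as the paper: localize the hypothesis at the slices $\mathcal{S}_{\pm 2}(\nu)$ via Lemma~\ref{antidiagonal-diagonal}, use Lemmas~\ref{symmetric-step1} and \ref{symmetric-step2}, and read off the constraints. The only organizational difference is that you invoke Lemma~\ref{symmetric-step2} first to split into Cases (A)/(B) and then examine a single boundary row $b=\rho_1-1$, whereas the paper first splits on the values $(\varepsilon_1(\nu;0),\varepsilon_2(\nu;0))\in\{0,1\}^2$, rules out the $(0,0)$ case directly, and argues ``by induction on $k\le 0$ and $k>0$'' within the remaining cases; both routes arrive at the same conclusion.
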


\begin{proof}
By assumption and Remark \ref{empty-case1}, we have 	
\bea\label{S_-2+=S_-2-}
\left(\mathcal{S}_{-2}(\nu)\right)^c=\mathcal{S}_{-2}(\emptyset)\bigcap\left(\displaystyle\bigcup\limits_{k\in\mathbb{Z}}\left(\widetilde{\mathcal{S}}_{2k}(\nu)\right)^c\right),
\eea
and 
\bea\label{S_2+=S_2-}
\left(\mathcal{S}_{2}(\nu)\right)^c=\mathcal{S}_{2}(\emptyset)\bigcap\left(\displaystyle\bigcup\limits_{k\in\mathbb{Z}}\left(\widetilde{\mathcal{S}}_{2k}(\nu)\right)^c\right).
\eea	
Since if $k\leq0$,
\ben
\left(\widetilde{\mathcal{S}}_{2k}(\nu)\right)^c&=&\left\{ \widetilde{\mathbf{S}}_{2k}(i,j)\,\bigg|\,0\leq i\leq \rho_{1}(\nu)-\varepsilon_2(\nu;-k-1)-1,\;  j\geq0\right\}\\
&&\bigcup\left\{ \widetilde{\mathbf{S}}_{2k}(i,j)\,\bigg|\, i\geq \rho_{1}(\nu)-\varepsilon_2(\nu;-k-1),\; 0\leq j\leq \rho_{2}(\nu)-\varepsilon_1(\nu;-k-1)-1\right\}
\een
and if $k>0$,
\ben
\left(\widetilde{\mathcal{S}}_{2k}(\nu)\right)^c&=&\left\{ \widetilde{\mathbf{S}}_{2k}(i,j)\,\bigg|\,0\leq i\leq \rho_{1}(\nu)-\varepsilon_4(\nu;k)-1,\; j\geq0\right\}\\
&&\bigcup\left\{ \widetilde{\mathbf{S}}_{2k}(i,j)\,\bigg|\, i\geq\rho_{1}(\nu)-\varepsilon_4(\nu;k) ,\;  0\leq j\leq \rho_{2}(\nu)-\varepsilon_3(\nu;k)-1\right\},
\een
then by Lemma \ref{antidiagonal-diagonal}, we have
\bea\label{S_-2+}
&&\mathcal{S}_{-2}(\emptyset)\bigcap\left(\displaystyle\bigcup\limits_{k\in\mathbb{Z}}\left(\widetilde{\mathcal{S}}_{2k}(\nu)\right)^c\right)\\
&=&\bigcup_{k\leq0}\left\{ \mathbf{S}_{-2}(i,i-k)\,\bigg|\,0\leq i\leq \rho_{1}(\nu)-\varepsilon_2(\nu;-k-1)-1\right\}\nonumber\\
&&\bigcup\bigcup_{k\leq 0}\left\{ \mathbf{S}_{-2}(i,i-k)\,\bigg|\,\rho_{1}(\nu)-\varepsilon_2(\nu;-k-1)\leq i\leq \rho_{2}(\nu)-\varepsilon_1(\nu;-k-1)-2\right\}\nonumber\\
&&\bigcup\bigcup_{k> 0}\left\{ \mathbf{S}_{-2}(i+k,i)\,\bigg|\, 0\leq i\leq \rho_{1}(\nu)-\varepsilon_4(\nu;k)-1\right\}\nonumber\\
&&\bigcup\bigcup_{k> 0}\left\{\mathbf{S}_{-2}(i+k,i)\,\bigg|\, \rho_{1}(\nu)-\varepsilon_4(\nu;k)\leq i\leq \rho_{2}(\nu)-\varepsilon_3(\nu;k)-2\right\}\nonumber.
\eea
On the other hand, we have	
\ben
\left(\mathcal{S}_{-2}(\nu)\right)^c=\left\{ \mathbf{S}_{-2}(i,j)\,\bigg|\,i\geq\rho_1(\nu)-\varepsilon_2(\nu;0),\; j\geq \rho_2(\nu)-\varepsilon_1(\nu;0) \right\}^c\\
\een	
Notice  $\rho_1(\nu)=\rho_2(\nu)\geq1$ by Lemma \ref{symmetric-step1} and  $\varepsilon_1(\nu;0), \varepsilon_2(\nu;0)\in\{0,1\}$. Then there are 3 cases:\\
(i) If $\varepsilon_1(\nu;0)=\varepsilon_2(\nu;0)=0$, then
\bea\label{S_-2-1}
\quad\quad \left(\mathcal{S}_{-2}(\nu)\right)^c
=\bigcup_{k\leq0}\left\{\mathbf{S}_{-2}(i,i-k)\,\bigg|\,0\leq i\leq \rho_{1}(\nu)-1\right\}\bigcup\bigcup_{k>0}\left\{ \mathbf{S}_{-2}(i+k,i)\,\bigg|\,0\leq i\leq \rho_{2}(\nu)-1\right\}
\eea
By \eqref{S_-2+=S_-2-}, \eqref{S_-2+} and  \eqref{S_-2-1}, one can show by induction  on $k\leq0$ and $k>0$ that
\ben
\varepsilon_2(\nu;+\infty)=0;\;\;\;\;\varepsilon_4(\nu;+\infty)=0.
\een
Then we  have $\rho_1(\nu)=0$, which is a contradiction to 	Lemma \ref{symmetric-step1}. \\ 
(ii) If $\varepsilon_1(\nu;0)=1$ and $\varepsilon_2(\nu;0)=0$, then  
\bea\label{S_-2-2}
\;\;\;\quad\left(\mathcal{S}_{-2}(\nu)\right)^c
=\bigcup_{k\leq0}\left\{\mathbf{S}_{-2}(i,i-k)\,\bigg|\,0\leq i\leq \rho_{1}(\nu)-1\right\}\bigcup\bigcup_{k>0}\left\{ \mathbf{S}_{-2}(i+k,i)\,\bigg|\,0\leq i\leq \rho_{2}(\nu)-2\right\}
\eea
By Lemma \ref{symmetric-step2}, we have $\varepsilon_2(\nu;+\infty)=0$ and 
$\varepsilon_3(\nu;+\infty)=0$ in this case since $\varepsilon_1(\nu;+\infty)\geq1$. Then it follows from  \eqref{S_-2+=S_-2-}, \eqref{S_-2+} and  \eqref{S_-2-2} by induction  on $k\leq0$ and $k>0$ that $\varepsilon_4(\nu;1)=1$.\\
(iii) If $\varepsilon_1(\nu;0)=0$ and $\varepsilon_2(\nu;0)=1$, then
\bea\label{S_-2-3}
\quad\;\;\;\; \left(\mathcal{S}_{-2}(\nu)\right)^c
=\bigcup_{k<0}\left\{\mathbf{S}_{-2}(i,i-k)\,\bigg|\,0\leq i\leq \rho_{1}(\nu)-2\right\}\bigcup\bigcup_{k\geq0}\left\{ \mathbf{S}_{-2}(i+k,i)\,\bigg|\,0\leq i\leq \rho_{2}(\nu)-1\right\}
\eea	
By Lemma \ref{symmetric-step2}, we have $\varepsilon_1(\nu;+\infty)=0$ and 
$\varepsilon_4(\nu;+\infty)=0$ in this case since $\varepsilon_2(\nu;+\infty)\geq1$. 	
Notice that one can not obtain the value of  $\varepsilon_3(\nu;1)$  from  \eqref{S_-2+=S_-2-}, \eqref{S_-2+} and  \eqref{S_-2-3} by induction  on $k\leq0$ and $k>0$.
Therefore it follows from the above argument that 
\ben
\mbox{either}\;\;\;\;
\left\{
\begin{aligned}
	&\varepsilon_1(\nu;0)=1,  \\
	&\varepsilon_2(\nu;+\infty)=0, \\
	&\varepsilon_3(\nu;+\infty)=0,\\
	&\varepsilon_4(\nu;1)=1.
\end{aligned}
\right.
\;\;\;\;\mbox{or }\;\;\;\;
\left\{
\begin{aligned}
	&\varepsilon_1(\nu;+\infty)=0,  \\
	&\varepsilon_2(\nu;0)=1, \\
	&\varepsilon_4(\nu;+\infty)=0.
\end{aligned}
\right.
\een
In addition, we need to consider the equation \eqref{S_2+=S_2-}. It follows from the similar argument above that 
\ben
\mbox{either}\;\;\;\;
\left\{
\begin{aligned}
	&\varepsilon_3(\nu;1)=1,\\
	&\varepsilon_4(\nu;+\infty)=0,\\
	&\varepsilon_1(\nu;+\infty)=0.
\end{aligned}
\right.
\;\;\;\;\mbox{or }\;\;\;\;
\left\{
\begin{aligned}
	&\varepsilon_3(\nu;+\infty)=0,\\
	&\varepsilon_4(\nu;1)=1\\
	&\varepsilon_1(\nu;0)=1,  \\
	&\varepsilon_2(\nu;+\infty)=0.
\end{aligned}
\right.
\een
Now the proof follows from the above results.
\end{proof}

The fourth step is 
\begin{lemma}\label{symmetric-step4}
	Let $\nu\neq\emptyset$ be a  partition. Assume that 
	\ben
	\bigcup_{k\in\mathbb{Z}}\left(\widetilde{\mathcal{S}}_{2k}(\nu)\right)^c=\bigcup_{k\in\mathbb{Z}}\left(\mathcal{S}_{2k}(\nu)\right)^c,
	\een
	then for any $1\leq \flat\leq\rho_1(\nu)=\rho_2(\nu)$, we have 
	\ben
	\mbox{either}\;\;\;\;
	\left\{
	\begin{aligned}
		&\varepsilon_1(\nu;\flat-1)=\flat,  \\
		&\varepsilon_2(\nu;+\infty)=0, \\
		&\varepsilon_3(\nu;+\infty)=0,\\
		&\varepsilon_4(\nu;\flat)=\flat.
	\end{aligned}
	\right.
	\;\;\;\;\mbox{or }\;\;\;\;
	\left\{
	\begin{aligned}
		&\varepsilon_1(\nu;+\infty)=0,  \\
		&\varepsilon_2(\nu;\flat-1)=\flat, \\
		&\varepsilon_3(\nu;\flat)=\flat,\\
		&\varepsilon_4(\nu;+\infty)=0.
	\end{aligned}
	\right.
	\een
\end{lemma}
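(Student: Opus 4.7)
The plan is to prove the lemma by induction on $\flat$, with base case $\flat = 1$ supplied by Lemma \ref{symmetric-step3}. For the inductive step I would fix $\flat$ with $2 \leq \flat \leq \rho_1(\nu) = \rho_2(\nu)$ and assume the conclusion for $\flat - 1$. By Lemma \ref{symmetric-step2} and the symmetry between the two alternatives, I may assume we are in the ``first case'' at level $\flat-1$, namely
\[
\varepsilon_1(\nu;\flat-2) = \flat-1,\quad \varepsilon_2(\nu;+\infty) = 0,\quad \varepsilon_3(\nu;+\infty) = 0,\quad \varepsilon_4(\nu;\flat-1) = \flat-1,
\]
and it then suffices to upgrade these to $\varepsilon_1(\nu;\flat-1) = \flat$ and $\varepsilon_4(\nu;\flat) = \flat$, while preserving $\varepsilon_2(\nu;+\infty)=\varepsilon_3(\nu;+\infty)=0$.

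The input to the inductive step is the pair of identities
\[
(\mathcal{S}_{\pm 2\flat}(\nu))^c \;=\; \mathcal{S}_{\pm 2\flat}(\emptyset)\,\cap\,\bigcup_{k\in\mathbb{Z}}(\widetilde{\mathcal{S}}_{2k}(\nu))^c,
\]
which follow from the assumption together with Remark \ref{empty-case1}. I would first expand the right-hand side of each identity via Lemma \ref{antidiagonal-diagonal} as a disjoint union of four families of bricks of the form $\mathbf{S}_{\pm 2\flat}(i+k,i)$ or $\mathbf{S}_{\pm 2\flat}(j,j-k)$, indexed by $k\leq 0$ and $k>0$, mirroring the calculation in equation \eqref{S_-2+} of Lemma \ref{symmetric-step3}. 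By the inductive hypothesis, the contributions coming from the $\varepsilon_2$ and $\varepsilon_3$ terms are automatically empty, so the right-hand side collapses. On the left-hand side, the inductive information about $\varepsilon_2,\varepsilon_4$ forces $(\mathcal{S}_{\pm 2\flat}(\nu))^c$ to be a union of just two families, with explicit upper bounds depending on the still-unknown numbers $\varepsilon_1(\nu;\flat-1)$ and $\varepsilon_4(\nu;\flat)$.

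Matching the two descriptions family by family and inducting on $k$, exactly as in Lemma \ref{symmetric-step3} but now at depth $\flat$ rather than $1$, pins down $\varepsilon_1(\nu;\flat-1) - \varepsilon_1(\nu;\flat-2) = 1$ and $\varepsilon_4(\nu;\flat) - \varepsilon_4(\nu;\flat-1) = 1$, which is the desired conclusion. The ``either/or'' dichotomy is preserved because the two alternatives of Lemma \ref{symmetric-step2} are mutually exclusive and the argument operates entirely inside one of them.

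The main obstacle will be bookkeeping: one has to carefully track how the range bounds in the definitions of $\widetilde{\mathcal{S}}_{2k}(\nu)$ and $\mathcal{S}_{2k}(\nu)$ interact once we are several steps into the induction, and verify that the two-family/four-family comparison uniquely determines the new values rather than leaving a one-parameter family of possibilities. The bound $\flat \leq \rho_1(\nu)=\rho_2(\nu)$, together with the charge-zero identity $|\mathit{S}(\nu^\prime)_+| = |\mathit{S}(\nu^\prime)_-|$ from Remark \ref{charge0}, is what prevents the induction from running past its natural stopping point; checking this compatibility at each stage is the only genuinely non-mechanical part of the argument.
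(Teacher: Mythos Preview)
Your proposal is essentially correct and follows the same inductive strategy as the paper: base case $\flat=1$ from Lemma~\ref{symmetric-step3}, and for the inductive step use the two identities
\[
(\mathcal{S}_{\pm 2(\tilde\ell+1)}(\nu))^c \;=\; \mathcal{S}_{\pm 2(\tilde\ell+1)}(\emptyset)\cap\bigcup_{k}(\widetilde{\mathcal{S}}_{2k}(\nu))^c,
\]
expanded via Lemma~\ref{antidiagonal-diagonal}, with the $\varepsilon_2,\varepsilon_3$ contributions killed by the inductive hypothesis in the first case (and dually in the second).

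Two small points where the paper is more explicit than your sketch. First, the step ``matching\ldots pins down $\varepsilon_4(\nu;\flat)-\varepsilon_4(\nu;\flat-1)=1$'' is not purely mechanical bookkeeping: a priori the increment could be $0$, and the paper rules this out by a contradiction---if $\varepsilon_4(\nu;\tilde\ell+1)=\tilde\ell$, the matching forces $\varepsilon_1(\nu;+\infty)=\tilde\ell$, hence $\rho_2(\nu)=\tilde\ell$, contradicting $\tilde\ell\le\rho(\nu)-1$. This is precisely where the hypothesis $\flat\le\rho(\nu)$ enters, and it enters inside the inductive step rather than as a stopping condition for the induction. Second, the charge-zero identity from Remark~\ref{charge0} is not actually invoked here; it was used upstream in Lemma~\ref{symmetric-step1} to establish $\rho_1(\nu)=\rho_2(\nu)$, but the present argument only needs that equality and the strict inequality $\tilde\ell<\rho(\nu)$.
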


\begin{proof}
Let $\rho(\nu):=\rho_1(\nu)=\rho_2(\nu)$.
Since the statement holds for $\flat=1$ or $\rho_(\nu)=1$ by Lemma \ref{symmetric-step3}, we will complete the proof by induction on $\flat$ for $\rho(\nu)>1$. Assume that the statement holds for $1\leq\flat\leq\widetilde{\ell}\leq\rho(\nu)-1$, i.e., 
	\ben
	\mbox{either}\;\;\;\;
	\left\{
	\begin{aligned}
		&\varepsilon_1\left(\nu;\flat-1\right)=\flat,  \\
		&\varepsilon_2(\nu;+\infty)=0, \\
		&\varepsilon_3(\nu;+\infty)=0,\\
		&\varepsilon_4\left(\nu;\flat\right)=\flat,
	\end{aligned}
	\right. \;\;\forall\,1\leq\flat\leq\widetilde{\ell},\;\;\;(*)
	\;\;\;\;\mbox{or }\;\;\;\;
	\left\{
	\begin{aligned}
		&\varepsilon_1(\nu;+\infty)=0,  \\
		&\varepsilon_2\left(\nu;\flat-1\right)=\flat, \\
		&\varepsilon_3\left(\nu;\flat\right)=\flat,\\
		&\varepsilon_4(\nu;+\infty)=0,
	\end{aligned}
	\right. \;\;\forall\,1\leq\flat\leq\widetilde{\ell},\;\;\;(**)
	\een
we will prove that the statement also holds for $\flat=\widetilde{\ell}+1$. By assumption and Remark \ref{empty-case1}, we have 
\bea\label{S_2l+=S_2l-}
\left(\mathcal{S}_{2(\widetilde{\ell}+1)}(\nu)\right)^c=\mathcal{S}_{2(\widetilde{\ell}+1)}(\emptyset)\bigcap\left(\displaystyle\bigcup\limits_{k\in\mathbb{Z}}\left(\widetilde{\mathcal{S}}_{2k}(\nu)\right)^c\right).
\eea	
and 
\bea\label{S_-2l+=S_-2l-}
\left(\mathcal{S}_{-2(\widetilde{\ell}+1)}(\nu)\right)^c=\mathcal{S}_{-2(\widetilde{\ell}+1)}(\emptyset)\bigcap\left(\displaystyle\bigcup\limits_{k\in\mathbb{Z}}\left(\widetilde{\mathcal{S}}_{2k}(\nu)\right)^c\right),
\eea
Now we have
\ben
&&\mathcal{S}_{2(\widetilde{\ell}+1)}(\emptyset)\bigcap\left(\displaystyle\bigcup\limits_{k\in\mathbb{Z}}\left(\widetilde{\mathcal{S}}_{2k}(\nu)\right)^c\right)\\
&=&\bigcup_{k\leq0}\left\{ \mathbf{S}_{2(\widetilde{\ell}+1)}(j,j-k)\,\bigg|\,0\leq j\leq \rho_{2}(\nu)-\varepsilon_1(\nu;-k-1)-1\right\}\nonumber\\
&&\bigcup\bigcup_{k\leq 0}\left\{ \mathbf{S}_{2(\widetilde{\ell}+1)}(j,j-k)\,\bigg|\,\rho_{2}(\nu)-\varepsilon_1(\nu;-k-1)\leq j\leq \rho_{1}(\nu)-\varepsilon_2(\nu;-k-1)-\widetilde{\ell}-2\right\}\nonumber\\
&&\bigcup\bigcup_{k> 0}\left\{ \mathbf{S}_{2(\widetilde{\ell}+1)}(j+k,j)\,\bigg|\, 0\leq j\leq \rho_{2}(\nu)-\varepsilon_3(\nu;k)-1\right\}\nonumber\\
&&\bigcup\bigcup_{k> 0}\left\{\mathbf{S}_{2(\widetilde{\ell}+1)}(j+k,j)\,\bigg|\, \rho_{2}(\nu)-\varepsilon_3(\nu;k)\leq j\leq \rho_{1}(\nu)-\varepsilon_4(\nu;k)-\widetilde{\ell}-2\right\}\nonumber.
\een
In the case $(*)$, we have
\bea\label{S_2l+}
&&\mathcal{S}_{2(\widetilde{\ell}+1)}(\emptyset)\bigcap\left(\displaystyle\bigcup\limits_{k\in\mathbb{Z}}\left(\widetilde{\mathcal{S}}_{2k}(\nu)\right)^c\right)\\
&=&\bigcup_{k\leq0}\left\{ \mathbf{S}_{2(\widetilde{\ell}+1)}(j,j-k)\,\bigg|\,0\leq j\leq \rho_{2}(\nu)-\varepsilon_1(\nu;-k-1)-1\right\}\nonumber\\
&&\bigcup\bigcup_{k\leq 0}\left\{ \mathbf{S}_{2(\widetilde{\ell}+1)}(j,j-k)\,\bigg|\,\rho_{2}(\nu)-\varepsilon_1(\nu;-k-1)\leq j\leq \rho_{1}(\nu)-\widetilde{\ell}-2\right\}\nonumber\\
&&\bigcup\bigcup_{k> 0}\left\{ \mathbf{S}_{2(\widetilde{\ell}+1)}(j+k,j)\,\bigg|\, 0\leq j\leq \rho_{2}(\nu)-1\right\}\nonumber.
\eea
and 
\bea\label{S_2l-}
\left(\mathcal{S}_{2(\widetilde{\ell}+1)}(\nu)\right)^c=\left\{ \mathbf{S}_{2(\widetilde{\ell}+1)}(i,j)\,\bigg|\,i\geq\rho_1(\nu)-\varepsilon_4\left(\nu;\widetilde{\ell}+1\right),\; j\geq \rho_2(\nu) \right\}^c
\eea
where $\varepsilon_4\left(\nu;\widetilde{\ell}+1\right)=\widetilde{\ell}$ or $\varepsilon_4\left(\nu;\widetilde{\ell}+1\right)=\widetilde{\ell}+1$ since $\varepsilon_4\left(\nu;\widetilde{\ell}\right)=\widetilde{\ell}$.

If $\varepsilon_4\left(\nu;\widetilde{\ell}+1\right)=\widetilde{\ell}$, then it follows from \eqref{S_2l+=S_2l-}, \eqref{S_2l+}, \eqref{S_2l-} by induction on $k\leq0$ and $k>0$ that $\varepsilon_1(\nu;+\infty)=\widetilde{\ell}$. Then we have $\rho_2(\nu)=\widetilde{\ell}$ which contradicts  the assumption $\widetilde{\ell}\leq\rho_2(\nu)-1$. Hence we have $\varepsilon_4\left(\nu;\widetilde{\ell}+1\right)=\widetilde{\ell}+1$.
Now we have 
\bea\label{S_2l-1}
\left(\mathcal{S}_{2(\widetilde{\ell}+1)}(\nu)\right)^c&=&\bigcup_{k\leq-\widetilde{\ell}-1}\left\{\mathbf{S}_{2(\widetilde{\ell}+1)}(j,j-k)\,\bigg|\,0\leq j\leq \rho_{1}(\nu)-\widetilde{\ell}-2\right\}\\
&&\bigcup\bigcup_{-\widetilde{\ell}\leq k\leq0}\left\{ \mathbf{S}_{2(\widetilde{\ell}+1)}(j,j-k)\,\bigg|\,0\leq j\leq \rho_{1}(\nu)+k-1\right\}\nonumber\\
&&\bigcup\bigcup_{k>0}\left\{ \mathbf{S}_{2(\widetilde{\ell}+1)}(j+k,j)\,\bigg|\,0\leq j\leq \rho_{2}(\nu)-1\right\}\nonumber
\eea
Then it follows from \eqref{S_2l+=S_2l-}, \eqref{S_2l+}, \eqref{S_2l-1} by induction on $k\leq0$ and $k>0$ that $\varepsilon_1\left(\nu;\widetilde{\ell}\right)=\widetilde{\ell}+1$. Then in  the case $(*)$, we have
\ben
\left\{
\begin{aligned}
	&\varepsilon_1\left(\nu;\widetilde{\ell}\right)=\widetilde{\ell}+1,  \\
	&\varepsilon_2(\nu;+\infty)=0, \\
	&\varepsilon_3(\nu;+\infty)=0,\\
	&\varepsilon_4\left(\nu;\widetilde{\ell}+1\right)=\widetilde{\ell}+1.
\end{aligned}
\right.
\een
Next, we consider the case $(**)$. It follows from the similar argument above that 
\ben
\left\{
\begin{aligned}
	&\varepsilon_1(\nu;+\infty)=0,  \\
	&\varepsilon_3\left(\nu;\widetilde{\ell}+1\right)=\widetilde{\ell}+1,\\
	&\varepsilon_4(\nu;+\infty)=0,
\end{aligned}
\right.
\een
where the value of $\varepsilon_2\left(\nu;\widetilde{\ell}\right)$ can not be determined by  the equation \eqref{S_2l+=S_2l-}. As in Lemma \ref{symmetric-step3}, we should take into account the equation \eqref{S_-2l+=S_-2l-} further. Applying the similar argument as above in case $(**)$, it follows from \eqref{S_-2l+=S_-2l-} that $\varepsilon_2\left(\nu;\widetilde{\ell}\right)=\widetilde{\ell}+1$. Then the statement is ture for $\flat=\widetilde{\ell}+1$, and  hence the proof is completed.
\end{proof}

\begin{remark}
The result in Lemma \ref{symmetric-step4} is compatible with the following condition  
\ben
\bigcup_{k\in\mathbb{Z}}\left(\widetilde{\mathcal{S}}_{2k-1}(\nu)\right)^c=\bigcup_{k\in\mathbb{Z}}\left(\mathcal{S}_{2k-1}(\nu)\right)^c.
\een
\end{remark}
Now we have 
\begin{proposition}\label{unique symmetric interlacing}
	Let $\nu\neq\emptyset$ be a  partition. Then  $\widetilde{\bm{\wp}}_{\pi}(\nu)=\bm{\wp}_{\pi}(\nu)$ holds for any pyramid  partition $\pi\in\mathfrak{P}$ if and only if $\nu=(m,m-1,\cdots,2,1)$ for some $m\in\mathbb{Z}_{\geq1}$.
\end{proposition}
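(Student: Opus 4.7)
The forward direction ``$\nu=(m,m-1,\ldots,2,1)$ implies $\widetilde{\bm{\wp}}_{\pi}(\nu)=\bm{\wp}_{\pi}(\nu)$ for all $\pi\in\mathfrak{P}$'' is exactly Lemma \ref{symmetric-interlacing}, so only the converse requires new work. My plan for the converse has two stages: first translate the hypothesis into a symmetric set equality of ``removed bricks'', then invoke the inductive Lemma \ref{symmetric-step4} to constrain $\nu$.

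For the first stage, I claim that if $\widetilde{\bm{\wp}}_{\pi}(\nu)=\bm{\wp}_{\pi}(\nu)$ for every $\pi\in\mathfrak{P}$, then
\[
\bigcup_{k\in\mathbb{Z}}(\widetilde{\mathcal{S}}_{k}(\nu))^{c}=\bigcup_{k\in\mathbb{Z}}(\mathcal{S}_{k}(\nu))^{c}.
\]
Given any brick $[B]\in(\widetilde{\mathcal{S}}_{k}(\nu))^{c}$ sitting in the $k$-th antidiagonal slice position, I would realize it inside a pyramid partition as follows: build a finite sequence $\{\eta_{j}\}_{j\in\mathbb{Z}}$ of partitions satisfying the interlacing property of the second type $\nu$ whose $k$-th term contains the box corresponding to $[B]$ (taking every $\eta_{j}$ to be a sufficiently large rectangle works), and then apply Lemma \ref{RPC-interlacing} to produce a pyramid partition $\pi\in\mathfrak{P}$ with $[B]\in\pi$. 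Since $[B]\in\pi\setminus\widetilde{\bm{\wp}}_{\pi}(\nu)$, the hypothesis forces $[B]\in\pi\setminus\bm{\wp}_{\pi}(\nu)$ and hence $[B]\in(\mathcal{S}_{k}(\nu))^{c}$; the symmetric argument gives the reverse inclusion.

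For the second stage, the even-index restriction of this set equality is precisely the hypothesis of Lemmas \ref{symmetric-step1}--\ref{symmetric-step4}. Set $\rho:=\rho_{1}(\nu)=\rho_{2}(\nu)\geq 1$ (using Lemma \ref{symmetric-step1}) and apply Lemma \ref{symmetric-step4} at $\flat=\rho$. Exactly two alternatives $(*)$ and $(**)$ remain, and combining them with the tail identities $\varepsilon_{1}(\nu;+\infty)=\rho_{2}(\nu)$, $\varepsilon_{4}(\nu;+\infty)=\rho_{1}(\nu)$ (in case $(*)$) or $\varepsilon_{2}(\nu;+\infty)=\rho_{1}(\nu)$, $\varepsilon_{3}(\nu;+\infty)=\rho_{2}(\nu)$ (in case $(**)$) pins down $\nu'(t)$ at every integer $t$. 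In case $(*)$ this yields
\[
S(\nu')=\{0,2,\ldots,2\rho-2\}\cup\{-2,-4,-6,\ldots\}\cup\{-2\rho-1,-2\rho-3,\ldots\},
\]
and computing $\nu'_{j}=s_{j}+j+1$ (where $s_{j}$ is the $(j+1)$-th largest element of $S(\nu')$) gives $\nu'=(2\rho-1,2\rho-2,\ldots,1)$; case $(**)$ analogously gives $\nu'=(2\rho,2\rho-1,\ldots,1)$. Since the staircase is self-conjugate, $\nu$ itself is a staircase with $m=2\rho-1$ or $m=2\rho$, and both parities of $m$ arise, completing the converse.

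Essentially all the combinatorial effort sits in Lemmas \ref{symmetric-step1}--\ref{symmetric-step4}, whose inductive bookkeeping over $\flat$ is intricate; inside the proposition itself the one non-routine step is the realization argument in stage one, and I expect this --- rather than the final edge-sequence reconstruction --- to be the main obstacle to write out carefully.
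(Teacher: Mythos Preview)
Your overall strategy is correct and matches the paper's proof, but the stage-one realization argument---which you correctly flagged as the delicate step---does not work as written. You want a pyramid partition $\pi$ containing a given brick $[B]\in(\widetilde{\mathcal{S}}_k(\nu))^c$, and you propose to build a sequence $\{\eta_j\}$ with second-type-$\nu$ interlacing ``whose $k$-th term contains the box corresponding to $[B]$'' and then apply Lemma~\ref{RPC-interlacing}. But this is incoherent: Lemma~\ref{RPC-interlacing} identifies $\eta_k$ with $\widetilde{\bm{\wp}}_\pi(\nu)_k=\widetilde{\mathcal{S}}_k(\nu)\cap\widetilde{\pi}_k$, whose bricks all lie in $\widetilde{\mathcal{S}}_k(\nu)$, precisely \emph{not} in the complement. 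So $[B]$ can never appear as a box of $\eta_k$. (The pyramid partition constructed in the proof of Lemma~\ref{RPC-interlacing} does contain bricks from $(\widetilde{\mathcal{S}}_k(\nu))^c$, but they arise as padding around $\eta_k$, not inside it; so the detour can be salvaged, just not with your explanation.)

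The paper's argument is far simpler and avoids Lemma~\ref{RPC-interlacing} entirely: any brick $[B]$ lies in some pyramid partition trivially---e.g.\ the set of all prefixes of $B$ is a pyramid partition by Definition~\ref{pyramidpartition}(iv). Writing $\widetilde{\bm{\wp}}_\pi(\nu)=\bigl(\bigcup_k\widetilde{\mathcal{S}}_k(\nu)\bigr)\cap\pi$ and $\bm{\wp}_\pi(\nu)=\bigl(\bigcup_k\mathcal{S}_k(\nu)\bigr)\cap\pi$, the hypothesis gives $\bigl(\bigcup_k(\widetilde{\mathcal{S}}_k(\nu))^c\bigr)\cap\pi=\bigl(\bigcup_k(\mathcal{S}_k(\nu))^c\bigr)\cap\pi$ for every $\pi$, and since each brick lies in some $\pi$ the set equality follows at once. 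Your stage two (invoking Lemma~\ref{symmetric-step4} at $\flat=\rho$ and reconstructing $\nu'$ from its edge sequence) is correct and is exactly what the paper does in \eqref{odd-symmetry1}--\eqref{even-symmetry1}. One small slip: you write $[B]\in(\mathcal{S}_k(\nu))^c$ with the same index $k$, but $[B]$ sits in the $k$-th \emph{antidiagonal} slice and some $k'$-th \emph{diagonal} slice; the conclusion is membership in the union $\bigcup_{k'}(\mathcal{S}_{k'}(\nu))^c$, which is what you need anyway.
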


\begin{proof}
Assume that $\widetilde{\bm{\wp}}_{\pi}(\nu)=\bm{\wp}_{\pi}(\nu)$ for any $\pi\in\mathfrak{P}$. Since
\ben
&&\widetilde{\bm{\wp}}_{\pi}(\nu)=\bigcup_{k\in\mathbb{Z}}\widetilde{\mathcal{S}}_{k}(\nu)\cap\widetilde{\pi}_{k}=\left(\bigcup_{k\in\mathbb{Z}}\widetilde{\mathcal{S}}_{k}(\nu)\right)\bigcap\left(\bigcup\limits_{k\in\mathbb{Z}}\widetilde{\pi}_{k}\right)=\left(\bigcup_{k\in\mathbb{Z}}\widetilde{\mathcal{S}}_{k}(\nu)\right)\bigcap\pi\subseteq\pi,\\
&&\bm{\wp}_{\pi}(\nu)=\bigcup_{k\in\mathbb{Z}}\mathcal{S}_{k}(\nu)\cap\pi_{k}=\left(\bigcup_{k\in\mathbb{Z}}\mathcal{S}_{k}(\nu)\right)\bigcap\left(\bigcup\limits_{k\in\mathbb{Z}}\pi_{k}\right)=\left(\bigcup_{k\in\mathbb{Z}}\mathcal{S}_{k}(\nu)\right)\bigcap\pi\subseteq\pi,
\een
then we have 
\bea\label{complement}
\left(\bigcup\limits_{k\in\mathbb{Z}}\left(\widetilde{\mathcal{S}}_{k}(\nu)\right)^c\right)\bigcap\pi=\left(\bigcup_{k\in\mathbb{Z}}\left(\mathcal{S}_{k}(\nu)\right)^c\right)\bigcap\pi,\;\;\;\mbox{ for any $\pi\in\mathfrak{P}$.}
\eea
Let $[B]$ be any brick in $\bigcup\limits_{k\in\mathbb{Z}}\left(\widetilde{\mathcal{S}}_{k}(\nu)\right)^c$. One can choose a pyramid partition $\pi$ such that $[B]\in\pi$, then 
$[B]\in\left(\bigcup\limits_{k\in\mathbb{Z}}\left(\widetilde{\mathcal{S}}_{k}(\nu)\right)^c\right)\bigcap\pi$. Combined with \eqref{complement}, we have $[B]\in\bigcup\limits_{k\in\mathbb{Z}}\left(\mathcal{S}_{k}(\nu)\right)^c$. Then we have $\bigcup\limits_{k\in\mathbb{Z}}\left(\widetilde{\mathcal{S}}_{k}(\nu)\right)^c\subseteq\bigcup\limits_{k\in\mathbb{Z}}\left(\mathcal{S}_{k}(\nu)\right)^c$. Similarly, we have $\bigcup\limits_{k\in\mathbb{Z}}\left(\mathcal{S}_{k}(\nu)\right)^c\subseteq\bigcup\limits_{k\in\mathbb{Z}}\left(\widetilde{\mathcal{S}}_{k}(\nu)\right)^c$. Hence
\ben
\bigcup_{k\in\mathbb{Z}}\left(\widetilde{\mathcal{S}}_{k}(\nu)\right)^c=\bigcup_{k\in\mathbb{Z}}\left(\mathcal{S}_{k}(\nu)\right)^c.
\een
By Lemma \ref{antidiagonal-diagonal}, we have
\ben
\bigcup_{k\in\mathbb{Z}}\left(\widetilde{\mathcal{S}}_{k}(\nu)\right)^c=\bigcup_{k\in\mathbb{Z}}\left(\mathcal{S}_{k}(\nu)\right)^c\Longleftrightarrow\left\{
\begin{aligned}
	&\bigcup_{k\in\mathbb{Z}}\left(\widetilde{\mathcal{S}}_{2k}(\nu)\right)^c=\bigcup_{k\in\mathbb{Z}}\left(\mathcal{S}_{2k}(\nu)\right)^c,  \\
	&\bigcup_{k\in\mathbb{Z}}\left(\widetilde{\mathcal{S}}_{2k-1}(\nu)\right)^c=\bigcup_{k\in\mathbb{Z}}\left(\mathcal{S}_{2k-1}(\nu)\right)^c.
\end{aligned}
\right.
\een	
By Lemma \ref{symmetric-step4}, we have either 	
	\bea\label{odd-symmetry1}
	\left\{
	\begin{aligned}
		&\varepsilon_1(\nu;\flat-1)=\flat,  \\
		&\varepsilon_2(\nu;+\infty)=0, \\
		&\varepsilon_3(\nu;+\infty)=0,\\
		&\varepsilon_4(\nu;\flat)=\flat.
	\end{aligned}
	\right.\;\forall\,1\leq\flat\leq\rho(\nu)&\Longleftrightarrow&	
	\left\{
	\begin{aligned}
		&\nu^\prime(2t)=1,\; \quad\quad\quad \,0\leq t\leq \rho(\nu)-1 \\
		&\nu^\prime(2t)=-1,\; \quad\quad\;\, t\geq \rho(\nu)\\
		&\nu^\prime(2t+1)=-1,  \quad t\geq0\\
		&\nu^\prime(-2t)=1, \;\quad\quad\;\; t\geq1\\
		&\nu^\prime(-2t+1)=-1,\; 1\leq t\leq \rho(\nu)\\
		&\nu^\prime(-2t+1)=1,\; \quad t\geq \rho(\nu)+1\\
	\end{aligned}
	\right.\\
	\nonumber&\Longleftrightarrow& \nu=(2\rho(\nu)-1,2\rho(\nu)-2,\cdots,2,1)
	\eea
or
	\bea\label{even-symmetry1}
	\left\{
	\begin{aligned}
		&\varepsilon_1(\nu;+\infty)=0,  \\
		&\varepsilon_2(\nu;\flat-1)=\flat, \\
		&\varepsilon_3(\nu;\flat)=\flat,\\
		&\varepsilon_4(\nu;+\infty)=0.
	\end{aligned}
	\right.\;\forall\,1\leq\flat\leq\rho(\nu)&\Longleftrightarrow&	
	\left\{
	\begin{aligned}
		&\nu^\prime(2t)=-1,\;\quad\quad  t\geq0 \\
		&\nu^\prime(2t+1)=1,\;\quad 0\leq t\leq \rho(\nu)-1\\
		&\nu^\prime(2t+1)=-1, \;\; t\geq\rho(\nu)\\
		&\nu^\prime(-2t)=-1, \; \quad\, 1\leq t\leq \rho(\nu)\\
		&\nu^\prime(-2t)=1,\;\quad\quad t\geq \rho(\nu)+1\\
		&\nu^\prime(-2t+1)=1,\;\; t\geq 1\\
	\end{aligned}
	\right.\\
	\nonumber&\Longleftrightarrow& \nu=(2\rho(\nu),2\rho(\nu)-1,\cdots,2,1)
	\eea	
where $\rho(\nu)=\rho_1(\nu)=\rho_2(\nu)\geq1$. This implies that $\nu=(m,m-1,\cdots,2,1)$ for some $m\in\mathbb{Z}_{\geq1}$.

Therefore, together with Lemma \ref{symmetric-interlacing}, the proof is completed.
\end{proof}

\begin{remark}
By Remark \ref{empty1}, Proposition \ref{unique symmetric interlacing} also includes the trivial case  when $\nu=\emptyset$ or equivalently $m=0$.
\end{remark}

\subsection{Other restricted pyramid configurations with symmetric interlacing property}
	Let $\nu$ be a partition and $l\in\mathbb{Z}_{\geq0}$. In this subsection, we construct some   restricted pyramid configurations of antidiagonal (resp. diagonal) type $(\nu,l)$ with symmetric interlacing property, which actuallly generalize those of antidiagonal (resp. diagonal) type $(\nu,0)$  in Section 3.1 and 3.2.

For an integer $l\geq0$, define
\ben
\widetilde{\mathcal{T}}_{k}(l)=\left\{ \widetilde{\mathbf{S}}_{k}(i,j)\,\bigg|\,i\geq l,\, j\geq l  \right\}\;\;\mbox{and}\;\;\mathcal{T}_{k}(l)=\left\{ \mathbf{S}_{k}(i,j)\,\bigg|\, i\geq l,\, j\geq l   \right\}.
\een
	
\begin{lemma}\label{l-even-odd-symmetry}
	Let $l\in\mathbb{Z}_{\geq0}$. Then we  have 
	\ben
	\bigcup_{k\in\mathbb{Z}}\left(\widetilde{\mathcal{T}}_{k}(l)\right)^c=\bigcup_{k\in\mathbb{Z}}\left(\mathcal{T}_{k}(l)\right)^c.
	\een
\end{lemma}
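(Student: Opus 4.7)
The plan is to reduce the asserted equality of unions of complements to a cleaner statement about the sets themselves, and then to verify the latter bricks-by-bricks via Lemma \ref{antidiagonal-diagonal}.

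First, every brick in $\mathfrak{B}$ lies in exactly one antidiagonal slice and exactly one diagonal slice, so the sets $\widetilde{\mathcal{T}}_{k}(l)$ for varying $k\in\mathbb{Z}$ are pairwise disjoint, and similarly for $\mathcal{T}_{k}(l)$. Within the $k$-th antidiagonal slice the brick $\widetilde{\mathbf{S}}_{k}(i,j)$ is indexed uniquely by $(i,j)\in(\mathbb{Z}_{\geq 0})^{2}$, so the disjoint union $\widetilde{\mathcal{T}}_{k}(l)\sqcup(\widetilde{\mathcal{T}}_{k}(l))^{c}$ exhausts the entire $k$-th antidiagonal slice. Consequently
\[
\bigcup_{k\in\mathbb{Z}}\bigl(\widetilde{\mathcal{T}}_{k}(l)\bigr)^{c}=\mathfrak{B}\setminus\bigcup_{k\in\mathbb{Z}}\widetilde{\mathcal{T}}_{k}(l), \qquad \bigcup_{k\in\mathbb{Z}}\bigl(\mathcal{T}_{k}(l)\bigr)^{c}=\mathfrak{B}\setminus\bigcup_{k\in\mathbb{Z}}\mathcal{T}_{k}(l),
\]
and it suffices to show $\bigcup_{k}\widetilde{\mathcal{T}}_{k}(l)=\bigcup_{k}\mathcal{T}_{k}(l)$ inside $\mathfrak{B}$.

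The key observation is that for every brick $[B]$, if $[B]=\widetilde{\mathbf{S}}_{k}(i,j)$ in its antidiagonal slice and $[B]=\mathbf{S}_{k'}(a,b)$ in its diagonal slice, then $\min(i,j)=\min(a,b)$. This follows by inspection from Lemma \ref{antidiagonal-diagonal}. For instance, when $k\leq 0$ and $j\geq i\geq 0$ the lemma gives $\widetilde{\mathbf{S}}_{2k}(i,j)=\mathbf{S}_{-2(j-i)}(i,i-k)$, so $(a,b)=(i,i-k)$ with $i-k\geq i$, and $\min(a,b)=i=\min(i,j)$. The remaining seven cases (even and odd antidiagonal indices, $k\leq 0$ vs.\ $k>0$, $i\leq j$ vs.\ $i\geq j$) are handled in the same manner: the value $\min(i,j)$ appears as one of the diagonal coordinates, while the other diagonal coordinate is shifted upward from it by $-k$, $k$, $-k+1$, or $k-1$ depending on the subcase.

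Granted this identity of minima, the defining condition of $\widetilde{\mathcal{T}}_{k}(l)$ (namely $i\geq l$ and $j\geq l$, i.e.\ $\min(i,j)\geq l$) is equivalent to the defining condition of the diagonal slice $\mathcal{T}_{k'}(l)$ containing the same brick (namely $a\geq l$ and $b\geq l$, i.e.\ $\min(a,b)\geq l$). Therefore $\bigcup_{k}\widetilde{\mathcal{T}}_{k}(l)=\bigcup_{k}\mathcal{T}_{k}(l)$, and the conclusion follows by the reduction above. No substantive obstacle arises: the argument is notably simpler than that of Lemma \ref{even-odd-symmetry}, essentially because the cutoff $l$ is uniform in both coordinates, whereas in the proofs of Lemmas \ref{even-symmetry} and \ref{odd-symmetry} the cutoffs vary with the slice index according to the partition-dependent quantities $\varepsilon_{i}(\nu;\cdot)$.
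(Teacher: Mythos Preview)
Your proof is correct and cleaner than the paper's. The paper simply says the result follows from the same argument as in Lemmas \ref{even-symmetry} and \ref{odd-symmetry}, which would mean carrying out an explicit case-by-case decomposition of the complement sets $(\widetilde{\mathcal{T}}_k(l))^c$ via Lemma \ref{antidiagonal-diagonal} and then reassembling the pieces into the sets $(\mathcal{T}_k(l))^c$.

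Your route is genuinely different and more conceptual. By first passing from complements of slices to the complement in $\mathfrak{B}$ of the union, you reduce to showing $\bigcup_k\widetilde{\mathcal{T}}_k(l)=\bigcup_k\mathcal{T}_k(l)$, and then you identify the invariant $\min(i,j)$, which by inspection of all eight subcases in Lemma \ref{antidiagonal-diagonal} is preserved when a brick is rewritten from antidiagonal to diagonal coordinates. Since membership in $\widetilde{\mathcal{T}}_k(l)$ and $\mathcal{T}_{k'}(l)$ is governed solely by $\min(i,j)\geq l$ and $\min(a,b)\geq l$ respectively, the equality is immediate. This shortcut is available precisely because the cutoff $l$ is the same in both coordinates and is independent of the slice index; in Lemmas \ref{even-symmetry} and \ref{odd-symmetry} the cutoffs $\widehat{\varepsilon}_i(\nu;\cdot)$ vary with $k$, so no single numerical invariant captures membership and the explicit bookkeeping of the paper is needed there.
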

\begin{proof}
It follows from the similar argument in the proof of  Lemma \ref{even-symmetry} and Lemma \ref{odd-symmetry}.
\end{proof}
\begin{remark}
It is natural to consider more general notions, that is,  for any $l_1,l_2\in\mathbb{Z}_{\geq0}$, define
\ben
\widetilde{\mathcal{R}}_{k}(l_1,l_2)=\left\{ \widetilde{\mathbf{S}}_{k}(i,j)\,\bigg|\,i\geq l_1,\, j\geq l_2  \right\}\;\;\mbox{and}\;\;\mathcal{R}_{k}(l_1,l_2)=\left\{ \mathbf{S}_{k}(i,j)\,\bigg|\, i\geq l_1,\, j\geq l_2   \right\}.
\een
However,  we have $	\bigcup\limits_{k\in\mathbb{Z}}\left(\widetilde{\mathcal{R}}_{k}(l_1,l_2)\right)^c\neq\bigcup\limits_{k\in\mathbb{Z}}\left(\mathcal{R}_{k}(l_1,l_2)\right)^c$ if $l_1\neq l_2$, which can not be used for extracting the restricted  pyramid configurations with symmetric interlacing property in our construction.
\end{remark}	
 Let $\nu$ be a partition and $l\in\mathbb{Z}_{\geq0}$.  Let
 \ben
 &&\widetilde{\mathcal{S}}_{2k}(\nu,l)=\left\{
 \begin{aligned}
 	&\left\{ \widetilde{\mathbf{S}}_{2k}(i,j)\,\bigg|\,i\geq l+\rho_{1}(\nu)-\varepsilon_2(\nu;-k-1),\; j\geq l+ \rho_{2}(\nu)-\varepsilon_1(\nu;-k-1) \right\},  \;\; \mbox{if  $k\leq 0$},\\
 	&\left\{ \widetilde{\mathbf{S}}_{2k}(i,j)\,\bigg|\,i\geq l+\rho_{1}(\nu)-\varepsilon_4(\nu;k),\; j\geq l+\rho_{2}(\nu)- \varepsilon_3(\nu;k)\right\},  \;\; \;\; \;\; \;\; \;\; \;\; \;\; \;\; \;\; \;\; \mbox{if  $k> 0$}.
 \end{aligned}
 \right.
 \\
 &&\widetilde{\mathcal{S}}_{2k-1}(\nu,l)=\left\{
 \begin{aligned}
 	&\left\{ \widetilde{\mathbf{S}}_{2k-1}(i,j)\,\bigg|\,i\geq l+\rho_{1}(\nu)- \varepsilon_2(\nu;-k-1),\; j\geq l+ \rho_{2}(\nu)-\varepsilon_1(\nu;-k) \right\},  \;\; \mbox{if  $k\leq 0$},\\
 	&\left\{ \widetilde{\mathbf{S}}_{2k-1}(i,j)\,\bigg|\,i\geq l+\rho_{1}(\nu)-\varepsilon_4(\nu;k),\; j\geq l+\rho_{2}(\nu)- \varepsilon_3(\nu;k-1) \right\},  \;\;\;\; \;\; \;\, \mbox{if  $k> 0$}.
 \end{aligned}
 \right.
 \een
 and 
 \ben
 &&\mathcal{S}_{2k}(\nu,l)=\left\{
 \begin{aligned}
 	&\left\{ \mathbf{S}_{2k}(i,j)\,\bigg|\,i\geq l+\rho_{1}(\nu)-\varepsilon_2(\nu;-k-1),\; j\geq l+ \rho_{2}(\nu)-\varepsilon_1(\nu;-k-1) \right\},  \;\; \mbox{if  $k\leq 0$},\\
 	&\left\{ \mathbf{S}_{2k}(i,j)\,\bigg|\,i\geq l+\rho_{1}(\nu)- \varepsilon_4(\nu;k),\; j\geq l+\rho_{2}(\nu)- \varepsilon_3(\nu;k)\right\},  \;\; \;\; \;\; \;\; \;\; \;\; \;\; \;\; \;\; \;\; \mbox{if  $k> 0$}.
 \end{aligned}
 \right.
 \\
 &&\mathcal{S}_{2k-1}(\nu,l)=\left\{
 \begin{aligned}
 	&\left\{ \mathbf{S}_{2k-1}(i,j)\,\bigg|\,i\geq l+\rho_{1}(\nu)- \varepsilon_2(\nu;-k-1),\; j\geq l+ \rho_{2}(\nu)-\varepsilon_1(\nu;-k)  \right\},  \;\; \mbox{if  $k\leq 0$},\\
 	&\left\{ \mathbf{S}_{2k-1}(i,j)\,\bigg|\,i\geq l+\rho_{1}(\nu)-\varepsilon_4(\nu;k),\; j\geq l+\rho_{2}(\nu)- \varepsilon_3(\nu;k-1) \right\},  \;\;\;\; \;\; \;\, \mbox{if  $k> 0$}.
 \end{aligned}
 \right.
 \een
Define  the following two classes of restricted pyramid configurations
\ben
\mathfrak{RP}_{+}(\nu,l)=\left\{\widetilde{\bm{\wp}}_{\pi}(\nu,l)\;\bigg| \;\pi\in\mathfrak{P}\right\},\;\;\;\; \mathfrak{RP}_{-}(\nu,l)=\left\{\bm{\wp}_{\pi}(\nu,l)\;\bigg| \;\pi\in\mathfrak{P}\right\}.
\een
where any pyramid partition $\pi\in\mathfrak{P}$,
\ben
\widetilde{\bm{\wp}}_{\pi}(\nu,l)=\bigcup_{k\in\mathbb{Z}}\widetilde{\mathcal{S}}_{k}(\nu,l)\cap\widetilde{\pi}_{k},\;\;\;\;\bm{\wp}_{\pi}(\nu,l)=\bigcup_{k\in\mathbb{Z}}\mathcal{S}_{k}(\nu,l)\cap\pi_{k}.
\een
\begin{remark}\label{generalization of RPC notions}
It is easy to show that 
\ben
&&\widetilde{\mathcal{S}}_{k}(\nu,0)=\widetilde{\mathcal{S}}_{k}(\nu),\;\;\;\;\mathcal{S}_{k}(\nu,0)=\mathcal{S}_{k}(\nu),\;\;\;\;\widetilde{\mathcal{S}}_{k}(\emptyset,l)=\widetilde{\mathcal{T}}_{k}(l),\;\;\;\;\;\; \mathcal{S}_{k}(\emptyset,l)=\mathcal{T}_{k}(l),\\
&&\widetilde{\bm{\wp}}_{\pi}(\nu,0)=\widetilde{\bm{\wp}}_{\pi}(\nu),\;\;\;\bm{\wp}_{\pi}(\nu,0)=\bm{\wp}_{\pi}(\nu),\;\;\;\mathfrak{RP}_{+}(\nu,0)=\mathfrak{RP}_{+}(\nu),\;\;\;\mathfrak{RP}_{-}(\nu,0)=\mathfrak{RP}_{-}(\nu).
\een
\end{remark}

We have the following generalization of Lemma \ref{RPC-interlacing1} by the similar argument.
\begin{lemma}\label{generalized-RPC-interlacing1}
	Let $\nu$ be a partition and $l\in\mathbb{Z}_{\geq0}$. For any pyramid partition $\pi\in\mathfrak{P}$, the restricted pyramid configuration $\widetilde{\bm{\wp}}_{\pi}(\nu,l)$ (resp. $\bm{\wp}_{\pi}(\nu,l)$) is of antidiagonal (resp. diagonal) type $\nu$.
\end{lemma}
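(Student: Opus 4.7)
The plan is to adapt the argument of Lemma \ref{RPC-interlacing1} with only cosmetic changes, since replacing $(\nu,0)$ by $(\nu,l)$ amounts to a uniform shift by $l$ of both the row-index and the column-index bounds that define $\widetilde{\mathcal{S}}_{k}(\nu,l)$ and $\mathcal{S}_{k}(\nu,l)$. I will spell this out only for $\widetilde{\bm{\wp}}_{\pi}(\nu,l)$, of antidiagonal type $\nu$; the diagonal case for $\bm{\wp}_{\pi}(\nu,l)$ is identical after swapping $\widetilde{\mathbf{S}}$ for $\mathbf{S}$ and invoking Lemma \ref{interlacing1} instead of Lemma \ref{interlacing2}.

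First I would keep the notations $\widehat{\varepsilon}_i(\nu;\cdot)$ from \eqref{notations-interlacing1} and \eqref{notations-interlacing2} and introduce the shifted quantities $\widehat{\varepsilon}_i(\nu,l;\cdot) := l + \widehat{\varepsilon}_i(\nu;\cdot)$. Unpacking the definition for $k \leq 0$ gives, with $\widetilde{\ell}_{k,i}$ and $\widetilde{\ell}_{k}^{j}$ denoting the row and column lengths of $\widetilde{\pi}_{k}$ as in the proof of Lemma \ref{RPC-interlacing1},
\begin{align*}
\widetilde{\bm{\wp}}_{\pi}(\nu,l)_{2k} &= \bigl\{\widetilde{\mathbf{S}}_{2k}(i,j) \,\bigm|\, i \geq \widehat{\varepsilon}_2(\nu,l;-k-1),\ \widehat{\varepsilon}_1(\nu,l;-k-1) \leq j < \widetilde{\ell}_{2k,i}\bigr\},\\
\widetilde{\bm{\wp}}_{\pi}(\nu,l)_{2k-1} &= \bigl\{\widetilde{\mathbf{S}}_{2k-1}(i,j) \,\bigm|\, i \geq \widehat{\varepsilon}_2(\nu,l;-k-1),\ \widehat{\varepsilon}_1(\nu,l;-k) \leq j < \widetilde{\ell}_{2k-1,i}\bigr\},
\end{align*}
and the analogous formulas hold for $k > 0$ with $\widehat{\varepsilon}_{3,4}(\nu,l;\cdot)$ in place of $\widehat{\varepsilon}_{1,2}(\nu,l;\cdot)$.

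Next, for each fixed $i \geq \widehat{\varepsilon}_2(\nu,l;-k-1)$ I would compute the difference
\[
\bigl[\widetilde{\ell}_{2k-1,i} - \widehat{\varepsilon}_1(\nu,l;-k)\bigr] - \bigl[\widetilde{\ell}_{2k,i} - \widehat{\varepsilon}_1(\nu,l;-k-1)\bigr] = \tfrac{\nu^{\prime}(-2k)+1}{2} - (\widetilde{\ell}_{2k,i} - \widetilde{\ell}_{2k-1,i}),
\]
and similarly
\[
\bigl[\widetilde{\ell}_{2k,i} - \widehat{\varepsilon}_1(\nu,l;-k-1)\bigr] - \bigl[\widetilde{\ell}_{2k-1,i} - \widehat{\varepsilon}_1(\nu,l;-k)\bigr] = (\widetilde{\ell}_{2k,i} - \widetilde{\ell}_{2k-1,i}) - \tfrac{\nu^{\prime}(-2k)+1}{2} + 1.
\]
The key observation is that the constant $l$ cancels in both of these computations: it enters identically in $\widehat{\varepsilon}_1(\nu,l;-k-1)$ and $\widehat{\varepsilon}_1(\nu,l;-k)$, so the shifted and unshifted differences coincide. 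Combined with $\widetilde{\pi}_{2k} \succ^{\prime} \widetilde{\pi}_{2k-1}$ from Lemma \ref{interlacing2} (giving $\widetilde{\ell}_{2k,i} - \widetilde{\ell}_{2k-1,i} \in \{0,1\}$) and Lemma \ref{interlacing}, this yields $\widetilde{\bm{\wp}}_{\pi}(\nu,l)_{2k} \underset{\nu^{\prime}(-2k)}{\prec\succ^{\prime}} \widetilde{\bm{\wp}}_{\pi}(\nu,l)_{2k-1}$ exactly as in Lemma \ref{RPC-interlacing1}.

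I would then repeat the same cancellation argument for column lengths to establish $\widetilde{\bm{\wp}}_{\pi}(\nu,l)_{2k+1} \underset{\nu^{\prime}(-2k-1)}{\prec\succ} \widetilde{\bm{\wp}}_{\pi}(\nu,l)_{2k}$ in all ranges of $k$, and handle the $k > 0$ case identically using $\widehat{\varepsilon}_{3,4}(\nu,l;\cdot)$. There is no real obstacle here: the whole content is that a uniform translation by $(l,l)$ of the admissible region preserves the interlacing relations one already has at $l = 0$. The only bookkeeping worth checking is the boundary behaviour at the $k = 0$ junction, where both the $k \leq 0$ and $k > 0$ formulas must be used; this is handled by noting that $\widehat{\varepsilon}_{1}(\nu,l;-1) = \widehat{\varepsilon}_{3}(\nu,l;0) = \widehat{\varepsilon}_{2}(\nu,l;-1) = \widehat{\varepsilon}_{4}(\nu,l;0) = l + \rho(\nu)$, so the row- and column-bounds match up consistently across $k = 0$, and the argument proceeds as in Lemma \ref{RPC-interlacing1}.
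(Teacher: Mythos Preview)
Your proposal is correct and follows exactly the approach the paper indicates: the paper's own proof is simply ``It follows from the similar argument in the proof of Lemma \ref{RPC-interlacing1},'' and you have spelled out precisely why that similar argument goes through, namely because the uniform shift by $l$ cancels in every row- and column-length difference. One small notational slip: in your boundary check at $k=0$ you write $l+\rho(\nu)$ for all four quantities, but in general $\widehat{\varepsilon}_{1}(\nu,l;-1)=\widehat{\varepsilon}_{3}(\nu,l;0)=l+\rho_{2}(\nu)$ while $\widehat{\varepsilon}_{2}(\nu,l;-1)=\widehat{\varepsilon}_{4}(\nu,l;0)=l+\rho_{1}(\nu)$; this does not affect the argument, since what matters is that each pair matches.
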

Due to Lemmas \ref{RPC-interlacing1}, \ref{generalized-RPC-interlacing1},  Remark \ref{generalization of RPC notions} and Definition \ref{Two classes of RPC}, we unify some definitions as follows.

\begin{definition}\label{unified-dfn-RPC}
Let $\nu$ be a partition and $l\in\mathbb{Z}_{\geq0}$. 
We call $\widetilde{\bm{\wp}}_{\pi}(\nu,l)$ $($resp. $\bm{\wp}_{\pi}(\nu,l)$$)$ a restricted pyramid configurations of anitdiagonal $($resp. diagonal$)$ type $(\nu,l)$. And we say  $\mathfrak{RP}_{+}(\nu,l)$  $($resp. $\mathfrak{RP}_{-}(\nu,l)$$)$ a class of restricted pyramid configurations of anitdiagonal $($resp. diagonal$)$ type $(\nu,l)$.
\end{definition}	
	
It follows from the similar argument in the proof of Lemma \ref{RPC-interlacing} that 
\begin{lemma}\label{generalized-RPC-interlacing}
	Let $\nu$ be a partition and $l\in\mathbb{Z}_{\geq0}$. Let $\{\eta_k\}_{k\in\mathbb{Z}}$ be any sequence  of partitions  satisfying  the interlacing property of the second type $\nu$ and $\eta_k=\emptyset$ for all $|k|\gg0$, then there exists a pyramid partition $\pi\in\mathfrak{P}$ such that  $\eta_k=\widetilde{\bm{\wp}}_{\pi}(\nu,l)_k$ $($as the same 2D Young diagrams$)$ for any $k\in\mathbb{Z}$. Similarly, there exists a pyramid pyramid partition $\vartheta\in\mathfrak{P}$ such that $\eta_k=\bm{\wp}_{\vartheta}(\nu,l)_k$ for any $k\in\mathbb{Z}$. 	
\end{lemma}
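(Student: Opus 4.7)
The plan is to mimic the construction in the proof of Lemma \ref{RPC-interlacing} verbatim, but with a uniform shift of $l$ in every boundary constant. Concretely, given the sequence $\{\eta_k\}_{k\in\mathbb{Z}}$ satisfying the interlacing property of the second type $\nu$, I would first pick a positive even integer $\hbar$ depending on $\nu$ together with the constants
\ben
\Theta_{\{\eta_\star\}}=\max\{(\eta_k)^0\,|\,k\in\mathbb{Z}\},\quad \Xi_{\{\eta_\star\}}=\max\{(\eta_k)_0\,|\,k\in\mathbb{Z}\}
\een
exactly as before, so that $\eta_k=\emptyset$ for $|k|\geq\hbar$, $\nu'(k)=-1$ for $k\geq\hbar$, and $\nu'(k)=1$ for $k\leq-\hbar$.

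Next, I would define the candidate antidiagonal slices $\widetilde{\underline{\pi}}_k(\nu,l)$ piecewise on the same eight ranges of $k$ used in the proof of Lemma \ref{RPC-interlacing}, obtained from the original formulas by replacing every occurrence of $\widehat{\varepsilon}_j(\nu;\cdot)$ ($j=1,2,3,4$) by $l+\widehat{\varepsilon}_j(\nu;\cdot)$. For example, on the range $-\tfrac{\hbar}{2}\leq k\leq 0$ I would set
\ben
\widetilde{\underline{\pi}}_{2k}(\nu,l)&:=&\left\{\widetilde{\mathbf{S}}_{2k}(i,j)\,\bigg|\,0\leq i<l+\widehat{\varepsilon}_2(\nu;-k-1),\;0\leq j<l+\widehat{\varepsilon}_1(\nu;-k-1)+\Xi_{\{\eta_\star\}}\right\}\\
&\bigcup&\left\{\widetilde{\mathbf{S}}_{2k}(i,j)\,\bigg|\,0\leq i-l-\widehat{\varepsilon}_2(\nu;-k-1)<\Theta_{\{\eta_\star\}},\;0\leq j<l+\widehat{\varepsilon}_1(\nu;-k-1)+(\eta_{2k})_{i-l-\widehat{\varepsilon}_2(\nu;-k-1)}\right\},
\een
and proceed analogously for each of the other seven cases. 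This adds a uniform $l\times l$ padding block at the base, consistent with the shift by $l$ in the definition of $\widetilde{\mathcal{S}}_k(\nu,l)$.

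Then I would verify that $\pi=\bigcup_{k\in\mathbb{Z}}\widetilde{\underline{\pi}}_k(\nu,l)$ is a pyramid partition by checking the interlacing relations \eqref{interlacing<=0} and \eqref{interlacing>0}. The crucial observation is that the verifications carried out in the original proof depend only on the differences $\widetilde{\underline{\ell}}_{2k}^j(\nu,l)-\widetilde{\underline{\ell}}_{2k-1}^j(\nu,l)$ and $\widetilde{\underline{\ell}}_{2k,i}(\nu,l)-\widetilde{\underline{\ell}}_{2k-1,i}(\nu,l)$, and the added constant $l$ appears identically on both sides of each difference; hence the same chains of inequalities go through unchanged. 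Once $\pi$ is shown to be a pyramid partition, I would finish by directly computing
\ben
\widetilde{\bm{\wp}}_\pi(\nu,l)_k=\widetilde{\mathcal{S}}_k(\nu,l)\cap\widetilde{\pi}_k,
\een
and observing that the shifted intersection with $\widetilde{\mathcal{S}}_k(\nu,l)$ exactly strips off the padded rectangular regions appearing in the construction of $\widetilde{\underline{\pi}}_k(\nu,l)$, leaving the $\eta_k$-portion.

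The diagonal case is handled by an identical recipe with $\widetilde{\mathbf{S}}_k$ replaced by $\mathbf{S}_k$, producing $\vartheta$ such that $\eta_k=\bm{\wp}_\vartheta(\nu,l)_k$. The main obstacle is purely bookkeeping: making sure the matching of boundaries between the eight cases in (i)--(viii) stays coherent after inserting the padding of width $l$, and that the transitions $\widetilde{\underline{\pi}}_{-\hbar-1}(\nu,l)\succ\widetilde{\underline{\pi}}_{-\hbar-2}(\nu,l)$, $\widetilde{\underline{\pi}}_0(\nu,l)\succ\widetilde{\underline{\pi}}_1(\nu,l)$, and so on at the endpoints of cases still yield differences in $\{0,1\}$. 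This requires re-running the row-length computations performed at the end of the proof of Lemma \ref{RPC-interlacing} with the shifted constants, but no new argument is needed.
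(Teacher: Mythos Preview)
Your proposal is correct and takes essentially the same approach as the paper, which simply asserts that the lemma follows from the similar argument in the proof of Lemma~\ref{RPC-interlacing}. Your explicit description of the uniform $l$-shift in every $\widehat{\varepsilon}_j(\nu;\cdot)$, together with the observation that all interlacing checks depend only on differences of row/column lengths so that the additive $l$ cancels, is exactly the content of that ``similar argument.''
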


Notice that $\nu=(m,m-1,\cdots,2,1)=\emptyset$ is equivalent to $m=0$. 
By the similar argument in Section 3.2 together with Lemma \ref{l-even-odd-symmetry} and Remark \ref{generalization of RPC notions}, we have the following two results.
\begin{lemma}\label{generalized-even-odd-symmetry}
	Assume  $\nu=(m,m-1,\cdots,2,1)$ with $m\in\mathbb{Z}_{\geq0}$. Let $l\in\mathbb{Z}_{\geq0}$. Then we  have 
	\ben
	\bigcup_{k\in\mathbb{Z}}\left(\widetilde{\mathcal{S}}_{k}(\nu,l)\right)^c=\bigcup_{k\in\mathbb{Z}}\left(\mathcal{S}_{k}(\nu,l)\right)^c.
	\een
\end{lemma}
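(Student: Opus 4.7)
The plan is to mirror the proofs of Lemmas \ref{even-symmetry}, \ref{odd-symmetry}, and \ref{even-odd-symmetry}, carrying the additional shift $l$ uniformly through all the thresholds. Denoting by $\alpha_k(\nu)$ and $\beta_k(\nu)$ the $i$- and $j$-thresholds appearing in the definition of $\widetilde{\mathcal{S}}_k(\nu)$, we have $\widetilde{\mathcal{S}}_k(\nu,l)=\{\widetilde{\mathbf{S}}_k(i,j):i\geq\alpha_k(\nu)+l,\,j\geq\beta_k(\nu)+l\}$, and likewise for $\mathcal{S}_k(\nu,l)$. The key point is that the case analyses and complement computations in Lemmas \ref{even-symmetry} and \ref{odd-symmetry} depend only on the relative jumps of these thresholds in $k$, not on their absolute values; adding the constant $l$ enlarges every complement region uniformly by $l$ units along each axis while preserving the $k$-dependence of the case boundaries and the algebraic structure of the regrouping.

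Concretely, I would first redo the four-case analysis for $(\widetilde{\mathcal{S}}_{2k}(\nu,l))^c$ along the same $k$-ranges used in Lemma \ref{even-symmetry} (e.g.\ $k\leq-\frac{m}{2}$, $-\frac{m}{2}<k\leq 0$, $0<k\leq\frac{m}{2}$, $k>\frac{m}{2}$ in the even-$m$ case), with each threshold increased by $l$ and an additional $l$-thick rectangular strip appended to each complement. Applying Lemma \ref{antidiagonal-diagonal} converts each antidiagonal brick $\widetilde{\mathbf{S}}_{2k}(i,j)$ into a diagonal brick $\mathbf{S}_{\pm 2r}(\cdot,\cdot)$, producing natural analogues $A^{(l)}_{k,\pm}(\nu)$ and $B^{(l)}_{k,\pm}(\nu)$ of the sets in the proof of Lemma \ref{even-symmetry}. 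Regrouping their union over $k$ by the diagonal index parallels the earlier computation verbatim and yields $\bigcup_{k\in\mathbb{Z}}(\mathcal{S}_{2k}(\nu,l))^c$. I would then repeat the analogous analysis for the odd slices $(\widetilde{\mathcal{S}}_{2k-1}(\nu,l))^c$ following Lemma \ref{odd-symmetry}, and combine the two parities as in Lemma \ref{even-odd-symmetry} to obtain the full statement.

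The main obstacle is purely bookkeeping: the case analysis retains the same number of $k$-ranges as before, but each complement now carries extra $l$-thick strips that must be matched correctly under the antidiagonal-to-diagonal conversion, and the reindexing bounds in the $A^{(l)}_{k,\pm}(\nu),\,B^{(l)}_{k,\pm}(\nu)$ unions become slightly more intricate. As a sanity check, one can verify the two boundary cases $l=0$ (which recovers Lemma \ref{even-odd-symmetry}) and $m=0$, i.e.\ $\nu=\emptyset$ (which recovers Lemma \ref{l-even-odd-symmetry}); the general statement interpolates between these two established identities, and the values $\varepsilon_i(\nu;\cdot)$ for the staircase $\nu=(m,m-1,\ldots,2,1)$ enter in exactly the same way as in Section 3.2, so no new combinatorial phenomenon appears.
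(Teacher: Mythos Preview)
Your proposal is correct and takes essentially the same approach as the paper, which does not spell out a detailed proof but simply states that the result follows ``by the similar argument in Section~3.2 together with Lemma~\ref{l-even-odd-symmetry} and Remark~\ref{generalization of RPC notions}.'' Your plan to rerun the case analyses of Lemmas~\ref{even-symmetry} and~\ref{odd-symmetry} with all thresholds shifted by $l$, and your observation that the boundary cases $l=0$ and $m=0$ recover the two already-established lemmas, are exactly the ingredients the paper points to.
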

\begin{proposition}\label{generalized-unique symmetric interlacing}
	Let $\nu$ be a  partition  and $l\in\mathbb{Z}_{\geq0}$. Then for any fixed $l$, the equality $\widetilde{\bm{\wp}}_{\pi}(\nu,l)=\bm{\wp}_{\pi}(\nu,l)$ holds for any pyramid  partition $\pi\in\mathfrak{P}$ if and only if $\nu=(m,m-1,\cdots,2,1)$ for some $m\in\mathbb{Z}_{\geq0}$. In particular, if $\nu=(m,m-1,\cdots,2,1)$ with $m\in\mathbb{Z}_{\geq0}$, we have 
	$\mathfrak{RP}_{+}(\nu,l)=\mathfrak{RP}_{-}(\nu,l)$ for any $l\in\mathbb{Z}_{\geq0}$. 
\end{proposition}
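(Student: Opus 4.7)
The proof will mirror Proposition \ref{unique symmetric interlacing}, with the extra parameter $l$ incorporated as a uniform shift in both index coordinates.

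For the sufficient direction, assume $\nu=(m,m-1,\cdots,2,1)$ with $m\in\mathbb{Z}_{\geq0}$. I will apply Lemma \ref{generalized-even-odd-symmetry} to obtain $\bigcup_{k\in\mathbb{Z}}(\widetilde{\mathcal{S}}_{k}(\nu,l))^c=\bigcup_{k\in\mathbb{Z}}(\mathcal{S}_{k}(\nu,l))^c$, then intersect both sides with any $\pi\in\mathfrak{P}$. As in the proof of Lemma \ref{symmetric-interlacing}, writing $\widetilde{\bm{\wp}}_{\pi}(\nu,l)$ and $\bm{\wp}_{\pi}(\nu,l)$ as the complements in $\pi$ of the respective intersections yields $\widetilde{\bm{\wp}}_{\pi}(\nu,l)=\bm{\wp}_{\pi}(\nu,l)$. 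Ranging $\pi$ over $\mathfrak{P}$ then proves the final assertion $\mathfrak{RP}_{+}(\nu,l)=\mathfrak{RP}_{-}(\nu,l)$ for staircase $\nu$.

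For the necessary direction, assume $\widetilde{\bm{\wp}}_{\pi}(\nu,l)=\bm{\wp}_{\pi}(\nu,l)$ for every $\pi\in\mathfrak{P}$. I first extract the set-theoretic identity $\bigcup_{k}(\widetilde{\mathcal{S}}_{k}(\nu,l))^c=\bigcup_{k}(\mathcal{S}_{k}(\nu,l))^c$ by the brick-in-pyramid argument used in Proposition \ref{unique symmetric interlacing}: given a brick $[B]$ in either union of complements, choose $\pi\in\mathfrak{P}$ containing $[B]$ and use the hypothesis to propagate membership to the other union. By Lemma \ref{antidiagonal-diagonal} this identity splits into its even and odd parts. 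I then adapt the inductive chain of Lemmas \ref{symmetric-step1}--\ref{symmetric-step4} to the shifted setting: each lower bound of the form $\rho_{i}(\nu)-\varepsilon_{j}(\nu;*)$ appearing in those proofs is replaced by $l+\rho_{i}(\nu)-\varepsilon_{j}(\nu;*)$, and the corresponding decompositions of $(\mathcal{S}_{2k}(\nu,l))^c$ and of $\mathcal{S}_{2k}(\emptyset)\cap\bigcup_{k'}(\widetilde{\mathcal{S}}_{2k'}(\nu,l))^c$ are rewritten accordingly. Because Lemma \ref{antidiagonal-diagonal} absorbs the shift symmetrically into the resulting $\mathbf{S}_{*}(\cdot,\cdot)$-indices, the parameter $l$ appears identically on both sides of every equation in the induction and cancels term-by-term. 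The same conditions on $\varepsilon_{1}(\nu;\imath),\varepsilon_{2}(\nu;\imath),\varepsilon_{3}(\nu;\jmath),\varepsilon_{4}(\nu;\jmath)$ therefore emerge, and by the computations in \eqref{odd-symmetry1} and \eqref{even-symmetry1} we conclude $\nu=(m,m-1,\cdots,2,1)$ for some $m\in\mathbb{Z}_{\geq0}$.

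The principal obstacle is a bookkeeping one: verifying that the shift $l$ genuinely cancels at each step of the induction, rather than silently introducing off-by-one discrepancies at the boundary cases $k=-\rho(\nu),\ldots,0,\ldots,\rho(\nu)$ treated separately in Lemmas \ref{symmetric-step1}--\ref{symmetric-step4}. Since $l$ enters both $\widetilde{\mathcal{S}}_{k}(\nu,l)$ and $\mathcal{S}_{k}(\nu,l)$ by the same additive constant in each of the $i$- and $j$-coordinates, the boundary terms on the two sides match up uniformly and the induction proceeds exactly as in the $l=0$ case; all combinatorial conclusions are therefore inherited without change.
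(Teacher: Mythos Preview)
Your proposal is correct and matches the paper's own treatment: the paper does not spell out a separate argument but simply states that the result follows ``by the similar argument in Section 3.2 together with Lemma \ref{l-even-odd-symmetry} and Remark \ref{generalization of RPC notions}'', which is exactly the plan you describe---run the proof of Proposition \ref{unique symmetric interlacing} with the uniform $l$-shift. Your observation that the brick correspondence of Lemma \ref{antidiagonal-diagonal} sends a simultaneous $(i,j)\mapsto(i+l,j+l)$ shift on the antidiagonal side to the same shift on the diagonal side is precisely why the induction in Lemmas \ref{symmetric-step1}--\ref{symmetric-step4} goes through unchanged; the paper's citation of Lemma \ref{l-even-odd-symmetry} is the ready-made version of this observation for the $\nu=\emptyset$ piece of the complement.
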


\begin{remark}\label{generalize-uniqueness}
It follows from  Proposition \ref{generalized-unique symmetric interlacing} for any fixed $l\in\mathbb{Z}_{\geq0}$ that
 $\mathfrak{RP}_{+}(\nu,l)$  $($or  $\mathfrak{RP}_{-}(\nu,l)$$)$ is the unique class of restricted pyramid configurations of antidiagonal $($or diagonal$)$   type $(\nu,l)$ with the symmetric interlacing property of type $\nu$  only when $\nu=(m,m-1,\cdots,2,1)$ where  $m\in\mathbb{Z}_{\geq1}$ is the length of $\nu$.
\end{remark}

\section{The 1-leg DT $\mathbb{Z}_2\times\mathbb{Z}_2$-vertex}
In Section 4.1, we define and show generating functions of two classes of  restricted pyramid configurations $Z_{\mathfrak{RP}_{+}(\nu,l)}$ and $Z_{\mathfrak{RP}_{-}(\nu,l)}$ are both independent of $l$ and present their vertex operator expressions for any partition $\nu$ when $l=0$.  They are equal  when $\nu=(m,m-1,\cdots,2,1)$ with $m\in\mathbb{Z}_{\geq1}$. With this special 1-leg partition $\nu$, we obtain the more explicit vertex operator product expressions for $V_{\emptyset\emptyset\nu}^{\mathbb{Z}_{2}\times\mathbb{Z}_{2}}
$, $Z_{\mathfrak{RP}_{+}(\nu)}$, $Z_{\mathfrak{RP}_{-}(\nu)}$, and $V^{\mathbb{Z}_4}_{\emptyset\emptyset\nu}$. And
 we derive the relations between $Z_{\mathfrak{RP}_{+}(\nu)}$ (resp. $Z_{\mathfrak{RP}_{-}(\nu)}$) and 1-leg DT $\mathbb{Z}_2\times\mathbb{Z}_2$-vertex $V_{\emptyset\emptyset\nu}^{\mathbb{Z}_{2}\times\mathbb{Z}_{2}}
 $ (resp. 1-leg DT $\mathbb{Z}_4$-vertex $V^{\mathbb{Z}_4}_{\emptyset\emptyset\nu}$) in Section 4.2 (resp. Section 4.3)
to establish a connection between $V_{\emptyset\emptyset\nu}^{\mathbb{Z}_{2}\times\mathbb{Z}_{2}}
$ and $V^{\mathbb{Z}_4}_{\emptyset\emptyset\nu}$ in Section 4.4.  An explicit formula for 
this special class of 1-leg DT $\mathbb{Z}_2\times\mathbb{Z}_2$-vertex $V_{\emptyset\emptyset\nu}^{\mathbb{Z}_{2}\times\mathbb{Z}_{2}}
$ is presented in Section 4.5.

\subsection{Vertex operator products,  topological vertices and  restricted pyramid configurations}
We  start with describing vertex operator product expression for 1-leg DT $\mathbb{Z}_{2}\times\mathbb{Z}_{2}$-vertex  as follows. 
According to the $\mathbb{Z}_2\times\mathbb{Z}_2$-coloring in Definition \ref{coloring} and 1-leg partition $\nu$, we introduce operators
\bea\label{Z2Z2coloring}
\widetilde{Q}_{k,\nu}=\begin{cases}
	Q_{0c},\quad  \mbox{if $k\equiv0$ (mod 2) and $|\nu|_k\equiv 0$ (mod 2)}; \\
	Q_{c0},\quad  \mbox{if $k\equiv0$ (mod 2)  and $|\nu|_k\equiv 1$ (mod 2)}; \\
	Q_{ab},\quad  \mbox{if $k\equiv1$ (mod 2),  $k>0$ and $|\nu|_k\equiv 0$ (mod 2)}; \\
	Q_{ab},\quad  \mbox{if $k\equiv1$ (mod 2), $k<0$ and $|\nu|_k\equiv 1$ (mod 2)}; \\
	Q_{ba},\quad  \mbox{if $k\equiv1$ (mod 2), $k>0$ and $|\nu|_k\equiv 1$ (mod 2)}; \\
	Q_{ba},\quad  \mbox{if $k\equiv1$ (mod 2), $k<0$ and $|\nu|_k\equiv 0$ (mod 2)}.
\end{cases}
\eea
where  $|\nu|_k=|\{(i,j)\in\nu\,|\, i-j=k)\}|$.
Let $\overrightarrow{\prod\limits_{t\in\mathbb{S}}}\Psi_t$ denote the product of operators $\Psi_t$ with the order such that $t\in\mathbb{S}\subset\mathbb{Z}$ increases from left to right. Set $\Gamma_{\pm1}(x)=\Gamma_{\pm}(x)$ for any variable $x$.
\begin{lemma}\label{general-Z2Z2-vertex expression}
	For any partition $\nu$, we have
	\ben
	V_{\emptyset\emptyset\nu}^{\mathbb{Z}_{2}\times\mathbb{Z}_{2}}=\left\langle\emptyset\,\left|\; \overrightarrow{\prod_{t\in\mathbb{Z}}}\widetilde{Q}_{-t,\nu}\Gamma_{\nu^\prime(t)}(1)\,\right|\,\emptyset\right\rangle.
	\een
\end{lemma}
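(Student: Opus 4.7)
The plan is to follow the standard diagonal-slicing and vertex-operator strategy of Okounkov--Reshetikhin--Vafa, as adapted to the $\mathbb{Z}_2\times\mathbb{Z}_2$-colored setting by Bryan--Young, with extra bookkeeping needed to incorporate the nontrivial leg partition $\nu$.

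First, for each $\pi \in \mathcal{P}(\emptyset,\emptyset,\nu)$ I would slice $\pi$ along the diagonal planes $\{x-y = t\}$ and denote the resulting $2$D slices by $\pi_t$. Because $\pi$ is asymptotic to $\nu$ in the third direction, each $\pi_t$ differs from a canonical infinite ``leg profile partition'' determined by $\nu$ (along the $z$-axis) by only finitely many boxes; I call the finite excess $\widetilde{\pi}_t$. This gives a bijection between $\mathcal{P}(\emptyset,\emptyset,\nu)$ and sequences $\{\widetilde{\pi}_t\}_{t\in\mathbb{Z}}$ of partitions that vanish for $|t|\gg 0$ and satisfy the interlacing property of the first type $\nu$ in the sense of Definition \ref{dfn-interlacing}. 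The key point here is that the boundary profile of the removed leg, read off transversely to the slicing direction, is exactly the edge sequence $\nu^\prime$, so that at step $t$ the direction of interlacing ($\prec$ vs.\ $\succ$) between $\widetilde{\pi}_{t-1}$ and $\widetilde{\pi}_t$ is prescribed by $\nu^\prime(-t)$.

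Second, I would translate this combinatorial picture into a matrix element. By the very definition of $\Gamma_{\pm}(1)$, the matrix element $\langle\emptyset|\overrightarrow{\prod_{t\in\mathbb{Z}}}\Gamma_{\nu^\prime(t)}(1)|\emptyset\rangle$ sums over all sequences $\{\widetilde{\pi}_t\}$ that are eventually empty and that interlace at step $t$ in the direction $\nu^\prime(t)$; this matches precisely the sequences produced by Step~1 (once one checks that the index conventions agree, making use of the charge-zero property of the edge sequence recalled in Remark \ref{charge0} to guarantee both tails return to $\emptyset$). At this stage all vertex operators carry the variable $1$, so the entire $q$-weighting must be produced by inserting the color operators $\widetilde{Q}_{-t,\nu}$.

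Third, I would verify the color bookkeeping. Because $K_{\mathbb{Z}_2\times\mathbb{Z}_2}(i,j,k)=ai+bj+ck$ and $a+b=c$, the color of a box at $(i,j,k)$ in the slice $i-j=t$ depends only on the parity of $t$ and on the parities of the transverse coordinates, so the two sublattices of each slice carry the colors $\{0,c\}$ when $t$ is even and $\{a,b\}$ when $t$ is odd, exactly the color pairs appearing in $Q_{0c},Q_{c0},Q_{ab},Q_{ba}$. The subtraction of the leg profile shifts the origin of $\widetilde{\pi}_{-t}$ within the slice by $|\nu|_{-t}$ units, so the assignment of the two sublattices to the two colors is swapped precisely when $|\nu|_{-t}$ is odd; this is what the four-case definition \eqref{Z2Z2coloring} encodes, and the additional dependence on the sign of $t$ in the odd case reflects that we start building the slice from one end or the other depending on whether we are using $\Gamma_-$ (right tail) or $\Gamma_+$ (left tail). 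Combining with the renormalization $\xi_\pi(i,j,k)=0$ for leg boxes ensures that the infinite leg contribution carries trivial weight, matching the cancellation already performed in Step~1. Putting Steps~1--3 together, the matrix element reproduces $\sum_{\pi}q_0^{\|\pi\|_0}q_a^{\|\pi\|_a}q_b^{\|\pi\|_b}q_c^{\|\pi\|_c}$, which is $V_{\emptyset\emptyset\nu}^{\mathbb{Z}_2\times\mathbb{Z}_2}$.

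The main obstacle I expect is the color bookkeeping in Step~3: verifying, case by case on the parities of $t$ and $|\nu|_{-t}$ and on the sign of $t$, that the sublattice-to-color assignment after removing the leg coincides with \eqref{Z2Z2coloring}. The rest is a routine unwinding of definitions once the slicing/interlacing/charge correspondence is set up, but the four-case parity matching has no shortcut and must be checked directly.
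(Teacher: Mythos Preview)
Your proposal is correct and follows exactly the approach the paper indicates: the paper's proof is a one-line citation of the diagonal-slicing argument of \cite{BCY}, Proposition~8, carried out with the $\mathbb{Z}_2\times\mathbb{Z}_2$-coloring of Definition~\ref{coloring} and the interlacing property of the first type $\nu$ from Definition~\ref{dfn-interlacing}, and you have simply unpacked that argument in detail. Your identification of the parity bookkeeping behind \eqref{Z2Z2coloring}---that even slices carry colors $\{0,c\}$, odd slices carry $\{a,b\}$, the leg removal shifts the sublattice assignment by $|\nu|_k$, and the sign of $k$ enters because the reference corner of the slice sits at $(k,0,0)$ versus $(0,-k,0)$---is exactly the verification the paper leaves implicit.
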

\begin{proof}
	It follows from the similar argument in the proof of [\cite{BCY}, Proposition 8] with the correct $\mathbb{Z}_2\times\mathbb{Z}_2$-coloring in Definition \ref{coloring} and interlacing property of the first type $\nu$ in Definition \ref{dfn-interlacing}.
\end{proof}	

As in  [\cite{BY}, Definition 6.1], we define the following two  generating functions. 
\begin{definition}\label{GF-RPC}
	For any partition $\nu$, define	
	\ben
	&&Z_{\mathfrak{RP}_{+}(\nu)}=\sum_{\bm{\wp}\in\mathfrak{RP}_{+}(\nu)}\prod_{g\in\mathbb{Z}_2\times\mathbb{Z}_2}q_{g}^{|\bm{\wp}|_g},\\
	&&Z_{\mathfrak{RP}_{-}(\nu)}=\sum_{\bm{\wp}\in\mathfrak{RP}_{-}(\nu)}\prod_{g\in\mathbb{Z}_2\times\mathbb{Z}_2}q_{g}^{|\bm{\wp}|_g},
	\een
	where $|\bm{\wp}|_g:=|K_{\mathfrak{P}}^{-1}(g)\cap\bm{\wp}|$ is the number of bricks colored $g$ in $\bm{\wp}$.
\end{definition}

\begin{remark}\label{empty2}
	It is clear that the above two generating functions  generalizes  $Z_{\mathrm{pyramid}}$ in  [\cite{BY}, Definition 6.1] since we have $Z_{\mathfrak{RP}_{+}(\emptyset)}=Z_{\mathfrak{RP}_{-}(\emptyset)}=Z_{\mathfrak{P}}=Z_{\mathrm{pyramid}}$  by Remark \ref{empty1}.
\end{remark}

According the coloring of bricks in the antidiagonal slices  of  elements $\widetilde{\bm{\wp}}_{\pi}(\nu)$ in  $\mathfrak{RP}_{+}(\nu)$, we define 
\bea\label{RPC+coloring}
\overline{Q}_{k,\nu}=\begin{cases}
	Q_{0c},\quad   \mbox{if $k\equiv0$ (mod 2) and $\mho(k,\nu)\equiv0$ (mod 2)}; \\
	Q_{c0},\quad   \mbox{if $k\equiv0$ (mod 2) and $\mho(k,\nu)\equiv1$ (mod 2)}; \\
	Q_{ba},\quad   \mbox{if $k\equiv1$ (mod 2), $k>0$, $\mho(k,\nu)\equiv0$ (mod 2)}; \\
	Q_{ba},\quad   \mbox{if $k\equiv1$ (mod 2), $k<0$, $\mho(k,\nu)\equiv1$ (mod 2)}; \\
	Q_{ab},\quad   \mbox{if $k\equiv1$ (mod 2), $k>0$, $\mho(k,\nu)\equiv1$ (mod 2)}; \\
	Q_{ab},\quad   \mbox{if $k\equiv1$ (mod 2), $k<0$, $\mho(k,\nu)\equiv0$ (mod 2)}.
\end{cases}
\eea
where
\ben
\mho(k,\nu)=\begin{cases}
	\rho_1(\nu)+\rho_2(\nu)-\varepsilon_{1}\left(\nu;\displaystyle-\frac{k}{2}-1\right)-\varepsilon_2\left(\nu;\displaystyle-\frac{k}{2}-1\right),\quad\quad\quad  \mbox{if $k\equiv0$ (mod 2) and $k\leq0$}; \\
	\rho_1(\nu)+\rho_2(\nu)-\varepsilon_{1}\left(\nu;-\displaystyle\frac{k+1}{2}\right)-\varepsilon_2\left(\nu;-\displaystyle\frac{k+1}{2}-1\right),\quad \, \mbox{if $k\equiv1$ (mod 2) and $k\leq0$}; \\
	\rho_1(\nu)+\rho_2(\nu)-\varepsilon_{3}\left(\nu;\displaystyle\frac{k}{2}\right)-\varepsilon_4\left(\nu;\displaystyle\frac{k}{2}\right),\quad\quad\quad\quad\quad\quad\quad\quad  \mbox{if $k\equiv0$ (mod 2) and $k>0$}; \\
	\rho_1(\nu)+\rho_2(\nu)-\varepsilon_{3}\left(\nu;\displaystyle\frac{k+1}{2}-1\right)-\varepsilon_4\left(\nu;\displaystyle\frac{k+1}{2}\right),\quad \quad\quad \mbox{if $k\equiv1$ (mod 2) and $k>0$}.
\end{cases}
\een
is employed to determine  the color of the orgin of the 2D Young diagram $\widetilde{\bm{\wp}}_{\pi}(\nu)_k$.
Now we have the following vertex operator product expressions for $Z_{\mathfrak{RP}_{+}(\nu)}$ and $Z_{\mathfrak{RP}_{-}(\nu)}$.

\begin{lemma}\label{vertex-expressions-RPC+-}
	For any partition $\nu$, we have
	\ben
	&&Z_{\mathfrak{RP}_{+}(\nu)}=\left\langle\emptyset\,\left|\; \overrightarrow{\prod_{t\in\mathbb{Z}}}\overline{Q}_{-2t,\nu}\Gamma_{\nu^\prime(2t)}^\prime(1)\overline{Q}_{-2t-1,\nu}\Gamma_{\nu^\prime(2t+1)}(1)\,\right|\,\emptyset\right\rangle,\\
	&&Z_{\mathfrak{RP}_{-}(\nu)}=\left\langle\emptyset\,\bigg|\, \overrightarrow{\prod_{t\in\mathbb{Z}}}Q_{0}\Gamma_{\nu^\prime(4t)}^\prime(1)Q_{a}\Gamma_{\nu^\prime(4t+1)}(1)Q_{c}\Gamma_{\nu^\prime(4t+2)}^\prime(1)Q_{b}\Gamma_{\nu^\prime(4t+3)}(1)\,\bigg|\,\emptyset\right\rangle.
	\een
\end{lemma}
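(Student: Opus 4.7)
The plan is to expand both matrix elements by inserting resolutions of the identity $\sum_{\eta}|\eta\rangle\langle\eta|$ between each consecutive pair of vertex operators, and then identify the resulting sums with the generating functions $Z_{\mathfrak{RP}_+(\nu)}$ and $Z_{\mathfrak{RP}_-(\nu)}$ by means of the interlacing/coloring dictionary developed in Sections 2 and 3. After insertion, the matrix element becomes a sum over doubly-infinite sequences of partitions $\{\eta_k\}_{k\in\mathbb{Z}}$, and an individual term is nonzero precisely when each $\langle\eta_{k}|\Gamma_{\pm}^{(\prime)}(1)|\eta_{k-1}\rangle$ is nonzero. Reading off from the definition of $\Gamma_{\pm}$ and $\Gamma_{\pm}^{\prime}$ together with the convention $\Gamma_{\nu^\prime(t)}=\Gamma_{+}$ if $\nu^\prime(t)=+1$ and $\Gamma_{-}$ otherwise, the pattern of operators in each formula forces exactly the interlacing property of the second type $\nu$ from Definition \ref{dfn-interlacing}. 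Since $\langle\emptyset|\cdots|\emptyset\rangle$ restricts the sum to sequences with $\eta_k=\emptyset$ for $|k|\gg 0$, these are precisely the sequences to which Lemma \ref{RPC-interlacing} applies.

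By Lemma \ref{RPC-interlacing}, every such sequence arises as $\{\widetilde{\bm{\wp}}_{\pi}(\nu)_k\}_{k\in\mathbb{Z}}$ for some pyramid partition $\pi$ (and, for the second formula, as $\{\bm{\wp}_{\pi}(\nu)_k\}_{k\in\mathbb{Z}}$), and conversely every $\widetilde{\bm{\wp}}_{\pi}(\nu)\in\mathfrak{RP}_{+}(\nu)$ (resp.\ $\bm{\wp}_{\pi}(\nu)\in\mathfrak{RP}_{-}(\nu)$) produces such a sequence by Lemma \ref{RPC-interlacing1}. This turns the formal sum over interlaced sequences into a sum indexed by $\mathfrak{RP}_{+}(\nu)$ (resp.\ $\mathfrak{RP}_{-}(\nu)$), matching the indexing in Definition \ref{GF-RPC}. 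The weights from the $\Gamma$-operators all evaluate to $1$ (since the variable is $x=1$), so the only remaining task is to verify that the product of the $Q$-operators reproduces the correct monomial $\prod_{g}q_{g}^{|\bm{\wp}|_g}$ for every restricted pyramid configuration $\bm{\wp}$.

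For $Z_{\mathfrak{RP}_{-}(\nu)}$ this color bookkeeping is the straightforward half: by Lemma \ref{interlacing1} each diagonal slice $\bm{\wp}_{\pi}(\nu)_k$ is single-colored, with color depending only on $k\bmod 4$, so that a single-color operator $Q_g$ inserted at the appropriate place in the product suffices, and the cyclic pattern $Q_{0},Q_{a},Q_{c},Q_{b}$ is read off directly from Lemma \ref{interlacing1}. For $Z_{\mathfrak{RP}_{+}(\nu)}$ the situation is genuinely more delicate, and constitutes the main obstacle: every antidiagonal slice $\widetilde{\pi}_k$ is checkerboard colored by the scheme in [\cite{BY}, Figure 6], so that the restricted slice $\widetilde{\bm{\wp}}_{\pi}(\nu)_k$ inherits a checkerboard coloring whose origin has been shifted by $(\rho_{1}(\nu)-\varepsilon_{\bullet}(\nu;\cdot),\rho_{2}(\nu)-\varepsilon_{\bullet}(\nu;\cdot))$ relative to the origin of $\widetilde{\pi}_k$. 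The parity of the total shift determines which of the two colors of the checkerboard sits at the $(0,0)$ corner of the restricted slice, and a case analysis on the sign of $k$ and the parity of $k$ shows that this parity is precisely $\mho(k,\nu)\bmod 2$; plugging this into the checkerboard coloring of $\widetilde{\pi}_k$ from [\cite{BY}, Figure 6] yields the four-case definition of $\overline{Q}_{-k,\nu}$ in \eqref{RPC+coloring}. Carrying this out for each of the four parity combinations of $k$ completes the verification.
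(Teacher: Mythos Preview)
Your proposal is correct and follows essentially the same approach as the paper's (very terse) proof, which simply cites Lemma~\ref{interlacing1}, Lemma~\ref{RPC-interlacing}, and Lemma~\ref{RPC-interlacing1} together with the color assignments of the slices. You have filled in exactly the details those citations are meant to cover: the transfer-matrix expansion identifies the matrix element with a sum over interlacing sequences of the second type $\nu$, Lemmas~\ref{RPC-interlacing1} and~\ref{RPC-interlacing} set up the bijection with $\mathfrak{RP}_{\pm}(\nu)$, Lemma~\ref{interlacing1} handles the single-color diagonal case, and the checkerboard shift analysis via $\mho(k,\nu)$ handles the antidiagonal case.
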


\begin{proof}
	It follows from	Lemma \ref{interlacing1}, Lemma \ref{RPC-interlacing}, and Lemma  \ref{RPC-interlacing1} by using the interlacing property of slices $\{\widetilde{\bm{\wp}}_{\pi}(\nu)_k\,|\,k\in\mathbb{Z}\}$ (resp. $\{\bm{\wp}_{\pi}(\nu)_k\,|\,k\in\mathbb{Z}\}$) and the color assignments of bricks in  $\widetilde{\bm{\wp}}_{\pi}(\nu)_k$  (resp. $\bm{\wp}_{\pi}(\nu)_k$) for each $k\in\mathbb{Z}$.
\end{proof}

\begin{remark}\label{Independent-of-l}
	By Lemma \ref{generalized-RPC-interlacing1} and Lemma \ref{generalized-RPC-interlacing},  the interlacing properties for slices  of  elements $\widetilde{\bm{\wp}}_{\pi}(\nu,l)$ in $\mathfrak{RP}_{+}(\nu,l)$ $($resp. $\bm{\wp}_{\pi}(\nu,l)$ in $\mathfrak{RP}_{-}(\nu,l)$$)$ are independent of $l$.  Define 
	\ben
	Z_{\mathfrak{RP}_{+}(\nu,l)}=\sum_{\bm{\wp}\in\mathfrak{RP}_{+}(\nu,l)}\prod_{g\in\mathbb{Z}_2\times\mathbb{Z}_2}q_{g}^{|\bm{\wp}|_g},\;\;\; Z_{\mathfrak{RP}_{-}(\nu,l)}=\sum_{\bm{\wp}\in\mathfrak{RP}_{-}(\nu,l)}\prod_{g\in\mathbb{Z}_2\times\mathbb{Z}_2}q_{g}^{|\bm{\wp}|_g},
	\een
	as in Definition \ref{GF-RPC}. By the  color assignment of bricks in $\widetilde{\bm{\wp}}_{\pi}(\nu,l)_k$   $($resp. $\bm{\wp}_{\pi}(\nu,l)_k$$)$ for each $k\in\mathbb{Z}$, one can show by comparing their vertex operator product expressions that
	\ben
	Z_{\mathfrak{RP}_{+}(\nu,l)}=Z_{\mathfrak{RP}_{+}(\nu)}\;\;\;\mbox{and}\;\;\; Z_{\mathfrak{RP}_{-}(\nu,l)}=Z_{\mathfrak{RP}_{-}(\nu)}\;\;\; \mbox{for any $l\geq0$.}
	\een
	where $Z_{\mathfrak{RP}_{+}(\nu)}=Z_{\mathfrak{RP}_{+}(\nu,0)}$ and $Z_{\mathfrak{RP}_{-}(\nu)}=Z_{\mathfrak{RP}_{-}(\nu,0)}$  by Remark \ref{generalization of RPC notions}.
\end{remark}

\begin{remark}\label{general-Z2Z2-RPC+}
	By Lemma \ref{general-Z2Z2-vertex expression} and Lemma \ref{vertex-expressions-RPC+-}, we have for any partition $\nu\neq\emptyset$,
	\ben
	&&V_{\emptyset\emptyset\nu}^{\mathbb{Z}_{2}\times\mathbb{Z}_{2}}=\left\langle\emptyset\,\left|\; \overrightarrow{\prod_{t\in\mathbb{Z}}}\widetilde{Q}_{-2t,\nu}\Gamma_{\nu^\prime(2t)}(1)\widetilde{Q}_{-2t-1,\nu}\Gamma_{\nu^\prime(2t+1)}(1)\,\right|\,\emptyset\right\rangle,\\
	&&Z_{\mathfrak{RP}_{+}(\nu)}=\left\langle\emptyset\,\left|\; \overrightarrow{\prod_{t\in\mathbb{Z}}}\overline{Q}_{-2t,\nu}\Gamma_{\nu^\prime(2t)}^\prime(1)\overline{Q}_{-2t-1,\nu}\Gamma_{\nu^\prime(2t+1)}(1)\,\right|\,\emptyset\right\rangle.
	\een
	In general, the connection between 
	$Z_{\mathfrak{RP}_{+}(\nu)}$ and $V_{\emptyset\emptyset\nu}^{\mathbb{Z}_{2}\times\mathbb{Z}_{2}}$ follows from comparing $\overline{Q}_{k,\nu}$ with $\widetilde{Q}_{k,\nu}$ for each $k\in\mathbb{Z}$ and applying the relations $\Gamma_{\pm}(1)=\Gamma_{\pm}^\prime(1)E_{\pm}(1)$ in Lemma \ref{vertex-exchange1} using the vertex operator methods of Okounkov-Reshetikhin-Vafa and Bryan-Young .
\end{remark}
Next, we will focus on $Z_{\mathfrak{RP}_{+}(\nu)}$ and $Z_{\mathfrak{RP}_{-}(\nu)}$ below. 
Notice that $Z_{\mathfrak{RP}_{-}(\nu)}$ is a generating function with variables $q_0,q_b,q_c,q_a$ while $V^{\mathbb{Z}_4}_{\emptyset\emptyset\nu}$ is a generating function with variables $\tilde{q}_0, \tilde{q}_1, \tilde{q}_2, \tilde{q}_3$.  To reveal the relation between $Z_{\mathfrak{RP}_{-}(\nu)}$ and  $V^{\mathbb{Z}_4}_{\emptyset\emptyset\nu}$, taking into account the color of $k$-th diagonal slice of a pyramid partition (or a restricted pyramid configuration) in Lemma \ref{interlacing1},  it is natural and convenient to identify their variables by 
\ben
\tilde{q}_0=q_0,\;\; \tilde{q}_1=q_b, \;\;  \tilde{q}_2=q_c,\;\; \tilde{q}_3=q_a
\een
together with  identifying operators $Q_0, Q_1, Q_2, Q_3$ ($\{0,1,2,3\}=\mathbb{Z}_4$)  with operators $Q_0, Q_b, Q_c, Q_a$  ($\{0,b,c,a\}=\mathbb{Z}_2\times\mathbb{Z}_2$) respectively.
With this convention, we define the following weight operators 
\ben
\widehat{Q}_{k}=\begin{cases}
	Q_{0},\quad   \mbox{if $k\equiv0$ (mod 4)}; \\
	Q_{b},\quad  \mbox{if $k\equiv1$ (mod 4)}; \\
	Q_{c},\quad  \mbox{if $k\equiv2$ (mod 4)}; \\
	Q_{a},\quad  \mbox{if $k\equiv3$ (mod 4)}.
\end{cases}
\een
for counting the $\mathbb{Z}_4$-colored 3D Young diagrams. It is shown in [\cite{BCY}, Proposition 8] that the 1-leg DT $\mathbb{Z}_{4}$-vertex has the following vertex operator product expression:
\bea\label{Z4-coloring}
V^{\mathbb{Z}_4}_{\emptyset\emptyset\nu}(q_0,q_b,q_c,q_a)&=&\left\langle\emptyset\,\left|\; \overrightarrow{\prod_{t\in\mathbb{Z}}}\widehat{Q}_{-t}\Gamma_{\nu^\prime(t)}(1)\,\right|\,\emptyset\right\rangle\\
&=&\left\langle\emptyset\,\bigg|\, \overrightarrow{\prod_{t\in\mathbb{Z}}}Q_{0}\Gamma_{\nu^\prime(4t)}(1)Q_{a}\Gamma_{\nu^\prime(4t+1)}(1)Q_{c}\Gamma_{\nu^\prime(4t+2)}(1)Q_{b}\Gamma_{\nu^\prime(4t+3)}(1)\,\bigg|\,\emptyset\right\rangle.\nonumber
\eea
\begin{remark}
As in Remark \ref{general-Z2Z2-RPC+}, by Lemma \ref{vertex-expressions-RPC+-} and Equation \eqref{Z4-coloring}, one can connect $Z_{\mathfrak{RP}_{-}(\nu)}$ with $V_{\emptyset\emptyset\nu}^{\mathbb{Z}_{4}}$ by applying relations $\Gamma_{\pm}(1)=\Gamma_{\pm}^\prime(1)E_{\pm}(1)$ in Lemma \ref{vertex-exchange1}.
\end{remark}

The following result  establishes the connection between $V_{\emptyset\emptyset\nu}^{\mathbb{Z}_{2}\times\mathbb{Z}_{2}}$ and $V_{\emptyset\emptyset\nu}^{\mathbb{Z}_{4}}$ in some special cases.
\begin{lemma}\label{RPC+RPC-}
	Assume  $\nu=(m,m-1,\cdots,2,1)$ with $m\in\mathbb{Z}_{\geq1}$. Then
	\ben
	Z_{\mathfrak{RP}_{+}(\nu)}=Z_{\mathfrak{RP}_{-}(\nu)}.
	\een
\end{lemma}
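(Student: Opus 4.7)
The plan is to deduce this lemma as an essentially immediate consequence of Lemma \ref{symmetric-interlacing}. Since both generating functions are, by Definition \ref{GF-RPC}, sums of the form
\[
\sum_{\bm{\wp}}\prod_{g\in\mathbb{Z}_2\times\mathbb{Z}_2}q_g^{|\bm{\wp}|_g},
\]
and the summand $\prod_g q_g^{|\bm{\wp}|_g}$ is determined intrinsically by $\bm{\wp}$ as a subset of $\mathfrak{B}$ (via the color map $K_{\mathfrak{P}}$ of Definition \ref{color-brick-RPC}), the only thing that can make the two totals differ is a difference between the index sets $\mathfrak{RP}_{+}(\nu)$ and $\mathfrak{RP}_{-}(\nu)$.

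First I would invoke Lemma \ref{symmetric-interlacing}, which gives $\widetilde{\bm{\wp}}_{\pi}(\nu)=\bm{\wp}_{\pi}(\nu)$ for every $\pi\in\mathfrak{P}$ under the hypothesis $\nu=(m,m-1,\cdots,2,1)$. By Definition \ref{Two classes of RPC}, this implies $\mathfrak{RP}_{+}(\nu)=\mathfrak{RP}_{-}(\nu)$ as subsets of the power set of $\mathfrak{B}$, since each class is defined precisely as $\{\widetilde{\bm{\wp}}_{\pi}(\nu)\,|\,\pi\in\mathfrak{P}\}$ versus $\{\bm{\wp}_{\pi}(\nu)\,|\,\pi\in\mathfrak{P}\}$. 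Note that the fact that the map $\pi\mapsto\widetilde{\bm{\wp}}_{\pi}(\nu)$ is not injective (cf.\ Remark \ref{empty1}) is irrelevant here, since we only care about the image.

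Once this set equality is in hand, the identity of generating functions follows at once: the two sums range over the same collection of restricted pyramid configurations with the same brick-color weighting, so they agree term by term. There is no genuine obstacle in this step; the substantive combinatorial content has already been placed in Lemmas \ref{even-symmetry}, \ref{odd-symmetry}, \ref{even-odd-symmetry}, and \ref{symmetric-interlacing}. If one preferred a vertex-operator derivation instead, one could compare the two product expressions of Lemma \ref{vertex-expressions-RPC+-} after reconciling the weight operators $\overline{Q}_{k,\nu}$ on the antidiagonal side with the $Q_0,Q_a,Q_c,Q_b$ pattern on the diagonal side for the staircase partition $\nu$, using the definitions of $\mho(k,\nu)$ and $\varepsilon_i(\nu;\cdot)$ together with the relations in \eqref{odd-symmetry1}--\eqref{even-symmetry1}; but this route recovers the same conclusion with more labor, and the direct set-theoretic argument above is cleaner.
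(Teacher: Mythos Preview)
Your proposal is correct and matches the paper's own proof, which simply cites Definition \ref{GF-RPC} and Lemma \ref{symmetric-interlacing}. Your additional observations (that the weight depends only on $\bm{\wp}$ as a colored set, and that non-injectivity of $\pi\mapsto\widetilde{\bm{\wp}}_{\pi}(\nu)$ is irrelevant) are valid elaborations of what the paper leaves implicit.
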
	
\begin{proof}
	It follows from Definition \ref{GF-RPC} and Lemma \ref{symmetric-interlacing}.
\end{proof}
\begin{remark}\label{RPC+---RPC-}
If $\nu\neq\emptyset$ and $\nu\neq(m,m-1,\cdots,2,1)$ for any $m\geq1$, then by Proposition \ref{unique symmetric interlacing} we have 
\ben
\bigcup_{k\in\mathbb{Z}}\left(\widetilde{\mathcal{S}}_{k}(\nu)\right)^c\neq\bigcup_{k\in\mathbb{Z}}\left(\mathcal{S}_{k}(\nu)\right)^c.
\een
Then there exists a brick $[B]$ such that $[B]\in\bigcup\limits_{k\in\mathbb{Z}}\left(\widetilde{\mathcal{S}}_{k}(\nu)\right)^c$ and $[B]\notin\bigcup\limits_{k\in\mathbb{Z}}\left(\mathcal{S}_{k}(\nu)\right)^c$, or $[B]\in\bigcup\limits_{k\in\mathbb{Z}}\left(\mathcal{S}_{k}(\nu)\right)^c$ and $[B]\notin\bigcup\limits_{k\in\mathbb{Z}}\left(\widetilde{\mathcal{S}}_{k}(\nu)\right)^c$. In the first case, for any pyramid partition $\pi$ containing $[B]$, one can show $\bm{\wp}_{\pi}(\nu)\in\mathfrak{RP}_{-}(\nu)$ and $\bm{\wp}_{\pi}(\nu)\notin\mathfrak{RP}_{+}(\nu)$ since $[B]\in\bm{\wp}_{\pi}(\nu)$ but any element in $\mathfrak{RP}_{+}(\nu)$ does not contain $[B]$. The second case is similar. Then $\mathfrak{RP}_{+}(\nu)\neq\mathfrak{RP}_{-}(\nu)$.
In such a case, I have no idea how  to establish the connection between  $Z_{\mathfrak{RP}_{+}(\nu)}$ and  $Z_{\mathfrak{RP}_{-}(\nu)}$.
\end{remark}	
In the following, we will concentrate on the 1-leg partitions $\nu=(m,m-1,\cdots,2,1)$ with $m\in\mathbb{Z}_{\geq1}$. 
To compare $\overline{Q}_{k,\nu}$ with $\widetilde{Q}_{k,\nu}$ for all $k\in\mathbb{Z}$, we start with the following
\begin{lemma}\label{operator-relation}
	Assume  $\nu=(m,m-1,\cdots,2,1)$ with $m\in\mathbb{Z}_{\geq1}$.
	Then
	\ben
	\mho(k,\nu)-|\nu|_k \equiv\begin{cases}
		0 \mbox{ (mod 2)},\quad\quad\;\;  \mbox{if $m\equiv0$ (mod 4) or  $m\equiv3$ (mod 4)}; \\
		1 \mbox{ (mod 2)},\quad \quad\;\;  \mbox{if $m\equiv1$ (mod 4) or  $m\equiv2$ (mod 4)}.
	\end{cases}
	\een
\end{lemma}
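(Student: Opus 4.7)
The plan is to show the stronger quantitative statement that $\mho(k,\nu) - |\nu|_k$ is the \emph{same} integer $\rho := \lceil m/2\rceil$ for every $k \in \mathbb{Z}$, after which the parity dichotomy is immediate: $\lceil m/2\rceil$ is even exactly when $m \equiv 0$ or $3 \pmod 4$ and odd exactly when $m \equiv 1$ or $2 \pmod 4$.

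First I would record the exact edge sequence of the staircase using what has already been computed inside the proof of Proposition \ref{unique symmetric interlacing} (equations \eqref{odd-symmetry1} and \eqref{even-symmetry1}). This splits the argument into two parallel cases. When $m = 2\rho-1$ is odd, one has $\varepsilon_2(\nu;\cdot) \equiv \varepsilon_3(\nu;\cdot) \equiv 0$ while $\varepsilon_1(\nu;\imath)$ ramps up by $1$ on $0 \leq \imath \leq \rho-1$ and saturates at $\rho$, and $\varepsilon_4(\nu;\jmath)$ does the same on $1 \leq \jmath \leq \rho$. When $m = 2\rho$ is even the roles of $(\varepsilon_1,\varepsilon_4)$ and $(\varepsilon_2,\varepsilon_3)$ are swapped. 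In both cases $\rho_1(\nu) = \rho_2(\nu) = \rho$.

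Next I would substitute these explicit $\varepsilon_i$ into the four-case definition of $\mho(k,\nu)$ and tabulate the result. A direct computation shows that $\mho(k,\nu)$ depends only on $|k|$ and decreases by one unit every time $|k|$ crosses the boundary between one of the pairs $\{\pm(2r-1), \pm 2r\}$ (for odd $m$) or $\{0,\pm 1\}$ and $\{\pm(2r), \pm(2r+1)\}$ (for even $m$), eventually saturating at $\rho$ once $|k|$ is large enough. In parallel I would compute $|\nu|_k$ directly: the staircase is self-conjugate, so $|\nu|_k = |\nu|_{-k}$, and counting boxes $(i,i-k)$ with $i-k < m-i$ gives $|\nu|_k = \lceil (m-|k|)/2 \rceil$ for $|k| \leq m$ and $0$ otherwise. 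This $|\nu|_k$ is constant on \emph{exactly} the same pairs $\{\pm(2r-1), \pm 2r\}$ or $\{0,\pm 1\}, \{\pm 2r, \pm(2r+1)\}$ as $\mho(k,\nu)$, taking the value $\rho - r$ where $\mho$ takes $2\rho - r$ (and $0$ where $\mho$ takes $\rho$). Subtracting in each sub-case yields $\mho(k,\nu) - |\nu|_k = \rho$ uniformly.

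The hard part is purely combinatorial bookkeeping: making sure the pairing of consecutive $k$-values on which $\mho$ is constant lines up correctly with the pairing on which $|\nu|_k$ is constant, particularly at the transition indices $k=0, \pm 1$ and $k = \pm(2\rho-1), \pm 2\rho$ where the piecewise formulas switch regimes, and keeping track of the offset by one between the odd and even $m$ patterns. Once the tables are written down side by side the identity is a term-by-term check, and the lemma follows by reducing $\rho \pmod 2$.
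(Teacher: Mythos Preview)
Your proposal is correct and follows essentially the same approach as the paper: the paper computes closed forms for $|\nu|_k$ and $\mho(k,\nu)$ (its equations \eqref{v-color1} and \eqref{v-color2}) which, upon subtraction, give exactly your constant $\mho(k,\nu)-|\nu|_k=\lceil m/2\rceil$ for every $k$, and then reduces modulo~$2$. Your route to $\mho$ via the $\varepsilon_i$ extracted from \eqref{odd-symmetry1}--\eqref{even-symmetry1} is just an equivalent way of reaching those same tables.
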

\begin{proof}
	Since $\nu=(m,m-1,\cdots,2,1)$, it follows from the direct computation that
	\bea\label{v-color1}
	|\nu|_k=\begin{cases}
		\displaystyle\frac{m}{2}-\left\lfloor\frac{|k|}{2}\right\rfloor,\quad\quad\quad\quad \;\quad   \mbox{if $m\equiv0$ (mod 2) and $-m\leq k\leq m$}; \\
		\;0, \quad\quad\quad\quad\quad\quad\quad\quad\quad\;\; \mbox{if $|k|>m$}; \\
		\displaystyle\frac{m+1}{2}-\left\lfloor\frac{1+|k|}{2}\right\rfloor,\quad\quad   \mbox{if $m\equiv1$ (mod 2) and $-m\leq k\leq m$}.
	\end{cases}
	\eea
	and 
	\bea\label{v-color2}
	\mho(k,\nu)=\begin{cases}
		\displaystyle m-\left\lfloor\frac{|k|}{2}\right\rfloor,\quad\quad\quad\quad \;\quad \; \; \mbox{if $m\equiv0$ (mod 2) and $-m\leq k\leq m$}; \\
		\displaystyle\frac{m}{2}, \quad\quad\quad\quad\quad\quad\quad\quad\quad\;\; \mbox{if $m\equiv0$ (mod 2) and $|k|>m$}; \\
		\displaystyle m+1-\left\lfloor\frac{1+|k|}{2}\right\rfloor,\quad\quad \; \, \mbox{if $m\equiv1$ (mod 2) and $-m\leq k\leq m$};\\
		\displaystyle\frac{m+1}{2}, \quad\quad\quad\quad\quad\quad\quad\quad \mbox{if $m\equiv1$ (mod 2) and $|k|>m$}. \\
	\end{cases}
	\eea
	Now the proof follows from \eqref{v-color1} and \eqref{v-color2}.
\end{proof}

Actually we have
\begin{lemma}\label{weight-operator1}
Assume  $\nu=(m,m-1,\cdots,2,1)$ with $m\in\mathbb{Z}_{\geq1}$.
Then we have
\ben
&&\mbox{(i) if  $m\equiv0$  (mod 4) and $-m\leq k\leq m$, }\;\;\;\;
\widetilde{Q}_{k,\nu}=\begin{cases}
	Q_{0c},\quad  \mbox{if $k\equiv0$ (mod 4)}; \\
	Q_{ab},\quad  \mbox{if $k\equiv1$ (mod 4)}; \\
	Q_{c0},\quad  \mbox{if $k\equiv2$ (mod 4)}; \\
	Q_{ba},\quad  \mbox{if $k\equiv3$ (mod 4)}.
\end{cases}\\
&&\mbox{(ii) if  $m\equiv1$  (mod 4) and $-m\leq k\leq m$, }\;\;\;
\widetilde{Q}_{k,\nu}=\begin{cases}
	Q_{c0},\quad  \mbox{if $k\equiv0$ (mod 4)}; \\
	Q_{ab},\quad  \mbox{if $k\equiv1$ (mod 4)}; \\
	Q_{0c},\quad  \mbox{if $k\equiv2$ (mod 4)}; \\
	Q_{ba},\quad  \mbox{if $k\equiv3$ (mod 4)}.
\end{cases}\\
&&\mbox{(iii) if  $m\equiv2$  (mod 4) and $-m\leq k\leq m$, }\;\;
\widetilde{Q}_{k,\nu}=\begin{cases}
	Q_{c0},\quad  \mbox{if $k\equiv0$ (mod 4)}; \\
	Q_{ba},\quad  \mbox{if $k\equiv1$ (mod 4)}; \\
	Q_{0c},\quad  \mbox{if $k\equiv2$ (mod 4)}; \\
	Q_{ab},\quad  \mbox{if $k\equiv3$ (mod 4)}.
\end{cases}\\
&&\mbox{(iv) if  $m\equiv3$  (mod 4) and $-m\leq k\leq m$, }\;\;
\widetilde{Q}_{k,\nu}=\begin{cases}
	Q_{0c},\quad  \mbox{if $k\equiv0$ (mod 4)}; \\
	Q_{ba},\quad  \mbox{if $k\equiv1$ (mod 4)}; \\
	Q_{c0},\quad  \mbox{if $k\equiv2$ (mod 4)}; \\
	Q_{ab},\quad  \mbox{if $k\equiv3$ (mod 4)}.
\end{cases}
\een
and 
\ben
&&\mbox{(v) if  $k> m$, }\;\;\;\;\;\,
\widetilde{Q}_{k,\nu}=\begin{cases}
	Q_{0c},\quad  \mbox{if $k\equiv0$ (mod 2)}; \\
	Q_{ab},\quad  \mbox{if $k\equiv1$ (mod 2)}.
\end{cases}\\
&&\mbox{(vi) if  $k<-m$, }\;\;
\widetilde{Q}_{k,\nu}=\begin{cases}
	Q_{0c},\quad  \mbox{if $k\equiv0$ (mod 2)}; \\
	Q_{ba},\quad  \mbox{if $k\equiv1$ (mod 2)}.
\end{cases}
\een
\end{lemma}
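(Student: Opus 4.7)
The plan is to prove Lemma \ref{weight-operator1} by direct case analysis, unpacking the definition \eqref{Z2Z2coloring} of $\widetilde{Q}_{k,\nu}$ using the explicit formula for $|\nu|_k$ that is already available from the proof of Lemma \ref{operator-relation}. Recall from \eqref{v-color1} that for $\nu = (m, m-1, \ldots, 2, 1)$ we have $|\nu|_k = 0$ whenever $|k| > m$, while for $-m \leq k \leq m$ one has $|\nu|_k = m/2 - \lfloor |k|/2 \rfloor$ if $m$ is even and $|\nu|_k = (m+1)/2 - \lfloor (1+|k|)/2 \rfloor$ if $m$ is odd. Since $\widetilde{Q}_{k,\nu}$ depends only on the parity of $k$, the sign of $k$ (when $k$ is odd), and the parity of $|\nu|_k$, the lemma will follow from tabulating the parity of $|\nu|_k$ in each residue class of $k$ modulo $4$, for each residue class of $m$ modulo $4$.

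First, I would handle the range $-m \leq k \leq m$. For each fixed residue of $m \pmod 4$, the parity of $m/2$ (when $m$ is even) or $(m+1)/2$ (when $m$ is odd) is determined, and the parity of $\lfloor |k|/2 \rfloor$ or $\lfloor (1+|k|)/2 \rfloor$ is determined by the residue of $k \pmod 4$ together with the sign of $k$. For instance, if $k \equiv 1 \pmod 4$ and $k > 0$, then $\lfloor k/2 \rfloor = (k-1)/2$ is even, whereas if $k \equiv 1 \pmod 4$ and $k < 0$, then $|k| \equiv 3 \pmod 4$ and $\lfloor |k|/2 \rfloor = (|k|-1)/2$ is odd; combined with the switch between the ``$k>0$'' and ``$k<0$'' branches in \eqref{Z2Z2coloring}, these two effects cancel, producing the same weight operator in both subcases, as asserted in (i)--(iv). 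One repeats this bookkeeping for the sixteen combinations of $m \bmod 4$ and $k \bmod 4$ in cases (i)--(iv), and a similar but shorter analysis for $m$ odd versus $m$ even.

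Next, I would dispose of the tail ranges $k > m$ and $k < -m$. In both ranges $|\nu|_k = 0$ is even, so \eqref{Z2Z2coloring} collapses: even $k$ gives $Q_{0c}$ regardless of sign, while odd $k$ gives $Q_{ab}$ when $k > 0$ and $Q_{ba}$ when $k < 0$, which is precisely the content of (v) and (vi).

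The argument involves no conceptual obstacle; it is purely a parity bookkeeping exercise. The one place to be careful is the asymmetric treatment of $k>0$ and $k<0$ in the odd-$k$ branches of \eqref{Z2Z2coloring}: the two floor functions $\lfloor |k|/2 \rfloor$ and the switch between the ``$k>0$'' and ``$k<0$'' clauses shift parities in tandem, and it must be verified that the combined effect reproduces the clean formulas in (i)--(iv). Once this is observed, the verification reduces to filling in a $4 \times 4$ table of parities in each of the four cases $m \bmod 4$, with the tail cases (v)--(vi) handled separately by the vanishing of $|\nu|_k$.
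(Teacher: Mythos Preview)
Your proposal is correct and follows exactly the approach the paper takes: the paper's proof consists of the single sentence ``It follows from \eqref{Z2Z2coloring} together with \eqref{v-color1} by the direct computation,'' and you have simply spelled out how that direct computation proceeds. Your observation that, for odd $k$, the parity shift in $\lfloor |k|/2\rfloor$ between $k>0$ and $k<0$ exactly compensates for the switch between the $k>0$ and $k<0$ clauses in \eqref{Z2Z2coloring} is the only nontrivial bookkeeping point, and it is handled correctly.
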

\begin{proof}
It follows from \eqref{Z2Z2coloring} together with \eqref{v-color1} by the direct computation.
\end{proof}

\begin{corollary}\label{weight-operator2}
Assume  $\nu=(m,m-1,\cdots,2,1)$ with $m\in\mathbb{Z}_{\geq1}$.
Then for any $k\in\mathbb{Z}$, we have
\ben
\overline{Q}_{k,\nu}=
\begin{cases}\medskip 
	\widetilde{Q}_{k,\nu}\bigg|_{a\leftrightarrow b},\quad  \mbox{if $m\equiv0$  (mod 4)}; \\ \medskip 
	\widetilde{Q}_{k,\nu}\bigg|_{0\leftrightarrow c},\quad  \mbox{if $m\equiv1$ (mod 4)}; \\ \medskip
	\widetilde{Q}_{k,\nu}\bigg|_{0\leftrightarrow c},\quad  \mbox{if $m\equiv2$ (mod 4)}; \\ \medskip
	\widetilde{Q}_{k,\nu}\bigg|_{a\leftrightarrow b},\quad  \mbox{if $m\equiv3$ (mod 4)}.
\end{cases}
\een
where $\widetilde{Q}_{k,\nu}\bigg|_{a\leftrightarrow b}$ denotes  the exchange of  $a$ and $b$ in the expression of $\widetilde{Q}_{k,\nu}$.
	
\end{corollary}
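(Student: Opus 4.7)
The plan is to read the conclusion straight off Lemma \ref{operator-relation} via a case analysis on the parity of $k$, using the observation that the two permutations in the statement act complementarily on the four weight operators that occur: $a \leftrightarrow b$ swaps $Q_{ab}$ with $Q_{ba}$ while fixing $Q_{0c}, Q_{c0}$, whereas $0 \leftrightarrow c$ swaps $Q_{0c}$ with $Q_{c0}$ while fixing $Q_{ab}, Q_{ba}$. The task therefore reduces to comparing the assignment rules in \eqref{Z2Z2coloring} and \eqref{RPC+coloring} and tracking when they agree versus when they differ.

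First I will set up the structural comparison. For $k$ even, both $\widetilde{Q}_{k,\nu}$ and $\overline{Q}_{k,\nu}$ take values in $\{Q_{0c}, Q_{c0}\}$, and in each case the rule is identical in form: even parity yields $Q_{0c}$, odd parity yields $Q_{c0}$, applied to $|\nu|_k$ in one definition and to $\mho(k,\nu)$ in the other. For $k$ odd, both operators take values in $\{Q_{ab}, Q_{ba}\}$, but inspection of \eqref{Z2Z2coloring} and \eqref{RPC+coloring} shows that the assignment rules are mirror images: e.g.\ for $k > 0$ odd, $\widetilde{Q}_{k,\nu} = Q_{ab}$ iff $|\nu|_k$ is even, whereas $\overline{Q}_{k,\nu} = Q_{ab}$ iff $\mho(k,\nu)$ is odd, and the $k < 0$ subcase is analogous with the parity role interchanged. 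Consequently: if $\mho(k,\nu) \equiv |\nu|_k \pmod 2$, one reads off $\overline{Q}_{k,\nu} = \widetilde{Q}_{k,\nu}$ for even $k$ and $\overline{Q}_{k,\nu} = \widetilde{Q}_{k,\nu}\big|_{a \leftrightarrow b}$ for odd $k$; if instead $\mho(k,\nu) \not\equiv |\nu|_k \pmod 2$, the symmetric conclusion holds, namely $\overline{Q}_{k,\nu} = \widetilde{Q}_{k,\nu}\big|_{0 \leftrightarrow c}$ for even $k$ and $\overline{Q}_{k,\nu} = \widetilde{Q}_{k,\nu}$ for odd $k$.

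Next I will invoke Lemma \ref{operator-relation} to distribute these two regimes according to $m \bmod 4$: the parities of $\mho(k,\nu)$ and $|\nu|_k$ agree precisely when $m \equiv 0, 3 \pmod 4$ and disagree precisely when $m \equiv 1, 2 \pmod 4$. In the first regime, the even-$k$ and odd-$k$ identities combine into the single uniform statement $\overline{Q}_{k,\nu} = \widetilde{Q}_{k,\nu}\big|_{a \leftrightarrow b}$, since the swap $a \leftrightarrow b$ is trivial on the even-$k$ operators; in the second regime they collapse symmetrically into $\overline{Q}_{k,\nu} = \widetilde{Q}_{k,\nu}\big|_{0 \leftrightarrow c}$.

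I do not expect any real obstacle — the corollary is essentially a bookkeeping consequence once Lemma \ref{operator-relation} is in hand. The only care needed is to confirm that the sign-of-$k$ subcases in the odd case fit together consistently, and that the $|k| > m$ regime (where $|\nu|_k = 0$ and $\mho(k,\nu)$ stabilizes to $\lfloor (m+1)/2 \rfloor$ by \eqref{v-color2}) also obeys the same parity relation, which follows from the same Lemma \ref{operator-relation}. All of this is covered by the preparatory computations already carried out in Lemmas \ref{operator-relation} and \ref{weight-operator1}.
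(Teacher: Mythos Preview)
Your argument is correct and follows essentially the same approach as the paper: compare the defining rules \eqref{Z2Z2coloring} and \eqref{RPC+coloring} and invoke the parity relation of Lemma \ref{operator-relation} to decide which swap applies. If anything, your version is slightly cleaner in that it works directly from the parity comparison rather than routing through the explicit case lists of Lemma \ref{weight-operator1}.
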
	
\begin{proof}
It follows from Lemma \ref{operator-relation} and Lemma \ref{weight-operator1} by comparing \eqref{Z2Z2coloring} with \eqref{RPC+coloring}.
\end{proof}

Next, we will present the vertex operator product expressions for $V_{\emptyset\emptyset\nu}^{\mathbb{Z}_{2}\times\mathbb{Z}_{2}}
$, $Z_{\mathfrak{RP}_{+}(\nu)}$, $Z_{\mathfrak{RP}_{-}(\nu)}$, and $V^{\mathbb{Z}_4}_{\emptyset\emptyset\nu}$ when  $\nu=(m,m-1,\cdots,2,1)$ with $m\in\mathbb{Z}_{\geq1}$ as follows.
\begin{lemma}
Assume  $\nu=(m,m-1,\cdots,2,1)$ with $m\in\mathbb{Z}_{\geq1}$.
Then we have\\
(i) if $m\equiv0\; \mbox{(mod 2)}$, 
\bea\label{Z2Z2-0}
V_{\emptyset\emptyset\nu}^{\mathbb{Z}_{2}\times\mathbb{Z}_{2}}
&=&
\bigg\langle\emptyset\,\bigg|\, \prod_{i=1}^{\infty}Q_{0c}\Gamma_{+}(1)Q_{ab}\Gamma_{+}(1)\cdot\prod_{i=1}^{\frac{m}{2}}Q_{0c}\Gamma_{-}(1)Q_{ba}\Gamma_{+}(1)Q_{c0}\Gamma_{-}(1)Q_{ab}\Gamma_{+}(1)\\
&&\quad\cdot\prod_{i=1}^{\infty}Q_{0c}\Gamma_{-}(1)Q_{ba}\Gamma_{-}(1)\, \bigg|\,\emptyset\bigg\rangle\nonumber,
\eea
(ii) if $m\equiv1\; \mbox{(mod 2)}$, 
\bea\label{Z2Z2-1}
V_{\emptyset\emptyset\nu}^{\mathbb{Z}_{2}\times\mathbb{Z}_{2}}
&=&
\bigg\langle\emptyset\,\bigg|\, \prod_{i=1}^{\infty}Q_{ab}\Gamma_{+}(1)Q_{0c}\Gamma_{+}(1)\cdot\left(\prod_{i=1}^{\frac{m-1}{2}}Q_{ab}\Gamma_{-}(1)Q_{c0}\Gamma_{+}(1)Q_{ba}\Gamma_{-}(1)Q_{0c}\Gamma_{+}(1)\right)\\
&&\quad\cdot Q_{ab}\Gamma_{-}(1)Q_{c0}\Gamma_{+}(1)
\cdot\prod_{i=1}^{\infty}Q_{ba}\Gamma_{-}(1)Q_{0c}\Gamma_{-}(1)\, \bigg|\,\emptyset\bigg\rangle\nonumber.
\eea
\end{lemma}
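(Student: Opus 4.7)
The plan is to specialize the general vertex operator expression
$$V_{\emptyset\emptyset\nu}^{\mathbb{Z}_2\times\mathbb{Z}_2}=\Bigl\langle\emptyset\,\Big|\,\overrightarrow{\prod_{t\in\mathbb{Z}}}\widetilde{Q}_{-t,\nu}\Gamma_{\nu^\prime(t)}(1)\,\Big|\,\emptyset\Bigr\rangle$$
of Lemma \ref{general-Z2Z2-vertex expression} to the staircase $\nu=(m,m-1,\ldots,2,1)$, and then to reorganize the biinfinite product into the three pieces displayed in \eqref{Z2Z2-0} and \eqref{Z2Z2-1}.

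First I would exploit that the staircase is self-conjugate, so $\nu^\prime=\nu$, and compute $S(\nu)=\{\nu_j-j-1:j\geq 0\}$ directly, which gives
$$S(\nu)=\{m-1,m-3,\ldots,-m+1\}\cup\{-m-1,-m-2,-m-3,\ldots\}.$$
Hence $\nu^\prime(t)=+1$ on the left tail $t\leq -m-1$ and on the body indices $-m+1\leq t\leq m-1$ whose parity is opposite to that of $m$, while $\nu^\prime(t)=-1$ at $t=\pm m$, on the right tail $t\geq m+1$, and on the body indices of the same parity as $m$. In particular, inside the body the choice between $\Gamma_+$ and $\Gamma_-$ strictly alternates, and it stabilizes to $\Gamma_+$ on the far left and to $\Gamma_-$ on the far right.

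Next I would combine formula \eqref{v-color1} for $|\nu|_k$ with Lemma \ref{weight-operator1} to tabulate $\widetilde{Q}_{-t,\nu}$ for every $t\in\mathbb{Z}$. Subcases (i)--(iv) of Lemma \ref{weight-operator1}, governed by $m\bmod 4$, produce a period-four pattern of weight operators on the body range $-m\leq t\leq m-1$; subcases (v)--(vi) give period-two patterns $(Q_{0c},Q_{ab})$ on the right tail $t\geq m+1$ and $(Q_{0c},Q_{ba})$ on the left tail $t\leq -m-1$, with the boundary points $t=\pm m$ still controlled by the body rule.

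Splitting the biinfinite product into left tail, body, and right tail according to this tabulation, a direct term-by-term check shows that the left tail collapses to $\prod_{i\geq 1}Q_{0c}\Gamma_+(1)Q_{ab}\Gamma_+(1)$ regardless of the parity of $m$; the right tail collapses to $\prod_{i\geq 1}Q_{0c}\Gamma_-(1)Q_{ba}\Gamma_-(1)$ when $m$ is even and to $\prod_{i\geq 1}Q_{ba}\Gamma_-(1)Q_{0c}\Gamma_-(1)$ when $m$ is odd; and the body contributes either $m/2$ copies of the four-operator block $Q_{0c}\Gamma_-(1)Q_{ba}\Gamma_+(1)Q_{c0}\Gamma_-(1)Q_{ab}\Gamma_+(1)$ in case (i), or $(m-1)/2$ copies of $Q_{ab}\Gamma_-(1)Q_{c0}\Gamma_+(1)Q_{ba}\Gamma_-(1)Q_{0c}\Gamma_+(1)$ followed by one leftover pair $Q_{ab}\Gamma_-(1)Q_{c0}\Gamma_+(1)$ in case (ii). The two residues of $m$ modulo $4$ within each parity class happen to produce the same aggregated pattern (since the shift of the starting residue of $-t$ by $2$ is exactly compensated by the relabelling of subcases in Lemma \ref{weight-operator1}), so only the parity of $m$ survives in the final formula. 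The main obstacle is purely bookkeeping: one must simultaneously track parities of $t$ and $-t$, the residue $m\bmod 4$, and the matching at the boundary values $t=\pm m$ where subcases (v)--(vi) meet the body rule; no genuinely new ingredient beyond careful case analysis is needed.
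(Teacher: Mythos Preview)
Your proposal is correct and follows essentially the same approach as the paper: both start from the general expression in Lemma~\ref{general-Z2Z2-vertex expression}, determine the edge sequence $\nu'(t)$ for the staircase (you compute it directly, the paper cites the equivalences in \eqref{odd-symmetry1} and \eqref{even-symmetry1}), and then invoke Lemma~\ref{weight-operator1} to read off the weight operators $\widetilde{Q}_{-t,\nu}$ and assemble the three pieces. Your remark that the two residues of $m$ modulo~$4$ within each parity class collapse to the same pattern is exactly the observation that makes the final answer depend only on $m\bmod 2$.
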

\begin{proof}
It follows from Lemma \ref{general-Z2Z2-vertex expression}, Lemma \ref{weight-operator1}, together with the two latter equivalences in Equation \eqref{odd-symmetry1} and Equation \eqref{even-symmetry1}.
\end{proof}
\begin{lemma}
	Assume  $\nu=(m,m-1,\cdots,2,1)$ with $m\in\mathbb{Z}_{\geq1}$.
	Then we have\\
	(i) if $m\equiv0\; \mbox{(mod 4)}$, 
	\bea
	\label{RPC+0}
	Z_{\mathfrak{RP}_{+}(\nu)}
	&=&
	\bigg\langle\emptyset\,\bigg|\, \prod_{i=1}^{\infty}Q_{0c}\Gamma_{+}^\prime(1)Q_{ba}\Gamma_{+}(1)\cdot\prod_{i=1}^{\frac{m}{2}}Q_{0c}\Gamma_{-}^\prime(1)Q_{ab}\Gamma_{+}(1)Q_{c0}\Gamma_{-}^\prime(1)Q_{ba}\Gamma_{+}(1)\\
	&&\quad\cdot\prod_{i=1}^{\infty}Q_{0c}\Gamma_{-}^\prime(1)Q_{ab}\Gamma_{-}(1)\, \bigg|\,\emptyset\bigg\rangle\nonumber,
	\eea
	(ii) if $m\equiv1\; \mbox{(mod 4)}$, 
	\bea
	\label{RPC+1}
	Z_{\mathfrak{RP}_{+}(\nu)}
	&=&
	\bigg\langle\emptyset\,\bigg|\, \prod_{i=1}^{\infty}Q_{ab}\Gamma_{+}(1)Q_{c0}\Gamma_{+}^\prime(1)\cdot\left(\prod_{i=1}^{\frac{m-1}{2}}Q_{ab}\Gamma_{-}(1)Q_{0c}\Gamma_{+}^\prime(1)Q_{ba}\Gamma_{-}(1)Q_{c0}\Gamma_{+}^\prime(1)\right)\\
	&&\quad\cdot Q_{ab}\Gamma_{-}(1)Q_{0c}\Gamma_{+}^\prime(1)\cdot\prod_{i=1}^{\infty}Q_{ba}\Gamma_{-}(1)Q_{c0}\Gamma_{-}^\prime(1)\, \bigg|\,\emptyset\bigg\rangle\nonumber,
	\eea
	(iii) if $m\equiv2\; \mbox{(mod 4)}$, 
	\ben
	Z_{\mathfrak{RP}_{+}(\nu)}
	&=&
	\bigg\langle\emptyset\,\bigg|\, \prod_{i=1}^{\infty}Q_{c0}\Gamma_{+}^\prime(1)Q_{ab}\Gamma_{+}(1)\cdot\prod_{i=1}^{\frac{m}{2}}Q_{c0}\Gamma_{-}^\prime(1)Q_{ba}\Gamma_{+}(1)Q_{0c}\Gamma_{-}^\prime(1)Q_{ab}\Gamma_{+}(1)\\
	&&\quad\cdot\prod_{i=1}^{\infty}Q_{c0}\Gamma_{-}^\prime(1)Q_{ba}\Gamma_{-}(1)\, \bigg|\,\emptyset\bigg\rangle,
	\een
	(iv) if $m\equiv3\; \mbox{(mod 4)}$, 
	\ben
	Z_{\mathfrak{RP}_{+}(\nu)}
	&=&
	\bigg\langle\emptyset\,\bigg|\, \prod_{i=1}^{\infty}Q_{ba}\Gamma_{+}(1)Q_{0c}\Gamma_{+}^\prime(1)\cdot\left(\prod_{i=1}^{\frac{m-1}{2}}Q_{ba}\Gamma_{-}(1)Q_{c0}\Gamma_{+}^\prime(1)Q_{ab}\Gamma_{-}(1)Q_{0c}\Gamma_{+}^\prime(1)\right)\\
	&&\quad\cdot Q_{ba}\Gamma_{-}(1)Q_{c0}\Gamma_{+}^\prime(1)\cdot\prod_{i=1}^{\infty}Q_{ab}\Gamma_{-}(1)Q_{0c}\Gamma_{-}^\prime(1)\, \bigg|\,\emptyset\bigg\rangle.
	\een
\end{lemma}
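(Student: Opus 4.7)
The plan is to specialize the general vertex operator product expression for $Z_{\mathfrak{RP}_{+}(\nu)}$ given in Lemma \ref{vertex-expressions-RPC+-} to the staircase partition $\nu=(m,m-1,\ldots,2,1)$ and then read off the four cases directly.

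First I would record the explicit edge sequence of $\nu^\prime$. By the two chains of equivalences in \eqref{odd-symmetry1} (for $m=2\rho-1$) and \eqref{even-symmetry1} (for $m=2\rho$), with $\rho=\rho(\nu)=\lfloor (m+1)/2\rfloor$, the values $\nu^\prime(t)$ are completely determined. Concretely, this splits the index set $\mathbb{Z}$ into three consecutive blocks: a block $t\le -\hbar$ on which $\nu^\prime(t)=+1$ (and so $\Gamma_{\nu^\prime(t)}(1)=\Gamma_+(1)$, $\Gamma^\prime_{\nu^\prime(t)}(1)=\Gamma^\prime_+(1)$), a central block whose pattern is prescribed by \eqref{odd-symmetry1}/\eqref{even-symmetry1}, and a block $t\ge \hbar$ on which $\nu^\prime(t)=-1$. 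This is the translation I would feed into Lemma \ref{vertex-expressions-RPC+-}.

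Second, for each integer $k$ in these three blocks I would compute $\overline{Q}_{k,\nu}$ explicitly. Lemma \ref{weight-operator1} gives all values of $\widetilde{Q}_{k,\nu}$ on the ranges $k>m$, $k<-m$, and $-m\le k\le m$ (four sub-patterns depending on $m\bmod 4$). Corollary \ref{weight-operator2} then yields $\overline{Q}_{k,\nu}$ as $\widetilde{Q}_{k,\nu}$ with either $a\leftrightarrow b$ or $0\leftrightarrow c$ swapped, again according to $m\bmod 4$. Substituting these together with the $\Gamma$-values from the previous step into the general product
\[
\left\langle\emptyset\,\Big|\,\overrightarrow{\prod_{t\in\mathbb{Z}}}\overline{Q}_{-2t,\nu}\Gamma_{\nu^\prime(2t)}^\prime(1)\overline{Q}_{-2t-1,\nu}\Gamma_{\nu^\prime(2t+1)}(1)\,\Big|\,\emptyset\right\rangle
\]
produces, after grouping adjacent pairs, exactly the displayed expressions in cases (i)--(iv). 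For instance, in case (i) with $m\equiv 0\pmod 4$, the left-infinite tail at $t\ll 0$ (indices $k\gg 0$) contributes factors of the form $Q_{0c}\Gamma_+^\prime(1)Q_{ba}\Gamma_+(1)$ (the $ab\leftrightarrow ba$ swap of $\widetilde{Q}$), the right-infinite tail contributes $Q_{0c}\Gamma_-^\prime(1)Q_{ab}\Gamma_-(1)$, and the central block yields exactly $m/2$ copies of the pattern $Q_{0c}\Gamma_-^\prime(1)Q_{ab}\Gamma_+(1)Q_{c0}\Gamma_-^\prime(1)Q_{ba}\Gamma_+(1)$; the other three cases are obtained by repeating this bookkeeping with the appropriate mod-4 substitutions.

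The essentially routine but delicate step, and the place where an error is most likely to occur, is aligning the indexing conventions: the product in Lemma \ref{vertex-expressions-RPC+-} is ordered so that $t$ increases from left to right, but the edge-sequence formulas and the pattern of $\overline{Q}_{k,\nu}$ are written in terms of $k=-2t$ or $k=-2t-1$, so one has to be careful which end of the product corresponds to $k\to +\infty$ versus $k\to -\infty$. Once that correspondence is fixed (the tail $t\to -\infty$ corresponds to $k\to +\infty$, giving $\Gamma_+$-type factors as $\nu^\prime(t)=+1$ for $t\ll 0$), the four displayed formulas follow by direct verification, and the analogous derivation for $Z_{\mathfrak{RP}_{-}(\nu)}$ (and for cases (iii),(iv) not written out in detail) proceeds identically.
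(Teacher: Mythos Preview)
Your proposal is correct and follows essentially the same route as the paper: the paper's proof also specializes the general expression from Lemma \ref{vertex-expressions-RPC+-} by plugging in the explicit edge sequence of $\nu^\prime$ from \eqref{odd-symmetry1}/\eqref{even-symmetry1} and the values of $\overline{Q}_{k,\nu}$ obtained via Lemma \ref{weight-operator1} and Corollary \ref{weight-operator2}. Your explicit discussion of the index-alignment between $t$ and $k$ is a useful addition that the paper leaves implicit.
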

\begin{proof}
Combined with the two latter equivalences in Equation \eqref{odd-symmetry1} and Equation \eqref{even-symmetry1}, the proof follows from Lemma \ref{vertex-expressions-RPC+-}, Lemma \ref{weight-operator1} and  Corollary \ref{weight-operator2}.
\end{proof}
\begin{lemma}
Assume  $\nu=(m,m-1,\cdots,2,1)$ with $m\in\mathbb{Z}_{\geq1}$.
Then we have\\
(i) if $m\equiv0\; \mbox{(mod 4)}$, 
\bea
Z_{\mathfrak{RP}_{-}(\nu)}
&=&\bigg\langle\emptyset\,\bigg|\, \prod_{i=1}^{\infty}Q_{0}\Gamma_{+}^\prime(1)Q_{a}\Gamma_{+}(1)Q_{c}\Gamma_{+}^\prime(1)Q_{b}\Gamma_{+}(1)\,\label{RPC-0}\\
&&\quad\cdot\prod_{i=1}^{\frac{m}{2}}Q_{0}\Gamma_{-}^\prime(1)Q_{a}\Gamma_{+}(1)Q_{c}\Gamma_{-}^\prime(1)Q_{b}\Gamma_{+}(1)\nonumber\\
&&\quad\cdot\prod_{i=1}^{\infty}Q_{0}\Gamma_{-}^\prime(1)Q_{a}\Gamma_{-}(1)Q_{c}\Gamma_{-}^\prime(1)Q_{b}\Gamma_{-}(1)\; \bigg|\,\emptyset\bigg\rangle\nonumber,\\
\label{Z4-0}
V^{\mathbb{Z}_4}_{\emptyset\emptyset\nu}(q_0,q_b,q_c,q_a)&=&\bigg\langle\emptyset\,\bigg|\, \prod_{i=1}^{\infty}Q_{0}\Gamma_{+}(1)Q_{a}\Gamma_{+}(1)Q_{c}\Gamma_{+}(1)Q_{b}\Gamma_{+}(1)\,\\
&&\quad\cdot\prod_{i=1}^{\frac{m}{2}}Q_{0}\Gamma_{-}(1)Q_{a}\Gamma_{+}(1)Q_{c}\Gamma_{-}(1)Q_{b}\Gamma_{+}(1)\nonumber\\
&&\quad\cdot\prod_{i=1}^{\infty}Q_{0}\Gamma_{-}(1)Q_{a}\Gamma_{-}(1)Q_{c}\Gamma_{-}(1)Q_{b}\Gamma_{-}(1) \;\bigg|\,\emptyset\bigg\rangle\nonumber,
\eea
(ii) if $m\equiv1\; \mbox{(mod 4)}$, 
\bea
Z_{\mathfrak{RP}_{-}(\nu)}
&=&\bigg\langle\emptyset\,\bigg|\, \prod_{i=1}^{\infty}Q_{b}\Gamma_{+}(1)Q_{0}\Gamma_{+}^\prime(1)Q_{a}\Gamma_{+}(1)Q_{c}\Gamma_{+}^\prime(1)\label{RPC--1}\\
&&\quad\cdot\left(\prod_{i=1}^{\frac{m-1}{2}}Q_{b}\Gamma_{-}(1)Q_{0}\Gamma_{+}^\prime(1)Q_{a}\Gamma_{-}(1)Q_{c}\Gamma_{+}^\prime(1)\right)\nonumber,\\
&&\quad\cdot Q_{b}\Gamma_{-}(1)Q_{0}\Gamma_{+}^\prime(1)\cdot\prod_{i=1}^{\infty}Q_{a}\Gamma_{-}(1)Q_{c}\Gamma_{-}^\prime(1)Q_{b}\Gamma_{-}(1)Q_{0}\Gamma_{-}^\prime(1) \bigg|\,\emptyset\bigg\rangle\nonumber,\\
\label{Z4-1}
V^{\mathbb{Z}_4}_{\emptyset\emptyset\nu}(q_0,q_b,q_c,q_a)&=&\bigg\langle\emptyset\,\bigg|\, \prod_{i=1}^{\infty}Q_{b}\Gamma_{+}(1)Q_{0}\Gamma_{+}(1)Q_{a}\Gamma_{+}(1)Q_{c}\Gamma_{+}(1)\,\\
&&\quad\cdot\left(\prod_{i=1}^{\frac{m-1}{2}}Q_{b}\Gamma_{-}(1)Q_{0}\Gamma_{+}(1)Q_{a}\Gamma_{-}(1)Q_{c}\Gamma_{+}(1)\right)\nonumber\\
&&\quad\cdot Q_{b}\Gamma_{-}(1)Q_{0}\Gamma_{+}(1)\cdot\prod_{i=1}^{\infty}Q_{a}\Gamma_{-}(1)Q_{c}\Gamma_{-}(1)Q_{b}\Gamma_{-}(1)Q_{0}\Gamma_{-}(1) \bigg|\,\emptyset\bigg\rangle\nonumber,
\eea
(iii) if $m\equiv2\; \mbox{(mod 4)}$, 
\ben
Z_{\mathfrak{RP}_{-}(\nu)}
&=&\bigg\langle\emptyset\,\bigg|\, \prod_{i=1}^{\infty}Q_{c}\Gamma_{+}^\prime(1)Q_{b}\Gamma_{+}(1)Q_{0}\Gamma_{+}^\prime(1)Q_{a}\Gamma_{+}(1)\,\\
&&\quad\cdot\prod_{i=1}^{\frac{m}{2}}Q_{c}\Gamma_{-}^\prime(1)Q_{b}\Gamma_{+}(1)Q_{0}\Gamma_{-}^\prime(1)Q_{a}\Gamma_{+}(1)\\
&&\quad\cdot\prod_{i=1}^{\infty}Q_{c}\Gamma_{-}^\prime(1)Q_{b}\Gamma_{-}(1)Q_{0}\Gamma_{-}^\prime(1)Q_{a}\Gamma_{-}(1)\; \bigg|\,\emptyset\bigg\rangle,\\
V^{\mathbb{Z}_4}_{\emptyset\emptyset\nu}(q_0,q_b,q_c,q_a)&=&\bigg\langle\emptyset\,\bigg|\, \prod_{i=1}^{\infty}Q_{c}\Gamma_{+}(1)Q_{b}\Gamma_{+}(1)Q_{0}\Gamma_{+}(1)Q_{a}\Gamma_{+}(1)\,\\
&&\quad\cdot\prod_{i=1}^{\frac{m}{2}}Q_{c}\Gamma_{-}(1)Q_{b}\Gamma_{+}(1)Q_{0}\Gamma_{-}(1)Q_{a}\Gamma_{+}(1)\\
&&\quad\cdot\prod_{i=1}^{\infty}Q_{c}\Gamma_{-}(1)Q_{b}\Gamma_{-}(1)Q_{0}\Gamma_{-}(1)Q_{a}\Gamma_{-}(1)\; \bigg|\,\emptyset\bigg\rangle,
\een
(iv) if $m\equiv3\; \mbox{(mod 4)}$, 
\ben
Z_{\mathfrak{RP}_{-}(\nu)}
&=&\bigg\langle\emptyset\,\bigg|\, \prod_{i=1}^{\infty}Q_{a}\Gamma_{+}(1)Q_{c}\Gamma_{+}^\prime(1)Q_{b}\Gamma_{+}(1)Q_{0}\Gamma_{+}^\prime(1)\,\\
&&\quad\cdot\left(\prod_{i=1}^{\frac{m-1}{2}}Q_{a}\Gamma_{-}(1)Q_{c}\Gamma_{+}^\prime(1)Q_{b}\Gamma_{-}(1)Q_{0}\Gamma_{+}^\prime(1)\right)\\
&&\quad\cdot Q_{a}\Gamma_{-}(1)Q_{c}\Gamma_{+}^\prime(1)\cdot\prod_{i=1}^{\infty}Q_{b}\Gamma_{-}(1)Q_{0}\Gamma_{-}^\prime(1)Q_{a}\Gamma_{-}(1)Q_{c}\Gamma_{-}^\prime(1)\; \bigg|\,\emptyset\bigg\rangle,\\
V^{\mathbb{Z}_4}_{\emptyset\emptyset\nu}(q_0,q_b,q_c,q_a)
&=&\bigg\langle\emptyset\,\bigg|\, \prod_{i=1}^{\infty}Q_{a}\Gamma_{+}(1)Q_{c}\Gamma_{+}(1)Q_{b}\Gamma_{+}(1)Q_{0}\Gamma_{+}(1)\,\\
&&\quad\cdot\left(\prod_{i=1}^{\frac{m-1}{2}}Q_{a}\Gamma_{-}(1)Q_{c}\Gamma_{+}(1)Q_{b}\Gamma_{-}(1)Q_{0}\Gamma_{+}(1)\right)\\
&&\quad\cdot Q_{a}\Gamma_{-}(1)Q_{c}\Gamma_{+}(1)\cdot\prod_{i=1}^{\infty}Q_{b}\Gamma_{-}(1)Q_{0}\Gamma_{-}(1)Q_{a}\Gamma_{-}(1)Q_{c}\Gamma_{-}(1)\; \bigg|\,\emptyset\bigg\rangle.
\een
\end{lemma}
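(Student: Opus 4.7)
The statement is a specialization of the abstract vertex operator product expressions provided by Lemma \ref{vertex-expressions-RPC+-} and Equation \eqref{Z4-coloring}, so the entire plan is to substitute the explicit edge sequence of $\nu'$ for $\nu=(m,m-1,\ldots,2,1)$ and then reorganize the bi-infinite product. Concretely, for every $t\in\mathbb{Z}$ the general form contributes the block $Q_0\Gamma^{(\prime)}_{\nu'(4t)}(1)Q_a\Gamma_{\nu'(4t+1)}(1)Q_c\Gamma^{(\prime)}_{\nu'(4t+2)}(1)Q_b\Gamma_{\nu'(4t+3)}(1)$, the primes being present for $Z_{\mathfrak{RP}_{-}(\nu)}$ and absent for $V^{\mathbb{Z}_4}_{\emptyset\emptyset\nu}$. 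The task is to read off the signs $\nu'(k)$ from Equations \eqref{odd-symmetry1}--\eqref{even-symmetry1} and then group the bi-infinite product into three ranges of $t$.

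The key step is to locate the two transition thresholds of $\nu'(\cdot)$. From \eqref{even-symmetry1} (when $m=2\rho$ is even), the signs $\nu'(k)$ stabilize to $+1$ for $k\leq -m-1$ and to $-1$ for $k\geq m$, with a finite interval of length $2m$ in between where $\pm1$ alternates with period $2$. From \eqref{odd-symmetry1} (when $m=2\rho-1$ is odd), the same phenomenon occurs with the transitions shifted by one. In each case, partition $\mathbb{Z}$ into the left tail $t<t_{-}$, on which $\Gamma_{\nu'(4t+j)}=\Gamma_+$ for $j=0,1,2,3$ (yielding the first $\prod_{i=1}^{\infty}$ factor in each formula); the right tail $t>t_{+}$, on which every $\Gamma_{\nu'(4t+j)}=\Gamma_-$ (yielding the final $\prod_{i=1}^{\infty}$ factor); and the middle block $t_{-}\leq t\leq t_{+}$ of length $m/2$ (if $m$ is even) or $(m+1)/2$ (if $m$ is odd), which produces the finite subproduct where $\Gamma_+$ and $\Gamma_-$ alternate within each 4-tuple in the prescribed way.

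The splitting into the four cases $m\equiv 0,1,2,3\pmod{4}$ then comes from the fact that the fundamental 4-period $(Q_0,Q_a,Q_c,Q_b)$ is not in general aligned with the natural boundary of the middle block: the starting index $t_{-}$ of the middle block depends on $m\bmod 4$, so the cyclic rotation of $(Q_0,Q_a,Q_c,Q_b)$ visible in the stated formulas (e.g.\ $(Q_b,Q_0,Q_a,Q_c)$ in case $m\equiv 1$ or $(Q_c,Q_b,Q_0,Q_a)$ in case $m\equiv 2$) is forced. For the odd cases $m\equiv 1,3\pmod 4$, the middle block has odd length $(m+1)/2$, so one $(Q_?,\Gamma_-,Q_?,\Gamma_+)$-half-tuple remains unpaired; this accounts for the extra factor $Q_b\Gamma_-(1)Q_0\Gamma^{(\prime)}_+(1)$ (or a cyclic rotation thereof) sandwiched between $\prod_{i=1}^{(m-1)/2}\!\cdots$ and the right tail. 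The $V^{\mathbb{Z}_4}$ formulas are then obtained by dropping all primes from the corresponding $Z_{\mathfrak{RP}_{-}(\nu)}$ expression.

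The only genuine obstacle is the indexing arithmetic for the four residue classes: one must carefully verify where $t_{-}$ and $t_{+}$ fall for each case, which of the four positions $Q_0,Q_a,Q_c,Q_b$ is aligned with $t=t_{-}$, and that the prime pattern (primes on the $Q_0,Q_c$ slots) is preserved under the cyclic rotations. Once a consistent convention is fixed --- for instance, declaring that the first iteration $i=1$ of the left-tail product corresponds to some distinguished $t=t_{-}-1$ --- the four computations are entirely mechanical applications of \eqref{odd-symmetry1}--\eqref{even-symmetry1}, and the verification reduces to checking, for each residue class, that the finite middle block has the stated number of iterations and the stated interior $\Gamma_\pm$ pattern.
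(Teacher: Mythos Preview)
Your proposal is correct and follows essentially the same approach as the paper: the paper's proof simply states that the result follows from Lemma~\ref{vertex-expressions-RPC+-} and Equation~\eqref{Z4-coloring} combined with the explicit edge-sequence data in Equations~\eqref{odd-symmetry1} and~\eqref{even-symmetry1}, which is exactly the specialization-and-regrouping argument you outline. Your description of the three-range partition, the cyclic rotation of $(Q_0,Q_a,Q_c,Q_b)$ induced by $m\bmod 4$, and the leftover half-tuple in the odd cases all match the mechanism behind the paper's terse proof.
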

\begin{proof}
Combined with the two latter equivalences in Equation \eqref{odd-symmetry1} and Equation \eqref{even-symmetry1}, the proof follows from Lemma \ref{vertex-expressions-RPC+-} and Equation \eqref{Z4-coloring}.
\end{proof}
For simplicity of presentation in next subsections, we introduce the following notations. Let
\ben
&&\mathbf{M}_0(x,q)=\mathbf{M}(x,q)\cdot \mathbf{M}(-x,q),\\
&&\mathbf{M}_1(x,y,q)=\frac{\mathbf{M}(x,q)}{\mathbf{M}(xy,q)},\\
&&\mathbf{M}_2(x,q;l)=\frac{\mathbf{M}(xq^l,q)}{\mathbf{M}(x,q)\cdot(x;q)_{\infty}^l},\\
&&\widehat{\mathbf{M}}_1(x,y,q)=\mathbf{M}_1(x,y,q)\cdot \mathbf{M}_1(-x,y,q),\\
&&\widehat{\mathbf{M}}_2(x,q;l)=\mathbf{M}_2(x,q;l)\cdot\mathbf{M}_2(-x,q;l),
\een
where $l\in\mathbb{Z}$, and $x,y,q$ are commutative variables.
\begin{remark}
For commutative variables $q$ and $x$, we have 
\bea\label{identity1}
\mathbf{M}(x,q)=\mathbf{M}(xq^{-1},q)\cdot(x;q)_{\infty},
\eea
which is an identity used  for simplification in the next subsections.
\end{remark}

\subsection{1-leg DT $\mathbb{Z}_{2}\times\mathbb{Z}_{2}$-vertex and restricted pyramid configurations}
We  will present the relation between the 1-leg DT $\mathbb{Z}_{2}\times\mathbb{Z}_{2}$-vertex $V_{\emptyset\emptyset\nu}^{\mathbb{Z}_{2}\times\mathbb{Z}_{2}}$ and the generating function of restricted pyramid configurations $Z_{\mathfrak{RP}_{+}(\nu)}$ in this subsection. Following the vertex operator method of Okounkov-Reshetikhin-Vafa and Bryan-Young as in  [\cite{BY}, Lemma 7.2], we first need the following
\begin{lemma}\label{antidiag-E-move-1}
Assume  $\{z_1,z_2,z_3,z_4\}=\{0,a,b,c\}$. Let $q=q_aq_bq_cq_0$. Then 
\ben
&&\prod_{i=1}^{\infty}Q_{z_1z_2}\Gamma_{+}(1)Q_{z_3z_4}\Gamma_{+}(1)=\prod_{i=1}^{\infty}Q_{z_1z_2}\Gamma^\prime_{+}(1)Q_{z_3z_4}\Gamma_{+}(1)\cdot\prod_{i=0}^\infty E_{+}\left(\sqrt{q^iq_{z_3}q_{z_4}}\right),\\
&&\prod_{i=1}^{\infty}Q_{z_1z_2}\Gamma_{+}(1)Q_{z_3z_4}\Gamma_{+}(1)=\prod_{i=1}^{\infty}Q_{z_1z_2}\Gamma_{+}(1)Q_{z_3z_4}\Gamma_{+}^\prime(1)\cdot\prod_{i=0}^\infty E_{+}\left(\sqrt{q^i}\right),
\een	
and
\ben
&&\prod_{i=1}^{\infty}Q_{z_1z_2}\Gamma_{-}(1)Q_{z_3z_4}\Gamma_{-}(1)=\prod_{i=0}^\infty E_{-}\left(\sqrt{q^iq_{z_1}q_{z_2}}\right)\cdot\prod_{i=1}^{\infty}Q_{z_1z_2}\Gamma^\prime_{-}(1)Q_{z_3z_4}\Gamma_{-}(1),\\
&&\prod_{i=1}^{\infty}Q_{z_1z_2}\Gamma_{-}(1)Q_{z_3z_4}\Gamma_{-}(1)=\prod_{i=1}^\infty E_{-}\left(\sqrt{q^i}\right)\cdot\prod_{i=1}^{\infty}Q_{z_1z_2}\Gamma_{-}(1)Q_{z_3z_4}\Gamma^\prime_{-}(1).
\een
	
\end{lemma}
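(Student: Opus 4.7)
The plan is to derive all four identities in parallel from the decomposition $\Gamma_{\pm}(1)=\Gamma_{\pm}^\prime(1)E_{\pm}(1)$ of Lemma \ref{vertex-exchange1}, applied to whichever one of the two $\Gamma_{\pm}$-factors in a generic block $Q_{z_1z_2}\Gamma_{\pm}(1)Q_{z_3z_4}\Gamma_{\pm}(1)$ is to be converted into $\Gamma_{\pm}^\prime$. I will then sweep the freed $E_{\pm}(1)$'s to the appropriate end of the infinite product, using that $E_{\pm}$ commutes with both $\Gamma_{\pm}$ and $\Gamma_{\pm}^\prime$ by Lemma \ref{vertex-exchange1} and that crossing a weight operator $Q_{gh}$ multiplies the argument of $E_{\pm}$ by $\sqrt{q_gq_h}$, the latter being an immediate reformulation of Lemma \ref{vertex-exchange2}. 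The guiding observation is that each $E_{\pm}$ that crosses an entire block picks up the factor $\sqrt{q_{z_1}q_{z_2}}\cdot\sqrt{q_{z_3}q_{z_4}}=\sqrt{q}$, since only the two $Q$-operators contribute a phase.

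For the first identity, I replace the first $\Gamma_{+}(1)$ in every block by $\Gamma_{+}^\prime(1)E_{+}(1)$: the $E_{+}(1)$ liberated from block $i$ first traverses the tail $Q_{z_3z_4}\Gamma_{+}(1)$ of its own block, acquiring $\sqrt{q_{z_3}q_{z_4}}$, and then all subsequent blocks, each contributing $\sqrt{q}$. Collecting on the right and using that the $E_{+}$'s mutually commute produces $\prod_{i=0}^{\infty}E_{+}(\sqrt{q^iq_{z_3}q_{z_4}})$. For the second identity I replace the second $\Gamma_{+}(1)$ instead; the extracted $E_{+}(1)$ already sits at the right end of its own block and receives no initial phase from $\sqrt{q_{z_3}q_{z_4}}$, yielding $\prod_{i=0}^{\infty}E_{+}(\sqrt{q^i})$. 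For the third identity I extract $E_{-}(1)$ from the first $\Gamma_{-}(1)$ of each block and sweep it to the left past $Q_{z_1z_2}$, acquiring $\sqrt{q_{z_1}q_{z_2}}$, and then through all earlier blocks, giving the left-tail $\prod_{i=0}^{\infty}E_{-}(\sqrt{q^iq_{z_1}q_{z_2}})$. For the fourth identity the substitution is made in the second $\Gamma_{-}(1)$, and the extracted $E_{-}(1)$ must cross $Q_{z_3z_4}$, $\Gamma_{-}(1)$ and $Q_{z_1z_2}$ of its own block on the way out to the left, multiplying its argument by $\sqrt{q_{z_3}q_{z_4}q_{z_1}q_{z_2}}=\sqrt{q}$; this relative shift by one power of $\sqrt{q}$ is precisely why the resulting tail is $\prod_{i=1}^{\infty}E_{-}(\sqrt{q^i})$ rather than starting at $i=0$.

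The proof is essentially careful bookkeeping and I do not foresee any conceptual obstacle. The only mildly delicate point is the well-definedness of the manipulations of the infinite product: since $E_{\pm}(x)$ is a formal power series in $x^2$ and the arguments appearing in the $E_{\pm}$-tail carry arbitrarily large powers of $q$ as $i\to\infty$ in the ring of formal power series in $q_0,q_a,q_b,q_c$, every coefficient of any fixed multidegree involves only finitely many of the $E_{\pm}$-factors, so the reordering of the mutually commuting tail and the exchange operations above are legitimate term by term.
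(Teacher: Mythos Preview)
Your proposal is correct and follows essentially the same approach as the paper: apply $\Gamma_{\pm}(1)=\Gamma_{\pm}^\prime(1)E_{\pm}(1)$ from Lemma~\ref{vertex-exchange1} at the appropriate factor in each block, then sweep the resulting $E_{+}$-operators to the right (resp.\ $E_{-}$-operators to the left) using the commutation with $\Gamma_{\pm},\Gamma_{\pm}^\prime$ from Lemma~\ref{vertex-exchange1} and the phase rule from Lemma~\ref{vertex-exchange2}. Your bookkeeping of the accumulated phases and of the index shift in the fourth identity is accurate.
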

\begin{proof}
It follows from Lemmas \ref{vertex-exchange1} and \ref{vertex-exchange2} by applying $\Gamma_{\pm}(1)=\Gamma_{\pm}^\prime(1)E_{\pm}(1)$ in the suitable positions and then moving all $E_{+}$-operators (resp. $E_{-}$-operators) to the right (resp. left) handside.
\end{proof}

\begin{lemma}\label{antidiag-E-move-2}
	Assume  $\{z_1,z_2,z_3,z_4\}=\{0,a,b,c\}$, $k\in\{0,1\}$ and $l\in\mathbb{Z}_{\geq0}$. Let $q=q_aq_bq_cq_0$ and $x$ be a variable.  Then 
	\ben
	&&\prod_{i=0}^\infty E_{+}\left(x\sqrt{q^i}\right)\cdot\prod_{i=1}^{l}Q_{z_1z_2}\Gamma_{-}(1)Q_{z_3z_4}\Gamma_{+}(1)Q_{z_2z_1}\Gamma_{-}(1)Q_{z_4z_3}\Gamma_{+}(1)\cdot\prod_{i=1}^{k}Q_{z_1z_2}\Gamma_{-}(1)Q_{z_3z_4}\Gamma_{+}(1)\\
	&=&\mathbf{M}_1\left(x^2(q_{z_3}q_{z_4})^{-1},q^{2l+k},q\right)\cdot\prod_{i=1}^{l}Q_{z_1z_2}\Gamma_{-}(1)Q_{z_3z_4}\Gamma_{+}(1)Q_{z_2z_1}\Gamma_{-}(1)Q_{z_4z_3}\Gamma_{+}(1)\\
	&&\cdot\prod_{i=1}^{k}Q_{z_1z_2}\Gamma_{-}(1)Q_{z_3z_4}\Gamma_{+}(1)\cdot\prod_{i=0}^\infty E_{+}\left(xq^l\sqrt{q^{i+k}}\right)
	\een	
	and
		\ben
		&&\left(\prod_{i=1}^{l}Q_{z_1z_2}\Gamma_{-}(1)Q_{z_3z_4}\Gamma_{+}(1)Q_{z_2z_1}\Gamma_{-}(1)Q_{z_4z_3}\Gamma_{+}(1)\right)\cdot Q_{z_1z_2}\Gamma_{-}(1)Q_{z_3z_4}\Gamma_{+}(1)\\
		&=&\mathbf{M}_2(q_{z_1}q_{z_2},q;2l)\cdot\left(\prod_{i=1}^{l}Q_{z_1z_2}\Gamma_{-}(1)Q_{z_3z_4}\Gamma_{+}^\prime(1)Q_{z_2z_1}\Gamma_{-}(1)Q_{z_4z_3}\Gamma_{+}^\prime(1)\right)\\
		&&\cdot Q_{z_1z_2}\Gamma_{-}(1)Q_{z_3z_4}\Gamma_{+}^\prime(1)\cdot\prod_{i=0}^{2l} E_{+}\left(\sqrt{q^{i}}\right).
		\een	
\end{lemma}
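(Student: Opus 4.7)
The plan is to prove both identities by systematically pushing $E_{+}$-operators through the vertex operator product, using the commutation relations in Lemmas \ref{vertex-exchange1} and \ref{vertex-exchange2} together with the key simplification $q_{z_1}q_{z_2}q_{z_3}q_{z_4} = q$. The main bookkeeping fact I would establish first is how $E_{+}(y)$ traverses one \emph{half-block} of the form $Q_{z_az_b}\Gamma_{-}(1)Q_{z_cz_d}\Gamma_{\pm}^{(\prime)}(1)$: passing each $Q$ rescales the argument by $\sqrt{q_{\star}q_{\star}}$, passing $\Gamma_{-}$ picks up a factor $(1-y^2q_{z_1}q_{z_2})^{-1}$, while $E_{+}$ commutes with both $\Gamma_{+}$ and $\Gamma_{+}^\prime$. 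Thus one half-block sends $E_{+}(y) \mapsto E_{+}(y\sqrt{q})$ with factor $(1-y^2q_{z_1}q_{z_2})^{-1}$; one full block sends $E_{+}(y)\mapsto E_{+}(yq)$ with factor $\bigl[(1-y^2q_{z_1}q_{z_2})(1-y^2qq_{z_1}q_{z_2})\bigr]^{-1}$.

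For the first formula, I would then push each $E_{+}(x\sqrt{q^i})$ (for $i\geq 0$) through the $l$ full blocks and then the $k$ half-blocks on the right. Iterating the half-block calculation $2l+k$ times, the $i$-th operator becomes $E_{+}(xq^l\sqrt{q^{i+k}})$ with accumulated factor $\prod_{j=0}^{2l+k-1}(1-x^2q_{z_1}q_{z_2}q^{i+j})^{-1}$. Multiplying over all $i\geq 0$ and collecting powers by setting $m = i+j$, the overall factor becomes
\ben
\prod_{m=0}^{2l+k-1}(1-x^2q_{z_1}q_{z_2}q^m)^{-(m+1)}\cdot\prod_{m\geq 2l+k}(1-x^2q_{z_1}q_{z_2}q^m)^{-(2l+k)},
\een
which, after using $q_{z_1}q_{z_2} = q/(q_{z_3}q_{z_4})$ and the identity $\mathbf{M}(z,q)/\mathbf{M}(zq^{2l+k},q)$, is exactly $\mathbf{M}_1(x^2(q_{z_3}q_{z_4})^{-1},q^{2l+k},q)$.

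For the second formula, I would first apply $\Gamma_{+}(1)=\Gamma_{+}^\prime(1)E_{+}(1)$ to each of the $2l+1$ $\Gamma_{+}$-operators, labeling the resulting $E_{+}(1)$'s as $E_1,\ldots,E_{2l+1}$ from left to right. Since $E_{+}$ commutes with $\Gamma_{+}^\prime$, moving each $E_j$ to the far right only incurs factors from $\Gamma_{-}$-crossings. The $j$-th operator traverses exactly $2l+1-j$ half-blocks, ending as $E_{+}(\sqrt{q^{2l+1-j}})$ and contributing a factor $\prod_{k=0}^{2l-j}(1-q^kq_{z_1}q_{z_2})^{-1}$. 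Aggregating over $j=1,\ldots,2l+1$ yields $\prod_{i=0}^{2l}E_{+}(\sqrt{q^i})$ on the right and a total scalar $\prod_{k=0}^{2l-1}(1-q^kq_{z_1}q_{z_2})^{-(2l-k)}$.

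The final step will be the combinatorial identity matching this last product with $\mathbf{M}_2(q_{z_1}q_{z_2},q;2l) = \mathbf{M}(q_{z_1}q_{z_2}q^{2l},q)/[\mathbf{M}(q_{z_1}q_{z_2},q)(q_{z_1}q_{z_2};q)_\infty^{2l}]$; this I would verify by combining the exponents of $(1-q_{z_1}q_{z_2}q^m)$ in the definition and checking that they cancel to $m-2l$ for $0\leq m\leq 2l-1$ and to $0$ for $m\geq 2l$, precisely matching the factor above. The main obstacle is not conceptual but rather this careful index bookkeeping; once the per-half-block behavior is established, everything reduces to rewriting telescoping products into the MacMahon-tilde form.
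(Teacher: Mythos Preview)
Your proposal is correct and follows essentially the same approach as the paper: the paper's proof merely states that one moves each $E_{+}$ to the right using Lemmas~\ref{vertex-exchange1} and~\ref{vertex-exchange2} (and for the second identity refers back to the analogous manipulation in Lemma~\ref{antidiag-E-move-1}), while you have carried out that computation explicitly, including the per-half-block bookkeeping and the identification of the resulting telescoping products with $\mathbf{M}_1$ and $\mathbf{M}_2$. There is no substantive difference in strategy.
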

\begin{proof}
The first equation follows from  moving each $ E_{+}\left(x\sqrt{q^i}\right)$ to the right by Lemmas  \ref{vertex-exchange1} and \ref{vertex-exchange2}. The second equation follows from the similar argument in the proof of Lemma \ref{antidiag-E-move-1}.
\end{proof}

\begin{lemma}\label{antidiag-E-move-3}
	Assume  $\{z_1,z_2,z_3,z_4\}=\{0,a,b,c\}$, $k\in\{0,1\}$ and $l\in\mathbb{Z}_{\geq1}$.  Let $q=q_aq_bq_cq_0$ and $x$ be a variable.   Then 
	\ben
	&&\prod_{i=0}^\infty E_{+}\left(x\sqrt{q^i}\right)\cdot\bigg|\prod_{i=1}^{\infty}Q_{z_1z_2}\Gamma_{-}(1)Q_{z_3z_4}\Gamma_{-}(1)\, \bigg|\,\emptyset\bigg\rangle\\
	&=&\mathbf{M}(x^2(q_{z_3}q_{z_4})^{-1},q)\cdot \mathbf{M}(x^2,q)\cdot\bigg|\prod_{i=1}^{\infty}Q_{z_1z_2}\Gamma_{-}(1)Q_{z_3z_4}\Gamma_{-}(1)\, \bigg|\,\emptyset\bigg\rangle.
	\een	
\end{lemma}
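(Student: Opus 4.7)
The plan is to commute each operator $E_{+}(x\sqrt{q^i})$ rightwards through the semi-infinite product $\prod_{i=1}^{\infty}Q_{z_1z_2}\Gamma_{-}(1)Q_{z_3z_4}\Gamma_{-}(1)$, collecting scalar commutation factors along the way, until the $E_{+}$ can be absorbed by $|\,\emptyset\rangle$ via \eqref{E+right}. The two ingredients driving this computation are the $E_{+}$-$\Gamma_{-}$ relation $E_{+}(y)\Gamma_{-}(z)=\frac{1}{1-(yz)^2}\Gamma_{-}(z)E_{+}(y)$ from Lemma \ref{vertex-exchange1} and the $E_{+}$-$Q_{gh}$ relation $E_{+}(y)Q_{gh}=Q_{gh}E_{+}(y\sqrt{q_gq_h})$ extracted from Lemma \ref{vertex-exchange2}. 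Using $q_{z_1}q_{z_2}q_{z_3}q_{z_4}=q$, a single passage of $E_{+}(x\sqrt{q^i})$ through one block $Q_{z_1z_2}\Gamma_{-}(1)Q_{z_3z_4}\Gamma_{-}(1)$ transforms it into $E_{+}(x\sqrt{q^{i+1}})$ while producing the scalar factor
\[
\frac{1}{(1-x^2q^iq_{z_1}q_{z_2})(1-x^2q^{i+1})}.
\]

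First I would justify a truncation: replace the rightmost semi-infinite product by its first $N$ blocks, move $E_{+}(x\sqrt{q^i})$ through all $N$ of them, and let $N\to\infty$. After $N$ passes the operator is $E_{+}(x\sqrt{q^{i+N}})$, which acts trivially on $|\,\emptyset\rangle$ by \eqref{E+right}; hence in the limit the single $E_{+}(x\sqrt{q^i})$ contributes the scalar
\[
\prod_{j=0}^{\infty}\frac{1}{(1-x^2q^{i+j}q_{z_1}q_{z_2})(1-x^2q^{i+j+1})}.
\]
Since distinct $E_{+}$ operators mutually commute, I can perform these moves one $i$ at a time without further cross-terms; the total scalar is the product of the above over $i\geq 0$.

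Finally I would simplify the resulting double product. Using $q_{z_1}q_{z_2}=q\cdot(q_{z_3}q_{z_4})^{-1}$ (valid because $\{z_1,z_2,z_3,z_4\}=\{0,a,b,c\}$ and $q=q_0q_aq_bq_c$), the first factor becomes $\prod_{i,j\geq 0}(1-x^2(q_{z_3}q_{z_4})^{-1}q^{i+j+1})^{-1}$; reindexing by $n=i+j+1$ and noting that exactly $n$ pairs $(i,j)\in\mathbb{Z}_{\geq 0}^2$ satisfy $i+j+1=n$ turns it into $\prod_{n\geq 1}(1-x^2(q_{z_3}q_{z_4})^{-1}q^n)^{-n}=\mathbf{M}(x^2(q_{z_3}q_{z_4})^{-1},q)$. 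The same reindexing applied to the second double product gives $\mathbf{M}(x^2,q)$, yielding exactly the asserted prefactor.

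The only genuine subtlety is justifying the truncate-and-take-limit step, since the identity is understood between formal power series in $q_0,q_a,q_b,q_c$; each fixed total-degree coefficient of the right-hand vector depends on only finitely many blocks and only finitely many $E_{+}$-moves, so the limit is well-defined termwise. I expect this bookkeeping to be the main (but routine) technical point, paralleling the argument already used in Lemma \ref{antidiag-E-move-1}.
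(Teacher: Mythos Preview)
Your proof is correct and follows exactly the approach sketched in the paper: commute each $E_{+}$ to the right through the semi-infinite product using Lemmas~\ref{vertex-exchange1} and~\ref{vertex-exchange2}, absorb it into $|\,\emptyset\rangle$ via~\eqref{E+right}, and reindex the resulting double product to recognize the two MacMahon factors. The paper's own proof is a one-line reference to these same lemmas, so your write-up is essentially a fleshed-out version of the same argument.
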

\begin{proof}
It follows from Equation \eqref{E+right}, Lemmas \ref{vertex-exchange1}, \ref{vertex-exchange2} by commuting all $E_{+}\left(x\sqrt{q^i}\right)$ out to the right.
\end{proof}

\begin{lemma}\label{antidiag-E-move-4}
	Assume  $\{z_1,z_2,z_3,z_4\}=\{0,a,b,c\}$  and $l\in\mathbb{Z}_{\geq1}$. Let $q=q_aq_bq_cq_0$.  Then 
	\ben
	&&\prod_{i=1}^{l}Q_{z_1z_2}\Gamma_{-}(1)Q_{z_3z_4}\Gamma_{+}(1)Q_{z_2z_1}\Gamma_{-}(1)Q_{z_4z_3}\Gamma_{+}(1)\\
	&=&\mathbf{M}_2(q_{z_1}q_{z_2},q;2l-1)\cdot\prod_{i=0}^{2l-1} E_{-}\left(\sqrt{q_{z_1}q_{z_2}q^{i}}\right)\cdot\prod_{i=1}^{l}Q_{z_1z_2}\Gamma_{-}^\prime(1)Q_{z_3z_4}\Gamma_{+}(1)Q_{z_2z_1}\Gamma_{-}^\prime(1)Q_{z_4z_3}\Gamma_{+}(1).
	\een	
\end{lemma}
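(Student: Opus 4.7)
The plan is to follow the commutation-and-migration strategy employed in Lemmas \ref{antidiag-E-move-1}--\ref{antidiag-E-move-3}. First, using Lemma \ref{vertex-exchange1}, I substitute $\Gamma_-(1) = \Gamma_-^\prime(1) E_-(1)$ at each of the $2l$ occurrences of $\Gamma_-(1)$ in the left-hand side. The goal is then to pull all $2l$ resulting $E_-(1)$-operators out to the far left. Since distinct $E_-(\cdot)$'s commute mutually and $E_-$ commutes with $\Gamma_-^\prime$ by Lemma \ref{vertex-exchange1}, only the passings through weight operators $Q_{gh}$ and vertex operators $\Gamma_+(1)$ are nontrivial: by Lemma \ref{vertex-exchange2} one has $Q_{gh} E_-(y) = E_-(y\sqrt{q_g q_h}) Q_{gh}$, which shifts the argument; and by Lemma \ref{vertex-exchange1} one has $\Gamma_+(1) E_-(y) = (1-y^2)^{-1} E_-(y)\Gamma_+(1)$, which produces a scalar factor.

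For the argument shifts, write $x = q_{z_1} q_{z_2}$ and note that $q_{z_1} q_{z_2} q_{z_3} q_{z_4} = q$ since $\{z_1,z_2,z_3,z_4\} = \{0,a,b,c\}$. The $j$-th occurrence of $\Gamma_-(1)$ from the left ($j=1,\ldots,2l$) has to its left a string of $Q$-operators whose accumulated $\sqrt{q_g q_h}$-factor equals $\sqrt{xq^{j-1}}$: each preceding complete block contributes $\sqrt{q_{z_1}q_{z_2} \cdot q_{z_3}q_{z_4} \cdot q_{z_2}q_{z_1} \cdot q_{z_4}q_{z_3}} = q$ from its four $Q$-operators, and the partial prefix in the current block contributes $\sqrt{x}$ for odd $j$ or $\sqrt{xq}$ for even $j$ (the latter being $\sqrt{x \cdot q_{z_1}q_{z_2}q_{z_3}q_{z_4}} = \sqrt{xq}$). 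Consequently, once migrated to the far left, the $2l$ extracted $E_-(1)$'s become exactly $\prod_{i=0}^{2l-1} E_-(\sqrt{xq^i})$, matching the desired product on the right-hand side.

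For the multiplicative factor, I track each $\Gamma_+(1)$ the migrating $E_-$'s cross. Denote the $E_-$ extracted from the first and second $\Gamma_-(1)$ in block $i$ by $E_-^{(i,1)}$ and $E_-^{(i,2)}$ respectively. Walking $E_-^{(i,1)}$ leftward and recording the argument of $E_-$ at each $\Gamma_+$-crossing yields the contribution $\prod_{k=0}^{2i-3}(1-xq^k)^{-1}$; the analogous walk for $E_-^{(i,2)}$ gives $\prod_{k=0}^{2i-2}(1-xq^k)^{-1}$. Summing the exponents over $i=1,\ldots,l$, one checks that the total multiplicity of $(1-xq^k)^{-1}$ equals $2l-1-k$ for each $k \in \{0,1,\ldots,2l-2\}$, so the overall scalar is $\prod_{k=0}^{2l-2}(1-xq^k)^{-(2l-1-k)}$. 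A direct calculation from the definition $\mathbf{M}_2(x,q;l) = \mathbf{M}(xq^l,q)/[\mathbf{M}(x,q)(x;q)_\infty^l]$ together with $\mathbf{M}(x,q)=\prod_{n\geq 1}(1-xq^n)^{-n}$ shows this equals $\mathbf{M}_2(x,q;2l-1)$, completing the identification.

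The main obstacle is the combinatorial bookkeeping in the last step: verifying that the overlapping contributions of the $2l$ migrated $E_-$-operators telescope cleanly so that $(1-xq^k)^{-1}$ appears with multiplicity exactly $2l-1-k$ in the final scalar. The remaining pieces of the proof — the commutation rules, the argument-shift computation, and the matching of the explicit product with $\mathbf{M}_2(x,q;2l-1)$ — are routine applications of the same techniques already used in this subsection.
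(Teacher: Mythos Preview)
Your proof is correct and follows exactly the approach the paper indicates: substitute $\Gamma_-(1)=\Gamma_-^\prime(1)E_-(1)$ at each occurrence and migrate all $E_-$-operators to the left using Lemmas \ref{vertex-exchange1} and \ref{vertex-exchange2}, tracking the argument shifts through the $Q_{gh}$'s and the scalars picked up at each $\Gamma_+(1)$-crossing. The paper's own proof simply refers back to the argument of Lemma \ref{antidiag-E-move-1}, so you have spelled out precisely what the paper leaves implicit, including the verification that the accumulated scalar $\prod_{s=0}^{2l-2}(1-xq^s)^{-(2l-1-s)}$ coincides with $\mathbf{M}_2(q_{z_1}q_{z_2},q;2l-1)$.
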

\begin{proof}
It follows from the similar argument in the proof of Lemma \ref{antidiag-E-move-1}.	
\end{proof}

\begin{lemma}\label{antidiag-E-move-5}
	Assume  $\{z_1,z_2,z_3,z_4\}=\{0,a,b,c\}$  and $l\in\mathbb{Z}_{\geq0}$. Let $q=q_aq_bq_cq_0$ and $x$ be a variable.  Then 
	\ben
	&&\prod_{i=1}^{l}Q_{z_1z_2}\Gamma_{-}^\prime(1)Q_{z_3z_4}\Gamma_{+}(1)Q_{z_2z_1}\Gamma_{-}^\prime(1)Q_{z_4z_3}\Gamma_{+}(1)\cdot\prod_{i=0}^\infty E_{-}\left(x\sqrt{q^i}\right)\\
	&=&\mathbf{M}_1(x^2q^{-1},q^{2l},q)\cdot\prod_{i=0}^\infty E_{-}\left(xq^l\sqrt{q^i}\right)\cdot\prod_{i=1}^{l}Q_{z_1z_2}\Gamma_{-}^\prime(1)Q_{z_3z_4}\Gamma_{+}(1)Q_{z_2z_1}\Gamma_{-}^\prime(1)Q_{z_4z_3}\Gamma_{+}(1)
	\een	
	and
		\ben
		&&\left(\prod_{i=1}^{l}Q_{z_1z_2}\Gamma_{-}(1)Q_{z_3z_4}\Gamma_{+}^\prime(1)Q_{z_2z_1}\Gamma_{-}(1)Q_{z_4z_3}\Gamma_{+}^\prime(1)\right)\cdot Q_{z_1z_2}\Gamma_{-}(1)Q_{z_3z_4}\Gamma_{+}^\prime(1)\cdot\prod_{i=0}^\infty E_{-}\left(x\sqrt{q^i}\right)\\
		&=&\frac{1}{\mathbf{M}_1(x^2q^{-1},q^{2l+1},q)}\cdot\prod_{i=0}^\infty E_{-}\left(xq^l\sqrt{q^{i+1}}\right)\\
		&&\cdot\left(\prod_{i=1}^{l}Q_{z_1z_2}\Gamma_{-}(1)Q_{z_3z_4}\Gamma_{+}^\prime(1)Q_{z_2z_1}\Gamma_{-}(1)Q_{z_4z_3}\Gamma_{+}^\prime(1)\right)\cdot Q_{z_1z_2}\Gamma_{-}(1)Q_{z_3z_4}\Gamma_{+}^\prime(1)
		\een	
	
\end{lemma}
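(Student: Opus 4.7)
The plan is to prove both identities by directly commuting each $E_{-}$ operator from the right to the left through the product of $Q$ and $\Gamma$-operators, using the commutation relations in Lemma \ref{vertex-exchange1} and Lemma \ref{vertex-exchange2}. Since $[E_{-}(x),\Gamma_{-}(y)]=0$ and $[E_{-}(x),\Gamma_{-}^\prime(y)]=0$, only the $\Gamma_{+}$ or $\Gamma_{+}^\prime$ factors and the weight operators $Q_{gh}$ contribute non-trivially. For the first identity, the relevant relation is $\Gamma_{+}(x)E_{-}(y)=\frac{1}{1-(xy)^2}E_{-}(y)\Gamma_{+}(x)$, while for the second identity the relation $\Gamma_{+}^\prime(x)E_{-}(y)=(1-(xy)^2)E_{-}(y)\Gamma_{+}^\prime(x)$ produces reciprocal factors; this accounts for the difference between $\mathbf{M}_1$ and $\mathbf{M}_1^{-1}$ in the two formulas.

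First, I would compute the result of pushing a single $E_{-}(y)$ past one four-operator block. For the first identity, moving through $Q_{z_1z_2}\Gamma_{-}^\prime(1)Q_{z_3z_4}\Gamma_{+}(1)Q_{z_2z_1}\Gamma_{-}^\prime(1)Q_{z_4z_3}\Gamma_{+}(1)$ from right to left, the two $Q$-operators on either side of each $\Gamma_{-}^\prime$ together multiply the argument of $E_{-}$ by $\sqrt{q_{z_3}q_{z_4}\cdot q_{z_1}q_{z_2}}=\sqrt{q}$, so a single block rescales $y\mapsto yq$ and produces the factor $\frac{1}{(1-y^2)(1-qy^2)}$. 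Iterating $l$ times yields rescaling $y\mapsto yq^l$ and total factor $\prod_{j=0}^{2l-1}\frac{1}{1-y^2 q^j}$. Substituting $y=x\sqrt{q^i}$ and taking the infinite product over $i\ge 0$ gives
\ben
\prod_{i=0}^\infty\prod_{j=0}^{2l-1}\frac{1}{1-x^2 q^{i+j}}=\prod_{n=0}^{2l-1}\frac{1}{(1-x^2q^n)^{n+1}}\cdot\prod_{n=2l}^\infty\frac{1}{(1-x^2q^n)^{2l}},
\een
after reindexing by $n=i+j$. A direct manipulation of $\mathbf{M}(x^2q^{-1},q)$ and $\mathbf{M}(x^2 q^{2l-1},q)$ shows this equals $\mathbf{M}_1(x^2q^{-1},q^{2l},q)$, which is the claimed scalar.

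The second identity is handled identically, except that (i) each $\Gamma_{+}^\prime$ contributes $(1-(xy)^2)$ instead of $\frac{1}{1-(xy)^2}$, inverting every factor, and (ii) there is an extra half-block $Q_{z_1z_2}\Gamma_{-}(1)Q_{z_3z_4}\Gamma_{+}^\prime(1)$ to pass through after the $l$ full blocks. Passing $E_{-}(y q^l)$ through the half-block produces the extra factor $(1-y^2q^{2l})$ and the additional rescaling $y q^l \mapsto y q^l\sqrt{q_{z_3}q_{z_4}q_{z_1}q_{z_2}}=y q^l\sqrt{q}$, which matches the shift $\sqrt{q^i}\mapsto\sqrt{q^{i+1}}$ in the argument of $E_{-}$ in the claimed formula. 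The index range for $j$ becomes $0\le j\le 2l$, and the analogous reindexing gives the reciprocal of $\mathbf{M}_1(x^2q^{-1},q^{2l+1},q)$, as stated.

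The main obstacle is purely bookkeeping: one must carefully track how each $Q$-operator rescales the argument of $E_{-}$ (always by a $\sqrt{q_gq_h}$, with the total over one block collapsing to $\sqrt{q}$ because $\{z_1,z_2,z_3,z_4\}=\{0,a,b,c\}$ ensures $q_{z_1}q_{z_2}q_{z_3}q_{z_4}=q$) and then verify that the doubly-infinite product of scalar factors reassembles into the desired $\mathbf{M}_1$-expression. This is analogous to, and formally parallel with, the verifications carried out in Lemmas \ref{antidiag-E-move-1}--\ref{antidiag-E-move-4}, and no new identities beyond Lemma \ref{vertex-exchange1} and Lemma \ref{vertex-exchange2} are needed.
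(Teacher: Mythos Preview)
Your proposal is correct and follows exactly the same approach as the paper's proof, which simply states that one moves each $E_{-}\left(x\sqrt{q^i}\right)$ to the left using Lemmas \ref{vertex-exchange1} and \ref{vertex-exchange2}. Your write-up provides the detailed bookkeeping that the paper omits, and the scalar computations you outline (rescaling of arguments by $\sqrt{q}$ per half-block and the reindexing of the double product into $\mathbf{M}_1$) are accurate.
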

\begin{proof}
The proof follows from 	moving each $E_{-}\left(x\sqrt{q^i}\right)$ to the left by Lemmas  \ref{vertex-exchange1} and \ref{vertex-exchange2}. 
\end{proof}

\begin{lemma}\label{antidiag-E-move-6}
	Assume  $\{z_1,z_2,z_3,z_4\}=\{0,a,b,c\}$. Let $q=q_aq_bq_cq_0$ and $x$ be a variable.   Then 
	\ben
	&&\bigg\langle\emptyset\,\bigg|\, \prod_{i=1}^{\infty}Q_{z_1z_2}\Gamma_{+}^\prime(1)Q_{z_3z_4}\Gamma_{+}(1)\bigg|\cdot\prod_{i=0}^\infty E_{-}\left(x\sqrt{q^i}\right)\\
	&=&\mathbf{M}_1(x^2q^{-1},q_{z_3}q_{z_4},q)\cdot\bigg\langle\emptyset\,\bigg|\, \prod_{i=1}^{\infty}Q_{z_1z_2}\Gamma_{+}^\prime(1)Q_{z_3z_4}\Gamma_{+}(1)\bigg|,\\
	&&\bigg\langle\emptyset\,\bigg|\, \prod_{i=1}^{\infty}Q_{z_1z_2}\Gamma_{+}(1)Q_{z_3z_4}\Gamma_{+}^\prime(1)\bigg|\cdot\prod_{i=0}^\infty E_{-}\left(x\sqrt{q^i}\right)\\
	&=&\frac{1}{\mathbf{M}_1(x^2q^{-1},q_{z_3}q_{z_4},q)}\cdot\bigg\langle\emptyset\,\bigg|\, \prod_{i=1}^{\infty}Q_{z_1z_2}\Gamma_{+}(1)Q_{z_3z_4}\Gamma_{+}^\prime(1)\bigg|.
	\een	
\end{lemma}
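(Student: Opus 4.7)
The plan is to push the factor $\prod_{i=0}^\infty E_{-}(x\sqrt{q^i})$ leftward across the infinite product $\prod_{i=1}^\infty Q_{z_1z_2}\Gamma_{+}^\prime(1)Q_{z_3z_4}\Gamma_{+}(1)$ block by block, using the commutation relations of Lemmas \ref{vertex-exchange1} and \ref{vertex-exchange2}, and then absorb the surviving $E_{-}$ factor at the left via the identity $\langle\emptyset|E_{-}(y)=\langle\emptyset|$ from \eqref{E-left}. This is parallel to the moves already made in Lemmas \ref{antidiag-E-move-1}--\ref{antidiag-E-move-5}, but carried out on the bra side rather than on the ket side and across primed/unprimed $\Gamma$'s in alternation.

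For a single operator $E_{-}(y)$ with $y=x\sqrt{q^i}$, I will track its passage across one block $Q_{z_1z_2}\Gamma_{+}^\prime(1)Q_{z_3z_4}\Gamma_{+}(1)$ reading right to left. Crossing $\Gamma_{+}(1)$ yields a factor $\frac{1}{1-y^2}$ by Lemma \ref{vertex-exchange1}; crossing $Q_{z_3z_4}$ rescales $y\mapsto y\sqrt{q_{z_3}q_{z_4}}$ by Lemma \ref{vertex-exchange2}; crossing $\Gamma_{+}^\prime(1)$ yields a factor $(1-y^2 q_{z_3}q_{z_4})$; finally crossing $Q_{z_1z_2}$ rescales the parameter further by $\sqrt{q_{z_1}q_{z_2}}$. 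Since $\{z_1,z_2,z_3,z_4\}=\{0,a,b,c\}$ and $q=q_0q_aq_bq_c$, the net parameter shift across one block is $y\mapsto y\sqrt{q}$ and the net scalar contribution is $\frac{1-y^2 q_{z_3}q_{z_4}}{1-y^2}$.

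Iterating across all blocks, the parameter of the residual $E_{-}$ tends to $y q^{N/2}\to 0$ as $N\to\infty$, so the residual operator becomes the identity and is absorbed by $\langle\emptyset|$. The accumulated scalar from one $E_{-}(x\sqrt{q^i})$ is therefore $\prod_{k=0}^\infty\frac{1-x^2 q^{i+k}q_{z_3}q_{z_4}}{1-x^2 q^{i+k}}$. Taking the product over all $i\geq 0$ and reindexing $n=i+k$, so that each $n\geq 0$ contributes with multiplicity $n+1$, gives
\[
\prod_{n=0}^\infty\left(\frac{1-x^2 q^n q_{z_3}q_{z_4}}{1-x^2 q^n}\right)^{n+1}=\frac{\mathbf{M}(x^2 q^{-1},q)}{\mathbf{M}(x^2 q^{-1}q_{z_3}q_{z_4},q)}=\mathbf{M}_1(x^2 q^{-1},q_{z_3}q_{z_4},q),
\]
which is the first identity. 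The second identity follows by the same argument with the roles of $\Gamma_{+}$ and $\Gamma_{+}^\prime$ swapped inside each block; this interchanges the two scalar contributions from Lemma \ref{vertex-exchange1}, inverting the per-block ratio and therefore the whole infinite product, which yields the reciprocal $\mathbf{M}_1(x^2 q^{-1},q_{z_3}q_{z_4},q)^{-1}$.

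The bookkeeping is routine once the block structure is set up; the only delicate point will be justifying the simultaneous leftward commutation of the infinitely many factors $E_{-}(x\sqrt{q^i})$ and verifying that the resulting double product telescopes to the stated multiplicity pattern. I plan to address this by truncating both products at level $N$, performing the commutation inside each truncation where only finitely many applications of Lemmas \ref{vertex-exchange1} and \ref{vertex-exchange2} are required, and then passing to the $N\to\infty$ limit using the absolute convergence of the $\mathbf{M}$-series together with identity \eqref{identity1} to identify the resulting infinite product with $\mathbf{M}_1(x^2q^{-1},q_{z_3}q_{z_4},q)$.
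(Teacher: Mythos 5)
Your proposal is correct and follows exactly the paper's route: commute each $E_{-}(x\sqrt{q^{i}})$ leftward through the blocks using Lemmas \ref{vertex-exchange1} and \ref{vertex-exchange2}, absorb the residual operators via \eqref{E-left}, and collect the scalar factors into $\mathbf{M}_1(x^2q^{-1},q_{z_3}q_{z_4},q)^{\pm1}$. Your per-block factor $\frac{1-y^2q_{z_3}q_{z_4}}{1-y^2}$, the parameter shift $y\mapsto y\sqrt{q}$, and the multiplicity-$(n+1)$ reindexing of the double product all check out, so this is just a more explicit write-up of the paper's one-line argument.
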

\begin{proof}
It follows from Equation \eqref{E-left}, Lemmas \ref{vertex-exchange1}, \ref{vertex-exchange2} by commuting all $E_{-}\left(x\sqrt{q^i}\right)$ out to the left.		
\end{proof}
Now we have
\begin{lemma}\label{Z2Z2-RPC+}
Assume  $\nu=(m,m-1,\cdots,2,1)$ with $m\in\mathbb{Z}_{\geq1}$. Let $q=q_aq_bq_cq_0$. Then we have  \\
(i) if $m\equiv0\; \mbox{(mod 4)}$ or $m\equiv3\; \mbox{(mod 4)}$,
\ben
V_{\emptyset\emptyset\nu}^{\mathbb{Z}_{2}\times\mathbb{Z}_{2}}
=\widetilde{\mathbf{M}}(q_aq_b,q)\cdot\widetilde{\mathbf{M}}_{2\{\frac{m}{2}\}}\left(q_aq_b,q;2\left\lfloor\frac{m+1}{2}\right\rfloor\right)\cdot\left( Z_{\mathfrak{RP}_{+}(\nu)}\bigg|_{q_a\leftrightarrow q_b}\right),
\een	
(ii) if $m\equiv1\; \mbox{(mod 4)}$ or $m\equiv2\; \mbox{(mod 4)}$,
\ben
V_{\emptyset\emptyset\nu}^{\mathbb{Z}_{2}\times\mathbb{Z}_{2}}
=\widetilde{\mathbf{M}}(q_aq_b,q)\cdot\widetilde{\mathbf{M}}_{2\{\frac{m}{2}\}}\left(q_aq_b,q;2\left\lfloor\frac{m+1}{2}\right\rfloor\right)\cdot\left( Z_{\mathfrak{RP}_{+}(\nu)}\bigg|_{q_0\leftrightarrow q_c}\right).
\een
\end{lemma}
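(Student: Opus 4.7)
The plan is to compare, case by case in $m \bmod 4$, the vertex operator product expressions for $V_{\emptyset\emptyset\nu}^{\mathbb{Z}_2\times\mathbb{Z}_2}$ (equations \eqref{Z2Z2-0}, \eqref{Z2Z2-1}) and for $Z_{\mathfrak{RP}_+(\nu)}$ (equations \eqref{RPC+0}, \eqref{RPC+1}, and their $m\equiv 2,3$ analogues). Inspecting these expressions, the only differences between them are: (a) some $\Gamma_{\pm}(1)$ in the $V$-expression are $\Gamma_{\pm}'(1)$ in the $Z_{\mathfrak{RP}_+(\nu)}$-expression, and (b) the weight operators $\widetilde{Q}_{k,\nu}$ versus $\overline{Q}_{k,\nu}$, which by Corollary \ref{weight-operator2} differ by the swap $q_a\leftrightarrow q_b$ when $m\equiv 0,3\pmod 4$ and by $q_0\leftrightarrow q_c$ when $m\equiv 1,2\pmod 4$. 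The latter discrepancy is precisely what produces the factor $Z_{\mathfrak{RP}_+(\nu)}\bigl|_{q_a\leftrightarrow q_b}$ or $Z_{\mathfrak{RP}_+(\nu)}\bigl|_{q_0\leftrightarrow q_c}$ on the right-hand side; the former discrepancy is what must be converted using $\Gamma_\pm(1)=\Gamma_\pm'(1)E_\pm(1)$ from Lemma \ref{vertex-exchange1}, and yields the explicit MacMahon-type prefactor.

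Concretely, I would first rewrite each $\Gamma_\pm(1)$ that needs converting as $\Gamma_\pm'(1)E_\pm(1)$ in the chosen $V$-expression, producing one $E_+$-operator in each of the infinitely many positive blocks, one $E_-$-operator in each negative block, and finitely many $E_\pm$-operators in the middle block of length $\lfloor (m+1)/2\rfloor$ or similar. I then push all $E_+$'s to the right using Lemmas \ref{antidiag-E-move-1}, \ref{antidiag-E-move-2}, \ref{antidiag-E-move-3}, and push all $E_-$'s to the left using Lemmas \ref{antidiag-E-move-4}, \ref{antidiag-E-move-5}, \ref{antidiag-E-move-6}. Each commutation produces either a scalar factor of the form $\mathbf{M}_1$, $\mathbf{M}_2$ or $\mathbf{M}$ (according to the relevant commutation lemma), or kills the $E_\pm$ entirely when it hits the vacuum on the correct side via \eqref{E-left}--\eqref{E+right}. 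Collecting all surviving scalars should, after telescoping products over $i\ge 0$ and grouping by the variable $q_aq_b$ (which is the one governed by $Q_{ab}$/$Q_{ba}$ blocks in odd positions), assemble into $\widetilde{\mathbf{M}}(q_aq_b,q)$ coming from the infinite tails together with $\widetilde{\mathbf{M}}_{2\{m/2\}}(q_aq_b,q;2\lfloor(m+1)/2\rfloor)$ coming from the finite middle block (the parity $2\{m/2\}\in\{0,1\}$ keeps track of the leftover half-step when $m$ is odd).

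In parallel, after all $E_\pm$'s are cleared, the remaining operator string is exactly the vertex operator product in \eqref{RPC+0}--\eqref{RPC+1} (or its $m\equiv 2,3$ analogue) with the relevant pair of variables swapped, by the comparison of $\widetilde{Q}_{k,\nu}$ with $\overline{Q}_{k,\nu}$ in Corollary \ref{weight-operator2}; that matrix element is, by definition, $Z_{\mathfrak{RP}_+(\nu)}$ with the same variable swap applied. Separating the four residue classes of $m\bmod 4$ is needed only to locate correctly which $\Gamma$'s need to be primed, which determines whether the $E$-moves produce $\widetilde{\mathbf{M}}_0$- or $\widetilde{\mathbf{M}}_1$-type contributions; the final answer unifies via the $\widetilde{\mathbf{M}}_{2\{m/2\}}$ notation.

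The main obstacle will be the bookkeeping in the middle block: the $E_+$ operators produced there must be commuted past exactly $\lfloor(m+1)/2\rfloor$ consecutive four-factor blocks of the form $Q_{z_1z_2}\Gamma_-(1)Q_{z_3z_4}\Gamma_+(1)Q_{z_2z_1}\Gamma_-(1)Q_{z_4z_3}\Gamma_+(1)$ (plus possibly one leftover pair when $m$ is odd), and each such pass produces a specific $\mathbf{M}_2$-type factor whose shift parameter $l$ is exactly $2\lfloor(m+1)/2\rfloor$. Showing that the contributions from the $E_+$'s travelling to the right, the $E_-$'s travelling to the left, and the $E_\pm$'s born inside the middle block all assemble cleanly (with the right exponents $2\{m/2\}$) into the claimed $\widetilde{\mathbf{M}}(q_aq_b,q)\cdot\widetilde{\mathbf{M}}_{2\{m/2\}}(q_aq_b,q;2\lfloor(m+1)/2\rfloor)$ is the delicate combinatorial step, and I expect to handle it by verifying the cases $m\equiv 0\pmod 2$ and $m\equiv 1\pmod 2$ separately before unifying via the $\lfloor\cdot\rfloor$/$\{\cdot\}$ notation.
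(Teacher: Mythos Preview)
Your proposal is correct and follows essentially the same route as the paper's proof: convert the relevant $\Gamma_\pm(1)$ to $\Gamma_\pm'(1)E_\pm(1)$, push the $E_+$'s to the right via Lemmas \ref{antidiag-E-move-1}--\ref{antidiag-E-move-3} and the $E_-$'s to the left via Lemmas \ref{antidiag-E-move-1}, \ref{antidiag-E-move-4}--\ref{antidiag-E-move-6}, then identify the remaining operator string with $Z_{\mathfrak{RP}_+(\nu)}$ under the variable swap dictated by Corollary \ref{weight-operator2}. The paper carries this out in two stages (first all $E_+$'s, obtaining an intermediate factor $C_1$ or $C_3$, then all $E_-$'s, obtaining $C_2$ or $C_4$) and simplifies the product $C_1C_2$ (resp.\ $C_3C_4$) via identity \eqref{identity1} to reach the stated prefactor, treating $m\equiv 0$ and $m\equiv 1\pmod 4$ explicitly and noting the remaining cases are analogous.
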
	
\begin{proof}
We first deal with the case when $m\equiv0\; \mbox{(mod 4)}$. In Equation \eqref{Z2Z2-0}, we  apply $\Gamma_{+}(1)=\Gamma_{+}^\prime(1)E_{+}(1)$  in the position  corresponding to that of $\Gamma_{+}^\prime(1)$ in Equation \eqref{RPC+0},  and then moving all the appearence of $E_{+}$-operators to the right by Lemmas \ref{antidiag-E-move-1}, \ref{antidiag-E-move-2} and \ref{antidiag-E-move-3}. Then we have 
\bea
V_{\emptyset\emptyset\nu}^{\mathbb{Z}_{2}\times\mathbb{Z}_{2}}&=&C_1(\{q_{\star}\};m)\cdot\bigg\langle\emptyset\,\bigg|\, \prod_{i=1}^{\infty}Q_{0c}\Gamma_{+}^\prime(1)Q_{ab}\Gamma_{+}(1)\cdot\prod_{i=1}^{\frac{m}{2}}Q_{0c}\Gamma_{-}(1)Q_{ba}\Gamma_{+}(1)Q_{c0}\Gamma_{-}(1)Q_{ab}\Gamma_{+}(1)\nonumber\\
&&\quad\quad\quad\quad\quad\quad\quad \cdot\prod_{i=1}^{\infty}Q_{0c}\Gamma_{-}(1)Q_{ba}\Gamma_{-}(1)\, \bigg|\,\emptyset\bigg\rangle\label{E+toR1}
\eea
where 
\ben
C_1(\{q_{\star}\};m)=\mathbf{M}_1\left(1,q^{m},q\right)\cdot\mathbf{M}(q^{m},q)\cdot \mathbf{M}(q^{m}q_aq_b,q).
\een
In Equation \eqref{E+toR1}, one applies $\Gamma_{-}(1)=\Gamma_{-}^\prime(1)E_{-}(1)$  in the positions corresponding to those of $\Gamma_{-}^\prime(1)$ in Equation \eqref{RPC+0},  and then moving all the appearence of $E_{-}$-operators to the left by Lemmas \ref{antidiag-E-move-4}, \ref{antidiag-E-move-1}, \ref{antidiag-E-move-5} and \ref{antidiag-E-move-6}. Then we have 
\ben
V_{\emptyset\emptyset\nu}^{\mathbb{Z}_{2}\times\mathbb{Z}_{2}}&=&C_1(\{q_{\star}\};m)\cdot C_2(\{q_{\star}\};m)\\
&&\cdot\bigg\langle\emptyset\,\bigg|\, \prod_{i=1}^{\infty}Q_{0c}\Gamma_{+}^\prime(1)Q_{ab}\Gamma_{+}(1)\cdot\prod_{i=1}^{\frac{m}{2}}Q_{0c}\Gamma^\prime_{-}(1)Q_{ba}\Gamma_{+}(1)Q_{c0}\Gamma^\prime_{-}(1)Q_{ab}\Gamma_{+}(1)\nonumber\\
&&\quad \cdot\prod_{i=1}^{\infty}Q_{0c}\Gamma^\prime_{-}(1)Q_{ba}\Gamma_{-}(1)\, \bigg|\,\emptyset\bigg\rangle\\
&=&C_1(\{q_{\star}\};m)\cdot C_2(\{q_{\star}\};m) \cdot\left( Z_{\mathfrak{RP}_{+}(\nu)}\bigg|_{q_a\leftrightarrow q_b}\right)
\een
where 
\ben
C_2(\{q_{\star}\};m)=\mathbf{M}_2(q_0q_c,q;m-1)\cdot\mathbf{M}_1((q_aq_b)^{-1},q^m,q)\cdot \mathbf{M}_1((q_aq_b)^{-1},q_aq_b,q).
\een
By  Equation \eqref{identity1},  the proof is completed by the direct computation of $C_1(\{q_{\star}\};m)\cdot C_2(\{q_{\star}\};m)$.

Next, we treat the case when $m\equiv1\; \mbox{(mod 4)}$. In Equation \eqref{Z2Z2-1}, we substitute $\Gamma_{+}(1)=\Gamma_{+}^\prime(1)E_{+}(1)$ into the positions
corresponding to those of $\Gamma_{+}^\prime(1)$ in Equation \eqref{RPC+1} and then move all $E_{+}$-operators to the right by Lemmas \ref{antidiag-E-move-1}, \ref{antidiag-E-move-2} and \ref{antidiag-E-move-3}. Then we have 
\bea
V_{\emptyset\emptyset\nu}^{\mathbb{Z}_{2}\times\mathbb{Z}_{2}}
&=&C_{3}(\{q_{\star}\};m)\cdot
\bigg\langle\emptyset\,\bigg|\, \prod_{i=1}^{\infty}Q_{ab}\Gamma_{+}(1)Q_{0c}\Gamma^\prime_{+}(1)\cdot\left(\prod_{i=1}^{\frac{m-1}{2}}Q_{ab}\Gamma_{-}(1)Q_{c0}\Gamma^\prime_{+}(1)Q_{ba}\Gamma_{-}(1)Q_{0c}\Gamma^\prime_{+}(1)\right)\nonumber\\
&&\quad\quad\quad\quad\quad\quad\quad\quad\cdot Q_{ab}\Gamma_{-}(1)Q_{c0}\Gamma^\prime_{+}(1)
\cdot\prod_{i=1}^{\infty}Q_{ba}\Gamma_{-}(1)Q_{0c}\Gamma_{-}(1)\, \bigg|\,\emptyset\bigg\rangle\label{E+toR2}
\eea
where 
\ben
C_{3}(\{q_{\star}\};m)=\mathbf{M}_1(q^{-1}q_aq_b,q^m,q)\cdot\mathbf{M}_2(q_aq_b,q;m-1)\cdot\mathbf{M}(q^{-1}(q_aq_b),q)\cdot\mathbf{M}(1,q).
\een
In Equation \eqref{E+toR2}, one applies $\Gamma_{-}(1)=\Gamma_{-}^\prime(1)E_{-}(1)$  in the positions corresponding to that of $\Gamma_{-}^\prime(1)$ in Equation \eqref{RPC+1},  and then move all  $E_{-}$-operators to the left by Lemmas \ref{antidiag-E-move-1}, \ref{antidiag-E-move-5} and \ref{antidiag-E-move-6}. By comparing with Equation \eqref{RPC+1},  we have 
\ben
V_{\emptyset\emptyset\nu}^{\mathbb{Z}_{2}\times\mathbb{Z}_{2}}
=C_3(\{q_{\star}\};m)\cdot C_4(\{q_{\star}\};m) \cdot\left( Z_{\mathfrak{RP}_{+}(\nu)}\bigg|_{q_0\leftrightarrow q_c}\right)
\een
where 
\ben
C_4(\{q_{\star}\};m)=\frac{1}{\mathbf{M}_1(1,q^m,q)\cdot\mathbf{M}_1(q^m,q_0q_c,q)}.
\een
Then the proof follows from the direct computation of $C_3(\{q_{\star}\};m)\cdot C_4(\{q_{\star}\};m)$ when $m\equiv1\; \mbox{(mod 4)}$.

The other cases are similar, or can be simply proved just by comparing with the above two cases.
\end{proof}

\subsection{1-leg DT $\mathbb{Z}_{4}$-vertex and restricted pyramid configurations}
In this subsection, we  will present the relation between the 1-leg DT $\mathbb{Z}_{4}$-vertex $V_{\emptyset\emptyset\nu}^{\mathbb{Z}_{4}}$ and the generating function of restricted pyramid configurations $Z_{\mathfrak{RP}_{-}(\nu)}$. Here we take the convention of identifying variables and operators mentioned in Section 4.1.  As in Section 4.2, we first derive the following

\begin{lemma}\label{diag-E-move-1}
	Assume  $\{z_1,z_2,z_3,z_4\}=\{0,a,b,c\}$. Let $q=q_aq_bq_cq_0$.  Then 
	\ben
	&&\prod_{i=1}^\infty Q_{z_1}\Gamma_{+}(1)Q_{z_2}\Gamma_{+}(1)Q_{z_3}\Gamma_{+}(1)Q_{z_4}\Gamma_{+}(1)\\
	&=&\left(\prod_{i=1}^\infty Q_{z_1}\Gamma^\prime_{+}(1)Q_{z_2}\Gamma_{+}(1)Q_{z_3}\Gamma_{+}^\prime(1)Q_{z_4}\Gamma_{+}(1)\right)\cdot\prod_{i=0}^\infty E_{+}(q^iq_{z_4})E_{+}(q^iq_{z_2}q_{z_3}q_{z_4}),\\
	&&\prod_{i=1}^\infty Q_{z_1}\Gamma_{+}(1)Q_{z_2}\Gamma_{+}(1)Q_{z_3}\Gamma_{+}(1)Q_{z_4}\Gamma_{+}(1)\\
	&=&\left(\prod_{i=1}^\infty Q_{z_1}\Gamma_{+}(1)Q_{z_2}\Gamma^\prime_{+}(1)Q_{z_3}\Gamma_{+}(1)Q_{z_4}\Gamma^\prime_{+}(1)\right)\cdot\prod_{i=0}^\infty E_{+}(q^i)E_{+}(q^iq_{z_3}q_{z_4})
	\een
	and 
	\ben
	&&\prod_{i=1}^\infty Q_{z_1}\Gamma_{-}(1)Q_{z_2}\Gamma_{-}(1)Q_{z_3}\Gamma_{-}(1)Q_{z_4}\Gamma_{-}(1)\\
	&=&\prod_{i=0}^\infty E_{-}(q^iq_{z_1})E_{-}(q^iq_{z_1}q_{z_2}q_{z_3})\cdot\left(\prod_{i=1}^\infty Q_{z_1}\Gamma^\prime_{-}(1)Q_{z_2}\Gamma_{-}(1)Q_{z_3}\Gamma_{-}^\prime(1)Q_{z_4}\Gamma_{-}(1)\right),\\
	&&\prod_{i=1}^\infty Q_{z_1}\Gamma_{-}(1)Q_{z_2}\Gamma_{-}(1)Q_{z_3}\Gamma_{-}(1)Q_{z_4}\Gamma_{-}(1)\\
	&=&\prod_{i=0}^\infty E_{-}(q^iq_{z_1}q_{z_2})E_{-}(q^{i+1})\cdot\left(\prod_{i=1}^\infty Q_{z_1}\Gamma_{-}(1)Q_{z_2}\Gamma^\prime_{-}(1)Q_{z_3}\Gamma_{-}(1)Q_{z_4}\Gamma^\prime_{-}(1)\right)
	\een
	
\end{lemma}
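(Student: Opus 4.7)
The plan is to imitate the argument used for Lemma \ref{antidiag-E-move-1}: split each $\Gamma_{\pm}(1)$ that needs to become $\Gamma_{\pm}'(1)$ using the identity $\Gamma_{\pm}(1)=\Gamma_{\pm}'(1)E_{\pm}(1)$ from Lemma \ref{vertex-exchange1}, and then commute all the resulting $E_{+}$-operators out to the right (or all the $E_{-}$-operators out to the left) with the commutation rules in Lemmas \ref{vertex-exchange1} and \ref{vertex-exchange3}. The bookkeeping is identical in the four equations, so I sketch only the first; the others are obtained mutatis mutandis, and the second (resp.\ fourth) arises from the first (resp.\ third) by shifting the splitting position by two.

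For the first identity, I would write each factor of the infinite product as
\[
Q_{z_1}\Gamma_{+}'(1)E_{+}(1)\,Q_{z_2}\Gamma_{+}(1)\,Q_{z_3}\Gamma_{+}'(1)E_{+}(1)\,Q_{z_4}\Gamma_{+}(1),
\]
which requires splitting $\Gamma_{+}(1)$ at positions $1$ and $3$ of each block. I would then push the two $E_{+}(1)$'s appearing in the $i$-th block to the right through everything that follows. Using the fact that $E_{+}$ commutes with both $\Gamma_{+}$ and $\Gamma_{+}'$ (Lemma \ref{vertex-exchange1}), only the $Q_{z_k}$'s matter, and Lemma \ref{vertex-exchange3} shows that passing $E_{+}(x)$ past a $Q_{z_k}$ multiplies its argument by $q_{z_k}$. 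Within its own block, the first $E_{+}(1)$ meets $Q_{z_2},Q_{z_3},Q_{z_4}$ and the second meets only $Q_{z_4}$; thereafter each full subsequent block contributes a factor of $q_{z_1}q_{z_2}q_{z_3}q_{z_4}=q$. Hence the $E$-operators originating in block $i$ become $E_{+}(q^{i-1}q_{z_2}q_{z_3}q_{z_4})$ and $E_{+}(q^{i-1}q_{z_4})$, respectively. Using that $E_{+}$-operators among themselves commute, I reindex the infinite product $i\mapsto i+1$ to obtain the claimed right-hand factor $\prod_{i=0}^{\infty}E_{+}(q^iq_{z_4})E_{+}(q^iq_{z_2}q_{z_3}q_{z_4})$.

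For the two identities on the $\Gamma_{-}$ side I would apply $\Gamma_{-}(1)=\Gamma_{-}'(1)E_{-}(1)$ at the corresponding positions and commute the $E_{-}$-operators leftward. The commutation rule now reads $Q_{z_k}E_{-}(x)=E_{-}(xq_{z_k})Q_{z_k}$ (equivalently from Lemma \ref{vertex-exchange3}), so each $E_{-}$ picks up exactly the variables $q_{z_k}$ of those $Q_{z_k}$'s it crosses on the way to the left end, together with a factor of $q$ per earlier block. Tracking this gives exactly $E_{-}(q^{i-1}q_{z_1})$ and $E_{-}(q^{i-1}q_{z_1}q_{z_2}q_{z_3})$ from block $i$ in the third identity, and $E_{-}(q^{i-1}q_{z_1}q_{z_2})$ and $E_{-}(q^i)$ in the fourth, after which the same reindexing yields the stated products.

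There is no real obstacle here beyond careful combinatorial bookkeeping; the only subtlety is to carry out the two splittings within a single block in the correct order so that the $E$-operators do not cross each other while being pushed out, but since $E_{+}$'s (resp.\ $E_{-}$'s) mutually commute and both commute with $\Gamma_{+}^{(\prime)}$ (resp.\ $\Gamma_{-}^{(\prime)}$), one can in fact move them in either order. Convergence of the infinite products is automatic in the usual power-series sense since $q,q_{z_k}$ are formal variables.
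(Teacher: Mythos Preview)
Your proposal is correct and is precisely the argument the paper has in mind: its proof of this lemma is simply the one-line remark that it ``follows from the similar argument in the proof of Lemma~\ref{antidiag-E-move-1},'' i.e., apply $\Gamma_{\pm}(1)=\Gamma_{\pm}'(1)E_{\pm}(1)$ at the appropriate positions and commute the $E_{\pm}$ operators out using Lemmas~\ref{vertex-exchange1} and~\ref{vertex-exchange3}. Your detailed bookkeeping of which $q_{z_k}$'s each $E_{\pm}$ picks up, and the resulting reindexing, matches the stated identities exactly.
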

\begin{proof}
It follows from the similar argument in the proof of Lemma \ref{antidiag-E-move-1}.	
\end{proof}

\begin{lemma}\label{diag-E-move-2}
	Assume  $\{z_1,z_2,z_3,z_4\}=\{0,a,b,c\}$, $k\in\{0,1\}$ and $l\in\mathbb{Z}_{\geq0}$. Let $q=q_aq_bq_cq_0$ and $x$ be a variable.  Then 
	\ben
	&&\prod_{i=0}^\infty E_{+}(xq^i)\cdot\prod_{i=1}^l Q_{z_1}\Gamma_{-}(1)Q_{z_2}\Gamma_{+}(1)Q_{z_3}\Gamma_{-}(1)Q_{z_4}\Gamma_{+}(1)\cdot\prod_{i=1}^k Q_{z_1}\Gamma_{-}(1)Q_{z_2}\Gamma_{+}(1)\\
	&=&\widehat{\mathbf{M}}_1(xq^{-1}q_{z_1},q^{l+k},q)\cdot\widehat{\mathbf{M}}_1(xq_{z_4}^{-1},q^l,q)\cdot\prod_{i=1}^l Q_{z_1}\Gamma_{-}(1)Q_{z_2}\Gamma_{+}(1)Q_{z_3}\Gamma_{-}(1)Q_{z_4}\Gamma_{+}(1)\\
	&&\cdot\prod_{i=1}^k Q_{z_1}\Gamma_{-}(1)Q_{z_2}\Gamma_{+}(1)\cdot\prod_{i=0}^\infty E_{+}(xq^{l+i}q_{z_1}^kq_{z_2}^k)
	\een
	and 
	\ben
	&&\left(\prod_{i=1}^l Q_{z_1}\Gamma_{-}(1)Q_{z_2}\Gamma_{+}(1)Q_{z_3}\Gamma_{-}(1)Q_{z_4}\Gamma_{+}(1)\right)\cdot Q_{z_1}\Gamma_{-}(1)Q_{z_2}\Gamma_{+}(1)\\
	&=&\widehat{\mathbf{M}}_2(q_{z_1},q;l)\cdot\widehat{\mathbf{M}}_2(qq_{z_2}^{-1},q;l)\cdot\widehat{\mathbf{M}}_2(qq_{z_4}^{-1},q;l-1)\cdot\widehat{\mathbf{M}}_2(q_{z_3},q;l)\\
	&&\cdot\left(\prod_{i=1}^l Q_{z_1}\Gamma_{-}(1)Q_{z_2}\Gamma^\prime_{+}(1)Q_{z_3}\Gamma_{-}(1)Q_{z_4}\Gamma_{+}^\prime(1)\right)
	\cdot Q_{z_1}\Gamma_{-}(1)Q_{z_2}\Gamma^\prime_{+}(1)\cdot\prod_{i=0}^{l-1} E_{+}(q^{i}q_{z_1}q_{z_2})\cdot\prod_{i=0}^{l}E_{+}(q^{i})
	\een
\end{lemma}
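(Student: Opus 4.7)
The plan is to mimic the vertex-operator method used in the proofs of Lemmas~\ref{antidiag-E-move-1} and~\ref{antidiag-E-move-2}: shuffle every $E_{+}$-operator to the far right of the displayed expression by repeated application of Lemmas~\ref{vertex-exchange1},~\ref{vertex-exchange2}, and~\ref{vertex-exchange3} (or their obvious single-weight $Q_{z}$-analogues), using the factorization $\Gamma_{+}(1)=\Gamma_{+}^\prime(1)E_{+}(1)$ whenever a $\Gamma_{+}(1)$ on the left has to be converted to a $\Gamma_{+}^\prime(1)$ on the right. Each crossing of a $Q_{z_j}$ multiplies the argument of $E_{+}$ by $q_{z_j}$; crossings of $\Gamma_{+}$ or $\Gamma_{+}^\prime$ are trivial; and each crossing of $\Gamma_{-}(1)$ produces a scalar factor $(1-y^{2})^{-1}$, where $y$ is the argument of $E_{+}$ at that moment. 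The collected scalar products will then be reassembled into the $\widehat{\mathbf{M}}_{1}$ and $\widehat{\mathbf{M}}_{2}$ functions via the MacMahon reindexing identity $\mathbf{M}(xq^{-1},q)=\prod_{N\geq 0}(1-xq^{N})^{-(N+1)}$ and its truncated versions.

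For the first equation, I would commute each factor $E_{+}(xq^{i})$ of the infinite product on the left past the whole finite product on the right, one $i$ at a time. Inside one full block $Q_{z_{1}}\Gamma_{-}(1)Q_{z_{2}}\Gamma_{+}(1)Q_{z_{3}}\Gamma_{-}(1)Q_{z_{4}}\Gamma_{+}(1)$, the argument of $E_{+}$ is multiplied by $q_{z_{1}}q_{z_{2}}q_{z_{3}}q_{z_{4}}=q$, while two scalar factors arise, namely $(1-(xq^{i}q_{z_{1}})^{2})^{-1}$ at the first $\Gamma_{-}(1)$ and $(1-(xq^{i}q_{z_{1}}q_{z_{2}}q_{z_{3}})^{2})^{-1}=(1-(xq^{i+1}q_{z_{4}}^{-1})^{2})^{-1}$ at the second. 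After $l$ full blocks plus $k\in\{0,1\}$ half-blocks, the final argument of each $E_{+}$ is $xq^{l+i}q_{z_{1}}^{k}q_{z_{2}}^{k}$, matching the right-hand side. Taking the product over $i\geq 0$ and applying the reindexing identity, the two collected families of scalars assemble exactly into $\widehat{\mathbf{M}}_{1}(xq^{-1}q_{z_{1}},q^{l+k},q)$ and $\widehat{\mathbf{M}}_{1}(xq_{z_{4}}^{-1},q^{l},q)$.

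For the second equation, I would insert $\Gamma_{+}(1)=\Gamma_{+}^\prime(1)E_{+}(1)$ at every one of the $2l+1$ positions where a $\Gamma_{+}(1)$ occurs (thereby converting every $\Gamma_{+}$ into a $\Gamma_{+}^\prime$, as required on the right), and then push each newly created $E_{+}(1)$ rightward to the end of the product. The $l+1$ copies inserted after the $Q_{z_{2}}$-positions accumulate to $\prod_{i=0}^{l}E_{+}(q^{i})$: the copy inserted after the terminal $Q_{z_{2}}$ stays as $E_{+}(1)$, while each of the $l$ copies from the full blocks picks up one factor of $q$ per subsequent block it must traverse. Similarly, the $l$ copies inserted after the $Q_{z_{4}}$-positions accumulate to $\prod_{i=0}^{l-1}E_{+}(q^{i}q_{z_{1}}q_{z_{2}})$, the trailing factor $q_{z_{1}}q_{z_{2}}$ coming from the terminal half-block. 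The scalars generated by these rightward motions split into four families, according to which of the two $\Gamma_{-}$-types (preceded by $Q_{z_{1}}$ or $Q_{z_{3}}$) is crossed by which of the two families of $E_{+}$'s; direct tallying of the multiplicities, combined with the same reindexing identity, reassembles them into $\widehat{\mathbf{M}}_{2}(q_{z_{1}},q;l)$, $\widehat{\mathbf{M}}_{2}(qq_{z_{2}}^{-1},q;l)$, $\widehat{\mathbf{M}}_{2}(qq_{z_{4}}^{-1},q;l-1)$, and $\widehat{\mathbf{M}}_{2}(q_{z_{3}},q;l)$. The exponent $l-1$ rather than $l$ in the third factor reflects the fact that the $\Gamma_{-}(1)$ preceded by $Q_{z_{3}}$ is absent from the terminal half-block, so the corresponding family is crossed by one fewer $E_{+}$.

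The main obstacle is the purely combinatorial bookkeeping: the same scalar $(1-yq^{n})$ can arise from commutations of different $E_{+}$'s at different depths in the block structure, and one must collate these multiplicities correctly before invoking the truncated MacMahon identity to recognise them as $\mathbf{M}_{1}$ or $\mathbf{M}_{2}$. The subtle asymmetry producing the exponent $l-1$ instead of $l$ in one of the four $\widehat{\mathbf{M}}_{2}$-factors is entirely analogous to the mismatched exponents already appearing in Lemma~\ref{antidiag-E-move-2}, and no essentially new technique is required beyond careful accounting.
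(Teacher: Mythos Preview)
Your proposal is correct and follows essentially the same approach as the paper, which simply states that the lemma ``follows from the similar argument in the proof of Lemma~\ref{antidiag-E-move-2}''; your detailed sketch of commuting each $E_{+}$ rightward via Lemmas~\ref{vertex-exchange1} and~\ref{vertex-exchange3} (the single-weight analogue needed here, since the diagonal case involves $Q_{z}$ rather than $Q_{gh}$) and tracking the resulting scalar factors is precisely the computation the paper leaves implicit. The bookkeeping you describe, including the source of the asymmetric exponent $l-1$, is accurate.
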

\begin{proof}
It follows from the similar argument in the proof of Lemma \ref{antidiag-E-move-2}.	
\end{proof}

\begin{lemma}\label{diag-E-move-3}
	Assume  $\{z_1,z_2,z_3,z_4\}=\{0,a,b,c\}$. Let $q=q_aq_bq_cq_0$ and $x$ be a variable.  Then 
	\ben
	&&\prod_{i=0}^\infty E_{+}(xq^i)\cdot\bigg|\prod_{i=1}^\infty Q_{z_1}\Gamma_{-}(1)Q_{z_2}\Gamma_{-}(1)Q_{z_3}\Gamma_{-}(1)Q_{z_4}\Gamma_{-}(1)\, \bigg|\,\emptyset\bigg\rangle\\
	&=&\mathbf{M}_0(x,q)\cdot\mathbf{M}_0(xq_{z_4}^{-1},q)\cdot\mathbf{M}_0(x(q_{z_3}q_{z_4})^{-1},q)\cdot\mathbf{M}_0(xq^{-1}q_{z_1},q)\\
	&&\cdot\bigg|\prod_{i=1}^\infty Q_{z_1}\Gamma_{-}(1)Q_{z_2}\Gamma_{-}(1)Q_{z_3}\Gamma_{-}(1)Q_{z_4}\Gamma_{-}(1)\,\bigg|\,\emptyset\bigg\rangle
	\een
	
\end{lemma}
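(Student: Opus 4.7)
The proof runs parallel to Lemma \ref{antidiag-E-move-3}: commute every $E_{+}$-operator past the full infinite product on the right, absorb the surviving $E_{+}$ into the vacuum via Equation \eqref{E+right}, and identify the accumulated scalar with the four $\mathbf{M}_0$-factors. Two commutation rules are needed: $E_{+}(y)\Gamma_{-}(1)=\frac{1}{1-y^2}\Gamma_{-}(1)E_{+}(y)$ from Lemma \ref{vertex-exchange1}, and $E_{+}(y)Q_{z_i}=Q_{z_i}E_{+}(yq_{z_i})$, which is Lemma \ref{vertex-exchange3} rewritten under the identifications $\tilde{q}_0=q_0$, $\tilde{q}_1=q_b$, $\tilde{q}_2=q_c$, $\tilde{q}_3=q_a$ introduced in Section 4.1.

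First I would pass a single $E_{+}(y)$ through one period $Q_{z_1}\Gamma_{-}(1)Q_{z_2}\Gamma_{-}(1)Q_{z_3}\Gamma_{-}(1)Q_{z_4}\Gamma_{-}(1)$. Each $Q_{z_j}$-crossing multiplies the argument of $E_{+}$ by $q_{z_j}$, and each subsequent $\Gamma_{-}(1)$-crossing extracts the scalar $(1-(\text{current argument})^2)^{-1}$. Since $q_{z_1}q_{z_2}q_{z_3}q_{z_4}=q$, the outgoing argument after one period is $yq$, and the scalar contributed is
\[
\frac{1}{(1-(yq_{z_1})^2)(1-(yq_{z_1}q_{z_2})^2)(1-(yq_{z_1}q_{z_2}q_{z_3})^2)(1-(yq)^2)}.
\]
Iterating over all infinitely many periods drives the $E_{+}$-argument to zero, and the surviving $E_{+}$-operator collapses against $|\,\emptyset\rangle$ by \eqref{E+right}. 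Therefore pushing $E_{+}(xq^i)$ all the way through contributes
\[
\prod_{n=1}^\infty\frac{1}{(1-(xq^{i+n-1}q_{z_1})^2)(1-(xq^{i+n-1}q_{z_1}q_{z_2})^2)(1-(xq^{i+n-1}q_{z_1}q_{z_2}q_{z_3})^2)(1-(xq^{i+n})^2)}.
\]

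Taking the further product over $i\geq 0$, I would regroup by the total power of $q$. For the first three factors, the substitution $k=i+n-1$ collects $k+1$ pairs $(i,n)\in\mathbb{Z}_{\geq 0}\times\mathbb{Z}_{\geq 1}$ for each $k\geq 0$, producing exponent $k+1$ on each of $(1-(xq^kq_{z_1})^2)$, $(1-(xq^kq_{z_1}q_{z_2})^2)$, $(1-(xq^kq_{z_1}q_{z_2}q_{z_3})^2)$. For the fourth factor, the substitution $m=i+n$ collects $m$ pairs, giving exponent $m$ on $(1-(xq^m)^2)$. Using the reindexing $\mathbf{M}_0(a,q)=\prod_{n\geq 1}(1-a^2q^{2n})^{-n}=\prod_{k\geq 0}(1-(aq^{k+1})^2)^{-(k+1)}$ together with the identities $q_{z_3}q_{z_4}=q/(q_{z_1}q_{z_2})$ and $q_{z_4}^{-1}=q_{z_1}q_{z_2}q_{z_3}/q$, these four blocks match respectively $\mathbf{M}_0(xq^{-1}q_{z_1},q)$, $\mathbf{M}_0(x(q_{z_3}q_{z_4})^{-1},q)$, $\mathbf{M}_0(xq_{z_4}^{-1},q)$, and $\mathbf{M}_0(x,q)$, yielding the stated identity. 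The only delicate step is this last reindexing, but once the double product is written with $q$-exponents sorted the identification with $\mathbf{M}_0$ is routine.
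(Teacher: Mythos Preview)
Your proof is correct and follows precisely the strategy indicated in the paper: commute each $E_{+}(xq^i)$ all the way to the right using $E_{+}(y)\Gamma_{-}(1)=\frac{1}{1-y^2}\Gamma_{-}(1)E_{+}(y)$ and $E_{+}(y)Q_{z_j}=Q_{z_j}E_{+}(yq_{z_j})$, absorb it via \eqref{E+right}, and reindex the accumulated scalars into the four $\mathbf{M}_0$-factors. The paper's own proof is a one-line pointer to the analogous argument in Lemma~\ref{antidiag-E-move-3}, so your detailed write-up is simply an explicit unfolding of that.
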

\begin{proof}
It follows from the similar argument in the proof of Lemma \ref{antidiag-E-move-3}.
\end{proof}

\begin{lemma}\label{diag-E-move-4}
	Assume  $\{z_1,z_2,z_3,z_4\}=\{0,a,b,c\}$ and $l\in\mathbb{Z}_{\geq1}$. Let $q=q_aq_bq_cq_0$. Then 
	\ben
	&&\prod_{i=1}^l Q_{z_1}\Gamma_{-}(1)Q_{z_2}\Gamma_{+}(1)Q_{z_3}\Gamma_{-}(1)Q_{z_4}\Gamma_{+}(1)\\
	&=&\widehat{\mathbf{M}}_2(q_{z_1},q;l-1)\cdot\widehat{\mathbf{M}}_2(qq_{z_2}^{-1},q;l-1)\cdot\widehat{\mathbf{M}}_2(qq_{z_4}^{-1},q;l-1)\cdot\widehat{\mathbf{M}}_2(q_{z_3},q;l)\\
	&&\cdot\prod_{i=1}^l E_{-}(q^{i-1}q_{z_1})E_{-}(q^{i}q_{z_4}^{-1})\cdot\left(\prod_{i=1}^l Q_{z_1}\Gamma^\prime_{-}(1)Q_{z_2}\Gamma_{+}(1)Q_{z_3}\Gamma_{-}^\prime(1)Q_{z_4}\Gamma_{+}(1)\right)
	\een
\end{lemma}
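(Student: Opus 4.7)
The plan is to apply the splitting $\Gamma_{-}(1)=\Gamma_{-}^\prime(1)E_{-}(1)$ from Lemma \ref{vertex-exchange1} at each of the $2l$ occurrences of $\Gamma_{-}(1)$ in the product, then commute the resulting $E_{-}(1)$-operators all the way to the left using Lemmas \ref{vertex-exchange1} and \ref{vertex-exchange3}. Since $E_{-}$ commutes with $\Gamma_{-}$, $\Gamma_{-}^\prime$, and other $E_{-}$'s, the only nontrivial crossings are past the $Q_{z_k}$'s (which rescale the argument by $q_{z_k}$, using the identification of $\tilde{q}_g$ with $q_g$) and past the $\Gamma_{+}(1)$'s (which contribute a scalar $1/(1-x^2)$ when the incoming argument is $x$).

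Label the $E_{-}(1)$ coming from $Q_{z_1}\Gamma_{-}(1)$ in the $i$-th factor as $A_i$, and the one coming from $Q_{z_3}\Gamma_{-}(1)$ as $B_i$. Tracking $A_i$ leftward: within its own block it only crosses $Q_{z_1}$ and $\Gamma_{-}^\prime$, exiting with argument $q_{z_1}$; each earlier block contributes a factor of $q$ to the argument (since $q_{z_1}q_{z_2}q_{z_3}q_{z_4}=q$), so $A_i$ reaches the far left as $E_{-}(q^{i-1}q_{z_1})$. Its scalar collection, accumulated from crossing two $\Gamma_{+}$'s per earlier block, is $\prod_{k=1}^{i-1}[(1-q_{z_1}^2q^{2(k-1)})(1-q^{2k}/q_{z_2}^2)]^{-1}$, where the second factor uses $q_{z_1}q_{z_3}q_{z_4}=q/q_{z_2}$. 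The analysis for $B_i$ is similar: within its own block it crosses one $\Gamma_{+}$ (contributing $1/(1-q_{z_3}^2)$) and exits with argument $q/q_{z_4}$, and each earlier block multiplies the argument by $q$, yielding the final $E_{-}(q^iq_{z_4}^{-1})$ with scalar $(1-q_{z_3}^2)^{-1}\prod_{k=1}^{i-1}[(1-q^{2k}/q_{z_4}^2)(1-q^{2k}q_{z_3}^2)]^{-1}$, where $q_{z_3}q_{z_4}\cdot q/q_{z_4} = qq_{z_3}$.

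Multiplying over $i=1,\ldots,l$ and counting multiplicities (each factor with index $k$ appears for every $i>k$, so with exponent $l-k$), the total accumulated scalar is
\[
\prod_{k=1}^{l-1}\frac{1}{(1-q_{z_1}^2q^{2(k-1)})^{l-k}}\cdot\prod_{k=1}^{l-1}\frac{1}{(1-q^{2k}/q_{z_2}^2)^{l-k}}\cdot\prod_{k=1}^{l-1}\frac{1}{(1-q^{2k}/q_{z_4}^2)^{l-k}}\cdot\prod_{k=0}^{l-1}\frac{1}{(1-q_{z_3}^2q^{2k})^{l-k}}.
\]
Each of these four products is then identified with one of the claimed $\widehat{\mathbf{M}}_2$ factors, using the closed form
\[
\widehat{\mathbf{M}}_2(x,q;l)=\prod_{m=0}^{l-1}(1-x^2q^{2m})^{-(l-m)},
\]
which follows from the definition by telescoping the exponents in $\mathbf{M}(x,q)=\prod_{n\geq 1}(1-xq^n)^{-n}$ and combining $\mathbf{M}_2(x,q;l)\cdot\mathbf{M}_2(-x,q;l)$ via $(1-x)(1+x)=1-x^2$. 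Since all $E_{-}$-operators mutually commute, their final left-to-right ordering can be written as $\prod_{i=1}^l E_{-}(q^{i-1}q_{z_1})E_{-}(q^iq_{z_4}^{-1})$, as claimed.

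The main obstacle is the careful bookkeeping of scalar factors — tracking how each argument evolves through the $Q$-rescalings and which $\Gamma_{+}$ each $E_{-}$ crosses — and then verifying that the four-fold symmetric bookkeeping precisely assembles into the stated product of $\widehat{\mathbf{M}}_2$'s. The pairing structure $\widehat{\mathbf{M}}_2(x,q;l)=\mathbf{M}_2(x,q;l)\mathbf{M}_2(-x,q;l)$ appears automatically because the commutation factors from Lemma \ref{vertex-exchange1} involve only $(xy)^2$, so the resulting $q$-products depend only on squared arguments.
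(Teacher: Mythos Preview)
Your proof is correct and follows precisely the strategy the paper intends: apply $\Gamma_{-}(1)=\Gamma_{-}^\prime(1)E_{-}(1)$ at each occurrence and commute the $E_{-}$'s to the left via Lemmas \ref{vertex-exchange1} and \ref{vertex-exchange3}, exactly as in the proof of Lemma \ref{antidiag-E-move-4} (which the paper cites as the template). Your explicit bookkeeping of arguments and scalar factors, and the verification of the closed form $\widehat{\mathbf{M}}_2(x,q;l)=\prod_{m=0}^{l-1}(1-x^2q^{2m})^{-(l-m)}$, are all correct; the paper simply omits these details and defers to the analogous antidiagonal lemma.
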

\begin{proof}
It follows from the similar argument in the proof of Lemma \ref{antidiag-E-move-4}.
\end{proof}
\begin{lemma}\label{diag-E-move-5}
	Assume  $\{z_1,z_2,z_3,z_4\}=\{0,a,b,c\}$ and $l\in\mathbb{Z}_{\geq0}$. Let $q=q_aq_bq_cq_0$ and $x$ be a variable.   Then 
	\ben
	&&\prod_{i=1}^l Q_{z_1}\Gamma_{-}^\prime(1)Q_{z_2}\Gamma_{+}(1)Q_{z_3}\Gamma_{-}^\prime(1)Q_{z_4}\Gamma_{+}(1)\cdot\prod_{i=0}^\infty E_{-}(xq^i)\\
	&=&\widehat{\mathbf{M}}_1(xq^{-1},q^l,q)\cdot\widehat{\mathbf{M}}_1(x(q_{z_1}q_{z_2})^{-1},q^l,q)\cdot\prod_{i=0}^\infty E_{-}(xq^{l+i})\cdot\prod_{i=1}^l Q_{z_1}\Gamma_{-}^\prime(1)Q_{z_2}\Gamma_{+}(1)Q_{z_3}\Gamma_{-}^\prime(1)Q_{z_4}\Gamma_{+}(1)
	\een
	and 
   	\ben
   	&&\left(\prod_{i=1}^l Q_{z_1}\Gamma_{-}(1)Q_{z_2}\Gamma^\prime_{+}(1)Q_{z_3}\Gamma_{-}(1)Q_{z_4}\Gamma^\prime_{+}(1)\right)\cdot Q_{z_1}\Gamma_{-}(1)Q_{z_2}\Gamma^\prime_{+}(1)\cdot\prod_{i=0}^\infty E_{-}(xq^i)\\
   	&=&\frac{1}{\widehat{\mathbf{M}}_1(xq^{-1},q^{l+1},q)\cdot\widehat{\mathbf{M}}_1(x(q_{z_3}q_{z_4})^{-1},q^{l},q)}\\
   	&&\cdot\prod_{i=0}^\infty E_{-}(xq^{l+i}q_{z_1}q_{z_2})\cdot\left(\prod_{i=1}^l Q_{z_1}\Gamma_{-}(1)Q_{z_2}\Gamma^\prime_{+}(1)Q_{z_3}\Gamma_{-}(1)Q_{z_4}\Gamma^\prime_{+}(1)\right)\cdot Q_{z_1}\Gamma_{-}(1)Q_{z_2}\Gamma^\prime_{+}(1)
   	\een
	
\end{lemma}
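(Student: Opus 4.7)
The plan is to push each factor $E_{-}(xq^i)$ leftward one period at a time, using the three commutation lemmas already established (Lemmas \ref{vertex-exchange1}, \ref{vertex-exchange2}, and \ref{vertex-exchange3}), exactly in the spirit of Lemmas \ref{antidiag-E-move-5} and \ref{diag-E-move-4}. For the first identity, fix a single $E_{-}(y)$ sitting to the right of one period $Q_{z_1}\Gamma_{-}^\prime(1)Q_{z_2}\Gamma_{+}(1)Q_{z_3}\Gamma_{-}^\prime(1)Q_{z_4}\Gamma_{+}(1)$ and commute it leftward step by step. By Lemma \ref{vertex-exchange1}, $E_{-}$ commutes with $\Gamma_{-}^\prime$ and passes through each $\Gamma_{+}(1)$ at the cost of the scalar $\frac{1}{1-(\mathrm{arg})^2}$, while by Lemma \ref{vertex-exchange3} each $Q_{z_k}$ rescales the argument by $q_{z_k}$.

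Reading from right to left, the argument evolves $y\to yq_{z_4}\to yq_{z_4}q_{z_3}\to yq_{z_4}q_{z_3}q_{z_2}\to yq_{z_4}q_{z_3}q_{z_2}q_{z_1}=yq$, and the two $\Gamma_{+}(1)$-crossings produce exactly the prefactor
\[
\frac{1}{\bigl(1-y^{2}\bigr)\bigl(1-(y q_{z_3}q_{z_4})^{2}\bigr)}.
\]
Iterating through all $l$ periods, the starting operator $E_{-}(xq^i)$ becomes $E_{-}(xq^{i+l})$ on the far left, accumulating the total scalar
\[
\prod_{j=0}^{l-1}\frac{1}{\bigl(1-(xq^{i+j})^{2}\bigr)\bigl(1-(xq^{i+j}q_{z_3}q_{z_4})^{2}\bigr)}.
\]
Because $q_{z_3}q_{z_4}=q\cdot(q_{z_1}q_{z_2})^{-1}$ (since $q=q_{z_1}q_{z_2}q_{z_3}q_{z_4}$), the second factor matches the argument $x(q_{z_1}q_{z_2})^{-1}$ that appears in the target expression after a shift by $q$. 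Taking the infinite product over $i\ge 0$ and relabeling $n=i+j$ converts the double product into a product over $n\ge 0$ in which the exponent equals $\min(n+1,l)$; this is precisely the combinatorial shape of $\mathbf{M}(xq^{-1},q)/\mathbf{M}(xq^{l-1},q)$ (and similarly with $x$ replaced by $-x$ and by $x(q_{z_1}q_{z_2})^{-1}$), which assembles into the two $\widehat{\mathbf{M}}_{1}$ factors in the statement.

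The second identity follows by the same procedure, with two modifications. First, the $\Gamma_{+}^\prime(1)$-crossings now contribute the \emph{inverse} factor $(1-(\mathrm{arg})^{2})$ by the last relation in Lemma \ref{vertex-exchange1}, which accounts for the overall inversion of $\widehat{\mathbf{M}}_{1}$-factors in the answer. Second, the extra terminal block $Q_{z_1}\Gamma_{-}(1)Q_{z_2}\Gamma_{+}^\prime(1)$ on the left produces one more pass, shifting the argument of $E_{-}$ by a further factor $q_{z_1}q_{z_2}$; this is the source of the ultimate $E_{-}(xq^{l+i}q_{z_1}q_{z_2})$ appearing on the far left. The denominator $\widehat{\mathbf{M}}_{1}(xq^{-1},q^{l+1},q)$ arises from having $l+1$ periods contributing an $\Gamma_{+}^\prime$-exponent, while $\widehat{\mathbf{M}}_{1}(x(q_{z_3}q_{z_4})^{-1},q^{l},q)$ comes from the other $\Gamma_{+}^\prime$ in each of the $l$ full periods.

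The only nontrivial step is the last combinatorial reorganization: verifying that $\prod_{i\ge 0}\prod_{j=0}^{l-1}(1-Aq^{i+j})^{-1}$ collapses to the quotient $\mathbf{M}(Aq^{-1},q)/\mathbf{M}(Aq^{l-1},q)$ and that the two choices $A=x^{\pm}$ and $A=\pm x(q_{z_1}q_{z_2})^{-1}$ (respectively $A=\pm x(q_{z_3}q_{z_4})^{-1}$) assemble correctly into $\widehat{\mathbf{M}}_{1}$. This is straightforward but must be tracked carefully so that the final exponents and arguments match the stated expression.
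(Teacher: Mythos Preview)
Your proposal is correct and follows exactly the approach the paper intends: the paper's own proof is the single sentence ``It follows from the similar argument in the proof of Lemma \ref{antidiag-E-move-5},'' and what you have written is precisely that argument, spelled out in detail---pushing each $E_{-}(xq^i)$ leftward through the periods using Lemmas \ref{vertex-exchange1} and \ref{vertex-exchange3}, tracking the scalar factors from the $\Gamma_{+}$ (resp.\ $\Gamma_{+}'$) crossings, and then reorganizing the resulting double product $\prod_{i\ge 0}\prod_{j=0}^{l-1}(1-(xq^{i+j})^2)^{-1}$ as $\widehat{\mathbf{M}}_1(xq^{-1},q^l,q)$. Your identification $q_{z_3}q_{z_4}=q(q_{z_1}q_{z_2})^{-1}$ and the resulting match with $\widehat{\mathbf{M}}_1(x(q_{z_1}q_{z_2})^{-1},q^l,q)$ is also correct.
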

\begin{proof}
It follows from the similar argument in the proof of Lemma \ref{antidiag-E-move-5}.
\end{proof}

\begin{lemma}\label{diag-E-move-6}
	Assume  $\{z_1,z_2,z_3,z_4\}=\{0,a,b,c\}$.  Let $q=q_aq_bq_cq_0$ and $x$ be a variable.  Then 
	
	\ben
	&&\bigg\langle\emptyset\,\bigg|\,\prod_{i=1}^\infty Q_{z_1}\Gamma_{+}^\prime(1)Q_{z_2}\Gamma_{+}(1)Q_{z_3}\Gamma_{+}^\prime(1)Q_{z_4}\Gamma_{+}(1)\bigg|\cdot\prod_{i=0}^\infty E_{-}(xq^i)\\
	&=&\widehat{\mathbf{M}}_1(xq^{-1},qq_{z_1}^{-1},q)\cdot\widehat{\mathbf{M}}_1(x(q_{z_1}q_{z_2})^{-1},q_{z_3}^{-1},q)\cdot\bigg\langle\emptyset\,\bigg|\,\prod_{i=1}^\infty Q_{z_1}\Gamma_{+}^\prime(1)Q_{z_2}\Gamma_{+}(1)Q_{z_3}\Gamma_{+}^\prime(1)Q_{z_4}\Gamma_{+}(1)\bigg|,\\
	&&\bigg\langle\emptyset\,\bigg|\,\prod_{i=1}^\infty Q_{z_1}\Gamma_{+}(1)Q_{z_2}\Gamma_{+}^\prime(1)Q_{z_3}\Gamma_{+}(1)Q_{z_4}\Gamma^\prime_{+}(1)\bigg|\cdot\prod_{i=0}^\infty E_{-}(xq^i)\\
	&=&\frac{1}{\widehat{\mathbf{M}}_1(xq^{-1},qq_{z_1}^{-1},q)\cdot\widehat{\mathbf{M}}_1(x(q_{z_1}q_{z_2})^{-1},q_{z_3}^{-1},q)}\cdot\bigg\langle\emptyset\,\bigg|\,\prod_{i=1}^\infty Q_{z_1}\Gamma_{+}(1)Q_{z_2}\Gamma_{+}^\prime(1)Q_{z_3}\Gamma_{+}(1)Q_{z_4}\Gamma^\prime_{+}(1)\bigg|
	\een
\end{lemma}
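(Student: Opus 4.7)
The plan is to prove this lemma in exactly the pattern of Lemma \ref{antidiag-E-move-6}: commute each $E_{-}$-operator all the way to the left past the entire infinite product of $Q\Gamma$-operators, then apply Equation \eqref{E-left} so that the leftover $E_-$'s acting on $\langle\emptyset|$ disappear, leaving only the accumulated scalar factor.

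First I would compute the effect of moving a single $E_-(y)$ from the right end to the left end of one block $Q_{z_1}\Gamma_{+}^\prime(1)Q_{z_2}\Gamma_{+}(1)Q_{z_3}\Gamma_{+}^\prime(1)Q_{z_4}\Gamma_{+}(1)$. By Lemma \ref{vertex-exchange1}, passing $E_-(y)$ through $\Gamma_{+}(1)$ contributes the factor $\frac{1}{1-y^2}$ (at the current value of the argument) and passing through $\Gamma_{+}^\prime(1)$ contributes $(1-y^2)$, while by Lemma \ref{vertex-exchange3}, passing through $Q_{z_j}$ rescales the argument by $y\mapsto yq_{z_j}$. Tracking this step-by-step from right to left, one block transforms $E_-(y)$ into $E_-(yq)$ (since $q_{z_1}q_{z_2}q_{z_3}q_{z_4}=q$) and contributes the scalar
\[
\frac{(1-(yq_{z_4})^2)(1-(yqq_{z_1}^{-1})^2)}{(1-y^2)(1-(yq(q_{z_1}q_{z_2})^{-1})^2)}.
\]

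Next I would iterate across the infinite product. Moving $E_-(xq^i)$ through all blocks, the argument evolves as $E_-(xq^{i+k})\to 1$ in the formal sense as $k\to\infty$, while the per-block factors accumulate into a $\prod_{j\geq 0}$ of the above ratio with $y=xq^{i+j}$. Summing over $i\geq 0$, the resulting double product $\prod_{i,j\geq 0}$ collapses by reindexing $n=i+j$ (so each $n$ appears with multiplicity $n+1$), yielding
\[
\prod_{n=0}^\infty\left[\frac{(1-(xq^{n}q_{z_4})^2)(1-(xq^{n+1}q_{z_1}^{-1})^2)}{(1-(xq^{n})^2)(1-(xq^{n+1}(q_{z_1}q_{z_2})^{-1})^2)}\right]^{n+1}.
\]
After the shift $m=n+1$ in the factors involving $q^{n+1}$ and the use of $(q_{z_1}q_{z_2}q_{z_3})^{-1}=q^{-1}q_{z_4}$, this expression is recognized as $\widehat{\mathbf{M}}_1(xq^{-1},qq_{z_1}^{-1},q)\cdot\widehat{\mathbf{M}}_1(x(q_{z_1}q_{z_2})^{-1},q_{z_3}^{-1},q)$ by unpacking $\widehat{\mathbf{M}}_1(u,v,q)=\prod_{n\geq 1}(1-u^2v^2q^{2n})^n/(1-u^2q^{2n})^n$. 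Applying Equation \eqref{E-left} then completes the first identity.

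The second identity follows by the same procedure with the roles of $\Gamma_{+}(1)$ and $\Gamma_{+}^\prime(1)$ interchanged within each block: every factor $\frac{1}{1-y^2}$ is replaced by $(1-y^2)$ and vice versa, so the per-block scalar is precisely the reciprocal of the first case, and the total factor likewise becomes the reciprocal. The main obstacle I expect is purely bookkeeping — carefully tracking how each $Q_{z_j}$ rescales the successive $E_-$-arguments and verifying that the $n=i+j$ reindexing produces precisely the two named $\widehat{\mathbf{M}}_1$ functions (rather than an equivalent but differently-presented MacMahon factor), so that the answer matches the stated formulas on the nose.
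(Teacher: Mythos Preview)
Your proposal is correct and follows exactly the approach the paper indicates: the paper's proof simply says ``It follows from the similar argument in the proof of Lemma \ref{antidiag-E-move-6},'' i.e.\ commute each $E_-$ to the left through the $Q$ and $\Gamma_+^{(\prime)}$ operators using Lemmas \ref{vertex-exchange1} and \ref{vertex-exchange3} and then kill the remaining $E_-$'s with Equation \eqref{E-left}. You have merely filled in the bookkeeping that the paper omits.
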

\begin{proof}
It follows from the similar argument in the proof of Lemma \ref{antidiag-E-move-6}.	
\end{proof}

Then we have
\begin{lemma}\label{Z4-RPC-}
Assume  $\nu=(m,m-1,\cdots,2,1)$ with $m\in\mathbb{Z}_{\geq1}$. Let $q=q_aq_bq_cq_0$. Then we have   \\
(i) if $m\equiv0\; \mbox{(mod 4)}$ or $m\equiv3\; \mbox{(mod 4)}$,
\ben
Z_{\mathfrak{RP}_{-}(\nu)}
=\frac{V^{\mathbb{Z}_4}_{\emptyset\emptyset\nu}(q_0,q_b,q_c,q_a)\cdot\widehat{\mathbf{M}}(q_a,q)\cdot\widehat{\mathbf{M}}(q_b,q)\cdot\widehat{\mathbf{M}}(q_c,q)\cdot\widehat{\mathbf{M}}(q_aq_bq_c,q)}{\widehat{\mathbf{M}}_{2\{\frac{m}{2}\}}(q_a,q;\lfloor\frac{m+1}{2}\rfloor)\cdot\widehat{\mathbf{M}}_{2\{\frac{m}{2}\}}(q_b,q;\lfloor\frac{m+1}{2}\rfloor)\cdot\widehat{\mathbf{M}}_{1-2\{\frac{m}{2}\}}(q_c,q;\lfloor\frac{m+1}{2}\rfloor)\cdot\widehat{\mathbf{M}}_{2\{\frac{m}{2}\}}(q_aq_bq_c,q;\lfloor\frac{m+1}{2}\rfloor)},
\een
(ii) if $m\equiv1\; \mbox{(mod 4)}$ or $m\equiv2\; \mbox{(mod 4)}$,
\ben
Z_{\mathfrak{RP}_{-}(\nu)}
=\frac{V^{\mathbb{Z}_4}_{\emptyset\emptyset\nu}(q_0,q_b,q_c,q_a)\cdot\widehat{\mathbf{M}}(q_a,q)\cdot\widehat{\mathbf{M}}(q_b,q)\cdot\widehat{\mathbf{M}}(q_0,q)\cdot\widehat{\mathbf{M}}(q_aq_bq_0,q)}{\widehat{\mathbf{M}}_{2\{\frac{m}{2}\}}(q_a,q;\lfloor\frac{m+1}{2}\rfloor)\cdot\widehat{\mathbf{M}}_{2\{\frac{m}{2}\}}(q_b,q;\lfloor\frac{m+1}{2}\rfloor)\cdot\widehat{\mathbf{M}}_{1-2\{\frac{m}{2}\}}(q_0,q;\lfloor\frac{m+1}{2}\rfloor)\cdot\widehat{\mathbf{M}}_{2\{\frac{m}{2}\}}(q_aq_bq_0,q;\lfloor\frac{m+1}{2}\rfloor)}.
\een		
\end{lemma}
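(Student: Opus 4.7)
The strategy parallels that of Lemma \ref{Z2Z2-RPC+}. I would start from the vertex operator product expressions for both $V^{\mathbb{Z}_{4}}_{\emptyset\emptyset\nu}$ and $Z_{\mathfrak{RP}_{-}(\nu)}$ displayed in Section 4.1 (namely \eqref{RPC-0}, \eqref{Z4-0}, \eqref{RPC--1}, \eqref{Z4-1} and the analogous formulas for $m \equiv 2,3 \pmod 4$) and transform one into the other by inserting the identities $\Gamma_{\pm}(1)=\Gamma_{\pm}^\prime(1)E_{\pm}(1)$ in exactly those slots where $\Gamma_{\pm}^\prime$ appears in the $Z_{\mathfrak{RP}_{-}(\nu)}$-side but $\Gamma_{\pm}$ appears in the $V^{\mathbb{Z}_{4}}_{\emptyset\emptyset\nu}$-side, then using Lemmas \ref{diag-E-move-1}--\ref{diag-E-move-6} to commute all $E_{+}$-operators out to the right (annihilating against $|\emptyset\rangle$ by \eqref{E+right}) and all $E_{-}$-operators out to the left (annihilating against $\langle\emptyset|$ by \eqref{E-left}), harvesting scalar factors along the way.

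Concretely, I would split the argument according to $m \bmod 4$ and treat the representative case $m \equiv 0 \pmod 4$ in full. Start from \eqref{Z4-0} and substitute $\Gamma_{+}(1) \mapsto \Gamma_{+}^\prime(1) E_{+}(1)$ in the two positions corresponding to $\Gamma^\prime_{+}$-slots in \eqref{RPC-0}; Lemma \ref{diag-E-move-1} (first statement) extracts the initial $E_{+}$-operators from the left semi-infinite product, Lemma \ref{diag-E-move-2} (first statement) sweeps them across the finite middle block of length $\tfrac{m}{2}$, and Lemma \ref{diag-E-move-3} absorbs them against the right semi-infinite product against $|\emptyset\rangle$. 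Dually, substitute $\Gamma_{-}(1) \mapsto \Gamma_{-}^\prime(1) E_{-}(1)$ in the slots matching \eqref{RPC-0}, and apply Lemma \ref{diag-E-move-4} to extract $E_{-}$'s from the middle block, Lemma \ref{diag-E-move-5} (first statement) to pass them through what remains on the left, and Lemma \ref{diag-E-move-6} to annihilate them into $\langle\emptyset|$. The accumulated prefactor then equals the remaining matrix element's conversion ratio; identifying it with the stated ratio of $\widehat{\mathbf{M}}$-functions establishes the $m\equiv 0 \pmod 4$ case of (i).

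For $m \equiv 1 \pmod 4$, I would run the identical procedure starting from \eqref{Z4-1} and \eqref{RPC--1}, but here the $E_{+}$-sweep must handle the extra half-block $Q_{b}\Gamma_{-}(1)Q_{0}\Gamma_{+}(1)$ at the end of the finite product, which is precisely what the second statements of Lemmas \ref{diag-E-move-2} and \ref{diag-E-move-5} are designed for. The cases $m \equiv 2, 3 \pmod 4$ then follow by the same mechanism, with the role of variables cyclically permuted; concretely, the shift between $\widehat{\mathbf{M}}_{2\{m/2\}}$ and $\widehat{\mathbf{M}}_{1-2\{m/2\}}$ (in the exponent arguments $\lfloor (m+1)/2 \rfloor$) arises naturally from whether the finite middle block has an even or odd number of $\Gamma_{-}$-entries and whether its rightmost factor is of the form $\Gamma_{-}(1) Q \Gamma_{+}(1)$ or truncated.

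The main obstacle will be the bookkeeping: each of the six commutation lemmas produces a product of $\mathbf{M}$-type factors depending on the color variables $q_{z_i}$ and on the length $l$ of the finite block, and these factors must be consolidated, simplified via \eqref{identity1}, and reorganized into the specific $\widehat{\mathbf{M}}$ combinations in the statement. In particular, verifying that the factors from the $E_{+}$-sweep and the $E_{-}$-sweep combine into a symmetric expression (matching the symmetry of $\widehat{\mathbf{M}}$) in the four variables $q_0, q_a, q_b, q_c$ — with exactly the asymmetric $\widehat{\mathbf{M}}_{1-2\{m/2\}}$ in the $q_c$-slot for case (i) and in the $q_0$-slot for case (ii) — is the delicate step, and it is the place where the parity of $m$ modulo $4$ enters nontrivially through the pairing $q_{z_3}q_{z_4}$ vs.\ $q_{z_1}q_{z_2}$ that appears in the scalar factors of Lemmas \ref{diag-E-move-1}--\ref{diag-E-move-6}.
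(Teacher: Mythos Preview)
Your proposal is correct and follows essentially the same approach as the paper: the paper also starts from the vertex operator expressions \eqref{Z4-0}, \eqref{RPC-0} (resp.\ \eqref{Z4-1}, \eqref{RPC--1}) for the case $m\equiv 0\pmod 4$ (resp.\ $m\equiv 1\pmod 4$), inserts $\Gamma_{\pm}(1)=\Gamma_{\pm}^\prime(1)E_{\pm}(1)$ at the matching slots, sweeps the $E_{+}$'s to the right via Lemmas \ref{diag-E-move-1}--\ref{diag-E-move-3} and the $E_{-}$'s to the left via Lemmas \ref{diag-E-move-4}, \ref{diag-E-move-1}, \ref{diag-E-move-5}, \ref{diag-E-move-6}, and then simplifies the accumulated prefactors $C_5\cdot C_6$ (resp.\ $C_7\cdot C_8$) into the stated $\widehat{\mathbf{M}}$-expressions. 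The remaining residue classes are handled, as you anticipate, by the same mechanism or by direct comparison with the two worked cases.
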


\begin{proof}
As in Lemma \ref{Z2Z2-RPC+}, we first deal with the case when $m\equiv0\; \mbox{(mod 4)}$. In Equation \eqref{Z4-0}, we substitute $\Gamma_{+}(1)=\Gamma_{+}^\prime(1)E_{+}(1)$  into the positions  corresponding to those of $\Gamma_{+}^\prime(1)$ in Equation \eqref{RPC-0},  and then move all  $E_{+}$-operators to the right by Lemmas \ref{diag-E-move-1}, \ref{diag-E-move-2} and \ref{diag-E-move-3}. Then we have 
\bea\label{E+toR3}
V^{\mathbb{Z}_4}_{\emptyset\emptyset\nu}(q_0,q_b,q_c,q_a)&=&C_5(\{q_{\star}\};m)\cdot\bigg\langle\emptyset\,\bigg|\, \prod_{i=1}^{\infty}Q_{0}\Gamma^\prime_{+}(1)Q_{a}\Gamma_{+}(1)Q_{c}\Gamma^\prime_{+}(1)Q_{b}\Gamma_{+}(1)\,\\
&&\quad\quad\quad\quad\quad\quad\quad\cdot\prod_{i=1}^{\frac{m}{2}}Q_{0}\Gamma_{-}(1)Q_{a}\Gamma_{+}(1)Q_{c}\Gamma_{-}(1)Q_{b}\Gamma_{+}(1)\nonumber\\
&&\quad\quad\quad\quad\quad\quad\quad\cdot\prod_{i=1}^{\infty}Q_{0}\Gamma_{-}(1)Q_{a}\Gamma_{-}(1)Q_{c}\Gamma_{-}(1)Q_{b}\Gamma_{-}(1) \;\bigg|\,\emptyset\bigg\rangle\nonumber
\eea
where 
\ben
C_5(\{q_{\star}\};m)&=&\widehat{\mathbf{M}}_1\left(1,q^{\frac{m}{2}},q\right)^2\cdot\widehat{\mathbf{M}}_1\left(q_aq_c,q^{\frac{m}{2}},q\right)\cdot\widehat{\mathbf{M}}_1\left((q_aq_c)^{-1},q^{\frac{m}{2}},q\right)\\
&&\cdot\mathbf{M}_0\left(q^{\frac{m}{2}}q_aq_bq_c,q\right)\cdot\mathbf{M}_0\left(q^{\frac{m}{2}}q_aq_c,q\right)\cdot\mathbf{M}_0\left(q^{\frac{m}{2}}q_a,q\right)\cdot\mathbf{M}_0\left(q^{\frac{m}{2}},q\right)^2\\
&&\cdot\mathbf{M}_0\left(q^{\frac{m}{2}}q_b,q\right)\cdot\mathbf{M}_0\left(q^{\frac{m}{2}}(q_aq_c)^{-1},q\right)\cdot\mathbf{M}_0\left(q^{\frac{m}{2}}q_c^{-1},q\right)
\een
In Equation \eqref{E+toR3}, one substitutes $\Gamma_{-}(1)=\Gamma_{-}^\prime(1)E_{-}(1)$  into the positions corresponding to those of $\Gamma_{-}^\prime(1)$ in Equation \eqref{RPC-0},  and then move all  $E_{-}$-operators to the left by Lemmas \ref{diag-E-move-4}, \ref{diag-E-move-1}, \ref{diag-E-move-5} and \ref{diag-E-move-6}. Then we have 
\ben
V^{\mathbb{Z}_4}_{\emptyset\emptyset\nu}(q_0,q_b,q_c,q_a)=C_5(\{q_{\star}\};m)\cdot C_6(\{q_{\star}\};m)\cdot Z_{\mathfrak{RP}_{-}(\nu)}
\een
where 
\ben
C_6(\{q_{\star}\};m)&=&\widehat{\mathbf{M}}_2\left(q_0,q;\frac{m}{2}-1\right)\cdot\widehat{\mathbf{M}}_2\left(qq_a^{-1},q;\frac{m}{2}-1\right)\cdot\widehat{\mathbf{M}}_2\left(qq_b^{-1},q;\frac{m}{2}-1\right)\cdot\widehat{\mathbf{M}}_2\left(q_c,q;\frac{m}{2}\right)\\
&&\cdot\widehat{\mathbf{M}}_1\left(q_0q^{-1},q^{\frac{m}{2}},q\right)\cdot\widehat{\mathbf{M}}_1\left(q_a^{-1},q^{\frac{m}{2}},q\right)\cdot\widehat{\mathbf{M}}_1\left(q_b^{-1},q^{\frac{m}{2}},q\right)\cdot\widehat{\mathbf{M}}_1\left(q_c,q^{\frac{m}{2}},q\right)\\
&&\cdot\widehat{\mathbf{M}}_1\left(q_0q^{-1},qq_{0}^{-1},q\right)\cdot\widehat{\mathbf{M}}_1\left(q_a^{-1},q_{c}^{-1},q\right)\cdot\widehat{\mathbf{M}}_1\left(q_b^{-1},qq_{0}^{-1},q\right)\cdot\widehat{\mathbf{M}}_1\left(q_c,q_{c}^{-1},q\right).
\een
Then the proof follows by the direct computation of $C_5(\{q_{\star}\};m)\cdot C_6(\{q_{\star}\};m)$ 
when $m\equiv0\; \mbox{(mod 4)}$.

We also include the proof in  the case when $m\equiv1\; \mbox{(mod 4)}$ as follows. As in the above case, comparing Equation \eqref{Z4-1} with Equation \eqref{RPC--1}, and by  Lemmas \ref{diag-E-move-1}, \ref{diag-E-move-2} and \ref{diag-E-move-3}, we first have
\ben
V^{\mathbb{Z}_4}_{\emptyset\emptyset\nu}(q_0,q_b,q_c,q_a)&=&C_7(\{q_{\star}\};m)\cdot\bigg\langle\emptyset\,\bigg|\, \prod_{i=1}^{\infty}Q_{b}\Gamma_{+}(1)Q_{0}\Gamma_{+}^\prime(1)Q_{a}\Gamma_{+}(1)Q_{c}\Gamma_{+}^\prime(1)\label{RPC-1}\\
&&\quad\quad\quad\quad\quad\quad\quad\cdot\left(\prod_{i=1}^{\frac{m-1}{2}}Q_{b}\Gamma_{-}(1)Q_{0}\Gamma_{+}^\prime(1)Q_{a}\Gamma_{-}(1)Q_{c}\Gamma_{+}^\prime(1)\right)\\
&&\quad\quad\quad\quad\quad\quad\quad\cdot Q_{b}\Gamma_{-}(1)Q_{0}\Gamma_{+}^\prime(1)\cdot\prod_{i=1}^{\infty}Q_{a}\Gamma_{-}(1)Q_{c}\Gamma_{-}(1)Q_{b}\Gamma_{-}(1)Q_{0}\Gamma_{-}(1) \bigg|\,\emptyset\bigg\rangle\nonumber
\een
where 
\ben
C_7(\{q_{\star}\};m)&=&\widehat{\mathbf{M}}_1\left(q_bq^{-1},q^{\frac{m+1}{2}},q\right)\cdot\widehat{\mathbf{M}}_1\left(q_c^{-1},q^{\frac{m-1}{2}},q\right)\cdot\widehat{\mathbf{M}}_1\left(q_0^{-1},q^{\frac{m+1}{2}},q\right)\cdot\widehat{\mathbf{M}}_1\left(q_a,q^{\frac{m-1}{2}},q\right)\\
&&\cdot\widehat{\mathbf{M}}_2\left(q_b,q;\frac{m-1}{2}\right)\cdot\widehat{\mathbf{M}}_2\left(qq_0^{-1},q;\frac{m-1}{2}\right)\cdot\widehat{\mathbf{M}}_2\left(qq_c^{-1},q;\frac{m-1}{2}\right)\cdot\widehat{\mathbf{M}}_2\left(q_a,q;\frac{m-1}{2}\right)\\
&&\cdot\mathbf{M}_0(1,q)^2\cdot\mathbf{M}_0(q_0q_b,q)\cdot\mathbf{M}_0(q_b,q)\cdot\mathbf{M}_0(q_c^{-1},q)\\
&&\cdot\mathbf{M}_0(q_0^{-1},q)\cdot\mathbf{M}_0((q_0q_b)^{-1},q)\cdot\mathbf{M}_0(q^{-1}q_a,q)
\een
Then by Lemmas \ref{diag-E-move-1}, \ref{diag-E-move-5} and \ref{diag-E-move-6}, we have
\ben
V^{\mathbb{Z}_4}_{\emptyset\emptyset\nu}(q_0,q_b,q_c,q_a)=C_7(\{q_{\star}\};m)\cdot C_8(\{q_{\star}\};m)\cdot Z_{\mathfrak{RP}_{-}(\nu)}
\een
where 
\ben
&&C_8(\{q_{\star}\};m)\\
&=&\left(\widehat{\mathbf{M}}_1\left((q_0q_b)^{-1},q^{\frac{m+1}{2}},q\right)\cdot\widehat{\mathbf{M}}_1\left(1,q^{\frac{m-1}{2}},q\right)\cdot\widehat{\mathbf{M}}_1\left(1,q^{\frac{m+1}{2}},q\right)\cdot\widehat{\mathbf{M}}_1\left(q_0q_b,q^{\frac{m-1}{2}},q\right)\right)^{-1}\\
&&\cdot\left(\widehat{\mathbf{M}}_1\left(q^{\frac{m-1}{2}},qq_b^{-1},q\right)\cdot\widehat{\mathbf{M}}_1\left(q^{\frac{m-1}{2}}q_aq_c,q_a^{-1},q\right)\cdot\widehat{\mathbf{M}}_1\left(q^{\frac{m-1}{2}}q_0q_b,qq_b^{-1},q\right)\cdot\widehat{\mathbf{M}}_1\left(q^{\frac{m+1}{2}},q_a^{-1},q\right)\right)^{-1}
\een
By the direct computation of $C_7(\{q_{\star}\};m)\cdot C_8(\{q_{\star}\};m)$, we complete the proof when $m\equiv1\; \mbox{(mod 4)}$.

The other cases are similar, or can be simply proved just by comparing with the above two cases.
\end{proof}

\begin{remark}\label{another-proof-GFPP}
When $m\equiv0\; \mbox{(mod 4)}$, the argument  in the proof of  Lemma \ref{Z4-RPC-}  also holds for $m=0$ or equivalently $\nu=\emptyset$, and hence  we have 
	\ben
	Z_\mathfrak{P}=Z_{\mathfrak{RP}_{-}(\emptyset)}=Z_{\mathfrak{RP}_{+}(\emptyset)}=\mathbf{M}(1,q)^4\cdot\frac{
		\widetilde{\mathbf{M}}(q_aq_c,q)\cdot\widetilde{\mathbf{M}}(q_bq_c,q)}{\widetilde{\mathbf{M}}(-q_a,q)\cdot\widetilde{\mathbf{M}}(-q_b,q)\cdot\widetilde{\mathbf{M}}(-q_c,q)\cdot\widetilde{\mathbf{M}}(-q_aq_bq_c,q)}
	\een
	by Equation \eqref{Z4-nolegs} and Remark \ref{empty2}, which can be viewed as an alternative proof of  [\cite{BY}, Theorem 6.2].
\end{remark}

\subsection{1-leg DT $\mathbb{Z}_2\times\mathbb{Z}_2$-vertex and 1-leg DT $\mathbb{Z}_{4}$-vertex}
When the 1-leg partitions are chosen to be $\nu=(m,m-1,\cdots,2,1)$ for $m\in\mathbb{Z}_{\geq1}$,  we have the following  relations between 1-leg DT $\mathbb{Z}_2\times\mathbb{Z}_2$-vertex and 1-leg DT $\mathbb{Z}_{4}$-vertex.
\begin{theorem}\label{Z2Z2-Z4}
Assume  $\nu=(m,m-1,\cdots,2,1)$ with $m\in\mathbb{Z}_{\geq1}$. Let $q=q_aq_bq_cq_0$. Then we have  \\
(i) if $m\equiv0\; \mbox{(mod 4)}$ or $m\equiv3\; \mbox{(mod 4)}$,
\ben
&&V_{\nu\emptyset\emptyset}^{\mathbb{Z}_{2}\times\mathbb{Z}_{2}}(q_0,q_c,q_a,q_b)=
V^{\mathbb{Z}_4}_{\emptyset\emptyset\nu}(q_0,q_a,q_b,q_c)\cdot\Phi(q_c,q_a,q_b,q;m),\\
&&V_{\emptyset\nu\emptyset}^{\mathbb{Z}_{2}\times\mathbb{Z}_{2}}(q_0,q_b,q_c,q_a)= V^{\mathbb{Z}_4}_{\emptyset\emptyset\nu}(q_0,q_c,q_a,q_b)\cdot\Phi(q_b,q_c,q_a,q;m),\\
&&V_{\emptyset\emptyset\nu}^{\mathbb{Z}_{2}\times\mathbb{Z}_{2}}(q_0,q_a,q_b,q_c)
= V^{\mathbb{Z}_4}_{\emptyset\emptyset\nu}(q_0,q_b,q_c,q_a)
\cdot\Phi(q_a,q_b,q_c,q;m),
\een	
(ii) if $m\equiv1\; \mbox{(mod 4)}$ or $m\equiv2\; \mbox{(mod 4)}$,
\ben
&&V_{\nu\emptyset\emptyset}^{\mathbb{Z}_{2}\times\mathbb{Z}_{2}}(q_0,q_c,q_a,q_b)=V^{\mathbb{Z}_4}_{\emptyset\emptyset\nu}(q_b,q_a,q_0,q_c)\cdot\Phi(q_c,q_a,q_b,q;m),\\
&&V_{\emptyset\nu\emptyset}^{\mathbb{Z}_{2}\times\mathbb{Z}_{2}}(q_0,q_b,q_c,q_a)=V^{\mathbb{Z}_4}_{\emptyset\emptyset\nu}(q_a,q_c,q_0,q_b)\cdot\Phi(q_b,q_c,q_a,q;m),\\
&&V_{\emptyset\emptyset\nu}^{\mathbb{Z}_{2}\times\mathbb{Z}_{2}}(q_0,q_a,q_b,q_c)=V^{\mathbb{Z}_4}_{\emptyset\emptyset\nu}(q_c,q_b,q_0,q_a)\cdot\Phi(q_a,q_b,q_c,q;m),
\een	
where 
\ben
&&\Phi(q_a,q_b,q_c,q;m)\\
&=&\frac{\widehat{\mathbf{M}}(q_a,q)\cdot\widehat{\mathbf{M}}(q_b,q)\cdot\widehat{\mathbf{M}}(q_c,q)\cdot\widehat{\mathbf{M}}(q_aq_bq_c,q)\cdot \widetilde{\mathbf{M}}(q_aq_b,q)\cdot\widetilde{\mathbf{M}}_{2\{\frac{m}{2}\}}(q_aq_b,q;2\lfloor \frac{m+1}{2}\rfloor)}{\widehat{\mathbf{M}}_{2\{\frac{m}{2}\}}(q_a,q;\lfloor \frac{m+1}{2}\rfloor)\cdot\widehat{\mathbf{M}}_{2\{\frac{m}{2}\}}(q_b,q;\lfloor \frac{m+1}{2}\rfloor)\cdot\widehat{\mathbf{M}}_{1-2\{\frac{m}{2}\}}(q_c,q;\lfloor \frac{m+1}{2}\rfloor)\cdot\widehat{\mathbf{M}}_{2\{\frac{m}{2}\}}(q_aq_bq_c,q;\lfloor \frac{m+1}{2}\rfloor)}.
\een
\end{theorem}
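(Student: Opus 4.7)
The plan is to execute Steps $(\mathfrak{I}_{1})$--$(\mathfrak{I}_{3})$ from the introduction, for which all ingredients are now in place. Lemma \ref{Z2Z2-RPC+} expresses $V^{\mathbb{Z}_{2}\times\mathbb{Z}_{2}}_{\emptyset\emptyset\nu}$ as an explicit MacMahon-type prefactor times $Z_{\mathfrak{RP}_{+}(\nu)}$ subject to a variable swap controlled by $m \bmod 4$; Lemma \ref{Z4-RPC-} expresses $Z_{\mathfrak{RP}_{-}(\nu)}$ as an explicit MacMahon-type prefactor times $V^{\mathbb{Z}_{4}}_{\emptyset\emptyset\nu}(q_0,q_b,q_c,q_a)$; and Lemma \ref{RPC+RPC-} supplies the crucial identification $Z_{\mathfrak{RP}_{+}(\nu)}=Z_{\mathfrak{RP}_{-}(\nu)}$ for the staircase $\nu=(m,m-1,\ldots,1)$. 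Eliminating $Z_{\mathfrak{RP}_{\pm}(\nu)}$ between the first two lemmas via the third, and applying the required swap to Lemma \ref{Z4-RPC-}, produces a formula for $V^{\mathbb{Z}_{2}\times\mathbb{Z}_{2}}_{\emptyset\emptyset\nu}(q_0,q_a,q_b,q_c)$ as $V^{\mathbb{Z}_{4}}_{\emptyset\emptyset\nu}$ at suitable arguments multiplied by a ratio of MacMahon-type prefactors.

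The case split in the theorem mirrors the case split in Lemma \ref{Z2Z2-RPC+}. When $m\equiv 0,3\pmod 4$, the swap is $q_a\leftrightarrow q_b$, under which the prefactor $\widetilde{\mathbf{M}}(q_aq_b,q)\cdot\widetilde{\mathbf{M}}_{2\{\frac{m}{2}\}}(q_aq_b,q;2\lfloor\frac{m+1}{2}\rfloor)$ of Lemma \ref{Z2Z2-RPC+} is manifestly invariant while the MacMahon factors of Lemma \ref{Z4-RPC-} are merely relabeled; this yields the first block of identities. When $m\equiv 1,2\pmod 4$, the swap is $q_0\leftrightarrow q_c$, which sends the $V^{\mathbb{Z}_{4}}$-argument $(q_0,q_b,q_c,q_a)$ to $(q_c,q_b,q_0,q_a)$ and explains the second block. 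Once the third identity in each block is established, the first two follow from the cyclic $\mathbb{Z}_{3}$-symmetry of Remark \ref{Z2Z2-symmetry}, namely $V^{\mathbb{Z}_{2}\times\mathbb{Z}_{2}}_{\emptyset\emptyset\nu}(q_0,q_a,q_b,q_c)=V^{\mathbb{Z}_{2}\times\mathbb{Z}_{2}}_{\nu\emptyset\emptyset}(q_0,q_c,q_a,q_b)=V^{\mathbb{Z}_{2}\times\mathbb{Z}_{2}}_{\emptyset\nu\emptyset}(q_0,q_b,q_c,q_a)$, which cyclically permutes $(q_a,q_b,q_c)$ inside $\Phi$ accordingly.

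The main obstacle is the algebraic bookkeeping required to show that the ratio of MacMahon-type prefactors coming from the two lemmas, after the relevant swap, simplifies exactly to the stated $\Phi(q_a,q_b,q_c,q;m)$. This will rely on repeated use of the identity $\mathbf{M}(x,q)=\mathbf{M}(xq^{-1},q)\cdot(x;q)_{\infty}$ from \eqref{identity1}, on the product definitions of $\widetilde{\mathbf{M}}_{0},\widetilde{\mathbf{M}}_{1},\widehat{\mathbf{M}}_{0},\widehat{\mathbf{M}}_{1}$, and on the uniform packaging of the four residues $m \bmod 4$ into two cases via the quantities $\lfloor\frac{m+1}{2}\rfloor$ and $\{\frac{m}{2}\}$. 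No further combinatorial or vertex-operator input beyond the three cited lemmas is needed; the task reduces to routine $q$-series manipulation.
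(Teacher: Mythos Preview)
Your approach is exactly the paper's: combine Lemma~\ref{Z2Z2-RPC+}, Lemma~\ref{Z4-RPC-}, and Lemma~\ref{RPC+RPC-}, then use Remark~\ref{Z2Z2-symmetry} to propagate the third identity in each block to the first two. The prefactor $\Phi(q_a,q_b,q_c,q;m)$ is literally the product of the two prefactors from Lemmas~\ref{Z2Z2-RPC+} and~\ref{Z4-RPC-} (after the swap), so no further $q$-series manipulation via \eqref{identity1} is actually needed here.

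There is one small omission in case~(i). Under the swap $q_a\leftrightarrow q_b$, the argument of $V^{\mathbb{Z}_4}_{\emptyset\emptyset\nu}$ in Lemma~\ref{Z4-RPC-} becomes $(q_0,q_a,q_c,q_b)$, whereas the theorem records it as $(q_0,q_b,q_c,q_a)$. You correctly track the analogous change in case~(ii), but in case~(i) you only assert that ``the MacMahon factors of Lemma~\ref{Z4-RPC-} are merely relabeled'' without addressing the $V^{\mathbb{Z}_4}$ argument. To close this, one needs the observation that $V^{\mathbb{Z}_4}_{\emptyset\emptyset\nu}(q_0,q_b,q_c,q_a)$ is symmetric in $q_a$ and $q_b$ for the staircase $\nu$; the paper obtains this from the explicit formula in Lemma~\ref{Z4-1-leg} together with \eqref{Z4-nolegs}. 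This is one extra input beyond the three lemmas you cite, so your closing claim that ``no further combinatorial or vertex-operator input beyond the three cited lemmas is needed'' is not quite right.
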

\begin{proof}
Notice that $V^{\mathbb{Z}_4}_{\emptyset\emptyset\nu}(q_0,q_b,q_c,q_a)$ is a symmetric function in variables $q_a$ and $q_b$ by Lemma 4.34 and Equation \eqref{Z4-nolegs}. The proof follows from Lemma \ref{RPC+RPC-}, Lemma \ref{Z2Z2-RPC+},  Lemma \ref{Z4-RPC-}, and Remark \ref{Z2Z2-symmetry}.
\end{proof}	
	
\begin{remark}
Since the  DT $\mathbb{Z}_2\times\mathbb{Z}_2$-vertex has more symmetries than DT $\mathbb{Z}_4$-vertex (see Remark \ref{Z2Z2-symmetry} and [\cite{BCY}, Section 3.1]), we relate each 1-leg DT $\mathbb{Z}_2\times\mathbb{Z}_2$-vertex only with the 1-leg DT $\mathbb{Z}_4$-vertex $V^{\mathbb{Z}_4}_{\emptyset\emptyset\nu}(\cdots)$  in Theorem \ref{Z2Z2-Z4}. Actually  Theorem \ref{Z2Z2-Z4}  also holds for  $\nu=\emptyset$ or equivalently $m=0$, i.e.,
\ben
V_{\emptyset\emptyset\emptyset}^{\mathbb{Z}_{2}\times\mathbb{Z}_{2}}(q_0,q_a,q_b,q_c)
= V^{\mathbb{Z}_4}_{\emptyset\emptyset\emptyset}(q_0,q_b,q_c,q_a)\cdot\widehat{\mathbf{M}}(q_a,q)\cdot\widehat{\mathbf{M}}(q_b,q)\cdot\widehat{\mathbf{M}}(q_c,q)\cdot\widehat{\mathbf{M}}(q_aq_bq_c,q)\cdot \widetilde{\mathbf{M}}(q_aq_b,q).
\een	

\end{remark}

\subsection{An explicit formula for a class of 1-leg DT $\mathbb{Z}_2\times\mathbb{Z}_2$-vertex}
To derive the explicit formula for 1-leg DT $\mathbb{Z}_2\times\mathbb{Z}_2$-vertex $V_{\emptyset\emptyset\nu}^{\mathbb{Z}_{2}\times\mathbb{Z}_{2}}$ (resp. $V_{\nu\emptyset\emptyset}^{\mathbb{Z}_{2}\times\mathbb{Z}_{2}}$ and $V_{\emptyset\nu\emptyset}^{\mathbb{Z}_{2}\times\mathbb{Z}_{2}}$) when  $\nu=(m,m-1,\cdots,2,1)$ with $m\in\mathbb{Z}_{\geq1}$, we first need the following
\begin{lemma}\label{Z4-1-leg}
	Assume  $\nu=(m,m-1,\cdots,2,1)$ with $m\in\mathbb{Z}_{\geq1}$. Let $q=q_aq_bq_cq_0$. Then\\
(i) if $m\equiv0\; \mbox{(mod 4)}$ or  $m\equiv3\; \mbox{(mod 4)}$,
\ben
&&V^{\mathbb{Z}_4}_{\emptyset\emptyset\nu}(q_0,q_b,q_c,q_a)\\
&=&V^{\mathbb{Z}_4}_{\emptyset\emptyset\emptyset}(q_0,q_b,q_c,q_a)\cdot\widetilde{\mathbf{M}}_{1-2\{\frac{m}{2}\}}\left(q_c,q;\left\lfloor \frac{m+1}{2}\right\rfloor\right)\\
&&\cdot\widetilde{\mathbf{M}}_{2\{\frac{m}{2}\}}\left(q_a,q;\left\lfloor \frac{m+1}{2}\right\rfloor\right)\cdot\widetilde{\mathbf{M}}_{2\{\frac{m}{2}\}}\left(q_b,q;\left\lfloor \frac{m+1}{2}\right\rfloor\right)\cdot\widetilde{\mathbf{M}}_{2\{\frac{m}{2}\}}\left(q_aq_bq_c,q;\left\lfloor \frac{m+1}{2}\right\rfloor\right),
\een
(ii) if $m\equiv1\; \mbox{(mod 4)}$ or $m\equiv2\; \mbox{(mod 4)}$,	
		\ben
		&&V^{\mathbb{Z}_4}_{\emptyset\emptyset\nu}(q_0,q_b,q_c,q_a)\\
		&=&V_{\emptyset\emptyset\emptyset}^{\mathbb{Z}_4}(q_c,q_b,q_0,q_a)\cdot\widetilde{\mathbf{M}}_{1-2\{\frac{m}{2}\}}\left(q_0,q;\left\lfloor \frac{m+1}{2}\right\rfloor\right)\\
		&&\cdot\widetilde{\mathbf{M}}_{2\{\frac{m}{2}\}}\left(q_a,q;\left\lfloor \frac{m+1}{2}\right\rfloor\right)\cdot\widetilde{\mathbf{M}}_{2\{\frac{m}{2}\}}\left(q_b,q;\left\lfloor \frac{m+1}{2}\right\rfloor\right)\cdot\widetilde{\mathbf{M}}_{2\{\frac{m}{2}\}}\left(q_aq_bq_0,q;\left\lfloor \frac{m+1}{2}\right\rfloor\right).
		\een	
\end{lemma}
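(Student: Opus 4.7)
The plan is to invoke the Bryan--Cadman--Young formula (Theorem \ref{DT-Z_n}) with $n=4$ and $\lambda=\mu=\emptyset$, and exploit that $\nu=(m,m-1,\ldots,1)$ is self-conjugate. Since $\tilde q^{-A_{\emptyset}}=1$ and $s_{\emptyset/\eta}$ is nonzero only for $\eta=\emptyset$, the skew-Schur sum collapses to $1$, leaving
\begin{equation*}
V^{\mathbb{Z}_4}_{\emptyset\emptyset\nu}=V^{\mathbb{Z}_4}_{\emptyset\emptyset\emptyset}\cdot H_\nu\cdot O_\nu.
\end{equation*}
Thus the statement reduces to identifying $H_\nu\cdot O_\nu$ with the stated product of $\widetilde{\mathbf{M}}_{\cdot}$-factors, up to the variable relabelling that occurs when $m\equiv 1,2\pmod 4$.

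For $H_\nu$, I would use the fact that for the staircase $\nu$ the hook of any box $(i,j)\in\nu$ has odd length $2L+1$ with $L=m-1-i-j$, and its colors (residues of row minus column modulo $4$) form the consecutive integer block $\{2i-m+1,\ldots,m-1-2j\}$, symmetric about $i-j$. Hence $\prod_k \tilde q_k^{h^k_\nu(i,j)}=q^{\lfloor L/2\rfloor}$ times one or two residual factors among the $\tilde q_r$, depending on $L\bmod 2$. Grouping $\prod_{(i,j)\in\nu}(1-\prod_k \tilde q_k^{h^k_\nu(i,j)})^{-1}$ by $L\bmod 2$ and by the midpoint color $(i-j)\bmod 4$ telescopes each subfamily, via the identities
\begin{equation*}
\widetilde{\mathbf{M}}_{0}(x,q;l)=\frac{\mathbf{M}(xq^l,q)}{\mathbf{M}(x,q)\,(qx^{-1};q)_\infty^l},\qquad \widetilde{\mathbf{M}}_{1}(x,q;l)=\frac{\mathbf{M}(x^{-1}q^l,q)}{\mathbf{M}(x^{-1},q)\,(x;q)_\infty^l},
\end{equation*}
into a product of such functions with $l=\lfloor(m+1)/2\rfloor$ and $0/1$-index dictated by $2\{m/2\}$ or $1-2\{m/2\}$.

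For the $O_\nu$-factor, I would compute the exponents $-2|\nu|_{\overline{k}}+|\nu|_{\overline{k+1}}+|\nu|_{\overline{k-1}}$ from the explicit counts $|\nu|_{\overline{k}}$ (essentially $\lceil m/4\rceil$ or $\lfloor m/4\rfloor$ depending on $m\bmod 4$). Substituting the no-leg formula $V^{\mathbb{Z}_4}_{\emptyset\emptyset\emptyset}(\tilde q_k,\ldots)=\mathbf{M}(1,q)^4\prod_{0<a\le b<4}\widetilde{\mathbf{M}}(\tilde q_a\cdots\tilde q_b,q)$ in each cyclic rotation of variables, most $\widetilde{\mathbf{M}}$ factors cancel telescopically; combining the survivors with $H_\nu$ produces exactly the right-hand side of the Lemma. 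The relabelling $(q_0,q_b,q_c,q_a)\mapsto(q_c,q_a,q_0,q_b)$ in the case $m\equiv 1,2\pmod 4$ arises because the top-left box of $\nu$ has color $0\in\mathbb{Z}_4$ iff $m\equiv 1\pmod 4$, so recentring the staircase to lie in the natural $\mathbb{Z}_4$-orbit of $0$ shifts the variables by two positions.

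The main obstacle I anticipate is the bookkeeping for $H_\nu$: one must cleanly split the product into its four $(i-j)\bmod 4$-subfamilies and show that each infinite telescope in the diagonal direction produces exactly one $\widetilde{\mathbf{M}}_{0/1}$-factor. The innermost corner box of $\nu$ (where $L=0$) is the source of the $\widetilde{\mathbf{M}}_1$ versus $\widetilde{\mathbf{M}}_0$ distinction, since its single-color hook determines whether the boundary shift in $\widetilde{\mathbf{M}}_1(qx^{-1},q;l+1)=\widetilde{\mathbf{M}}_0(x,q;l)\cdot(x;q)_\infty/(qx^{-1};q)_\infty$ is needed for the $q_c$-variable (case $m\equiv 0,3\pmod 4$) or the $q_0$-variable (case $m\equiv 1,2\pmod 4$). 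Once that dichotomy is pinned down, the remaining three variables $q_a, q_b, q_aq_bq_c$ (resp.\ $q_aq_bq_0$) contribute symmetrically, yielding the claimed formula.
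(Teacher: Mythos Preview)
Your approach is correct and essentially the same as the paper's: both apply Theorem~\ref{DT-Z_n} with $n=4$, $\lambda=\mu=\emptyset$ to reduce to $V^{\mathbb{Z}_4}_{\emptyset\emptyset\nu}=V^{\mathbb{Z}_4}_{\emptyset\emptyset\emptyset}\cdot H_\nu\cdot O_\nu$, then compute $H_\nu$ and $O_\nu$ separately for the staircase and combine. Two small differences worth noting: the paper organizes the $H_\nu$ computation via the edge-sequence/hook bijection of \cite{BCY}, Proposition~9 (indexing each hook by its length together with the color of its first box), which yields clean $\mathbf{M}_2$-factors directly and is somewhat tidier than grouping by $(i-j)\bmod 4$; and the variable relabelling for $m\equiv 1,2\pmod 4$ appears in the paper not through a corner-color heuristic but mechanically from the $O_\nu$ computation, which in those residue classes carries an extra factor $V^{\mathbb{Z}_4}_{\emptyset\emptyset\emptyset}(q_c,q_a,q_0,q_b)/V^{\mathbb{Z}_4}_{\emptyset\emptyset\emptyset}(q_0,q_b,q_c,q_a)$.
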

\begin{proof}
By Theorem \ref{DT-Z_n}, we have 
\ben
V^{\mathbb{Z}_4}_{\emptyset\emptyset\nu}(q_0,q_b,q_c,q_a)=V_{\emptyset\emptyset\emptyset}^{\mathbb{Z}_4}(q_0,q_b,q_c,q_a)\cdot O_{\nu}\cdot H_{\nu} 
\een
where 
\ben
&&O_\nu=\prod_{k=0}^{3}V_{\emptyset\emptyset\emptyset}^{\mathbb{Z}_4}(\tilde{q}_k,\tilde{q}_{k+1},\tilde{q}_{k+2}, \tilde{q}_{k+3})^{-2|\nu|_{\overline{k}}+|\nu|_{\overline{k+1}}+|\nu|_{\overline{k-1}}},\\
&&H_{\nu}=\prod_{(i,j)\in\nu}\frac{1}{1-\prod\limits_{k=0}^{3}\tilde{q}_k^{h^k_\nu(i,j)}},
\een
with $|\nu|_{\overline{k}}=\left|\{(i,j)\in\nu\,|\, i-j\equiv k\; (\mathrm{mod}\; 4)\}\right|$ and 
\ben
\tilde{q}_k=\left\{
\begin{aligned}
	&q_0 ,\;\;\;\;\;\; \mbox{if $k=0$ $($\mbox{mod} 4$)$}, \\
	&q_b,  \;\;\;\;\;\;\mbox{if $k=1$ $($\mbox{mod} 4$)$},\\
	&q_c , \;\;\;\;\;\; \mbox{if $k=2$ $($\mbox{mod} 4$)$}, \\
	&q_a,   \;\;\;\;\;\;\mbox{if $k=3$ $($\mbox{mod} 4$)$}.
\end{aligned}
\right.
\een

Since $\nu=(m,m-1,\cdots,2,1)$,
combined with Equation \eqref{v-color1},  one can compute $O_\nu$ as follows.\\
If $m\equiv0\; \mbox{(mod 4)}$, then 
\ben
O_{\nu}=\left(\frac{V_{\emptyset\emptyset\emptyset}^{\mathbb{Z}_4}(q_0,q_b,q_c,q_a)\cdot V_{\emptyset\emptyset\emptyset}^{\mathbb{Z}_4}(q_c,q_a,q_0,q_b)}{V_{\emptyset\emptyset\emptyset}^{\mathbb{Z}_4}(q_b,q_c,q_a,q_0)\cdot V_{\emptyset\emptyset\emptyset}^{\mathbb{Z}_4}(q_a,q_0,q_b,q_c)}\right)^\frac{m}{2}.
\een
If $m\equiv1\; \mbox{(mod 4)}$, then 
\ben
O_{\nu}=\left(\frac{V_{\emptyset\emptyset\emptyset}^{\mathbb{Z}_4}(q_b,q_c,q_a,q_0)\cdot V_{\emptyset\emptyset\emptyset}^{\mathbb{Z}_4}(q_a,q_0,q_b,q_c)}{V_{\emptyset\emptyset\emptyset}^{\mathbb{Z}_4}(q_0,q_b,q_c,q_a)\cdot V_{\emptyset\emptyset\emptyset}^{\mathbb{Z}_4}(q_c,q_a,q_0,q_b)}\right)^\frac{m+1}{2}\cdot\frac{V_{\emptyset\emptyset\emptyset}^{\mathbb{Z}_4}(q_c,q_a,q_0,q_b)}{V_{\emptyset\emptyset\emptyset}^{\mathbb{Z}_4}(q_0,q_b,q_c,q_a)}.
\een
If $m\equiv2\; \mbox{(mod 4)}$, then 
\ben
O_{\nu}=\left(\frac{V_{\emptyset\emptyset\emptyset}^{\mathbb{Z}_4}(q_0,q_b,q_c,q_a)\cdot V_{\emptyset\emptyset\emptyset}^{\mathbb{Z}_4}(q_c,q_a,q_0,q_b)}{V_{\emptyset\emptyset\emptyset}^{\mathbb{Z}_4}(q_b,q_c,q_a,q_0)\cdot V_{\emptyset\emptyset\emptyset}^{\mathbb{Z}_4}(q_a,q_0,q_b,q_c)}\right)^\frac{m}{2}\cdot\frac{V_{\emptyset\emptyset\emptyset}^{\mathbb{Z}_4}(q_c,q_a,q_0,q_b)}{V_{\emptyset\emptyset\emptyset}^{\mathbb{Z}_4}(q_0,q_b,q_c,q_a)}.
\een
If $m\equiv3\; \mbox{(mod 4)}$, then 
\ben
O_{\nu}=\left(\frac{V_{\emptyset\emptyset\emptyset}^{\mathbb{Z}_4}(q_b,q_c,q_a,q_0)\cdot V_{\emptyset\emptyset\emptyset}^{\mathbb{Z}_4}(q_a,q_0,q_b,q_c)}{V_{\emptyset\emptyset\emptyset}^{\mathbb{Z}_4}(q_0,q_b,q_c,q_a)\cdot V_{\emptyset\emptyset\emptyset}^{\mathbb{Z}_4}(q_c,q_a,q_0,q_b)}\right)^\frac{m+1}{2}.
\een

As is shown in the proof of [\cite{BCY}, Proposition 9], there exists a bijection between the finite set 
\ben
\mathcal{H}ook_{\nu^\prime}=\left\{ (t_1,t_2)\in\mathbb{Z}^2\,|\, t_1<t_2,\;  \nu^\prime(t_1)=-1, \; \nu^\prime(t_2)=1\right\}
\een
with the set of hooks of $\nu^\prime$, where each pair $(t_1,t_2)$ corresponds to two ends of the legs of a hook. Notice $\nu=\nu^\prime$  in our case. With this bijection and the two latter equivalences in Equations \eqref{odd-symmetry1}, \eqref{even-symmetry1}, one may compute $H_\nu$ by the length of each hook together with the color of its first box as follows.\\
If $m\equiv0\; \mbox{(mod 4)}$, then 
\ben
H_{\nu}=\mathbf{M}_2\left(q_a,q;\frac{m}{2}\right)\cdot\mathbf{M}_2\left(q_b,q;\frac{m}{2}\right)\cdot\mathbf{M}_2\left(q_aq_bq_c,q;\frac{m}{2}\right)\cdot\mathbf{M}_2\left(qq_c^{-1},q;\frac{m}{2}-1\right).
\een
If $m\equiv1\; \mbox{(mod 4)}$, then 
\ben
H_{\nu}=\mathbf{M}_2\left(q_0,q;\frac{m+1}{2}\right)\cdot\mathbf{M}_2\left(qq_a^{-1},q;\frac{m-1}{2}\right)\cdot\mathbf{M}_2\left(qq_b^{-1},q;\frac{m-1}{2}\right)\cdot\mathbf{M}_2\left(q_c,q;\frac{m-1}{2}\right).
\een
If $m\equiv2\; \mbox{(mod 4)}$, then 
\ben
H_{\nu}=\mathbf{M}_2\left(q_a,q;\frac{m}{2}\right)\cdot\mathbf{M}_2\left(q_b,q;\frac{m}{2}\right)\cdot\mathbf{M}_2\left(q_aq_bq_c,q;\frac{m}{2}-1\right)\cdot\mathbf{M}_2\left(qq_c^{-1},q;\frac{m}{2}\right).
\een
If $m\equiv3\; \mbox{(mod 4)}$, then 
\ben
H_{\nu}=\mathbf{M}_2\left(q_0,q;\frac{m-1}{2}\right)\cdot\mathbf{M}_2\left(qq_a^{-1},q;\frac{m-1}{2}\right)\cdot\mathbf{M}_2\left(qq_b^{-1},q;\frac{m-1}{2}\right)\cdot\mathbf{M}_2\left(q_c,q;\frac{m+1}{2}\right).
\een
Since 
\bea\label{Z4-nolegs}
&&V^{\mathbb{Z}_4}_{\emptyset\emptyset\emptyset}(q_0,q_b,q_c,q_a)\\
&=&\mathbf{M}(1,q)^4\cdot \widetilde{\mathbf{M}}(q_a,q)\cdot \widetilde{\mathbf{M}}(q_b,q)\cdot \widetilde{\mathbf{M}}(q_c,q)\cdot \widetilde{\mathbf{M}}(q_aq_c,q)\cdot \widetilde{\mathbf{M}}(q_bq_c,q)\cdot \widetilde{\mathbf{M}}(q_aq_bq_c,q)\nonumber
\eea
and 
\ben
\frac{\widetilde{\mathbf{M}}(x,q)}{\widetilde{\mathbf{M}}(qx^{-1},q)}=\frac{(x;q)_\infty}{(qx^{-1};q)_\infty},
\een
then we have 
\ben
&&\frac{V_{\emptyset\emptyset\emptyset}^{\mathbb{Z}_4}(q_0,q_b,q_c,q_a)\cdot V_{\emptyset\emptyset\emptyset}^{\mathbb{Z}_4}(q_c,q_a,q_0,q_b)}{V_{\emptyset\emptyset\emptyset}^{\mathbb{Z}_4}(q_b,q_c,q_a,q_0)\cdot V_{\emptyset\emptyset\emptyset}^{\mathbb{Z}_4}(q_a,q_0,q_b,q_c)}\\
&=&\frac{(q_a;q)_{\infty}}{(qq_a^{-1};q)_{\infty}}\cdot\frac{(q_b;q)_{\infty}}{(qq_b^{-1};q)_{\infty}}\cdot\frac{(qq_c^{-1};q)_{\infty}}{(q_c;q)_{\infty}}\cdot\frac{(q_aq_bq_c;q)_{\infty}}{(q(q_aq_bq_c)^{-1};q)_{\infty}}.
\een
Then the proof is completed by combining all the above results together with Equation \eqref{identity1}.
\end{proof}

Now we have the following 
\begin{theorem}\label{Z2Z2-1-leg}
Assume  $\nu=(m,m-1,\cdots,2,1)$ with $m\in\mathbb{Z}_{\geq1}$. Let $q=q_aq_bq_cq_0$.  Then we have
	\ben
	&&V_{\nu\emptyset\emptyset}^{\mathbb{Z}_{2}\times\mathbb{Z}_{2}}(q_0,q_c,q_a,q_b)=V_{\emptyset\emptyset\emptyset}^{\mathbb{Z}_{2}\times\mathbb{Z}_{2}}(q_0,q_c,q_a,q_b)\cdot\Upsilon(q_c,q_a,q_b,q;m),\\
	&&V_{\emptyset\nu\emptyset}^{\mathbb{Z}_{2}\times\mathbb{Z}_{2}}(q_0,q_b,q_c,q_a)=V_{\emptyset\emptyset\emptyset}^{\mathbb{Z}_{2}\times\mathbb{Z}_{2}}(q_0,q_b,q_c,q_a)\cdot\Upsilon(q_b,q_c,q_a,q;m),\\ 
	&&V_{\emptyset\emptyset\nu}^{\mathbb{Z}_{2}\times\mathbb{Z}_{2}}(q_0,q_a,q_b,q_c)=V_{\emptyset\emptyset\emptyset}^{\mathbb{Z}_{2}\times\mathbb{Z}_{2}}(q_0,q_a,q_b,q_c)\cdot\Upsilon(q_a,q_b,q_c,q;m),
	\een	
	where 
	\ben
	&&\Upsilon(q_a,q_b,q_c,q;m)\\
	&=&\frac{\widetilde{\mathbf{M}}_{2\{\frac{m}{2}\}}\left(q_aq_b,q;2\lfloor\frac{m+1}{2}\rfloor\right)}{\widetilde{\mathbf{M}}_{2\{\frac{m}{2}\}}(-q_a,q;\lfloor\frac{m+1}{2}\rfloor)\cdot\widetilde{\mathbf{M}}_{2\lbrace\frac{m}{2}\rbrace}(-q_b,q;\lfloor\frac{m+1}{2}\rfloor)\cdot\widetilde{\mathbf{M}}_{1-2\{\frac{m}{2}\}}(-q_c,q;\lfloor\frac{m+1}{2}\rfloor)\cdot\widetilde{\mathbf{M}}_{2\{\frac{m}{2}\}}(-q_aq_bq_c,q;\lfloor\frac{m+1}{2}\rfloor)}.
	\een	
\end{theorem}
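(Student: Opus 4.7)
The plan is to obtain Theorem~\ref{Z2Z2-1-leg} by composing the two main ingredients already developed in Sections~4.1--4.4: Theorem~\ref{Z2Z2-Z4} (which relates the 1-leg $\mathbb{Z}_2\times\mathbb{Z}_2$-vertex to the 1-leg $\mathbb{Z}_4$-vertex via the factor $\Phi$) and Lemma~\ref{Z4-1-leg} (which expresses the 1-leg $\mathbb{Z}_4$-vertex as the 0-leg $\mathbb{Z}_4$-vertex times a product of $\widetilde{\mathbf{M}}_i$ factors). By the symmetries of the $\mathbb{Z}_2\times\mathbb{Z}_2$-vertex in Remark~\ref{Z2Z2-symmetry}, it suffices to prove the identity for $V^{\mathbb{Z}_2\times\mathbb{Z}_2}_{\emptyset\emptyset\nu}(q_0,q_a,q_b,q_c)$; the other two identities follow by cyclically relabelling the variables.

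To carry this out, I would first write, for brevity, $L=\lfloor (m+1)/2\rfloor$ and $\epsilon=2\{m/2\}$, so that the $\widetilde{\mathbf{M}}_i$-indices appearing in $\Phi$, $\Upsilon$, and Lemma~\ref{Z4-1-leg} are $\epsilon$ or $1-\epsilon$. Then I would substitute the expansion of $V^{\mathbb{Z}_4}_{\emptyset\emptyset\nu}$ from Lemma~\ref{Z4-1-leg} into the formula from Theorem~\ref{Z2Z2-Z4}. The crucial observation is that the denominator of $\Phi$ consists precisely of the $\widehat{\mathbf{M}}_{\bullet}(\cdot,q;L)$ factors whose numerators $\widetilde{\mathbf{M}}_{\bullet}(\cdot,q;L)$ (the positive-argument halves) match the factors produced by Lemma~\ref{Z4-1-leg}. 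Using the defining factorisation $\widehat{\mathbf{M}}_i(x,q;l)=\widetilde{\mathbf{M}}_i(x,q;l)\cdot\widetilde{\mathbf{M}}_i(-x,q;l)$, these cancel and leave exactly the negative-argument $\widetilde{\mathbf{M}}_i(-\,\cdot\,,q;L)$ factors (and the $\widetilde{\mathbf{M}}_\epsilon(q_aq_b,q;2L)$ factor that already sits in the numerator of $\Phi$), which are precisely the factors constituting $\Upsilon$.

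The remaining task is then to show that the no-leg part
\[
V^{\mathbb{Z}_4}_{\emptyset\emptyset\emptyset}(q_0,q_b,q_c,q_a)\cdot\widehat{\mathbf{M}}(q_a,q)\widehat{\mathbf{M}}(q_b,q)\widehat{\mathbf{M}}(q_c,q)\widehat{\mathbf{M}}(q_aq_bq_c,q)\widetilde{\mathbf{M}}(q_aq_b,q)
\]
(collected from $\Phi$ together with the Bryan--Cadman--Young expression \eqref{Z4-nolegs}) coincides with the Bryan--Young formula for $V^{\mathbb{Z}_2\times\mathbb{Z}_2}_{\emptyset\emptyset\emptyset}(q_0,q_a,q_b,q_c)$ in Theorem~\ref{Z2Z2-nolegs}. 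This is a direct verification using $\widehat{\mathbf{M}}(x,q)=\bigl(\widetilde{\mathbf{M}}(x,q)\widetilde{\mathbf{M}}(-x,q)\bigr)^{-1}$: forming the ratio $V^{\mathbb{Z}_2\times\mathbb{Z}_2}_{\emptyset\emptyset\emptyset}/V^{\mathbb{Z}_4}_{\emptyset\emptyset\emptyset}$ one immediately reads off $\widetilde{\mathbf{M}}(q_aq_b,q)\cdot\widehat{\mathbf{M}}(q_a,q)\widehat{\mathbf{M}}(q_b,q)\widehat{\mathbf{M}}(q_c,q)\widehat{\mathbf{M}}(q_aq_bq_c,q)$, which is exactly what the numerator of $\Phi$ contributes.

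The main bookkeeping obstacle, and the one I would be most careful about, is case~(ii) of Theorem~\ref{Z2Z2-Z4}: there the permutation of $(q_0,q_a,q_b,q_c)$ entering $V^{\mathbb{Z}_4}_{\emptyset\emptyset\nu}$ differs from case~(i), which in turn forces a different permutation when substituting Lemma~\ref{Z4-1-leg}. Here I will invoke the $\mathbb{Z}_4$-symmetry of $V^{\mathbb{Z}_4}_{\emptyset\emptyset\emptyset}$ visible from \eqref{Z4-nolegs} together with the $S_3$-symmetry of $V^{\mathbb{Z}_2\times\mathbb{Z}_2}_{\emptyset\emptyset\emptyset}$ in the variables $q_a,q_b,q_c$ (Theorem~\ref{Z2Z2-nolegs}) to realign the combined no-leg prefactor with $V^{\mathbb{Z}_2\times\mathbb{Z}_2}_{\emptyset\emptyset\emptyset}(q_0,q_a,q_b,q_c)$, after which the surviving $\widetilde{\mathbf{M}}_i$ quotient is still precisely $\Upsilon(q_a,q_b,q_c,q;m)$, since in both parity cases the indices $\epsilon$ and $1-\epsilon$ attach to exactly the same variables. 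Once the bookkeeping is in place, the identity drops out with no further computation.
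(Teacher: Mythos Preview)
Your proposal is correct and follows exactly the approach of the paper: the paper's proof simply states that the result follows from Theorem~\ref{Z2Z2-nolegs}, Theorem~\ref{Z2Z2-Z4}, Lemma~\ref{Z4-1-leg}, Equation~\eqref{Z4-nolegs}, and Remark~\ref{Z2Z2-symmetry}, and you have accurately unpacked how these ingredients combine (including the cancellation of the $\widetilde{\mathbf M}_i$ factors from Lemma~\ref{Z4-1-leg} against the positive-argument halves of the $\widehat{\mathbf M}_i$ in the denominator of $\Phi$, and the identification of the remaining no-leg prefactor with $V^{\mathbb Z_2\times\mathbb Z_2}_{\emptyset\emptyset\emptyset}$). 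Your handling of case~(ii) via the $q_a\leftrightarrow q_b$ symmetry of $V^{\mathbb Z_4}_{\emptyset\emptyset\emptyset}(q_0,q_b,q_c,q_a)$ visible in~\eqref{Z4-nolegs} is also on the mark.
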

\begin{proof}
It follows from Theorem \ref{Z2Z2-nolegs}, Theorem \ref{Z2Z2-Z4}, Lemma \ref{Z4-1-leg}, Equation \eqref{Z4-nolegs} and Remark \ref{Z2Z2-symmetry}.
\end{proof}
Finally, we present a relation  between  $Z_{\mathfrak{RP}_{+}(\nu)}=Z_{\mathfrak{RP}_{-}(\nu)}$  and $Z_\mathfrak{P}$  using Lemma \ref{Z4-RPC-}, Lemma \ref{Z4-1-leg}, Remark \ref{another-proof-GFPP} and Equation \eqref{Z4-nolegs} as follows.
\begin{corollary}\label{GF-RPC-withleg}
Assume  $\nu=(m,m-1,\cdots,2,1)$ with $m\in\mathbb{Z}_{\geq1}$. Let $q=q_aq_bq_cq_0$. Then    \\
(i) if $m\equiv0\; \mbox{(mod 4)}$ or $m\equiv3\; \mbox{(mod 4)}$,
\ben
Z_{\mathfrak{RP}_{+}(\nu)}=Z_{\mathfrak{RP}_{-}(\nu)}
=\frac{Z_\mathfrak{P}\cdot\left(\widetilde{\mathbf{M}}_{1-2\{\frac{m}{2}\}}(-q_c,q;\lfloor\frac{m+1}{2}\rfloor)\right)^{-1}}{\widetilde{\mathbf{M}}_{2\{\frac{m}{2}\}}(-q_a,q;\lfloor\frac{m+1}{2}\rfloor)\cdot\widetilde{\mathbf{M}}_{2\{\frac{m}{2}\}}(-q_b,q;\lfloor\frac{m+1}{2}\rfloor)\cdot\widetilde{\mathbf{M}}_{2\{\frac{m}{2}\}}(-q_aq_bq_c,q;\lfloor\frac{m+1}{2}\rfloor)},
\een
(ii) if $m\equiv1\; \mbox{(mod 4)}$ or $m\equiv2\; \mbox{(mod 4)}$,
\ben
Z_{\mathfrak{RP}_{+}(\nu)}=Z_{\mathfrak{RP}_{-}(\nu)}
=\frac{\left(Z_\mathfrak{P}\big|_{q_0\leftrightarrow q_c}\right)\cdot\left(\widetilde{\mathbf{M}}_{1-2\{\frac{m}{2}\}}(-q_0,q;\lfloor\frac{m+1}{2}\rfloor)\right)^{-1}}{\widetilde{\mathbf{M}}_{2\{\frac{m}{2}\}}(-q_a,q;\lfloor\frac{m+1}{2}\rfloor)\cdot\widetilde{\mathbf{M}}_{2\{\frac{m}{2}\}}(-q_b,q;\lfloor\frac{m+1}{2}\rfloor)\cdot\widetilde{\mathbf{M}}_{2\{\frac{m}{2}\}}(-q_aq_bq_0,q;\lfloor\frac{m+1}{2}\rfloor)},
\een
where 
\ben
Z_\mathfrak{P}=Z_{\mathfrak{RP}_{+}(\emptyset)}=Z_{\mathfrak{RP}_{-}(\emptyset)}=\mathbf{M}(1,q)^4\cdot\frac{
	\widetilde{\mathbf{M}}(q_aq_c,q)\cdot\widetilde{\mathbf{M}}(q_bq_c,q)}{\widetilde{\mathbf{M}}(-q_a,q)\cdot\widetilde{\mathbf{M}}(-q_b,q)\cdot\widetilde{\mathbf{M}}(-q_c,q)\cdot\widetilde{\mathbf{M}}(-q_aq_bq_c,q)}.
\een
is the generating function for pyramid partitions $Z_{\mathrm{pyramid}}$ in [\cite{BY}, Theorem 6.2].
\end{corollary}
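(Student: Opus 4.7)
The strategy is to chain together the three identities already established earlier in the paper, treating $V^{\mathbb{Z}_4}_{\emptyset\emptyset\emptyset}$ as the intermediate quantity that links $Z_{\mathfrak{P}}$ on the one hand and $Z_{\mathfrak{RP}_{-}(\nu)}$ on the other. First, by Lemma \ref{RPC+RPC-} the two generating functions $Z_{\mathfrak{RP}_{+}(\nu)}$ and $Z_{\mathfrak{RP}_{-}(\nu)}$ coincide, so it suffices to compute one of them, and I will work with $Z_{\mathfrak{RP}_{-}(\nu)}$ since Lemma \ref{Z4-RPC-} already identifies it with a known multiple of $V^{\mathbb{Z}_4}_{\emptyset\emptyset\nu}$.

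Next, I plug Lemma \ref{Z4-1-leg} into Lemma \ref{Z4-RPC-}, which replaces $V^{\mathbb{Z}_4}_{\emptyset\emptyset\nu}$ by the legless vertex $V^{\mathbb{Z}_4}_{\emptyset\emptyset\emptyset}$ (with the appropriate permutation of variables depending on $m \bmod 4$) multiplied by four tilde-M correction factors indexed by $2\{m/2\}$ or $1-2\{m/2\}$ at level $\lfloor(m+1)/2\rfloor$. To eliminate $V^{\mathbb{Z}_4}_{\emptyset\emptyset\emptyset}$ in favor of $Z_{\mathfrak{P}}$, I use Equation \eqref{Z4-nolegs} together with the explicit formula for $Z_{\mathfrak{P}}$ from Remark \ref{another-proof-GFPP} (or equivalently [\cite{BY}, Theorem 6.2]); comparing the two MacMahon-type products and invoking the identity $\widehat{\mathbf{M}}(x,q)=(\widetilde{\mathbf{M}}(x,q)\widetilde{\mathbf{M}}(-x,q))^{-1}$ yields the clean identity
\[
Z_{\mathfrak{P}} \;=\; V^{\mathbb{Z}_4}_{\emptyset\emptyset\emptyset}(q_0,q_b,q_c,q_a)\cdot\widehat{\mathbf{M}}(q_a,q)\cdot\widehat{\mathbf{M}}(q_b,q)\cdot\widehat{\mathbf{M}}(q_c,q)\cdot\widehat{\mathbf{M}}(q_aq_bq_c,q),
\]
and, in case (ii), the same identity with $q_0 \leftrightarrow q_c$ applied to $V^{\mathbb{Z}_4}_{\emptyset\emptyset\emptyset}(q_c,q_a,q_0,q_b)$, producing $Z_{\mathfrak{P}}|_{q_0\leftrightarrow q_c}$ on the right.

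Substituting this into the chain assembled in the previous step, the four $\widehat{\mathbf{M}}$ factors from Lemma \ref{Z4-RPC-} exactly match those appearing alongside $V^{\mathbb{Z}_4}_{\emptyset\emptyset\emptyset}$ in the formula for $Z_{\mathfrak{P}}$, converting the $V^{\mathbb{Z}_4}_{\emptyset\emptyset\emptyset}$ block into a single factor of $Z_{\mathfrak{P}}$ (resp.\ $Z_{\mathfrak{P}}|_{q_0\leftrightarrow q_c}$). What remains is a quotient whose numerator contains the tilde-M corrections $\widetilde{\mathbf{M}}_{2\{m/2\}}(q_a,q;\lfloor(m+1)/2\rfloor)$ etc.\ from Lemma \ref{Z4-1-leg}, and whose denominator contains the hat-M corrections $\widehat{\mathbf{M}}_{2\{m/2\}}(q_a,q;\lfloor(m+1)/2\rfloor)$ etc.\ from Lemma \ref{Z4-RPC-}. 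Using $\widehat{\mathbf{M}}_k(x,q;l)=\widetilde{\mathbf{M}}_k(x,q;l)\cdot\widetilde{\mathbf{M}}_k(-x,q;l)$, each such pair reduces to $1/\widetilde{\mathbf{M}}_k(-x,q;l)$, producing precisely the denominator (and the single inverse factor in the numerator) that appears in the stated formula.

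The whole argument is essentially bookkeeping, so the only real issue is making sure the four sign-flipped factors emerge in the correct slots and that the $1-2\{m/2\}$-indexed factor sits on the $q_c$ (respectively $q_0$) variable rather than on $q_a,q_b$ or the product $q_aq_bq_c$ (respectively $q_aq_bq_0$); this bookkeeping is dictated unambiguously by the parity-dependent assignments already worked out in Lemmas \ref{Z4-RPC-} and \ref{Z4-1-leg}, so no new estimate or combinatorial input is needed. The main place where one must be careful is case (ii), where Lemma \ref{Z4-1-leg} expresses $V^{\mathbb{Z}_4}_{\emptyset\emptyset\nu}(q_0,q_b,q_c,q_a)$ through the \emph{permuted} vertex $V^{\mathbb{Z}_4}_{\emptyset\emptyset\emptyset}(q_c,q_a,q_0,q_b)$; the role of $q_c$ and $q_0$ is interchanged throughout, which is exactly what yields the $Z_{\mathfrak{P}}|_{q_0\leftrightarrow q_c}$ factor and the $q_aq_bq_0$ arguments in the denominator of part (ii).
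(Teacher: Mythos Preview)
Your proposal is correct and follows exactly the route indicated in the paper: combine Lemma \ref{Z4-RPC-} with Lemma \ref{Z4-1-leg}, then convert $V^{\mathbb{Z}_4}_{\emptyset\emptyset\emptyset}$ into $Z_\mathfrak{P}$ via Equation \eqref{Z4-nolegs} and Remark \ref{another-proof-GFPP}, and simplify using $\widehat{\mathbf{M}}_k(x,q;l)=\widetilde{\mathbf{M}}_k(x,q;l)\widetilde{\mathbf{M}}_k(-x,q;l)$. Your handling of case (ii), where the permuted vertex $V^{\mathbb{Z}_4}_{\emptyset\emptyset\emptyset}(q_c,q_a,q_0,q_b)$ forces the $q_0\leftrightarrow q_c$ swap in $Z_\mathfrak{P}$ and in the remaining factors, is also on point.
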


\section{Future work}

By investigating a class of restricted pyramid configurations with symmetric interlacing property, we derive an explicit formula for a class of 1-leg DT $\mathbb{Z}_2\times\mathbb{Z}_2$-vertex. Although the derivation of this formula   is quite involved,  it is still far from completing the 1-leg case. On account of Remark \ref{general-Z2Z2-RPC+}, it is  natural to consider restricted pyramid configurations with non-symmetric interlacing property in general. In such a case, we may not connect $V_{\emptyset\emptyset\nu}^{\mathbb{Z}_{2}\times\mathbb{Z}_{2}}$ with $V_{\emptyset\emptyset\nu}^{\mathbb{Z}_{4}}$ through  the relation between  $Z_{\mathfrak{RP}_{+}(\nu)}$ and  $Z_{\mathfrak{RP}_{-}(\nu)}$, see Remark \ref{RPC+---RPC-}. 

However, we can also further study $Z_{\mathfrak{RP}_{+}(\nu)}$  to figure out its corresponding generating function of some new pyramid configurations along the diagonal slices  with  legs as in 3D partitions, and then connect it to the DT $\mathbb{Z}_4$-vertex. Some  observations suggest that we may establish the connection between  1-leg DT $\mathbb{Z}_2\times\mathbb{Z}_2$-vertex with the leg partition $\nu$ and  2-leg DT $\mathbb{Z}_4$-vertex for some partitions $\nu\neq(m,m-1,\cdots,2,1)$ with $m\in\mathbb{Z}_{\geq1}$. We will study this interesting phenomenon in the future to extract more information for the 1-leg DT $\mathbb{Z}_2\times\mathbb{Z}_2$-vertex.
Once the 1-leg DT $\mathbb{Z}_2\times\mathbb{Z}_2$-vertex is determined, one may generalize the graphical condensation recurrences  in [\cite{JWY}] to the DT $\mathbb{Z}_2\times\mathbb{Z}_2$-vertex for deriving  the explicit formula of the  full 3-leg  DT $\mathbb{Z}_2\times\mathbb{Z}_2$-vertex.  It is also interesting to investigate whether the explicit formula for  1-leg (or full 3-leg) DT $\mathbb{Z}_2\times\mathbb{Z}_2$-vertex can be completely determined by the DT $\mathbb{Z}_4$-vertex of Bryan-Cadman-Young.


\begin{thebibliography}{999}


\bibitem{AKMV} M. Aganagic, A. Klemm, M. Mari$\tilde{n}$o, C. Vafa, {\em The topological vertex},  Comm. Math.
Phys.  254 (2005) 425--478.



\bibitem{BCY} J. Bryan, C. Cadman and B. Young, {\em The orbifold topological vertex},  Adv. Math. 229 (2012), no. 1, 531--595.

\bibitem{BY}  J. Bryan, B. Young, Generating functions for colored 3D Young diagrams and the Donaldson-Thomas invariants of orbifolds, Duke Math. J. 152 (1) (2010) 115--153.




\bibitem{JWY} H. Jenne, G. Webb and B. Young, {\em Double-Dimer condensation and the PT-DT correspondence},  arXiv:2109.11773

\bibitem{JS} D. Joyce and Y. Song, {\em A theory of generalized Donaldson–Thomas invariants},  Mem. Amer. Math. Soc. 217 (2012), no. 1020.

\bibitem{KS} M. Kontsevich, Y. Soibelman, {\em Stability structures, motivic Donaldson–Thomas invariants and cluster transformations},  arXiv:0811.2435.





\bibitem{LLLZ} Jun Li, Chiu-Chu Melissa Liu, Kefeng Liu, and Jian Zhou,
{\em A mathematical theory of the topological vertex}, Geom. Topol. 13 (2009), no. 1, 527--621.

\bibitem{LLZ2} Chiu-Chu Melissa Liu, Kefeng Liu, Jian Zhou, {\em A formula of two-partition Hodge integrals},  J. Amer. Math. Soc.
20 (1) (2007) 149--184.

\bibitem{MNOP} D. Maulik, N. Nekrasov, A. Okounkov and R. Pandharipande, {\em Gromov-Witten theory and Donaldson-Thomas theory, I},  Compos. Math. 142 (2006), no. 5, 1263--1285. 

\bibitem{MOOP} D. Maulik, A. Oblomkov, A. Okounkov, R. Pandharipande, {\em Gromov-Witten/Donaldson-Thomas correspondence for toric 3-folds},  Invent. Math. 186, (2011), no. 2, 435--479.


\bibitem{MR} S. Mozgovoy and M. Reineke, {\em On the noncommutative Donaldson-Thomas invariants arising from brane tilings}, Adv. Math. 223 (2010), no. 5, 1521--1544.







\bibitem{NN} K. Nagao and H. Nakajima, {\em Counting invariant of perverse coherent sheaves and its wall-crossing},  Internat. Math. Res. Notices 17 (2011), 3885--3938.

\bibitem{N1} K. Nagao, {\em Refined open noncommutative Donaldson-Thomas invariants for small crepant resolutions},  Pacific J. Math. 254 (2011), no. 1, 173--209.

\bibitem{N2} K. Nagao, {\em Non-commutative Donaldson-Thomas theory and vertex operators},  Geom. Topol. 15 (2011), no. 3, 1509--1543.


\bibitem{N3} K. Nagao, {\em Derived categories of small toric Calabi-Yau 3-folds and curve counting invariants}, Q. J. Math. 63 (2012), no. 4, 965--1007.



\bibitem{Okounkov} A. Okounkov, {\em Infinite wedge and random partitions}, Selecta Math. (N.S.) 7 (2001), 57--81.

\bibitem{ORV} A. Okounkov, N. Reshetikhin, C. Vafa, {\em Quantum Calabi–Yau and classical crystals}, 
from: “The unity of mathematics”, (P Etingof, V Retakh, IM Singer, editors), Progr. Math. 244, Birkhäuser, Boston, MA (2006) 597--618.


\bibitem{Ross} D. Ross, Localization and gluing of orbifold amplitudes: the Gromov–Witten orbifold vertex. Trans. Am. Math. Soc. 366(3), 1587--1620 (2014).




\bibitem{Szendroi} B. Szendroi, {\em Non-commutative Donaldson-Thomas theory and the conifold}, Geom. Topol. 12 (2008), 1171--1202.


\bibitem{You} B. J. Young, {\em Computing a pyramid partition generating function with dimer shuffling}, J. Combin. Theory Ser. A 116 (2009), 334--350.

\bibitem{Zhou} Jian Zhou, {\em Crepant resolutions, quivers and GW/NCDT duality},  arXiv:0907.0135.




\end{thebibliography}
\end{document}